\documentclass[a4paper, leqno, 10pt]{amsart}

\usepackage{amssymb, amsmath, amsthm, amsrefs}
\usepackage{hyperref}
\usepackage{mathrsfs}
\usepackage[matha, mathx]{mathabx}
\usepackage[shortlabels]{enumitem}
\usepackage{ esint }
\usepackage[colorinlistoftodos]{todonotes}
\usepackage{xcolor}
\numberwithin{equation}{section}
\newtheorem{thm}{Theorem}
\newtheorem{theo}[thm]{Theorem}
\newtheorem{corollary}[thm]{Corollary}
\newtheorem{lemma}[thm]{Lemma}
\newtheorem{proposition}[thm]{Proposition}

\newtheorem{rmk}[thm]{Remark}
\newtheorem{define}[thm]{Definition} 
\newcommand{\dd}{\mathrm{d}}
\newcommand{\T}{\mathbb{T}}
\newcommand{\R}{\mathbb{R}}
\newcommand{\norm}[1]{\left\Vert#1\right\Vert}
\newcommand{\brkt}[1]{\left(#1\right)}

\newcommand{\abs}[1]{\left|#1\right|}
\newcommand{\set}[1]{\left\{#1\right\}}
\newcommand{\jap}[1]{\left<#1\right>}

\newcommand{\D}{\mathbb{D}}
\newcommand{\C}{\mathbb{C}}
\newcommand{\Z}{\mathbb{Z}}
\newcommand{\N}{\mathbb{N}}
\newcommand{\I}{\mathrm{I}}
\newcommand{\II}{\mathrm{II}}
\newcommand{\dbtilde}[1]{\tilde{\tilde{#1}}}
\newcommand{\eps}{\varepsilon}

\author[Bakas]{Odysseas Bakas}
\address{BCAM - Basque Center for Applied Mathematics, 48009 Bilbao, Spain}
\email{obakas@bcamath.org}
\author[Pott]{Sandra Pott}
\address{Centre for Mathematical Sciences, Lund University, 221 00 Lund, Sweden}
\email{sandra.pott@math.lu.se}
\author[Rodr\'iguez-L\'opez]{Salvador Rodr\'iguez-L\'opez}
\address{Department of Mathematics, Stockholm University, 106 91 Stockholm, Sweden}
\email{s.rodriguez-lopez@math.su.se}
\author[Sola]{Alan Sola}
\address{Department of Mathematics, Stockholm University, 106 91 Stockholm, Sweden}
\email{sola@math.su.se}

\thanks{The first author was partially supported by the grant KAW 2017.0425, financed by the Knut and Alice Wallenberg Foundation and by the projects CEX2021-001142-S, RYC2018-025477-I, PID2021-122156NB-I00/AEI/10.13039/501100011033 funded by Agencia Estatal de Investigaci\'on and acronym ``HAMIP'', Juan de la Cierva Incorporaci\'on IJC2020-043082-I, and grant BERC 2022-2025 of the Basque Government. The third author was partially supported by the Spanish Government grant PID2020-113048GB-I00. The fourth author was partially supported by the National Science Foundation under DMS grant \# 1928930 while he participated in a programme hosted by MSRI (Berkeley, CA) during the spring 2022 semester.}

\subjclass[2010]{42A24, 42B35 (primary); 30H10, 30H35, 42A44, 42A85, 42B30 (secondary).}
\keywords{Hardy--Littlewood-type inequalities, Hardy-Orlicz spaces, multipliers, lacunary Fourier series, Sobolev embeddings.}

\let\iffalseHIDE\iffalse

\begin{document}

\title[Multipliers for Hardy--Orlicz spaces and applications]{Multipliers for Hardy--Orlicz spaces and applications}

\maketitle

\begin{abstract} 
Using real-variable methods, we characterise multipliers for general classes of Hardy--Orlicz spaces, %
unifying and extending several classical results due to Hardy and Littlewood; Duren and Shields; Paley; and others.
Applications of our results include inequalities involving Fourier coefficients and Fourier transforms of elements of Hardy--Orlicz spaces and their duals, as well as embeddings into spaces of generalised smoothness, Sobolev type-embeddings and Paley-Wiener type theorems. 
\end{abstract}

\section{Introduction}
Many, if not most, classical spaces of functions that arise in harmonic analysis and operator theory admit membership conditions that can be expressed in terms of growth conditions on %
Fourier coefficients or Fourier transforms of elements in the space, or, when the space consists of analytic functions, Taylor coefficients. Many operators acting on such spaces %
can either be defined in terms of their actions on the Fourier or Taylor side, or admit useful descriptions in these terms. This leads to the study of multiplier operators, or in brief, multipliers, and their mapping properties. The simplest case is that of $L^2(\T)$, the square integrable periodic functions: a function is in $L^2(\T)$ if, and only if, its Fourier coefficients are in the sequence space $\ell^2$, and so a multiplier is bounded if, and only if, it maps $\ell^2$ to $\ell^2$. Similarly, there are many classical results characterising when a multiplier is bounded from a given Banach space, or scale of Banach spaces, to $\ell^q$ for some suitable range of $q$. When dealing with less classical spaces, such as endpoint substitutes for $L^1$ spaces, one is naturally led to consider function spaces not of Banach type, and multiplier operators acting on such spaces. 

This paper is devoted to a study of multipliers in the setting of Hardy--Orlicz spaces, which generalise $H^p$ spaces on the circle or on Euclidean space, and their duals. Our goal is to both unify and generalise well-known results in the $H^p$ theory, and to exploit such results to give new descriptions of the elements of these spaces and how they embed into other naturally occurring spaces in complex and harmonic analysis.

Let us set the scene for this paper by letting $X(\D)$ be a space of holomorphic functions on the unit disc $\D := \{ z \in \C: |z| < 1 \}$, equipped with a norm, or quasi-norm, $\| \cdot \|_{X(\D)}$ so that $\norm{F}_{X(\D)} <\infty$ if, and only if, $F \in X(\D)$. For instance, $X(\D)$ could be a classical Hardy space $H^p (\D)$ with $p\geq 1$.

For $q \in (0, \infty]$, we say that a sequence $\lambda= \{ \lambda_n \}_{n=0}^{\infty}$ of complex numbers is a multiplier from $X(\D)$ to $\ell^q (\N_0)$ if there exists a constant $C_{\lambda, X} > 0$ such that
\begin{equation}\label{mult_def}
\norm{\{ \lambda_n f_n \}_{n=0}^{\infty}}_{\ell^q (\N_0)} \leq C_{\lambda, X} \norm{ F }_{X(\D)}  
\end{equation}
for all $F(z)= \sum_{n = 0}^{\infty} f_n z^n $ in $X(\D)$. The class of all multipliers from $X(\D)$ to $\ell^q (\N_0)$ is denoted by $\mathcal{M}_{X(\D) \rightarrow \ell^q (\N_0)}$. 
For $\lambda \in \mathcal{M}_{X(\D) \rightarrow \ell^q (\N_0)}$, we write
\[
\norm{\lambda}_{ \mathcal{M}_{X(\D) \rightarrow \ell^q (\N_0)}} := \inf \left\{ C_{\lambda, X} >0 : \eqref{mult_def} \text{ holds for all } F \in X(\D) \right\}.   
\]

In this paper we establish characterisations of multipliers from a broad class of Hardy--Orlicz spaces to $\ell^q$ for $q \in [1,2]$. Our characterisations include in a unified way known results for classical Hardy spaces as well as new multiplier results for Hardy--Orlicz spaces that arise in the study of products between Hardy spaces and their duals. %
Of particular interest to us is the space $H^{\log} (\D)$ that naturally arises in the study of products between functions in $H^1 (\D)$ and $BMO (\D)$ (see \cite{BIJZ}) and whose dual space $LMO (\D) \cong ( H^{\log} (\D) )^{\ast}$ appears in a number of problems in complex and harmonic analysis, including that of the characterisation of boundedness of Hankel operators on $H^1 (\D)$ (see \cite{JPS}). 

\subsection*{Multiplier theorems} Let us now recall some well-known multiplier theorems on classical Hardy spaces that our present work generalises.
If $p \in (0,1)$ then it was shown by P. L. Duren and A. L. Shields in \cites{DS_69,DS_70} that for $q \in [p, \infty)$ a sequence $\lambda = \{ \lambda_n \}_{n=0}^{\infty}$ is a multiplier from $H^p(\D)$ to $\ell^q (\N_0)$ if, and only if,  
\begin{equation}\label{Hp_lq}
\sup_{N \in \N} \frac{1}{N} \left( \sum_{n=1}^N n^{q/p} |\lambda_n|^q \right)^{1/q} < \infty.   
\end{equation}

In the case $p=1$ it was shown by G. H. Hardy and J. E. Littlewood \cites{H-L_37,H-L_41} and E. M. Stein and A. Zygmund \cite{SZ} that a sequence $\lambda = \{ \lambda_n \}_{n=0}^{\infty}$ is a multiplier from $H^1(\D)$ to $\ell^2 (\N_0)$ if, and only if,  
\begin{equation}\label{H1_l2}
\sup_{m \in \N} \sum_{n=2^m}^{2^{m+1}-1} |\lambda_n|^2 < \infty.
\end{equation}
For an introduction to classical Hardy spaces and for proofs of the aforementioned multiplier results, we refer the reader to Duren's book \cite{D}. 

In the $p=q=1$ case, a theorem attributed to C. Fefferman (see \cite{Dy} and \cite{Sl-St}) asserts that a sequence $\lambda = \{ \lambda_n \}_{n=0}^{\infty}$ is a multiplier from $H^1(\D)$ to $\ell^1 (\N_0)$ if, and only if,   
\begin{equation}\label{H1_l1}
\sup_{N \in \N} \sum_{k=1}^\infty \left(\sum_{n= k N}^{(k+1)N -1} |\lambda_n| \right)^2  < \infty.
\end{equation}

In this paper we extend the aforementioned results to a broad class of Hardy--Orlicz spaces. To state our results, recall that given a growth function $\Psi$ (see Definition \ref{main_def} below), the corresponding Hardy--Orlicz space $H^{\Psi} (\D)$ is defined as the class of functions $F$ that are holomorphic in $\D$ and satisfy
\[
\sup_{0 \leq r <1} \int_{[0,1)} \Psi \left( \abs{F \left(r e^{i 2\pi \theta}\right)} \right) \dd \theta < \infty.
\]
If $G \in H^{\Psi} (\D)$, we set
\[
\norm{G}_{H^{\Psi} (\D)} := \inf \left\{ \alpha >0 : \sup_{0 \leq r <1} \int_{[0,1)} \Psi \left(\alpha^{-1} \abs{G \left(r e^{i 2\pi \theta} \right)} \right) \dd \theta \leq 1 \right\}.
\]
Note that if $\Psi (t)=t^p$ then $( H^{\Psi} (\D), \| \cdot \|_{H^{\Psi} (\D)})$ coincides with the usual Hardy space $( H^p (\D), \| \cdot \|_{H^p (\D)})$. 

In what follows, if $\Psi$ is a growth function then $\Psi^{-1}$ denotes its inverse, i.e. $\Psi^{-1}: [0, \infty) \rightarrow [0, \infty)$ is such that  $\Psi^{-1} ( \Psi (t)) = t $ for all $t \geq 0$.

Our first result is a characterisation of the class $\mathcal{M}_{H^{\Psi} (\D) \rightarrow \ell^q (\N_0)}$ for $q \in [1, \infty)$ and for growth functions of order $p \in (1/2,1)$. 

\begin{thm}\label{gen_mult_p<1}
Let $\Psi$ be a growth function of order $p \in (1/2,1)$ and let $q \in [1, \infty)$. 

Let $\lambda = \{ \lambda_n \}_{n=0}^{\infty}$ be a sequence of complex numbers. The following are equivalent:
\begin{enumerate} 
\item  $\lambda$ is a multiplier from $H^{\Psi}(\D)$ to $\ell^q (\N_0)  $;
\item $\lambda$ satisfies the condition
\[
A_{\lambda, \Psi, p, q} := \sup_{N \in \N} \frac{1}{N} \left( \sum_{n=1}^N {\Psi^{-1}(n)}^q  |\lambda_n|^q \right)^{1/q} < \infty ;
\]
\item $\lambda$ satisfies the condition
\[
\widetilde{A}_{\lambda, \Psi, p,  q} := \sup_{m \in \N_0} \frac{\Psi^{-1}(2^m)} {2^m} \left( \sum_{n=2^m}^{2^{m+1}-1} |\lambda_n|^q \right)^{1/q} < \infty .
\]
\end{enumerate}
If $\lambda \in \mathcal{M}_{H^{\Psi}(\D) \rightarrow \ell^q (\N_0)}$ then
\[
\norm{ \lambda }_{\mathcal{M}_{H^{\Psi}(\D) \rightarrow \ell^q (\N_0)}} \approx A_{\lambda, \Psi, p, q} \approx \widetilde{A}_{\lambda, \Psi, p, q} ,
\]
where the implied constants depend only on $\Psi$, $p$, $q$ and not on $\lambda$. 
\end{thm}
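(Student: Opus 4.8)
The plan is to prove the chain of equivalences $(2)\Leftrightarrow(3)$ first, which is a purely sequential/arithmetic statement, and then to establish $(1)\Leftrightarrow(2)$ (equivalently $(1)\Leftrightarrow(3)$) with matching quasi-norm bounds. For $(2)\Leftrightarrow(3)$: since $\Psi$ has order $p$, the inverse $\Psi^{-1}$ is essentially a power-type function, so $\Psi^{-1}(n)\approx\Psi^{-1}(2^m)$ whenever $2^m\le n<2^{m+1}$, up to constants depending only on $\Psi,p$. Splitting the sum $\sum_{n=1}^N$ dyadically and using that the partial sums telescope geometrically because $\Psi^{-1}(2^m)/2^m$ behaves like a geometric sequence in $m$ (here the lower bound $p>1/2$, hence $\Psi^{-1}(t)/t\to 0$ at a controlled rate, guarantees summability of the relevant geometric tails), one gets $A_{\lambda,\Psi,p,q}\approx\widetilde A_{\lambda,\Psi,p,q}$. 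I would isolate the needed growth/doubling estimates for $\Psi^{-1}$ as a preliminary lemma (presumably already available from the section on growth functions).

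For the implication $(3)\Rightarrow(1)$ (sufficiency), the strategy is to test membership against the dyadic block structure of $H^\Psi$. Write $F=\sum_m \Delta_m F$ where $\Delta_m F$ is the Littlewood--Paley block supported on frequencies $[2^m,2^{m+1})$. One bounds $\|\{\lambda_n f_n\}\|_{\ell^q}^q = \sum_m \sum_{n=2^m}^{2^{m+1}-1}|\lambda_n|^q|f_n|^q$; inside each block, since $q\le\infty$ one has $\sum_n |\lambda_n|^q|f_n|^q\le (\sup_n|f_n|)^q\sum_n|\lambda_n|^q$, and $\sup_{2^m\le n<2^{m+1}}|f_n|\lesssim \|\Delta_m F\|_{H^1}$ (coefficient bound for $H^1$), which by the $p<1$ theory embeds into an $H^\Psi$-type estimate: $\sum_m (2^m)^{?}\|\Delta_m F\|^{?}\lesssim \|F\|_{H^\Psi}$. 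The role of hypothesis $(3)$ is precisely that the weights $\Psi^{-1}(2^m)/2^m$ are exactly what is needed to convert the block coefficient bounds into the $H^\Psi$ quasi-norm. I expect this to require a Hardy--Orlicz analogue of the classical $H^p$ coefficient estimate $|f_n|\lesssim (n+1)^{1/p-1}\|F\|_{H^p}$, namely $|f_n|\lesssim \Psi^{-1}(n+1)/(n+1)\cdot\|F\|_{H^\Psi}$ up to constants — this should either be quoted from an earlier section or proved via a subharmonicity/mean-value argument adapted to growth functions.

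For the converse $(1)\Rightarrow(3)$ (necessity), the standard device is to construct, for each fixed $m$, a test function $F^{(m)}\in H^\Psi(\D)$ that is "concentrated" at frequency scale $2^m$, has controlled $H^\Psi$-norm (of size roughly $2^m/\Psi^{-1}(2^m)$ after normalisation), and whose Taylor coefficients on the block $[2^m,2^{m+1})$ are chosen to extract $\big(\sum_{n}|\lambda_n|^q\big)^{1/q}$ — e.g. via a duality/extremal choice of phases together with a Rudin--Shapiro or Riesz-product type polynomial, or a suitable power of a conformal-map/Poisson-kernel building block raised to compensate the Orlicz growth. Plugging $F^{(m)}$ into \eqref{mult_def} and optimising yields $\widetilde A_{\lambda,\Psi,p,q}\lesssim \|\lambda\|_{\mathcal M}$. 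The main obstacle I anticipate is this construction: one needs a single family of analytic test functions whose $H^\Psi$-norms are sharp (not merely bounded above) and whose block coefficients realise the full $\ell^q$-dual pairing, and getting the growth-function normalisation $\Psi^{-1}(2^m)/2^m$ to come out exactly — rather than with a loss — is where the condition $p>1/2$ and the precise properties of growth functions will have to be used carefully. The endpoint nature of $q\in[1,\infty)$ (especially $q=1$) may also force a more delicate argument than a crude Khinchin/randomisation bound.
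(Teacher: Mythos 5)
Your outline of $(2)\Leftrightarrow(3)$ matches the paper's (quasi-constancy of $\Psi^{-1}$ on dyadic intervals plus geometric summation). But for the substantive implications $(1)\Leftrightarrow(2)$ your proposal takes a genuinely different route, and in both directions it leaves the hardest piece unproved. The paper first reduces all $q\in[1,\infty)$ to $q=1$ by the elementary observation that $\lambda\in\mathcal{M}_{H^\Psi\to\ell^q}$ iff $\{\alpha_n\lambda_n\}\in\mathcal{M}_{H^\Psi\to\ell^1}$ for every $\alpha\in\ell^{q'}$, and then invokes a structural duality lemma (Proposition~\ref{equiv_prop}, proved via the $H^\Psi$-atomic decomposition): $\lambda$ is an $H^\Psi\to\ell^1$ multiplier if, and only if, the signed polynomials $\Gamma_{\epsilon,\lambda}^{(M)}(z)=\sum_{n\le M}\epsilon_n\lambda_n z^n$ lie in $BMO(\rho_\Psi)(\D)$ with a bound uniform in $\epsilon$ and $M$. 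This converts the entire theorem into elementary oscillation estimates for trigonometric polynomials, which is what makes the growth-function normalisations come out sharp.

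Concretely, for $(2)\Rightarrow(1)$ the paper splits $\Gamma_{\epsilon,\lambda}^{(M)}$ into frequencies below and above $|I|^{-1}$ for each arc $I$ and bounds the low part by the oscillation estimate from the smoothness of the kernel and the high part by an $L^2$ estimate using the unconditionality Lemma~\ref{lem:uncond1}. Your direct Littlewood--Paley block estimate instead relies on the embedding $\sum_m (2^m/\Psi^{-1}(2^m))\,\norm{\Delta_m F}_{L^1}\lesssim\norm{F}_{H^\Psi}$, which you don't prove (the question marks in your ``$\sum_m(2^m)^?\norm{\Delta_m F}^?\lesssim\norm{F}_{H^\Psi}$'' are exactly where the argument would need to be completed); a naive term-by-term bound $\norm{\Delta_m F}_{L^1}\lesssim(\Psi^{-1}(2^m)/2^m)\norm{F}_{H^\Psi}$ gives each summand $\lesssim\norm{F}_{H^\Psi}$ but no summability in $m$. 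This Besov-type embedding for general growth functions would be an additional lemma to establish and is not available off the shelf in the paper. For $(1)\Rightarrow(2)$ the paper entirely sidesteps the extremal test-function construction that you correctly flag as your main obstacle: by Proposition~\ref{equiv_prop} the polynomial $b_N(z)=\sum_{n\le N}|\lambda_n|z^n$ must lie in $BMO(\rho_\Psi)$ with norm $O(1)$, and a single mean-oscillation lower bound on the interval $I=[0,(4N)^{-1}]$ using $\langle\sin(2\pi n\cdot)\rangle_I-\sin(2\pi n\theta)\approx n/N$ gives $\sum_{n\le N}n|\lambda_n|\lesssim N^2/\Psi^{-1}(4N)$, after which the monotonicity of $t\mapsto t/\Psi^{-1}(t)$ delivers condition (2). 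Your route would require building a family of $H^\Psi$ test functions whose quasi-norms are sharp up to constants and whose block coefficients realise the full $\ell^{q'}$-pairing; this is feasible in principle (it is the classical approach for $H^p$, $p<1$) but is more delicate than the duality detour, and you do not carry it out. The key idea your proposal misses, and what makes the paper's proof clean, is Proposition~\ref{equiv_prop}: it turns both halves of the multiplier characterisation into $BMO(\rho_\Psi)$ estimates about a polynomial built directly from $\lambda$.
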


Theorem \ref{gen_mult_p<1} includes Hardy--Orlicz spaces with growth functions of the form
\begin{equation}\label{Psi_r,p_def}
\Psi_{r,p} (t) := \frac{t^p}{\log^r (t+e)}, \quad t \geq 0, 
\end{equation}
for $p \in (1/2,1) $ and $r \in [0, \infty) $. In the Hardy space case, i.e. for $r=0$; $H^{\Psi_{0,p}} (\D) = H^p (\D)$, Theorem \ref{gen_mult_p<1} recovers the characterisation of Duren and Shields for the class $\mathcal{M}_{H^p(\D) \rightarrow \ell^q (\N_0)}$ when $p \in (1/2,1)$ and $q \in [1, \infty)$. 

Our next result extends Theorem \ref{gen_mult_p<1} to growth functions of order $p=1$ in the case where $q \in [1,2]$. More specifically, the following result holds true.

\begin{thm}\label{gen_mult_q_leq2}
Let $\Psi$ be a growth function of order $p \in (1/2,1]$.

Let $\lambda = \{ \lambda_n \}_{n=0}^{\infty}$ be a sequence of complex numbers and let $q \in [1,2]$.
\begin{enumerate}[(a)]
\item  If $q \in [1,2)$,  then the following are equivalent:
\begin{enumerate}[(i)]
\item $\lambda$  is a multiplier from $H^{\Psi}(\D)$ to $\ell^q (\N_0)$;
\item $\lambda$ satisfies the condition
\[
C_{\lambda, \Psi, p, q} := \sup_{N \in \N} \frac{ \Psi^{-1}(N)}{N} 
\left(\sum_{k=1}^\infty \left(\sum_{n= k N}^{(k+1)N -1} |\lambda_n|^q \right)^\frac{2}{(2-q)q} \right)^{(2-q)/2} < \infty.
\]
\end{enumerate}
If $\lambda \in \mathcal{M}_{H^{\Psi}(\D) \rightarrow \ell^q (\N_0)}$ then
\[
\norm{ \lambda }_{\mathcal{M}_{H^{\Psi}(\D) \rightarrow \ell^q (\N_0)}} \approx C_{\lambda, \Psi, p, q} ,
\]
where the implied constants depend only on $\Psi$, $p$, $q$ and not on $\lambda$. 
\item  If $q=2$, the following are equivalent:
\begin{enumerate}[(i)]
\item $\lambda$ is a multiplier from $H^{\Psi}(\D)$ to $\ell^2 (\N_0)$;
\item $\lambda$ satisfies the condition
\[
B_{\lambda, \Psi, p, 2}:= \sup_{N \in \N} \frac{ \Psi^{-1}(N)}{N} \sup_{k \in \N} \left(\sum_{n= k N}^{(k+1)N -1} |\lambda_n|^2 \right)^{1/2}  < \infty;
\]
\item $\lambda$ satisfies the condition
\[
\widetilde{B}_{\lambda, \Psi, p, 2} := \sup_{m \in \N} \frac{ \Psi^{-1}(2^m)  }{2^m } \left(\sum_{n=2^m}^{2^{m+1} -1} |\lambda_n|^2 \right)^{1/2} < \infty.
\]
\end{enumerate}
If $\lambda \in \mathcal{M}_{H^{\Psi}(\D) \rightarrow \ell^2 (\N_0)}$ then
\[
\norm{ \lambda }_{\mathcal{M}_{H^{\Psi}(\D) \rightarrow \ell^2 (\N_0)}} \approx B_{\lambda, \Psi, p, 2} \approx \widetilde{B}_{\lambda, \Psi, p, 2},
\]
where the implied constants depend only on $\Psi$ and $p$, but not on $\lambda$.
\end{enumerate}
\end{thm}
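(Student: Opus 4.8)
\emph{Proof strategy.} The plan is to treat each coefficient condition separately as "sufficient" and as "necessary" for membership in $\mathcal M_{H^\Psi(\D)\rightarrow\ell^q(\N_0)}$, and to reduce to Theorem~\ref{gen_mult_p<1} as far as possible. Three elementary sequence-space facts carry out the reduction. First, when $q=2$ the quantity $\widetilde A_{\lambda,\Psi,p,2}$ of Theorem~\ref{gen_mult_p<1} \emph{is} $\widetilde B_{\lambda,\Psi,p,2}$. Second, when $p\in(1/2,1)$ and $q\in[1,2)$ one has $\widetilde A_{\lambda,\Psi,p,q}\approx C_{\lambda,\Psi,p,q}$, with constants depending only on $\Psi,p,q$: the bound $\widetilde A\lesssim C$ comes from taking $N=2^m$ in the definition of $C$, and $C\lesssim\widetilde A$ from a geometric summation over dyadic scales, since finiteness of $\widetilde A$ forces $\|\lambda\|_{\ell^q([2^m,2^{m+1}))}$ to decay like a positive power of $2^{-m}$ when $p<1$, whence, after grouping the blocks $[kN,(k+1)N)$ by the dyadic block containing them,
\[
\sum_{k\ge 1}\Bigl(\sum_{n=kN}^{(k+1)N-1}|\lambda_n|^q\Bigr)^{\frac{2}{(2-q)q}}\lesssim \widetilde A_{\lambda,\Psi,p,q}^{\frac{2}{2-q}}\Bigl(\frac{N}{\Psi^{-1}(N)}\Bigr)^{\frac{2}{2-q}}\qquad\text{uniformly in }N .
\]
Third, $B_{\lambda,\Psi,p,2}\approx\widetilde B_{\lambda,\Psi,p,2}$ for every $p\in(1/2,1]$: the inequality $\widetilde B\le B$ is trivial, while for the converse a block $[kN,(k+1)N)$ either meets $O(1)$ dyadic blocks or lies inside a single dyadic block $[2^m,2^{m+1})$ with $2^m\approx kN$, in which case $\frac{\Psi^{-1}(N)}{N}\cdot\frac{2^m}{\Psi^{-1}(2^m)}\lesssim 1$ (this is $\approx k^{1-1/p}$ when $\Psi(t)=t^p$) because $\Psi$ has order $p\le1$. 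It therefore suffices to prove the theorem when $p=1$.

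\emph{The case $p=1$, $q=2$ (part (b)).} Here I would argue directly. The basic tool is a generalised Bernstein--Nikol'skii inequality for $H^\Psi$: if $G(z)=\sum_n g_n z^n$ is holomorphic with $g_n$ supported in an interval of length at most $N$, then $\sup_n|g_n|\le\|G\|_{L^1(\T)}\lesssim\frac{\Psi^{-1}(N)}{N}\|G\|_{H^\Psi(\D)}$, classical for $\Psi(t)=t^p$ and, for $\Psi$ of order $1$, a consequence of the elementary bound $\|G\|_{L^\infty(\T)}\lesssim N\|G\|_{L^1(\T)}$ together with the quasi-monotonicity of $t\mapsto t/\Psi(t)$. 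Writing $F=\sum_{j\ge 0}\Delta_j F$ with $\Delta_j F=\sum_{2^j\le n<2^{j+1}}f_n z^n$, this gives
\[
\sum_{n\ge 1}|\lambda_n f_n|^2=\sum_{j\ge 0}\sum_{2^j\le n<2^{j+1}}|\lambda_n|^2|f_n|^2\le\sum_{j\ge 0}\Bigl(\sup_{2^j\le n<2^{j+1}}|f_n|^2\Bigr)\sum_{2^j\le n<2^{j+1}}|\lambda_n|^2\lesssim\widetilde B_{\lambda,\Psi,1,2}^2\sum_{j\ge 0}\|\Delta_j F\|_{H^\Psi(\D)}^2 ,
\]
and one concludes with the Minkowski-type inequality $\bigl(\sum_{j}\|\Delta_j F\|_{H^\Psi}^2\bigr)^{1/2}\le\bigl\|\bigl(\sum_{j}|\Delta_j F|^2\bigr)^{1/2}\bigr\|_{H^\Psi}$ (valid since $\Psi$ has order $\le2$) and the $H^\Psi$ Littlewood--Paley square-function estimate $\bigl\|\bigl(\sum_{j}|\Delta_j F|^2\bigr)^{1/2}\bigr\|_{H^\Psi}\lesssim\|F\|_{H^\Psi}$; the finitely many low-frequency terms require only a routine modification. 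For the converse, to see that a multiplier forces $\widetilde B$ (equivalently $B$), I would test \eqref{mult_def} against de la Vallée Poussin-type polynomials: for a block $J=[kN,(k+1)N)$ take a holomorphic polynomial $W_J$ whose Taylor coefficients equal $1$ on $J$, with $\|W_J\|_{L^1(\T)}\lesssim1$; a direct estimate of $\int_{[0,1)}\Psi(\alpha^{-1}|W_J|)$ using the $\sim N$-height, $\sim 1/N$-width, $\sim(N\theta^2)^{-1}$-tail profile of $W_J$ and the order of $\Psi$ (this is where $p>1/2$ enters) yields $\|W_J\|_{H^\Psi(\D)}\approx N/\Psi^{-1}(N)$, so that $F=\frac{\Psi^{-1}(N)}{N}W_J$ is an admissible competitor giving $\frac{\Psi^{-1}(N)}{N}\|\lambda\|_{\ell^2(J)}\lesssim\|\lambda\|_{\mathcal M_{H^\Psi(\D)\rightarrow\ell^2(\N_0)}}$.

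\emph{The case $p=1$, $q\in[1,2)$ (part (a)).} The cleanest route is duality. Since $\ell^{q'}(\N_0)$, $q'\in(2,\infty]$, is the dual of $\ell^q(\N_0)$ and the multiplier operator acts on the dense set of polynomials, one has $\|\lambda\|_{\mathcal M_{H^\Psi(\D)\rightarrow\ell^q(\N_0)}}=\bigl\|\,\{d_n\}\mapsto\sum_n\lambda_n d_n z^n\,\bigr\|_{\ell^{q'}(\N_0)\to(H^\Psi(\D))^\ast}$. One then invokes the mean-oscillation description of $(H^\Psi(\D))^\ast$ — a $BMO$-type (logarithmic mean oscillation) space, which for $\Psi(t)=t/\log(e+t)$ is $LMO(\D)$ — with the weight $|I|\Psi^{-1}(1/|I|)=\Psi^{-1}(N)/N$ attached to arcs $I$ of length $\approx 1/N$. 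On such an arc the exponentials $e^{in\theta}$, after removing their means, are mutually coherent on frequency windows of length $\approx N$ and essentially orthogonal across them; consequently the $\ell^{q'}\!\to(H^\Psi)^\ast$ operator norm reduces, scale by scale, to the norm of the natural map $\ell^{q'}\to\ell^2$ built from the blocks $[kN,(k+1)N)$, whose value is computed by a Hölder optimisation within each block followed by an $\ell^{q'}$-to-$\ell^2$ comparison across blocks. This produces the weight $\Psi^{-1}(N)/N$ and the block-summation exponent $\frac{2}{(2-q)q}$; taking the supremum over $N$ — which homogenises the exponents arising at the various scales — yields precisely $C_{\lambda,\Psi,p,q}$. (Alternatively, one may argue directly by adapting the proof of Sledd--Stegenga of Fefferman's theorem.) The endpoints $q=1$ and $q\to2$ of this computation recover, respectively, Fefferman's condition \eqref{H1_l1} and the Paley--Hardy--Littlewood condition \eqref{H1_l2}, explaining the unifying character of the statement.

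\emph{Main obstacle.} The bulk of the work is at the endpoint $p=1$. For $p\in(1/2,1)$ everything is classical or already in Theorem~\ref{gen_mult_p<1}, and the dual space is a Lipschitz space with a transparent coefficient description; at $p=1$ one must instead work with $BMO$-Orlicz (e.g.\ $LMO$) dual spaces, establish their sharp Littlewood--Paley and mean-oscillation characterisations carrying the correct $\Psi^{-1}$-weights, and prove the generalised Bernstein inequality with the precise logarithmic gain (a factor $\sim(\log N)^{r}$ for $\Psi=\Psi_{r,1}$). Tracking these logarithmic factors throughout, and — in part (a) — pinning down the exponent $\frac{2}{(2-q)q}$ so that the sufficiency and necessity bounds match, is the hard part; the remaining ingredients (Littlewood--Paley and Minkowski inequalities for $H^\Psi$, the de la Vallée Poussin test functions, and the sequence-space equivalences of the first paragraph) are routine once the relevant properties of growth functions of order $p$ are in place.
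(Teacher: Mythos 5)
Your overall strategy---reduce to $p=1$, then use duality into the $BMO(\rho_{\Psi})$-type dual space---captures the right high-level picture, but there is a genuine gap at the heart of the argument, and some of the auxiliary steps diverge substantially from what the paper actually does.

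\textbf{The central gap.} In part (a) you write that on an arc $I$ of length $\approx 1/N$ ``the exponentials $e^{in\theta}$, after removing their means, are mutually coherent on frequency windows of length $\approx N$ and essentially orthogonal across them,'' and from this you conclude the problem ``reduces, scale by scale, to the norm of the natural map $\ell^{q'}\to\ell^2$ built from the blocks $[kN,(k+1)N)$.'' This claim is \emph{precisely} the substance of Lemma~\ref{lem:uncond1}, Part~\eqref{arithmetic}, which the authors single out as a new result not available in the literature: for an arc $I$ with $|I|=2^{-n}$ and any analytic polynomial $f$,
\[
\int_I |f|^2 \lesssim |I|\sum_{k\ge 0}\Bigl(\sum_{j=k2^n}^{(k+1)2^n-1}|\widehat f(j)|\Bigr)^2.
\]
Proving this requires controlling the off-diagonal interactions between distinct arithmetic blocks of length $N\approx|I|^{-1}$ (the paper does this via the boundedness of the discrete Hilbert transform on $\ell^2$ and a Schur test for the matrix $(k_1-k_2)^{-2}$). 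Treating the windows as ``essentially orthogonal'' is exactly what needs proof; the blocks $[kN,(k+1)N)$ are \emph{not} aligned with any Littlewood--Paley dyadic structure, so the usual square-function machinery does not apply. Without this lemma your argument does not close, and in particular the exponent $\frac{2}{(2-q)q}$ cannot be pinned down.

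\textbf{The paper's route, for comparison.} The paper does not reduce to $p=1$ but argues uniformly for $p\in(1/2,1]$. It first reformulates the multiplier property by duality (Proposition~\ref{equiv_prop}): $\lambda\in\mathcal M_{H^\Psi\to\ell^1}$ iff the randomised-sign polynomials $\Gamma_{\epsilon,\lambda}^{(M)}$ are uniformly bounded in $BMO(\rho_\Psi)(\D)$. For \emph{sufficiency} it splits $\Gamma_{\epsilon,\lambda}^{(M)}=K_{\eps,\lambda}+N_{\eps,\lambda}$ into low and high frequencies relative to $|I|^{-1}$, treating the low part by a mean-value estimate and the high part by Cauchy--Schwarz plus Lemma~\ref{lem:uncond1}\eqref{arithmetic}. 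For \emph{necessity}, it inserts random signs $\eps_k$ on the blocks $b_k$, integrates over $\{-1,1\}^{\N}$, applies Khintchine's inequality, and then obtains the block lower bounds
$\int_I|b_k-\langle b_k\rangle_I|\gtrsim |I|\sum_{n\in[kN,(k+1)N)}|\lambda_n|$
by the sine-oscillation argument from \eqref{eq:sinestimate}, with a separate treatment of the finitely many $k\le 3$. Your sketch for part~(a) mentions neither Khintchine nor a concrete lower bound mechanism and does not distinguish sufficiency from necessity.

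\textbf{Part (b).} Your sufficiency argument via Bernstein plus the vector-valued inequality
$(\sum_j\|\Delta_j F\|_{H^\Psi}^2)^{1/2}\le\|(\sum_j|\Delta_j F|^2)^{1/2}\|_{L^\Psi}\lesssim\|F\|_{H^\Psi}$
relies on a Littlewood--Paley square-function characterisation of $H^\Psi$ for general growth functions, together with a Minkowski inequality for the quasi-normed $L^\Psi$. Neither is established in the paper: the only square-function estimate proved is Theorem~\ref{LP_Psi_r_predual_Thm}, which is the $L^1$-valued, logarithmically weighted statement for $\Psi=\Psi_{r,1}$, not a Luxemburg-norm estimate for arbitrary $\Psi$. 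If you want to invoke these facts you must cite or prove them, and the Minkowski step in a concave-$\Psi$ Orlicz quasi-norm is not routine. The paper instead deduces part~(b) from the same high--low decomposition used for part~(a), taking $q\to 2$ in the block index; the required ingredient is Lemma~\ref{lem:uncond1}\eqref{geometric}, which follows from \eqref{arithmetic}. Your de la Vall\'ee Poussin test-function computation for the necessity in part~(b) is on the right lines and the estimate $\|W_J\|_{H^\Psi}\approx N/\Psi^{-1}(N)$ does check out for model growth functions, but it is only a sketch.

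\textbf{The reduction to $p=1$.} This is a genuinely different organisation from the paper, and it is not as clean as you suggest. Your bound $C_{\lambda,\Psi,p,q}\lesssim\widetilde A_{\lambda,\Psi,p,q}$ ``from geometric summation over dyadic scales'' requires $2^m/\Psi^{-1}(2^m)$ to decay \emph{geometrically}, which in general requires $\Psi$ to be of \emph{upper} type $p<1$, not merely of lower type $p$ as the definition of ``growth function of order $p$'' states. (For $\Psi(t)=t/\log(e+t)$, which is of lower type $p$ for every $p<1$, one has $2^m/\Psi^{-1}(2^m)\approx 1/m$, which is only polynomial decay.) If you keep this reduction you should make the needed upper-type hypothesis explicit. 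The paper avoids the issue by treating all $p\in(1/2,1]$ in one pass.

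In short: the sequence-space equivalences $\widetilde B\approx B$ and the dVP test are fine; the Bernstein-plus-square-function alternative for part~(b) would need independent justification not in the paper; and part~(a) --- the hardest and most novel part --- is not proved, because the ``essential orthogonality'' you assume is Lemma~\ref{lem:uncond1} in disguise.
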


Theorem \ref{gen_mult_q_leq2} includes Hardy--Orlicz spaces with $\Psi_{r,p}$ as in \eqref{Psi_r,p_def}, with $r \in [0, \infty) $ and $p \in (1/2,1]$, endpoint included. Note that if $r=0$ and $p=1$, then $H^{\Psi_{0,1}} (\D)= H^1 (\D)$, and condition (3) in part  (b) of Theorem \ref{gen_mult_q_leq2} becomes \eqref{H1_l2} that is, Theorem \ref{gen_mult_q_leq2} recovers the aforementioned characterisation of Hardy--Littlewood and Stein--Zygmund for the class $\mathcal{M}_{H^1(\D) \rightarrow \ell^2 (\N_0)}$. 

Note that if $q \in [1,2)$ and $\Psi$ is a growth function of order $p \in (1/2,1)$, then conditions (2) and (3) in Theorem \ref{gen_mult_p<1} and condition (ii) in part (a) of Theorem \ref{gen_mult_q_leq2} are all equivalent. Similarly, for $q=2$, conditions (2) and (3) in Theorem \ref{gen_mult_p<1} and conditions (ii) and (iii) in part (b) of Theorem \ref{gen_mult_q_leq2} are all equivalent.

To prove Theorems \ref{gen_mult_p<1} and \ref{gen_mult_q_leq2} we use a real-variable approach that is based on duality. An important ingredient in our proofs is an unconditionality result; see Lemma \ref{lem:uncond1} below. 

\subsection*{Hardy--Littlewood and Paley-type inequalities}

If $\Psi$ is a growth function of order $p \in (1/2,1]$ then the sequence $\lambda= \{ \lambda_n \}_{n=0}^{\infty} $ given by
\[
\lambda_n := 
\begin{cases}
1/\Psi^{-1} (n), \quad & \text{if } n \in \N; \\
0, \quad & \text{if } n = 0
\end{cases}
\]
satisfies condition (2) in Theorem \ref{gen_mult_q_leq2}. We thus obtain the following corollary.

\begin{corollary}\label{HL_complex}
Let $\Psi$ be a growth function of order $p \in (1/2,1]$. 
Then there exists a  constant $M_{\Psi, p} >0 $ such that
\[
\sum_{n=1}^{\infty} \frac{ \abs{f_n}}{ \Psi^{-1} (n) } \leq M_{\Psi, p} \norm{ F }_{H^{\Psi} (\D)} 
\]
for all $F(z) = \sum_{n \geq 0} f_n z^n$ in $H^{\Psi} (\D)$. 
\end{corollary}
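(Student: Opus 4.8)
The plan is to deduce the inequality from Theorem~\ref{gen_mult_q_leq2} in the case $q=1$. Consider the sequence $\lambda=\set{\lambda_n}_{n=0}^{\infty}$ defined by $\lambda_0:=0$ and $\lambda_n:=1/\Psi^{-1}(n)$ for $n\in\N$. The assertion of the corollary is exactly that $\lambda$ belongs to $\mathcal{M}_{H^{\Psi}(\D) \rightarrow \ell^1 (\N_0)}$: indeed, for any $F(z)=\sum_{n\ge0}f_nz^n$ in $H^{\Psi}(\D)$ we have
\[
\sum_{n=1}^{\infty}\frac{\abs{f_n}}{\Psi^{-1}(n)}=\norm{\set{\lambda_nf_n}_{n=0}^{\infty}}_{\ell^1 (\N_0)}\le\norm{\lambda}_{\mathcal{M}_{H^{\Psi}(\D) \rightarrow \ell^1 (\N_0)}}\norm{F}_{H^{\Psi}(\D)},
\]
so one may take $M_{\Psi,p}=\norm{\lambda}_{\mathcal{M}_{H^{\Psi}(\D) \rightarrow \ell^1 (\N_0)}}$. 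Since $q=1\in[1,2)$, part~(a) of Theorem~\ref{gen_mult_q_leq2} applies, and because for $q=1$ the exponents in that statement simplify to $\tfrac{2}{(2-q)q}=2$ and $\tfrac{2-q}{2}=\tfrac12$, it suffices to check that
\[
C_{\lambda,\Psi,p,1}=\sup_{N\in\N}\frac{\Psi^{-1}(N)}{N}\left(\sum_{k=1}^{\infty}\left(\sum_{n=kN}^{(k+1)N-1}\frac{1}{\Psi^{-1}(n)}\right)^{2}\right)^{1/2}<\infty.
\]

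To verify this, I would first use that $\Psi^{-1}$ is non-decreasing to bound each inner block by $\sum_{n=kN}^{(k+1)N-1}\frac{1}{\Psi^{-1}(n)}\le\frac{N}{\Psi^{-1}(kN)}$. The key structural input is that a growth function of order $p\le1$ is of upper type~$1$, i.e.\ $t\mapsto\Psi(t)/t$ is (essentially) non-increasing (a property recorded among the defining ones in Definition~\ref{main_def}); passing to inverses, this gives $\Psi^{-1}(st)\gtrsim s\,\Psi^{-1}(t)$ for $s\ge1$, and in particular $\Psi^{-1}(kN)\gtrsim k\,\Psi^{-1}(N)$ for all $k,N\in\N$, the finitely many values of $k$ below the implied constant being harmless since $\Psi^{-1}$ is non-decreasing. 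Combining the two estimates yields $\sum_{n=kN}^{(k+1)N-1}\frac{1}{\Psi^{-1}(n)}\lesssim\frac{N}{k\,\Psi^{-1}(N)}$, whence
\[
\left(\sum_{k=1}^{\infty}\left(\sum_{n=kN}^{(k+1)N-1}\frac{1}{\Psi^{-1}(n)}\right)^{2}\right)^{1/2}\lesssim\frac{N}{\Psi^{-1}(N)}\left(\sum_{k=1}^{\infty}\frac{1}{k^{2}}\right)^{1/2}\lesssim\frac{N}{\Psi^{-1}(N)}.
\]
Multiplying by $\Psi^{-1}(N)/N$ shows $C_{\lambda,\Psi,p,1}\lesssim1$ uniformly in $N$, and the corollary follows.

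When $p\in(1/2,1)$ there is a still shorter argument: for the same $\lambda$ and $q=1$, condition~(2) of Theorem~\ref{gen_mult_p<1} reads $A_{\lambda,\Psi,p,1}=\sup_{N\in\N}\frac1N\sum_{n=1}^{N}\Psi^{-1}(n)\abs{\lambda_n}=\sup_{N\in\N}\frac1N\sum_{n=1}^{N}1=1$, so $\lambda$ is a multiplier directly by that theorem. Thus the only point that genuinely requires a little care, and in particular the only part that does work at the endpoint $p=1$, is extracting the estimate $\Psi^{-1}(kN)\gtrsim k\,\Psi^{-1}(N)$ from the order-$p$ hypothesis; once that quantitative consequence of Definition~\ref{main_def} is in hand, the remainder is an elementary summation together with an appeal to Theorem~\ref{gen_mult_q_leq2}, and I do not anticipate any further obstacle.
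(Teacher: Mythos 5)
Your proof is correct and takes the same route as the paper: the paper simply observes (in the paragraph preceding the corollary) that $\lambda_n = 1/\Psi^{-1}(n)$ satisfies condition (ii) of Theorem~\ref{gen_mult_q_leq2} (a) and then invokes that theorem, leaving the verification implicit, while you have spelled it out. One small simplification: since $t\mapsto t^{-1}\Psi(t)$ is non-increasing is a \emph{defining} property in Definition~\ref{main_def}, passing to inverses gives $\Psi^{-1}(kN)\geq k\,\Psi^{-1}(N)$ with constant exactly $1$ (the map $t\mapsto \Psi^{-1}(t)/t$ is non-decreasing), so the caveat about finitely many small $k$ is unnecessary.
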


Corollary \ref{HL_complex} implies a classical inequality due to Hardy and Littlewood \cite{H-L}:
\begin{equation}\label{H-L_p=1}
\sum_{n=1}^{\infty} \frac{ \abs{f_n}}{ n} \lesssim \norm{ F }_{H^1 (\D)} 
\end{equation}
as well as its `$H^{\log}$'-variant
\begin{equation}\label{H-L_Hlog}
\sum_{n=1}^{\infty} \frac{ \abs{f_n}}{ n \log (n+1)} \lesssim \norm{ F }_{H^{\log} (\D)} 
\end{equation}
that the present authors had obtained in \cite{BPRS}.

In the $q=2$ case, Theorem \ref{gen_mult_q_leq2} has the following corollary which may be of independent interest.

\begin{corollary}\label{cor_Paley_ineq_Psi}
Let $\Psi$  be a growth function of order $p \in (1/2,1]$. 
Then there exists a constant $B_{\Psi} >0$ such that
\[
\left( \sum_{j = 0}^{\infty} \left( \frac {2^j} {\Psi^{-1}(2^j)} \right)^2 |f_{2^j}|^2 \right)^{1/2} \leq B_{\Psi, p} \norm{ F }_{H^{\Psi} (\D)}.
\]
for all functions $F(z)=\sum_{n=0}^{\infty} f_n z^n$ in $H^{\Psi} (\D)$.
\end{corollary}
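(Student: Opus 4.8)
The plan is to deduce Corollary~\ref{cor_Paley_ineq_Psi} directly from the $q=2$ case of Theorem~\ref{gen_mult_q_leq2} by exhibiting a single lacunary multiplier. Concretely, I would set
\[
\lambda_n :=
\begin{cases}
\frac{2^j}{\Psi^{-1}(2^j)}, & n = 2^j \text{ for some } j \in \N_0, \\
0, & \text{otherwise},
\end{cases}
\]
which is well defined since, for any growth function $\Psi$, the inverse $\Psi^{-1}$ is finite, positive and nondecreasing on $(0,\infty)$. The key observation is that testing the multiplier inequality \eqref{mult_def} against this $\lambda$ with $q=2$ reproduces exactly the left-hand side of the claimed estimate: the sequence $\{\lambda_n f_n\}_{n}$ is supported on the powers of $2$, and there $\lambda_{2^j} f_{2^j} = \frac{2^j}{\Psi^{-1}(2^j)} f_{2^j}$.

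The one small computation to carry out is the verification of condition (iii) in part~(b) of Theorem~\ref{gen_mult_q_leq2} for this $\lambda$. Each dyadic block $\{ n : 2^m \le n \le 2^{m+1}-1 \}$ contains precisely one power of $2$, namely $2^m$ itself, so $\bigl( \sum_{n=2^m}^{2^{m+1}-1} |\lambda_n|^2 \bigr)^{1/2} = |\lambda_{2^m}| = 2^m / \Psi^{-1}(2^m)$, and hence
\[
\widetilde{B}_{\lambda, \Psi, p, 2} = \sup_{m} \frac{\Psi^{-1}(2^m)}{2^m} \cdot \frac{2^m}{\Psi^{-1}(2^m)} = 1 < \infty .
\]
(If the convention is that $\N$ omits $0$, the missing block $\{1\} = \{2^0\}$ contributes only a single bounded coefficient, which is absorbed into the implied constant.)

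With this in hand, Theorem~\ref{gen_mult_q_leq2}(b) yields $\lambda \in \mathcal{M}_{H^{\Psi}(\D) \rightarrow \ell^2(\N_0)}$ with $\norm{\lambda}_{\mathcal{M}_{H^{\Psi}(\D) \rightarrow \ell^2(\N_0)}} \approx \widetilde{B}_{\lambda, \Psi, p, 2} = 1$, the implied constants depending only on $\Psi$ and $p$. Feeding this back into \eqref{mult_def} gives
\[
\Bigl( \sum_{j=0}^{\infty} \Bigl( \frac{2^j}{\Psi^{-1}(2^j)} \Bigr)^2 |f_{2^j}|^2 \Bigr)^{1/2} = \norm{ \{ \lambda_n f_n \}_{n=0}^{\infty} }_{\ell^2(\N_0)} \le C \, \norm{F}_{H^{\Psi}(\D)}
\]
for all $F(z) = \sum_{n \ge 0} f_n z^n$ in $H^{\Psi}(\D)$, with $C = C(\Psi,p)$, which is the assertion with $B_{\Psi,p} := C$. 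Since all the analytic content already resides in Theorem~\ref{gen_mult_q_leq2}, I do not expect any genuine obstacle here; the only points requiring a moment's care are the bookkeeping at the initial dyadic block and the elementary fact that distinct powers of $2$ fall into distinct dyadic blocks, so that the relevant block sums collapse to a single term.
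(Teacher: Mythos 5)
Your proof is correct and follows the same route the paper intends: the Corollary is stated as an immediate consequence of Theorem~\ref{gen_mult_q_leq2}(b), obtained by applying the multiplier characterisation to the lacunary sequence supported on powers of two, exactly as you do. Your choice to verify condition~(iii) (each dyadic block $[2^m, 2^{m+1})$ contains exactly one power of two, so $\widetilde{B}_{\lambda,\Psi,p,2}=1$) is the natural bookkeeping, and your remark about the $m=0$ block is the right minor precaution.
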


Let us examine what Corollary \ref{cor_Paley_ineq_Psi} implies in some special cases. If  $\Psi_{r,1} $ is as in \eqref{Psi_r,p_def}, then its inverse function satisfies
\begin{equation}\label{inverse_r}
\Psi^{-1}_{r,1} (t) \approx_r  t \log^r (t+e), \quad t \geq 0,
\end{equation}
where the implied constants depend only on $r$. Hence, when $r=0$, then Corollary \ref{cor_Paley_ineq_Psi} recovers a classical inequality due to R. E. A. C. Paley \cite{P}:
\begin{equation}\label{Paley_ineq}
\left( \sum_{j = 0}^{\infty} |f_{2^j}|^2 \right)^{1/2} \lesssim \norm{ F }_{H^1 (\D)} 
\end{equation}
for all functions $F(z)=\sum_{n=0}^{\infty} f_n z^n$ in $H^1 (\D)$. The choice $r=1$ corresponds to the Hardy--Orlicz space
\[
H^{\log} (\D) := H^{\Psi_{1,1}} (\D) .\]
In view of \eqref{inverse_r}, Corollary \ref{cor_Paley_ineq_Psi} implies that
\begin{equation}\label{Paley_ineq_Hlog}
\left(\sum_{j = 0}^{\infty} \frac{|f_{2^j}|^2}{(j+1)^{2}} \right)^{1/2} \lesssim \norm{ F }_{H^{\log} (\D)}  
\end{equation}
for all functions $F(z)=\sum_{n=0}^{\infty} f_n z^n$ in $H^{\log}(\D)$, which can be regarded as a natural variant of Paley's inequality \eqref{Paley_ineq} for functions in $H^{\log} (\D)$.

\subsection*{Dual spaces}
It follows from the work of S. Janson \cite{J} that the spaces $H^{\Psi} (\D)$ and $BMO(\rho_{\Psi})(\D)$ can be put in duality. Recall that if \[\rho_{\Psi} (t):=t^{-1} /\Psi^{-1} (t^{-1}), \quad t > 0,\] then $BMO(\rho_{\Psi})(\D)$ is the space of holomorphic functions on $\D$ whose boundary values belong to 
\[ BMO(\rho_{\Psi})(\T):=  
\left\{ u \in L^2 (\T) : \sup_{\substack{I \subset \T:\\ I \text{ arc }}} \frac{1} { \left[ \rho_{\Psi} (|I|) \right]^2 |I|} \int_I \abs{ u (e^{2\pi i \theta}) - \langle u \rangle_I }^2 \dd \theta < \infty \right\}.
\]
Here, if $g \in L^1 (\T) $ and $J \subset \T$ is an arc, $\langle g \rangle_J$ denotes the average of $g$ over $J$. The class of functions on $\T$ that are boundary values of functions in $BMO(\rho_{\Psi})(\D)$ is denoted by $BMOA (\rho_{\Psi})(\T)$. %

If $\Psi_{0,1} (t)=t$, then one recovers the usual BMO-type spaces, whereas the case $\Psi_{1,1} (t)=t /\log(t+e)$ corresponds to LMO-type spaces. %

Note that by appealing to Corollary \ref{cor_Paley_ineq_Psi} and duality, it follows that if $\{ a_j \}_{j=0}^{\infty}$ is a sequence of complex numbers with
\begin{equation}\label{lac_BMO_Psi_cond1}
\sum_{j=0}^{\infty} \left( \frac{\Psi^{-1} (2^j) } {2^j} \right)^2  |a_j|^2  < \infty
\end{equation}
then the lacunary trigonometric series
\[
\sum_{j = 0}^{\infty} a_j e^{i 2\pi 2^j \theta}
\]
is the Fourier series of a function in $BMOA (\rho_{\Psi})(\T)$. In the case where $\Psi (t) = \Phi_{0,1} (t) = t$, condition \eqref{lac_BMO_Psi_cond1} characterises lacunary Fourier series in $BMOA (\T)$; see \S 6.3 (iii) in Chapter IV in \cite{Big_Stein}.  
 One might ask whether condition \eqref{lac_BMO_Psi_cond1} actually characterises lacunary Fourier series in $BMOA (\rho_{\Psi})(\T)$ in general. As we will see below, it {\it does not}.
 
\subsection*{Littlewood--Paley characterisations}
If the growth function is of the form $\Psi= \Psi_{r,1}$ (i.e.  as in \eqref{Psi_r,p_def} for $p=1$), implication $(3)\implies (1)$ in part (b) of Theorem \ref{gen_mult_q_leq2} can also be deduced from a Littlewood--Paley characterisation of $BMO (\rho_{\Psi_{r,1}}) (\T)$ that is of independent interest. 

To state this characterisation of $BMO(\rho_{\Psi_{r,1}})(\T)$, we need to introduce some notation first. For $n \in \N_0$, define
\[ 
\delta_n  :=
\begin{cases}
V_1  , \quad & \text{if } n=0;  \\
  V_{2^n} - V_{2^{n-1}} , \quad & \text{if } n \in \N,
\end{cases}
\]
where $V_m$ denotes the de la Vall\'ee Poussin kernel of order $m$; see \S \ref{LP_part} below. For $f \in L^1 (\T)$ and $n \in \N_0$, define the corresponding Littlewood--Paley projection by $ \Delta_n (f ) := \delta_n \ast f $.

\begin{theo}\label{LP_BMO_Psi} Let $r \geq 0$ be a given exponent and let $\Psi_{r,1}$ be as in \eqref{Psi_r,p_def} (for $p=1$).  

If $u $ is a function in $ L^2 (\T)$, then the following are equivalent:
\begin{enumerate}
\item $u$ belongs to $BMO (\rho_{\Psi_{r,1}}) (\T)$;
\item one has
\begin{equation}\label{L-P_BMO_Psi_cond}
\sup_{\substack{I \subseteq \T: \\ I \ {\rm arc}}} \frac{1} { \left[ \rho_{\Psi_{r,1}} (|I|) \right]^2 |I|} \int_I \sum_{\substack{n \in \N_0: \\ 2^n > |I|^{-1}}} \left| \Delta_n (u) (e^{i 2 \pi \theta}) \right|^2 \, \dd \theta < \infty.
\end{equation}
\end{enumerate}
Moreover, one has 
\begin{equation}\label{equiv_norm_L-P_BMO_Psi}
\norm{u}_{BMO(\rho_{\Psi_{r,1}})^+ (\T)} \approx_{r} \\ \abs{ \int_{[0,1)} u (e^{i 2 \pi \theta}) \dd \theta} + \norm{u}_{\Psi_{r,1}, \ast} ,
\end{equation}
where
\[ 
\norm{u}_{\Psi_{r,1}, \ast}:= \left( \sup_{\substack{I \subseteq \T: \\ I \ {\rm arc}}} \frac{1} { \left[ \rho_{\Psi_{r,1}} (|I|) \right]^2 |I|} \int_I \sum_{\substack{n \in \N_0: \\ 2^n > |I|^{-1}}}  \left| \Delta_n (u) (e^{i 2 \pi \theta}) \right|^2 \, \dd \theta \right)^{1/2}.
\]
\end{theo}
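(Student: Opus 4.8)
The plan is to prove the two implications separately — $(1)\Rightarrow(2)$ by a direct real-variable estimate and $(2)\Rightarrow(1)$ by duality against $H^{\Psi_{r,1}}$ — and then to read off \eqref{equiv_norm_L-P_BMO_Psi} by bookkeeping the constants. Before starting, I would record the shape of the gauge: for $0<t<1$ one has $\rho_{\Psi_{r,1}}(t)\approx_r\log^{-r}(e+t^{-1})$, so $\rho_{\Psi_{r,1}}$ is positive, increasing and slowly varying, and — this is the feature that forces the hypothesis $\Psi=\Psi_{r,1}$ — it obeys the summation bounds
\[
\sum_{k\geq0}(1+k)\,2^{-k}\,\rho_{\Psi_{r,1}}(2^{k}t)\lesssim_r\rho_{\Psi_{r,1}}(t),\qquad\sum_{k\geq0}2^{-k}\,\rho_{\Psi_{r,1}}(2^{k}t)\lesssim_r\rho_{\Psi_{r,1}}(t)\qquad(0<t<1),
\]
proved by splitting the sums at $k\approx\tfrac12\log_2 t^{-1}$; for a general growth function of order $1$ these would fail. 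I would also set up the standard apparatus of the $\delta_n$: they are real, even kernels, $\widehat{\delta_n}$ is supported in an annulus $\abs{k}\approx2^n$ for $n\geq1$, $\sum_n\widehat{\delta_n}\equiv1$, one has the size bounds $\abs{\delta_n(\theta)}\lesssim2^n(1+2^n\abs{\theta})^{-2}$ and $\abs{\delta_n'(\theta)}\lesssim2^{2n}(1+2^n\abs{\theta})^{-3}$, and the almost-reproducing identity $\delta_n=\delta_n\ast\delta_n^{\sharp}$ with $\delta_n^{\sharp}:=\delta_{n-1}+\delta_n+\delta_{n+1}$; and, for $u$ in $BMO(\rho_{\Psi_{r,1}})$, the elementary telescoping bound $\abs{\jap{u}_{2^kJ}-\jap{u}_J}\lesssim_r(1+k)\,\norm{u}_{BMO(\rho_{\Psi_{r,1}})^+(\T)}$.

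\noindent\textbf{$(1)\Rightarrow(2)$.} Fix an arc $I$ with $2^{-n_0-1}<\abs{I}\leq2^{-n_0}$ and write $u=(u-\jap{u}_{3I})\chi_{3I}+(u-\jap{u}_{3I})\chi_{\T\setminus3I}+\jap{u}_{3I}=:u_1+u_2+u_3$. The constant $u_3$ is annihilated by every $\Delta_n$ with $2^n>\abs{I}^{-1}$. For $u_1$ I would use $L^2$-orthogonality of the $\Delta_n$ together with the doubling of $\rho_{\Psi_{r,1}}$:
\[
\sum_n\int_I\abs{\Delta_n u_1}^2\leq\norm{u_1}_{L^2}^2=\int_{3I}\abs{u-\jap{u}_{3I}}^2\lesssim_r[\rho_{\Psi_{r,1}}(\abs{I})]^2\,\abs{I}\,\norm{u}_{BMO(\rho_{\Psi_{r,1}})^+(\T)}^2.
\]
For $u_2$ I would decompose $\T\setminus3I$ into the dyadic annuli $A_k=2^{k+1}(3I)\setminus2^{k}(3I)$; for $x\in I$, $y\in A_k$ and $2^n>\abs{I}^{-1}$ one has $\abs{x-y}\gtrsim2^k\abs{I}\geq2^{-n}$, hence $\abs{\delta_n(x-y)}\lesssim2^{-n}(2^k\abs{I})^{-2}$, and Cauchy--Schwarz, the definition of $BMO(\rho_{\Psi_{r,1}})$ applied to $2^{k+1}(3I)$, and the telescoping bound give $\int_{A_k}\abs{u-\jap{u}_{3I}}\lesssim_r2^k\abs{I}\,(1+k)\,\rho_{\Psi_{r,1}}(2^k\abs{I})\,\norm{u}_{BMO(\rho_{\Psi_{r,1}})^+(\T)}$; substituting this and invoking the first summation inequality yields $\abs{\Delta_n u_2(x)}\lesssim_r(2^n\abs{I})^{-1}\,\rho_{\Psi_{r,1}}(\abs{I})\,\norm{u}_{BMO(\rho_{\Psi_{r,1}})^+(\T)}$ for $x\in I$. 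Squaring, integrating over $I$ and summing $\sum_{2^n>\abs{I}^{-1}}2^{-2n}\lesssim\abs{I}^2$ gives the same bound as for $u_1$. Adding the three pieces proves $(2)$, with $\norm{u}_{\Psi_{r,1},\ast}\lesssim_r\norm{u}_{BMO(\rho_{\Psi_{r,1}})^+(\T)}$.

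\noindent\textbf{$(2)\Rightarrow(1)$.} Here I would argue by duality. By Janson's theorem (\cite{J}), $BMO(\rho_{\Psi_{r,1}})^+(\T)$ is the dual of $H^{\Psi_{r,1}}(\T)$, and analytic polynomials are dense in $H^{\Psi_{r,1}}$, so it suffices to show
\[
\abs{\int_{[0,1)}u\,\overline g\,\dd\theta}\lesssim_r\brkt{\norm{u}_{\Psi_{r,1},\ast}+\abs{\int_{[0,1)}u\,\dd\theta}}\norm{g}_{H^{\Psi_{r,1}}}
\]
for every analytic polynomial $g$; testing against monomials then identifies the functional so produced with $u$ itself. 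Using $\sum_n\widehat{\delta_n}\equiv1$ and the reproducing identity, $\int u\,\overline g=\sum_n\int\Delta_n u\,\overline{\Delta_n^{\sharp}g}$, whence by Cauchy--Schwarz
\[
\abs{\int u\,\overline g}\leq\int_{[0,1)}\brkt{\sum_n\abs{\Delta_n u}^2}^{1/2}\brkt{\sum_n\abs{\Delta_n^{\sharp}g}^2}^{1/2}\dd\theta,
\]
the second factor being comparable to the Littlewood--Paley square function $S(g)$, which, by the square-function characterisation of Hardy--Orlicz spaces, satisfies $\norm{S(g)}_{L^{\Psi_{r,1}}}\approx_r\norm{g}_{H^{\Psi_{r,1}}}$; meanwhile hypothesis $(2)$ says exactly that the Littlewood--Paley data of $u$ lie, with constant $\norm{u}_{\Psi_{r,1},\ast}$, in the ``Carleson'' tent space dual to the one carrying $S(g)$. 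The bilinear estimate is then obtained by a Calder\'on--Zygmund/stopping-time decomposition of $S(g)$ into the level sets $\{S(g)>2^j\}=\bigsqcup_i I_{j,i}$, applying $(2)$ on each maximal arc $I_{j,i}$, and summing the resulting series — here the polylogarithmic form of $\rho_{\Psi_{r,1}}$ re-enters through the second summation inequality — to conclude $\lesssim_r\norm{u}_{\Psi_{r,1},\ast}\norm{g}_{H^{\Psi_{r,1}}}$. The finitely many lowest Fourier modes of $u$, which no $\delta_n$ with $2^n>\abs{I}^{-1}$ detects, are estimated by hand and produce the term $\abs{\int u}$.

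\noindent\textbf{Main obstacle and conclusion.} The step I expect to be the crux is the last bilinear inequality in $(2)\Rightarrow(1)$: it is the Orlicz-scale analogue of the Fefferman--Stein tent-space duality $T^{\infty}_2=(T^{1}_2)^\ast$ underlying the classical case $r=0$, and making it work for $\rho_{\Psi_{r,1}}$ rather than for $\rho\equiv1$ hinges on the two summation inequalities above. This is also why $(2)\Rightarrow(1)$ cannot be done by size estimates alone: the low-frequency component of $u$ restricted to a small arc is not controlled by the square-function data, so genuine cancellation from the $H^{\Psi_{r,1}}$ side must be used. Finally, combining the constant bounds from the two implications, together with the trivial inequality $\abs{\int u}\leq\norm{u}_{BMO(\rho_{\Psi_{r,1}})^+(\T)}$, yields the norm equivalence \eqref{equiv_norm_L-P_BMO_Psi}.
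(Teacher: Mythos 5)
Your proposal takes a genuinely different route from the paper's in both directions. The paper proves $(1)\Rightarrow(2)$ via a Cotlar-type almost-orthogonality lemma of Bourgain (and so obtains the strictly stronger Theorem \ref{reverse_arbitrary} for arbitrary Whitney families), while you use a direct Frazier--Jawerth-type split $u=u_1+u_2+u_3$ around $3I$; and the paper proves $(2)\Rightarrow(1)$ by a direct Vasilyev--Tselishchev-style estimate of $\int_I\abs{u-c}$ with the carefully chosen constant $c=\sum_{2^n\leq\abs{I}^{-1}}\Delta_n u(e^{i2\pi c_I})$, with no appeal to duality. Your $(1)\Rightarrow(2)$ is essentially sound, and more elementary than Bourgain's route, at the cost of generality; small corrections: $\sum_n\int_I\abs{\Delta_n u_1}^2\leq\norm{u_1}_{L^2}^2$ should be a $\lesssim$ since $\sum_n\abs{\widehat{\delta_n}(k)}^2$ is only bounded, not identically $1$, and the two summation inequalities for $\rho_{\Psi_{r,1}}$ should be verified explicitly, which is straightforward.

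The genuine gap is in $(2)\Rightarrow(1)$. After the pointwise Cauchy--Schwarz in $n$ you are left with $\int\brkt{\sum_n\abs{\Delta_n u}^2}^{1/2}S(g)\,\dd\theta$ and propose a stopping-time decomposition on $\set{S(g)>2^j}=\bigsqcup_i I_{j,i}$ with $(2)$ applied on each $I_{j,i}$. But $(2)$ only controls $\int_{I_{j,i}}\sum_{2^n>\abs{I_{j,i}}^{-1}}\abs{\Delta_n u}^2$, whereas the integrand now carries the full sum over $n$. The unmatched blocks with $2^n\leq\abs{I_{j,i}}^{-1}$ are not ``finitely many lowest Fourier modes producing $\abs{\int u}$''; there are about $\log\abs{I_{j,i}}^{-1}$ of them, and this count grows as the stopping arcs shrink. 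Controlling them requires playing the cancellation of $g$ on the arc against the locally smooth low-frequency part of $u$, which is exactly the scale--space correlation that the premature Cauchy--Schwarz has already discarded by converting the pairing into a pointwise product against the full square function of $u$. You identify this as the crux, but the plan as written does not resolve it. To make the duality route work one should keep the pairing in the form $\sum_n\int\Delta_n u\cdot\overline{\Delta_n^\sharp g}$, organise it as a $T^1_2$--$T^\infty_2$ tent-space estimate with the weight $\rho_{\Psi_{r,1}}$, and develop or cite an Orlicz analogue of Coifman--Meyer--Stein tent-space duality — a substantial extra input that should be flagged as such. A smaller point: to conclude $u\in BMO(\rho_{\Psi_{r,1}})(\T)$, rather than merely that $P(u)\in BMOA(\rho_{\Psi_{r,1}})(\T)$, the duality argument must test against all of $H^{\Psi_{r,1}}(\T)$, hence against all trigonometric polynomials, not only the analytic ones.
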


Theorem \ref{LP_BMO_Psi} generalises a well-known Littlewood--Paley characterisation of functions in $BMO (\T)$ due to M. Frazier and B. Jawerth \cite{FJ} and appears to be, to the best of our knowledge, new. As we will see below, if $u$ belongs to $BMO (\rho_{\Psi}) (\T)$ then \eqref{L-P_BMO_Psi_cond} holds %
for more general Littlewood--Paley-type partitions; see  Theorem \ref{reverse_arbitrary}  
for a precise statement of this fact. For the $BMO$-case this is implicit in the works of J. Bourgain \cite{B_SF} and J. L. Rubio de Francia \cite{RdF}, see also M. T. Lacey  \cite{L}.

Moreover, Theorem \ref{LP_BMO_Psi} can be to used to improve condition \eqref{lac_BMO_Psi_cond1}. More specifically, Theorem \ref{LP_BMO_Psi} implies the following characterisation of lacunary Fourier series in $BMOA (\rho_{\Psi}) (\T)$ in terms of the growth of their Fourier coefficients.

\begin{corollary}\label{BMOA_Psi_char_lac}
Let $r \geq 0$ be a given exponent.

Let $\{ a_j \}_{j=0}^{\infty} $ be a sequence of complex numbers. The following are equivalent:
\begin{enumerate}
\item The lacunary trigonometric series 
\[ 
\sum_{j=0}^{\infty} a_j e^{i 2\pi 2^j \theta}, \quad \theta \in [0,1),
\]
is the Fourier series of a function in $BMOA (\rho_{\Psi_{r,1}}) (\T)$;
\item one has
\begin{equation}\label{lac_BMO_Psi_cond_char}
\sup_{N \in \N} \left\{ \left( \frac{\Psi_{r,1}^{-1}(N)}{N}\right)^2 \sum_{j \geq \log N} |a_j|^2 \right\} < \infty.
\end{equation}
\end{enumerate}
\end{corollary}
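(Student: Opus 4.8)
The plan is to deduce this characterisation directly from the Littlewood--Paley description of $BMO(\rho_{\Psi_{r,1}})(\T)$ provided by Theorem~\ref{LP_BMO_Psi}. Since every frequency occurring in $\sum_{j\geq 0} a_j e^{i 2\pi 2^j \theta}$ is positive, any such series is automatically analytic, so the lacunary series is the Fourier series of a function in $BMOA(\rho_{\Psi_{r,1}})(\T)$ precisely when it is the Fourier series of some $u \in L^2(\T)$ belonging to $BMO(\rho_{\Psi_{r,1}})(\T)$. It therefore suffices to evaluate condition~\eqref{L-P_BMO_Psi_cond} on a lacunary series and to verify that it collapses to \eqref{lac_BMO_Psi_cond_char}, together with a short argument guaranteeing $L^2$-convergence in the direction $(2)\Rightarrow(1)$.

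The crucial observation is that the Littlewood--Paley blocks of a lacunary series degenerate to single terms. Recall from \S\ref{LP_part} that the de la Vall\'ee Poussin kernel $V_m$ is a Fourier multiplier with symbol equal to $1$ on $\{|k| \leq m\}$ and to $0$ on $\{|k| \geq 2m\}$; hence, with $\delta_0 = V_1$ and $\delta_n = V_{2^n} - V_{2^{n-1}}$ for $n \geq 1$, the symbol of $\Delta_n(f) = \delta_n \ast f$ is supported in the annulus $\{2^{n-1} \leq |k| \leq 2^{n+1}\}$ and takes the value $1$ at $|k| = 2^n$. Since $2^n$ is the only power of $2$ lying strictly between $2^{n-1}$ and $2^{n+1}$, for $u(e^{i 2\pi\theta}) = \sum_{j \geq 0} a_j e^{i 2\pi 2^j\theta}$ we obtain the exact identity $\Delta_n(u)(e^{i 2\pi\theta}) = a_n e^{i 2\pi 2^n\theta}$, so that $|\Delta_n(u)(e^{i2\pi\theta})|^2 = |a_n|^2$ for every $\theta$. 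Plugging this into \eqref{L-P_BMO_Psi_cond}, the integral over an arc $I$ contributes only a factor $|I|$, which cancels, and the condition becomes
\[
\sup_{\substack{I \subseteq \T: \\ I\ \mathrm{arc}}} \frac{1}{\left[ \rho_{\Psi_{r,1}}(|I|) \right]^2} \sum_{\substack{n \in \N_0: \\ 2^n > |I|^{-1}}} |a_n|^2 < \infty.
\]

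The expression being supremised no longer depends on the position of $I$, so the supremum is effectively over $|I| \in (0,1]$. Setting $N := \lceil |I|^{-1} \rceil \in \N$, one has $|I|^{-1} \leq N \leq 2|I|^{-1}$; moreover, since $\rho_{\Psi_{r,1}}(t) = t^{-1}/\Psi_{r,1}^{-1}(t^{-1})$ and, by \eqref{inverse_r}, $\Psi_{r,1}^{-1}(t) \approx_r t \log^r(t+e)$ is slowly varying (so $\Psi_{r,1}^{-1}(|I|^{-1}) \approx_r \Psi_{r,1}^{-1}(N)$), we get $[\rho_{\Psi_{r,1}}(|I|)]^{-2} \approx_r (\Psi_{r,1}^{-1}(N)/N)^2$; likewise the index sets $\{n \in \N_0 : 2^n > |I|^{-1}\}$ and $\{j \in \N_0 : j \geq \log N\}$ differ by at most an absolute number of elements, and the finitely many extra coefficients are controlled by applying \eqref{lac_BMO_Psi_cond_char} at a comparable scale. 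Hence the displayed condition is equivalent to \eqref{lac_BMO_Psi_cond_char}. For the direction $(2)\Rightarrow(1)$, condition~\eqref{lac_BMO_Psi_cond_char} with $N=1$ yields $\sum_{j\geq 0} |a_j|^2 < \infty$, so the lacunary series converges in $L^2(\T)$ to a function in $H^2(\D)$; running the computation in reverse and invoking the implication $(2)\Rightarrow(1)$ of Theorem~\ref{LP_BMO_Psi} places this function in $BMO(\rho_{\Psi_{r,1}})(\T)$, hence in $BMOA(\rho_{\Psi_{r,1}})(\T)$.

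I do not anticipate a genuine obstacle once the Littlewood--Paley blocks are identified with single coefficients; the one slightly delicate point is the passage between arbitrary arc-lengths $|I| \in (0,1]$ and integer scales $N$, and the matching of the summation ranges $\{2^n > |I|^{-1}\}$ and $\{j \geq \log N\}$ up to boundedly many indices. Both are handled by the explicit asymptotics \eqref{inverse_r}, which shows that $\Psi_{r,1}^{-1}$ is comparable to $t\log^r(t+e)$ and therefore changes by at most a constant factor between comparable arguments, so that the error from the boundary terms is $O(1)$ uniformly. (A purely cosmetic subtlety is the index $j=0$ appearing at $N=1$, which is absorbed since $u \in L^2(\T)$.)
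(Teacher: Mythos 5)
Your overall strategy --- deducing the corollary from Theorem~\ref{LP_BMO_Psi} applied to the lacunary series --- is indeed what the paper intends. However, the identification of the Littlewood--Paley blocks is incorrect. With $\delta_n = V_{2^n}-V_{2^{n-1}}$ and $\widehat{V_m}$ affine on $[m+1,2m+1]\cap\Z$, one computes for $n\geq 1$ that
$\widehat{\delta_n}(2^n) = 1 - 2^{1-n}$ (not $1$) and $\widehat{\delta_n}(2^{n+1}) = 2^{-n}$ (not $0$); in particular $\widehat{\delta_1}(2)=0$. So $\Delta_n(u)$ for $u=\sum_j a_j e^{i2\pi 2^j\theta}$ is the \emph{two-term} polynomial
$(1-2^{1-n})\,a_n e^{i2\pi 2^n\theta}+2^{-n}a_{n+1}e^{i2\pi 2^{n+1}\theta}$, not $a_n e^{i2\pi 2^n\theta}$, and $|\Delta_n(u)(e^{i2\pi\theta})|$ is \emph{not} constant in $\theta$. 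Your step ``the integral over an arc $I$ contributes only a factor $|I|$, which cancels'' is therefore unjustified. (Also note that $2^{n+1}$ lies in the closed annulus $\{2^{n-1}\leq|k|\leq 2^{n+1}\}$ you wrote down, contradicting your ``only power of $2$ strictly between'' remark.)

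This gap matters for the direction $(1)\Rightarrow(2)$, where you need $\frac{1}{|I|}\int_I\sum_{2^n>|I|^{-1}}|\Delta_n(u)|^2 \,\dd\theta \gtrsim \sum_{n\gtrsim\log|I|^{-1}}|a_n|^2$. For a \emph{fixed} arc $I$ this can fail because of the cross term $2\,\mathrm{Re}\bigl(c_n a_n\,\overline{d_n a_{n+1}}\int_I e^{-i2\pi 2^n\theta}\,\dd\theta\bigr)$ with $c_n=1-2^{1-n}$, $d_n=2^{-n}$. A clean remedy is to average over translates of $I$: the mean over $\tau\in[0,1)$ of $\frac{1}{|I|}\int_{I+\tau}\sum_n|\Delta_n(u)|^2\,\dd\theta$ equals, by orthogonality on $\T$, $\sum_n\bigl(|c_n a_n|^2+|d_n a_{n+1}|^2\bigr)\geq\frac14\sum_{n\geq 3}|a_n|^2$, and the supremum over arcs dominates this mean; the finitely many small indices are absorbed using $u\in L^2(\T)$. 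In the direction $(2)\Rightarrow(1)$ the extra term is harmless since $d_n=2^{-n}$ is tiny: the upper bound $\frac{1}{|I|}\int_I|\Delta_n(u)|^2 \leq 2\bigl(|a_n|^2+2^{-2n}|a_{n+1}|^2\bigr)$ suffices after summing, and your $L^2$-convergence remark and the comparison of scales $|I|^{-1}\leftrightarrow N$ via the slow variation of $\Psi_{r,1}^{-1}$ are fine.
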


Note that if $\Psi$ is not the identity map, then \eqref{lac_BMO_Psi_cond1} is  strictly stronger than  \eqref{lac_BMO_Psi_cond_char}. 

To illustrate Corollary \ref{BMOA_Psi_char_lac}, consider the case of $LMOA (\T)$ i.e. take $\Psi (t) = \Psi_{1,1} (t) = t/\log(t+e)$. Then the lacunary trigonometric series 
\[ 
\sum_{j=0}^{\infty} a_j e^{i 2\pi 2^j \theta}, \quad \theta \in [0,1),
\]
with $\{ a_j \}_{j=0}^{\infty} $ being a sequence of complex numbers, 
is the Fourier series of a function in $LMOA (\T)$ if, and only if,
\[ 
\sup_{N \in \N} \left\{ \log^2 N \sum_{j \geq \log N} |a_j|^2 \right\} < \infty.
\]
For instance, the function $u$ whose Fourier series is given by
 \begin{equation}\label{LMOA_lac_ex}
u (e^{i 2\pi \theta}) \sim \sum_{j=0}^{\infty} \frac{1}{j^{3/2}} e^{i 2\pi 2^j \theta}, \quad \theta \in [0,1), 
\end{equation}
belongs to ${\rm LMOA} (\T)$. Note that the lacunary sequence in \eqref{LMOA_lac_ex} does not satisfy \eqref{lac_BMO_Psi_cond1} (for $\Psi= \Psi_{1,1}$). 
 
\subsection*{Euclidean counterparts and applications}
Corollary \ref{HL_complex}, and our other results, have natural counterparts in the Euclidean setting. To formulate them we need to recall the definition of Hardy--Orlicz spaces on $\R^d$.  Following \cite{YLK}, if $\Psi$ is a growth function, let $ \mathcal{F}_{m_{\Psi}} $ denote  the family of Schwartz functions on $\R^d$ given by
\[
\mathcal{F}_{m_{\Psi}} : = \left\{ \phi \in \mathcal{S} (\R^d) :  \sup_{x \in \R^d} \sup_{\substack{ \beta \in \N_0^d : \\ |\beta| \leq m_{\Psi} +1 }} (1+|x|)^{(m_{\Psi}+2) (d+1)} \abs{\partial^{\beta} \phi (x)} \leq 1 \right\} ,
\]
where $m_{\Psi}$ is an appropriate non-negative integer depending on $\Psi$; see p. 16 in \cite{YLK}. 
For $t > 0$, we use the notation $\phi_{t} (x) : = t^{-d} \phi (t^{-1}x)$, $x \in \R^d$.  If $f$ is a tempered distribution on $\R^d$, define the corresponding non-tangential grand maximal function of $f$ by
\[
M_{ \mathcal{F}_{m_{\Psi}}}^{\ast} [f] (x) : = \sup_{\phi \in  \mathcal{F}_{m_{\Psi}}} \sup_{\substack{ (y,t ) \in \R^d  \times [0, \infty) : \\ |x-y| <t }  } \abs{ f \ast \phi_{t} (y)}, \quad x \in \R^d . 
\]

The Hardy--Orlicz space $H^{\Psi} (\R^d)$ is defined as the class of all tempered distributions on $\R^d$ satisfying
\[
\int_{\R^d} \Psi \brkt{M_{ \mathcal{F}_{m_{\Psi}}}^{\ast} [f ] (x)} \dd x < \infty. 
\]
If $f \in H^{\Psi} (\R^d)$, we set
\[ \| f \|_{H^{\Psi} (\R^d)} : = \inf \left\{ \lambda > 0 :   \int_{\R^d}    \Psi \brkt{\lambda^{-1} M_{ \mathcal{F}_{m_{\Psi}}}^{\ast} [f ] (x)}  \dd x   \leq 1 \right\} . 
\]

The local Hardy--Orlicz space $h^{\Psi} (\R^d)$ is defined as the class of all tempered distributions $f$ on $\R^d$ that are such that $ \Psi ( M_{ \mathcal{F}_{m_{\Psi}}, \mathrm{loc}}^{\ast} [f ]  ) \in L^1 (\R^d) $, where 
\[
M_{ \mathcal{F}_{m_{\Psi}}, \mathrm{loc}}^{\ast} [f ] (x) : = \sup_{\phi \in  \mathcal{F}_{m_{\Psi}}} \sup_{\substack{ (y,t ) \in \R^d  \times [0, 1) : \\ |x-y| <t }  } \abs{ f \ast \phi_{t} (y)}, \quad x \in \R^d .
\]
Note the restriction to $\R^d\times [0,1)$ in the first supremum on the right in the definition of $M^*_{\mathcal{F}_{m_{\Psi},\mathrm{loc}}}[f]$. For $f \in h^{\Psi} (\R^d)$, one sets
\[
\| f \|_{h^{\Psi} (\R^d)} : = \inf \left\{ \lambda > 0 :   \int_{\R^d}    \Psi \brkt{\lambda^{-1} M_{ \mathcal{F}_{m_{\Psi}}, \mathrm{loc}}^{\ast} [f ] (x)}  \dd x   \leq 1 \right\} . 
\]

Our next result implies a Euclidean analogue of Corollary \ref{HL_complex}.
 
\begin{theo}\label{eucl} Let $d \in \N$ be a given dimension and let $\Psi$ be a growth function of order $p$ with $p \in ( d/ (d+1), 1]$. 

Then there exists a constant $C_{d, \Psi, p} > 0$ such that 
\[
\int_{\R^d}   \frac{\Psi \brkt{ \abs{ \xi}^d \abs{ \widehat{f} (\xi) } } } { \abs{ \xi }^{2d} }\dd\xi \leq C_{d, \Psi,p}   \int_{\R^d} \Psi \left( M_{ \mathcal{F}_{m_{\Psi}}^*} [f] (x) \right)  \dd x
\] 
for all $f \in H^{\Psi} (\R^d)$.
\end{theo}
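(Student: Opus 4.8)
The plan is to transfer the statement to the circle, where Corollary \ref{HL_complex} (or rather its underlying multiplier estimate, Theorem \ref{gen_mult_q_leq2} with $q=1$) applies, by means of a standard dilation-and-localisation argument. The key observation is that the left-hand side integral, after the change of variables $\xi \mapsto R\xi$, behaves like a sum over dyadic annuli $\{2^{j} \le |\xi| < 2^{j+1}\}$ of the quantities $\Psi\big(2^{jd}|\widehat{f}(\xi)|\big)2^{-2jd}$, integrated over each annulus; and on each annulus one may discretise the frequency variable at unit scale so that $\widehat{f}$ is essentially sampled on a lattice of $\approx 2^{jd}$ points. To make this precise I would first reduce to $f$ compactly supported in frequency (by a density/truncation argument, using that the Hardy--Orlicz maximal function only decreases under smooth frequency cutoff, up to constants; this uses $p>d/(d+1)$ so that Hardy--Orlicz atoms with enough vanishing moments are available, exactly as on p.~16 of \cite{YLK}). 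Then the main estimate is purely local in frequency.

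The heart of the matter is a one-scale inequality: for a tempered distribution $g$ whose Fourier transform is supported in the ball $\{|\xi|\le 2\}$, one should have
\[
\int_{\{1/2 \le |\xi| \le 2\}} \Psi\big(|\widehat{g}(\xi)|\big)\,\dd\xi \;\lesssim\; \Psi^{-1}(1)\cdot\!\!\sup\Big\{\text{\dots}\Big\}\;\lesssim\; \int_{\R^d}\Psi\big(M^*_{\mathcal F_{m_\Psi}}[g](x)\big)\,\dd x,
\]
where the middle term is handled by periodising $g$ on a large torus and invoking the circle-case multiplier bound. More concretely, I would cover $\R^d$ by cubes $Q$ of unit side, periodise $f$ restricted to a dilate of such a cube, identify $\widehat{f}$ sampled at integer frequencies with Taylor/Fourier coefficients $f_n$ of a holomorphic-type object, and apply the $q=1$ endpoint of Theorem \ref{gen_mult_q_leq2} (equivalently Corollary \ref{HL_complex}) in the form $\sum_n |f_n|/\Psi^{-1}(n) \lesssim \|F\|_{H^\Psi}$. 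The weight $|\xi|^{-2d}\Psi(|\xi|^d|\widehat f(\xi)|)$ is tuned precisely so that, on the annulus $|\xi|\approx 2^j$, the factor $|\xi|^{-2d}$ matches the $\big(\Psi^{-1}(N)/N\big)$-type normalisation appearing in the multiplier conditions with $N\approx 2^{jd}$, while $\Psi(|\xi|^d \cdot)$ rescales the coefficient $\widehat f$ to account for the $L^1$-normalisation on a cube of volume $\approx 2^{-jd}$. Summing the one-scale estimates over $j\in\Z$ and using the subadditivity/scaling properties of the growth function $\Psi$ (finiteness of its lower and upper Matuszewska--Orlicz indices, which is what ``order $p$'' encodes) recovers the global bound.

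I would carry out the steps in the order: (1) reduce to $\widehat f$ compactly supported and smooth, controlling the error in $H^\Psi$-(quasi)norm; (2) decompose the frequency integral dyadically and rescale each piece to unit scale; (3) on each unit-scale piece, periodise and apply the circle-case multiplier inequality with $q=1$, converting $\widehat f$-samples into the sum $\sum |f_n|/\Psi^{-1}(n)$; (4) dominate the periodised $H^\Psi$-norm of the localised piece by the local grand maximal function $M^*_{\mathcal F_{m_\Psi},\mathrm{loc}}[f]$ over the corresponding cube, and then by $M^*_{\mathcal F_{m_\Psi}}[f]$; (5) sum the local contributions, using finite overlap of the cubes and the scaling of $\Psi$, to obtain the right-hand side.

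The main obstacle, I expect, is step (4) together with the passage between the global Hardy--Orlicz norm on $\R^d$ and the periodic Hardy--Orlicz norm on a torus: one must show that localising $f$ to a cube and periodising does not inflate the relevant $H^\Psi$-type quantity beyond a constant times $\int_Q \Psi(M^*[f])$, which is delicate because $\Psi$ is only a growth function (not a power), so homogeneity fails and one must instead invoke the doubling/scaling estimates for $\Psi$ and the Hardy--Orlicz atomic decomposition. A secondary technical point is justifying the dyadic decomposition of $\widehat f$ pointwise a.e.\ — this is where the initial reduction in step (1) to a genuine (integrable) function $\widehat f$ pays off. Once these transfer estimates are in place, the summation over scales is routine given the assumption $p\in(d/(d+1),1]$.
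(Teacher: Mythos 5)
Your plan diverges from the paper's, which establishes Theorem~\ref{eucl} \emph{directly} on $\R^d$ via the atomic decomposition of $H^\Psi(\R^d)$: it proves a bound for a single atom $a_Q$,
\[
\int_{\R^d} \frac{\Psi\brkt{|\xi|^d|\widehat{a_Q}(\xi)|}}{|\xi|^{2d}}\,\dd\xi \lesssim_{d,\Psi,p}|Q|\,\Psi\brkt{\norm{a_Q}_{L^\infty(\R^d)}},
\]
by splitting the frequency integral into $|\xi|^d|Q|\le 1$ (where the vanishing moment gives $|\widehat{a_Q}(\xi)|\lesssim|\xi||Q|^{1+1/d}\norm{a_Q}_\infty$) and $|\xi|^d|Q|>1$ (where H\"older and Plancherel are combined with the quasi-monotonicity of $t\mapsto t^{-1/2}\Psi(t)$, this being where $p>1/2$ enters), and then sums using sub-additivity of $\Psi$. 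No periodisation or transfer to the circle appears.

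The more serious issue with your proposal is structural, not just technical. Theorem~\ref{eucl} is a \emph{modular} inequality: it bounds $\int\Psi(\,\cdot\,)\,\dd\xi$ by $\int\Psi(M^*[f])\,\dd x$. Corollary~\ref{HL_complex}, which you wish to invoke via periodisation, is a \emph{norm-type} inequality $\sum_n|f_n|/\Psi^{-1}(n)\lesssim\norm{F}_{H^\Psi}$. In the paper the logical arrow runs from the modular to the norm statement: Corollary~\ref{cor} (the Euclidean analogue of Corollary~\ref{HL_complex}) is \emph{deduced from} Theorem~\ref{eucl} together with the pointwise bound of Proposition~\ref{FT_HO}, exploiting that $t\mapsto\Psi(t)/t$ is non-increasing. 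The reverse implication, which your plan needs, is false at the level of coefficient estimates. Setting $c_n:=n|f_n|/\Psi^{-1}(n)$, the norm bound reads $\sum_n c_n/n\lesssim1$, while the modular quantity $\sum_n\Psi(n|f_n|)/n^2$ is comparable (when $c_n\le1$) to $\sum_n c_n^{p}/n$ using the lower-type estimate $\Psi(st)\le c_{\Psi,p}s^p\Psi(t)$. For $p<1$ these are genuinely different: take $c_n=a_k$ for $2^k\le n<2^{k+1}$ with $a_k=1/(k\log^2 k)$; then $\sum c_n/n\approx\sum a_k<\infty$ while $\sum c_n^p/n\approx\sum a_k^p$ diverges. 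So knowing the normalised-coefficient sum is finite gives no control of the $\Psi$-modular of the coefficients, and steps (3) and (5) of your outline cannot close. To carry out a genuine transfer you would need the \emph{periodic modular} estimate $\sum_n\Psi(n|f_n|)/n^2\lesssim\int_\T\Psi(f^*)$ (this is what the cited \cite{BPRS}*{Theorem 28} provides), not Corollary~\ref{HL_complex} --- but that periodic result is itself proved by atomic decomposition, so the transfer would buy nothing over the paper's direct argument.

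A secondary point: identifying the Fourier coefficients of the periodisation of a localisation $f\chi_Q$ with samples of $\widehat f$ at integer frequencies is not correct, since $\widehat{f\chi_Q}=\widehat f*\widehat{\chi_Q}$ and $f$ is not compactly supported; the sampling identity requires $\mathrm{supp}\,f\subset Q$. Your step~(1), reducing to compactly supported $\widehat f$, makes the spatial support problem worse, not better. This can presumably be patched (e.g.\ by working with a smooth spatial partition of unity and tracking the tails), but it is another genuine obstacle that would need to be resolved carefully, on top of the modular-vs.-norm mismatch above.
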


As a consequence of this result, we obtain in Corollary \ref{cor}, a Euclidean counterpart of Corollary \ref{HL_complex}.

Notice that if $f$ is a tempered distribution,  then its Fourier transform  does not necessarily coincide with a function. However (see e.g. \cite{Big_Stein}*{\S 5.4 (a) Chapter III}), if $f\in H^p(\R^d)$ with $p \in (0,1]$, its Fourier transform agrees with a continuous function and 
\[
	\abs{\widehat{f} (\xi)} \lesssim_{d,p} \abs{\xi}^{d (p^{-1}-1)} \norm{f}_{h^p (\R^d)}, \quad \xi \in \R^d. 
\]
In Proposition \ref{FT_HO} we give an extension of this result for distributions in $H^\Psi(\R^d)$, and an analogous result for $h^\Psi(\R^n)$ is obtained in Proposition \ref{FT_HO_local}.

The Littlewood--Paley description of the Hardy space $h^p(\R^d)$ can be stated in terms of the validity of the functional inequality (see \S \ref{s_log_smooth} for the notation) 
\begin{equation}\label{eq:embedding_HL}
\norm{\left(\sum_{j=0}^\infty\abs{\widetilde{\Delta}_jf}^2\right)^{1/2}}_{L^p(\R^d)}\lesssim \norm{f}_{H^p (\R^d)},
\end{equation}
and its reversed counterpart. The expression in the left-hand side can be identified as the norm of $f$ in the inhomogeneous Triebel--Lizorkin space $F^{0}_{p,2}(\R^d)$, allowing the inequality to be interpreted as saying that $h^p(\R^d)$ is embedded in $F^{0}_{p,2}(\R^d)$.

In Theorem \ref{LP_Psi_r_euclidean} we give an extension of \eqref{eq:embedding_HL} for $p=1$, obtaining that the space $h^{\Psi_{r,1}}(\R^d)$ can be embedded in the Triebel--Lizorkin space of generalised smoothness $F^{0,-r}_{1,2}(\R^d)$ (see \S \ref{s_log_smooth}).

The classical Sobolev embedding yields that, given $p \in (0,1]$, if a distribution $f$ satisfies $(1-\Delta)^{\frac{d}{2p}}f\in h^p(\R^d)$, then $f$ is a continuous and bounded function (see e.g. \cite{Franke}*{Theorem 2}). Informally, this can be rephrased as saying that if $f$ has its derivatives of order smaller or equal to $dp$ in $h^p(\R^d)$, then $f$ is a continuous and bounded function.  As a direct consequence of Corollary \ref{cor}, we obtain an extension of this  result for distributions in $h^{\Psi}(\R^d)$, and in particular in $h^{\Psi_{r,p}}(\R^d)$, (see Corollaries  \ref{Cor:18} and \ref{cor_sobolev_embedding}), involving derivatives of logarithmic order. 

In the direction of obtaining an extension of \eqref{eq:embedding_HL} for $\Psi_{r,p}$ for $p<1$, in Corollary \ref{cor:Sobolev-embedding_local} we establish a Sobolev-type embedding of $h^{\Psi_{r,p}}(\R^d)$ into spaces of generalised smoothness. We later apply this to establish an embedding of those distributions in $h^{\Psi_{r,p}}(\R)$, $p \in (1/2, 1]$, whose Fourier transforms are supported in $[0,\infty)$, into certain spaces of analytic functions in a halfplane (see Corollary \ref{cor:embedding_Analytic}). 
\subsection*{Organisation of the paper} 

In Section \ref{background} we set down basic notation and provide some background on growth functions, Hardy--Orlicz spaces and their atomic decomposition, as well as $H^{\Psi}$-$BMO(\rho_{\Psi})$ duality. 

Section \ref{firstproofsection} contains the proofs of Theorems \ref{gen_mult_p<1} and \ref{gen_mult_q_leq2}, spread over several subsections, as well as several auxiliary results.

In Section \ref{BMOsection}, we prove Theorem \ref{LP_BMO_Psi} and discuss some of its consequences.

Section \ref{eucl_proof} is devoted to the Euclidean setting and in particular to the proof of Theorem \ref{eucl} and a counterpart of Corollary \ref{HL_complex}. More specifically, in Subsection \ref{behaviour} we make some introductory remarks concerning the behaviour of the Fourier transform on Hardy--Orlicz spaces and then in Subsection \ref{eucl_det} we prove Theorem \ref{eucl}. In subsection \ref{applicationsection} we present some applications of the Euclidean results to Sobolev-type embeddings and spaces of analytic functions in a half-plane.

\section{Notation and Background}\label{background}

The set of natural numbers is denoted by $\N$, the set of non-negative integers is denoted by $\N_0$, and the set of integers is denoted by $\Z$.

In what follows, we identify functions on $\T$ with $1$-periodic functions in the usual way.  

We denote the class of Schwartz functions by $ \mathcal{S} (\R^d)$ and we denote by $ \mathcal{C}_0 (\R^d)$ the class of all continuous functions $f$ on $\R^d$ such that $f (x) \rightarrow 0$ as $|x| \rightarrow \infty$. The class of distributions on $\T$ is denoted by $\mathcal{D}'(\T)$.

\subsection{Growth functions}
We now give a formal definition of the growth functions used to define $H^{\Psi}$ spaces.
\begin{define}\label{main_def}
Let $0 < p \leq 1$. A function $\Psi : [0, \infty) \rightarrow [0, \infty)$ is called a {\it growth function of order} $p$ if it has the following properties: \begin{itemize}
\item  $\Psi$ is continuous and increasing, with $\Psi (0) = 0$ and 
$\lim_{t\to \infty}\Psi (t)=\infty$;
\item the function $t \mapsto t^{-1}\Psi(t)$ is non-increasing on $(0, \infty)$; 
\item  $\Psi$ is of lower type $p$, namely, there exists a constant $c_{\Psi, p} >0$ such that
\[
\Psi (s \cdot t) \leq c_{\Psi, p} \cdot s^p \cdot \Psi (t) 
\]
for all $t \geq 0$ and $s \in (0,1)$.
\end{itemize} 

If $\Psi$ is a growth function of order $p$ for some $p \in (0, 1 ]$, we say that $\Psi$ is a {\it   growth function}. 
\end{define}
 
As noted in \cite{BG}, see also \cite{V}, given a growth function $\Psi$ of order $p$, then $\widetilde{\Psi} (t) : = \int_0^t s^{-1} \Psi (s) \dd s$ is also a growth function of order $p$ that is concave and point-wise equivalent to $\Psi$. Hence, in what follows, we may assume that the growth functions considered are always concave and hence, sub-additive.

In several parts of this paper we shall use the following standard fact: if $\Psi$ is a growth function of order $p$ then $t \mapsto t^{-l} \Psi (t)$ is quasi-increasing with constant $c_{\Psi, p}$ for all $l \in (0, p]$. Namely,
\begin{equation}\label{eq:growth}
\frac{\Psi (s)} {s^l} \leq c_{\Psi, p } \frac{\Psi (t)} {t^l}
\end{equation}
for all $0<s \leq t$. See \cite{V}*{Proposition 3.1} for further details.

\subsection{Hardy--Orlicz spaces}

If $\Psi$ is a growth function, then the `real-variable' Hardy--Orlicz space $H^{\Psi} (\T)$ consists of all distributions $f$ on $\T$ satisfying
\[
\int_{[0,1)} \Psi \brkt{ f^{\ast} ( e^{ i 2 \pi \theta} ) } \dd \theta < \infty. 
\]
Here $f^{\ast}$ denotes the non-tangential maximal function of $f \in \mathcal{D}' (\T)$ given by
\[
f^{\ast} (  e^{ i 2 \pi \theta}  ) : = \sup_{r e^{i 2 \pi \phi} \in \Gamma (  \theta ) } \abs{ \sum_{n \in \Z} r^{|n|} \widehat{f}(n) e^{i 2 \pi n \phi} } , 
\]
where \[\Gamma ( \theta ) : = \{ z \in \D : |z - e^{i 2 \pi \theta }| < 2 (1-|z|) \} \] and $ \widehat{f} (n) : = \langle f, e_n \rangle$, $n \in \Z$, with $e_n (\theta) : = e^{i 2 \pi n \theta}$, $\theta \in [0,1)$. 

If $f \in H^{\Psi} (\T)$, we set
\[
\norm{f}_{H^{\Psi} (\T)} : = \inf \left\{ \lambda > 0 :   \int_{[0, 1)} \Psi \brkt{ \lambda^{-1}   f^{\ast} ( e^{ i 2 \pi \theta} ) }  \dd \theta   \leq 1 \right\}  . 
\]

The analytic Hardy--Orlicz space $H^{\Psi}_A (\T)$ is defined as
\[
H^{\Psi}_A (\T) : = \left\{ f \in H^{\Psi} (\T):  \mathrm{supp} \brkt{ \widehat{f} } \subset \N_0 \right\} . 
\]
Recall that the Riesz projection $P$ is the multiplier operator with symbol $\chi_{\N_0}$. By arguing as in \cite{YLK}*{Chapter 4}, one can show that $P$ is bounded on $H^{\Psi} (\T)$ and moreover, one can show that if $f \in H^{\Psi} (\T)$ then for $f_1 := P (f) $ and $f_2 := f - f_1$ one has that $f = f_1 + f_2$, $f_1, \overline{f_2} \in H^{\Psi}_A (\T)$, and
\begin{equation}\label{norm_equiv}
 \norm{f}_{H^{\Psi} (\T)} \approx \norm{f_1}_{H^{\Psi}_A (\T)} +  \norm{\overline{f_2}}_{H^{\Psi}_A (\T)}. 
\end{equation}

It is well-known that $H^{\Psi} (\D)$ can be identified with $H^{\Psi}_A (\T)$ in the following way: if $F \in H^{\Psi} (\D)$ then it converges in the sense of distributions to an $f  \in \mathcal{D}' (\T)
$ that belongs to $H^{\Psi}_A (\T)$ with $\norm{F}_{H^{\Psi} (\D)} \lesssim  \norm{f}_{H^{\Psi} (\T)}$ and conversely, if $f \in H^{\Psi}_A (\T)$ then  
\[
F(r e^{i2 \pi \theta}) := \sum_{n=0}^{\infty} r^n \widehat{f} (n) e^{i 2 \pi n \theta}, \quad z = r e^{i2 \pi \theta} \in \D, \]
is a well-defined function that belongs to $H^{\Psi} (\D)$ and satisfies $\norm{f}_{H^{\Psi} (\T)} \lesssim  \norm{F}_{H^{\Psi} (\D)}$; see, e.g., \cite{BG}.

\subsection{Duality and atomic decompositions} Let $\Psi$ be a growth function and let $\rho_{\Psi}$ be as above, i.e.,
\begin{equation}
\rho_{\Psi} (t):=t^{-1} /\Psi^{-1} (t^{-1}), \quad  t > 0. 
\label{rhodef}
\end{equation}
Let $q \in [1, \infty)$. It follows from the work of Viviani \cite{V} that $u \in \mathrm{BMO} (\rho_{\Psi}) (\T)$ if, and only if,
\[
 \sup_{\substack{ I \subseteq \T  : \\  \text{ arc } }} \left\{ \frac{1} { \left[ \rho_{\Psi} (|I|) \right]^q |I|} \int_{I} \abs{u (e^{ i 2 \pi \theta}) - \langle u \rangle_I}^q \dd \theta \right\} < \infty 
\]
and moreover, one has
\begin{multline}\label{BMO_Psi_norms}
\norm{u}_{\mathrm{BMO}^+ (\rho_{\Psi}) (\T)} \approx_q \\
\abs{ \int_{[0,1)} u ( e^{ i 2 \pi \theta} ) \dd \theta} + \sup_{\substack{ I \subseteq \T  : \\  \text{ arc } }}  \left( \frac{1} { \left[ \rho_{\Psi} (|I|) \right]^q |I|}  \int_{I} \abs{u (e^{ i 2 \pi \theta}) - \langle u \rangle_I}^q \dd \theta \right)^{1/q} .
\end{multline}
In particular, 
\begin{multline}\label{BMO_Psi_1}
\norm{u}_{\mathrm{BMO}^+ (\rho_{\Psi}) (\T)} \approx \\
\abs{ \int_{[0,1)} u ( e^{ i 2 \pi \theta} ) \dd \theta} + \sup_{\substack{ I \subseteq \T  : \\  \text{ arc } }} \left\{ \Psi^{-1} ( |I|^{-1} ) \int_{I} \abs{u (e^{ i 2 \pi \theta}) - \langle u \rangle_I} \dd \theta \right\} . 
\end{multline}

As was shown by Janson \cite{J}, for a given growth function $\Psi$, the dual of $H^{\Psi}$ can be identified with $BMO(\rho_{\Psi})(\T)$; the choice of $\rho_{\Psi}$ in \eqref{rhodef} is made precisely so as to make this duality hold. Note that $\rho_{\Psi}$ is a positive non-decreasing function \cite{V}*{Proposition 3.10}. In \cite{V}, B. Viviani proved the very useful result that the Hardy--Orlicz space $H^{\Psi} (\R^d)$ admits an atomic decomposition. As a consequence, she gave another proof of Janson's result on duality. 

Let us first recall the definition of atoms in the Euclidean setting. 
\begin{define}\label{atomdef}
Let $\Psi$ be a growth function. A locally integrable function $a_Q$ is said to be an {\it atom} in $H^{\Psi} (\R^d)$ associated to a cube $Q\subset \R^d$ if 
\begin{enumerate}[label=\rm{\roman*)}]
\item $\mathrm{supp} (a_Q) \subset Q$;
\item $\abs{a_Q(x)} \leq \Psi^{-1}(1/\abs{Q})$ for a.e. $x \in Q$;
\item $\int_Q a_Q (x)\dd x=0$.
\end{enumerate}
\end{define}

Note here that 
\[
\norm{\chi_Q}_{L^{\Psi}(\R^d)}=\inf\set{\lambda > 0 : \int_Q \Psi(1/\lambda)\leq 1}=1/\Psi^{-1}(1/\abs{Q}) 
\]
and so, the second property in the definition of atoms can be rewritten as
\[ \norm{a_Q}_{L^{\infty} (\R^d)} \leq \norm{\chi_Q}^{-1}_{L^{\Psi}(\R^d)} .\] 

A tempered distribution $f $ belongs to $ H^{\Psi} (\R^d)$ if, and only if, there exists a sequence $\{ b_{Q_k} \}_{k \in \N}$ of multiples of $H^{\Psi} (\R^d)$-atoms such that
\[
f = \sum_{k \in \N} b_{Q_k} 
\]
in the sense of distributions, and
\[
\sum_{k \in \N} \abs{Q_k} \Psi \brkt{ \norm{b_{Q_k}}_{L^{\infty} (\R^d)} } < \infty. 
\]
Moreover, we have
\[
\norm{f}_{H^{\Psi} (\R^d)} \approx \Lambda_{\infty} \brkt{\{ b_{Q_k} \}_{k \in \N}} ,
\]
where
\[ 
\Lambda_{\infty} \brkt{\{ b_{Q_k} \}_{k \in \N} } := 
\inf \left\{  \lambda > 0 : \sum_{k \in \N} \abs{Q_k} \Psi \brkt{ \lambda^{-1} \norm{b_{Q_k}}_{L^{\infty} (\R^d)} } \leq 1 \right\} . 
\]
An analogous characterisation holds in the periodic setting.

As for $H^{\Psi} (\R^d)$, the local Hardy--Orlicz space $h^{\Psi} (\R^d)$ admits an atomic decomposition (see for instance \cite{YLK}*{\S 8.3}). Let us recall that a locally integrable function $a_Q$ is an atom in $h^{\Psi} (\R^d)$ associated to a cube $Q$ if it satisfies i), ii) from Definition \ref{atomdef} above, and 
\begin{enumerate}[label=\roman*)']\setcounter{enumi}{2}
\item $\int_Q a_Q (x)\dd x=0$, when $\abs{Q}<1$.
\end{enumerate}

Similarly as for $H^\psi(\R^d)$, a tempered distribution $f $ belongs to $h^{\Psi} (\R^d)$ if, and only if, there exist a sequence $\{\beta_{Q_k} \}_{k\in \N}$ of multiples of atoms in $h^{\Psi} (\R^d)$, such that
\[
f = \sum_{k \in \N} \beta_{Q_k}
\]
in the sense of distributions and
\[
\sum_{k \in \N} \abs{Q_k} \Psi \brkt{ \norm{\beta_{Q_k}}_{L^{\infty} (\R^d)} } < \infty. 
\]
Moreover, 
\[
\norm{f}_{h^{\Psi} (\R^d)} \approx \Lambda_{\infty} \brkt{\{ \beta_{Q_k} \}_{k \in \N} }.
\]

\subsection{Well-distributed intervals and Littlewood--Paley-type projections}\label{LP_part}  

In this paper, we shall consider `frequency' intervals that arise from Whitney-type decompositions of arbitrary intervals. To be more specific, following \cite{RdF}, if $K = [ c_K - |K|/2, c_K + |K|/2]$ is a non-empty interval, then its Whitney decomposition $W(I)$ consists of the families of intervals
\[ 
\mathbf{K}^{\rm left} := 
\left\{ \left[ c_K -\frac{|K|}{2} + \frac{ 2^{-(l+1)}}{3} |K| , c_K -\frac{ |K|}{2} + \frac{ 2^{-l}}{3} |K| \right) \right\}_{l \in \Z},
\]
\[
\mathbf{K}^{\rm centre} := \left\{ \left[ c_K -\frac{ |K|}{6}, c_K + \frac{|K|}{6} \right] \right\} ,  
\]
and
\[
\mathbf{K}^{\rm right } := \left\{ \left( c_K + \frac{|K|}{2} - \frac{2^{-l}}{3} |K| , c_K + \frac{|K|}{2} - \frac{ 2^{-(l+1)}}{3} |K| \right] \right\}_{l \in \Z}. 
\]
Note that for any given non-empty interval $K$ interval, the intervals in $W(K)$ are mutually disjoint and satisfy the properties
\begin{equation*} 
 \norm{ \sum_{J \in W(K)} \chi_{2J} }_{L^{\infty} (\R)} \leq 5 \qquad \text{and} \qquad 2 J \subseteq K \quad \text{for all } J \in W (K).
\end{equation*}

For $n \in \N$, let $K_n$ denote the usual Fej\'er kernel of order $n$, namely,
\[ 
K_n (e^{i 2 \pi \theta}):= \sum_{j=-n}^n \left( 1- \frac{|j|}{n+1} \right)  e^{i 2 \pi j \theta}, \quad \theta \in [0,1). 
\]
Then the de la Vall\'ee Poussin kernel of order $m \in \N$ is given by
\[ 
V_m := 2 K_{2m+1} - K_m 
\]
and satisfies $\widehat{V_m} (j) = 1$ for all $j \in \Z$ with $|j| \leq m+1$, and $\widehat{V_m} (j) = 0$ for all $j \in \Z$ with $|j| \geq  2m + 1$. Moreover, $\widehat{V_m}$ is even and `affine' on $ [m+1, 2m+1] \cap \Z $. 

For $m \in \N$, we set
\[ 
\sigma_m := V_{2m} - V_m .
\]
Note that $\widehat{\sigma_m} (j)=0$ for all $j \in \Z$ with $|j| \leq m+1$ or $|j| \geq 4m+1$. Moreover, $\widehat{\sigma_m}$ is even and `affine' on $ [m+1, 2m+1] \cap \Z $ and on $ [2m+1, 4m+1] \cap \Z $.

Suppose now that $K$ is an interval with $|K| = 3 \cdot 2^{l+1}$ for some $l \in \N_0$ and moreover, its endpoints $l_K := c_K - |K|/2 $ and $r_K := c_K + |K|/2 $ are integers. Let $W(K)$ denote the Whitney decomposition of $K$. If $J \in \mathbf{K}^{\rm left} $ and $|J| \in \N$, define 
\[ 
\delta_J (e^{i 2 \pi \theta}) :=  e^{i 2 \pi r_J \theta} e^{-i 2 \pi (2m)  \theta} \sigma_{|J|} (e^{i 2 \pi \theta}) , \quad \theta \in [0,1),
\]
where $r_J$ denotes the right endpoint of $J$. Similarly, if $J \in \mathbf{K}^{\rm right} $ and $|J| \in \N$, define 
\[ 
\delta_J (e^{i 2 \pi \theta}) :=  e^{i 2 \pi l_J \theta} e^{i 2 \pi (2m)  \theta} \sigma_{|J|} (e^{i 2 \pi \theta}) , \quad \theta \in [0,1),
\]
where $l_J$ is the left endpoint of $J$. 

For $J \in \mathbf{K}^{\rm left} \cup \mathbf{K}^{\rm right}$ and $f \in L^1 (\T) $, we set $\Delta_J (f) := \delta_J \ast f$.

\section{Proofs of Theorems \ref{gen_mult_p<1} and \ref{gen_mult_q_leq2}}\label{firstproofsection}
In this section we prove Theorems \ref{gen_mult_p<1} and \ref{gen_mult_q_leq2}. 
To this end, we shall first establish the following characterisation of $\mathcal{M}_{H^{\Psi} (\D) \rightarrow \ell^1 (\N_0)}$.

\begin{proposition}\label{equiv_prop}
Let $\Psi$ be a growth function of order $p \in (1/2, 1]$.  
 
Let $\lambda = \{ \lambda_n \}_{n \in \N_0}$ be a sequence of complex numbers.
The following are equivalent:
\begin{enumerate}
\item $\lambda$ is a multiplier from $H^{\Psi} (\D)$ to $\ell^1 (\N_0)$;
\item There exists a constant $A_{\lambda, \Psi, p}>0$  such that
\[
\Gamma_{\epsilon, \lambda}^{(M)} (z) : = \sum_{n=0}^M \epsilon_n \lambda_n z^n, \quad z \in \D,
\]
belongs to $BMO (\rho_{\Psi}) (\D)$ with 
\[
\norm{\Gamma_{\epsilon, \lambda}^{(M)}}_{BMO (\rho_{\Psi}) (\D)} \leq A_{\lambda, \Psi,p} 
\]
for every choice of $\epsilon = \{ \epsilon_n \}_{n \in \N_0}$ with $| \epsilon_n | \leq 1$, $n \in \N_0$, and for all $M \in \N_0$.
\end{enumerate}
\end{proposition}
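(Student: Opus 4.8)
## Proof proposal for Proposition~\ref{equiv_prop}

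The plan is to establish the equivalence by exploiting the $H^\Psi$--$BMO(\rho_\Psi)$ duality of Janson and Viviani recalled in Section~\ref{background}. The basic observation is that testing whether $\lambda$ is a multiplier from $H^\Psi(\D)$ to $\ell^1(\N_0)$ is, by duality at the level of the sequence spaces, the same as controlling the action of sequences $\epsilon = \{\epsilon_n\}$ with $|\epsilon_n|\le 1$. Indeed, $\norm{\{\lambda_n f_n\}}_{\ell^1} = \sup_{|\epsilon_n|\le 1}\left|\sum_n \epsilon_n\lambda_n f_n\right|$, and for a fixed finite truncation the inner sum is $\langle F, \overline{\Gamma^{(M)}_{\epsilon,\lambda}}\rangle$ in the natural $H^\Psi$--$BMOA(\rho_\Psi)$ pairing (up to the usual conjugation conventions, using that $F$ is analytic so only the analytic part of the pairing survives). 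So $\lambda \in \mathcal{M}_{H^\Psi(\D)\to\ell^1(\N_0)}$ with constant $C$ if and only if $|\langle F,\overline{\Gamma^{(M)}_{\epsilon,\lambda}}\rangle| \le C\norm{F}_{H^\Psi(\D)}$ uniformly in $M$ and in $\epsilon$, which by the duality theorem is equivalent to a uniform bound $\norm{\Gamma^{(M)}_{\epsilon,\lambda}}_{BMO(\rho_\Psi)(\D)} \lesssim C$.

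First I would set up the pairing carefully: for $F(z) = \sum f_n z^n \in H^\Psi(\D)$, identified with an element of $H^\Psi_A(\T)$ via the boundary-value correspondence, and for the polynomial $g = \Gamma^{(M)}_{\epsilon,\lambda}$, one has $\langle F, \bar g\rangle = \sum_{n=0}^M \epsilon_n\lambda_n f_n$ because $g$ is a polynomial (so the pairing is just a finite sum of Fourier-coefficient products and no delicate convergence issue arises on this side). Then: (1)$\Rightarrow$(2). Given that $\lambda$ is a multiplier with constant $C_{\lambda,X}$, for each $F\in H^\Psi(\D)$ and each admissible $\epsilon$, $|\langle F,\bar g\rangle| = |\sum \epsilon_n\lambda_n f_n| \le \norm{\{\lambda_n f_n\}}_{\ell^1} \le C_{\lambda,X}\norm{F}_{H^\Psi(\D)}$. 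Since $g = \Gamma^{(M)}_{\epsilon,\lambda}$ is a (analytic) polynomial, it lies in $BMOA(\rho_\Psi)(\T)$, and the duality $\big(H^\Psi(\D)\big)^* \cong BMO(\rho_\Psi)(\D)$ gives $\norm{\Gamma^{(M)}_{\epsilon,\lambda}}_{BMO(\rho_\Psi)(\D)} \approx \sup_{\norm{F}_{H^\Psi(\D)}\le 1}|\langle F,\bar g\rangle| \le C_{\lambda,X}$ up to the implied constant in the duality, uniformly in $\epsilon$ and $M$; set $A_{\lambda,\Psi,p}$ accordingly. (2)$\Rightarrow$(1). Conversely, suppose the uniform $BMO(\rho_\Psi)$ bound holds. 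Fix $F\in H^\Psi(\D)$ with $\norm{F}_{H^\Psi(\D)}\le 1$ and $M\in\N_0$. For any $\epsilon$ with $|\epsilon_n|\le 1$, duality gives $|\sum_{n=0}^M \epsilon_n\lambda_n f_n| = |\langle F, \overline{\Gamma^{(M)}_{\epsilon,\lambda}}\rangle| \lesssim \norm{\Gamma^{(M)}_{\epsilon,\lambda}}_{BMO(\rho_\Psi)(\D)}\norm{F}_{H^\Psi(\D)} \le C\, A_{\lambda,\Psi,p}$. Taking the supremum over all such $\epsilon$ yields $\sum_{n=0}^M |\lambda_n f_n| \lesssim A_{\lambda,\Psi,p}$, and then letting $M\to\infty$ (monotone convergence) gives $\norm{\{\lambda_n f_n\}}_{\ell^1(\N_0)} \lesssim A_{\lambda,\Psi,p}\norm{F}_{H^\Psi(\D)}$, i.e. $\lambda\in\mathcal{M}_{H^\Psi(\D)\to\ell^1(\N_0)}$.

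The main technical point — and where I expect the real work to be — is making the duality pairing rigorous in the Hardy--Orlicz setting, since $H^\Psi(\D)$ is in general only a quasi-Banach space and the duality $\big(H^\Psi\big)^* \cong BMO(\rho_\Psi)$ must be invoked with care: one needs that the pairing $\langle F, \bar g\rangle = \sum \epsilon_n\lambda_n f_n$ for analytic polynomials $g$ is exactly the functional realizing the duality (so that the norm equivalence $\norm{g}_{BMO(\rho_\Psi)} \approx \norm{\ell_g}_{(H^\Psi)^*}$ applies), and that polynomials are dense enough / the truncations behave well. This is essentially bookkeeping with the results of Janson~\cite{J} and Viviani~\cite{V} quoted in Section~\ref{background}, together with the boundedness of the Riesz projection on $H^\Psi(\T)$ (so one can pass freely between $H^\Psi(\D)$, $H^\Psi_A(\T)$ and the analytic part of $H^\Psi(\T)$, cf.~\eqref{norm_equiv}), but it needs to be stated precisely. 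The other routine steps — the $\ell^\infty$–$\ell^1$ duality giving $\norm{\cdot}_{\ell^1} = \sup_{|\epsilon_n|\le1}|\sum\epsilon_n(\cdot)|$, and the monotone convergence in $M$ — are straightforward. A minor point to watch is the conjugation/reality conventions in the pairing, which only affects whether one writes $\epsilon_n$ or $\bar\epsilon_n$ and $g$ or $\bar g$; since the condition ranges over all $\epsilon$ with $|\epsilon_n|\le 1$ this is immaterial to the statement.
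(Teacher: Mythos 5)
Your proposal is correct in outline and captures the essential duality idea, but the route you take for $(1)\Rightarrow(2)$ differs genuinely from the paper's. For $(2)\Rightarrow(1)$ your argument essentially coincides with the paper's: both pair $F$ against the polynomial $\Gamma^{(M)}_{\epsilon,\lambda}$ and invoke the duality bound $\abs{\langle F, g\rangle} \lesssim \norm{F}_{H^{\Psi}}\norm{g}_{BMO(\rho_{\Psi})}$, followed by a choice of $\epsilon$ (equivalently a supremum over $\epsilon$) to produce absolute values and a limiting argument in $M$. The only discrepancy is that the paper first dilates $F$ to $F_r$ (using $\norm{F_r}_{H^{\Psi}(\D)}\leq\norm{F}_{H^{\Psi}(\D)}$) before pairing, which gives a cleaner justification of the limiting argument than truncation alone; this is a minor point of rigour rather than a substantive change.

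Where you diverge is in $(1)\Rightarrow(2)$. You invoke the full isomorphism $(H^{\Psi})^{\ast}\cong BMO(\rho_{\Psi})$ to pass from a bound on the operator norm of the functional $F\mapsto\sum_{n\le M}\epsilon_n\lambda_n f_n$ to a bound on $\norm{\Gamma^{(M)}_{\epsilon,\lambda}}_{BMO(\rho_{\Psi})}$. This is valid in principle, but, as you yourself flag, it requires carefully matching the polynomial with the representative produced by the duality theorem (injectivity of the representation, identification of the pairing on a dense class). The paper sidesteps this entirely: given an arc $I$, it constructs an explicit multiple of an $H^{\Psi}$-atom,
\[
a_I = \tfrac{1}{2}\Psi^{-1}(\abs{I}^{-1})\bigl(e^{i\arg\{b^{(M)}_{\epsilon,\lambda}-\langle b^{(M)}_{\epsilon,\lambda}\rangle_I\}} - \langle e^{i\arg\{b^{(M)}_{\epsilon,\lambda}-\langle b^{(M)}_{\epsilon,\lambda}\rangle_I\}}\rangle_I\bigr)\chi_I,
\]
and computes directly via Parseval that the $\rho_{\Psi}$-oscillation of $b^{(M)}_{\epsilon,\lambda}$ over $I$ is controlled by $\sum_n\abs{\lambda_n\widehat{a_I}(n)}$, which the multiplier assumption bounds by $\norm{a_I}_{H^{\Psi}(\T)}\lesssim 1$. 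This is a self-contained computation using only one direction of the pairing (the ``easy'' $BMO$-against-atoms estimate) plus the explicit formula \eqref{BMO_Psi_1} for the $BMO(\rho_{\Psi})$ seminorm, avoiding abstract surjectivity altogether; the paper credits it to a trick in \cite{JPS}. Your approach would compile but would need the bookkeeping you describe about the duality isomorphism made explicit, whereas the paper's version trades this for an elementary two-line atom construction.
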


\begin{proof} To prove $(2) \implies (1)$, fix a function 
\[
F (z) = \sum_{n=0}^{\infty} f_n z^n, \quad z \in \D,
\]
in $H^{\Psi} (\D)$. We shall prove that there exists a constant $C_{\lambda, \Psi, p} > 0$, depending only on $\lambda$, $\Psi$, and $p$, such that
\begin{equation}\label{ineq_r}
\sum_{n=0}^M r^n | \lambda_n f_n| \leq C_{\lambda, \Psi, p}  \norm{ F }_{H^{\Psi} (\D)}
\end{equation}
for all $r \in (0,1)$ and for all $M \in \N_0$. The desired inequality then follows from \eqref{ineq_r} and a limiting argument.

In order to establish \eqref{ineq_r}, fix an $r \in (0,1)$ and define $F_r$ by
\[
F_r (z): = F(rz) = \sum_{n=0}^{\infty} r^n f_n z^n, \quad z \in \D . 
\]
Note that 
\begin{equation}\label{bound_r}
\norm{ F_r }_{H^{\Psi} (\D)} \leq \norm{ F }_{H^{\Psi} (\D)} . 
\end{equation}
Indeed, by the definition of the norm and a change of variables,
\begin{align*}
\norm{ F_r }_{H^{\Psi} (\D)} &  = \inf \left\{ \alpha >0 : \sup_{0 \leq \rho <1} \int_{[0,1)} \Psi \left(\alpha^{-1} \abs{F \left(r \rho e^{i 2\pi \theta} \right)} \right) \dd \theta \leq 1 \right\} \\
& \leq \inf \left\{ \alpha >0 : \sup_{0 \leq \sigma < 1} \int_{[0,1)} \Psi \left(\alpha^{-1} \abs{F \left( \sigma e^{i 2\pi \theta} \right)} \right) \dd \theta \leq 1 \right\} = \norm{ F }_{H^{\Psi} (\D)}.
\end{align*}

Fix an $\epsilon = \{ \epsilon_n \}_{n \in \N_0}$ with $|\epsilon_n| \leq 1$ for all $n \in \N_0$ and an $M \in \N_0$. We may write
\begin{equation}\label{pairing}
\abs{ \sum_{n=0}^M \epsilon_n r^n \lambda_n f_n   } = \abs{ \langle F_r ,  \Gamma_{\epsilon, \lambda, 0}^{(M)} \rangle }, 
\end{equation}
where $\Gamma_{\epsilon, \lambda, 0 }^{(M)} $ denotes the analytic trigonometric polynomial, which is the boundary value of $\Gamma_{\epsilon, \lambda}^{(M)} $.
By our assumption 
\[ 
\norm{ \Gamma_{\epsilon, \lambda}^{(M)} }_{BMO (\rho_{\Psi}) (\D)} \lesssim_{\lambda, \Psi, p} 1,
\]
where the implied constant is independent of the choice of $\epsilon = \{ \epsilon_n \}_{n \in \N_0}$ and $M \in \N_0$. Hence, it follows from \eqref{pairing} that
\[
\abs{ \sum_{n=0}^M \epsilon_n r^n  \lambda_n f_n }  \lesssim_{\lambda, \Psi, p} \norm{ F_r }_{H^{\Psi} (\D)}  ,
\]
which, combined with \eqref{bound_r}, yields
\begin{equation}\label{uni_eps}
\abs{ \sum_{n=0}^M \epsilon_n r^n  \lambda_n f_n   }  \leq C_{\lambda, \Psi, p} \norm{ F }_{H^{\Psi}   (\D)} ,
\end{equation}
where $C_{\lambda, \Psi, p} > 0$ is independent of $\epsilon = \{ \epsilon_n \}_{n \in \N_0}$ and $M \in \N_0$. Therefore, if we take
\[
\epsilon_n : =
\begin{cases}
 \frac{\overline{f_n}}{|f_n|} \cdot \frac{\overline{\lambda_n}}{|\lambda_n|} , \quad  & \text{if } f_n \neq 0; \\
0, \quad & \text{otherwise}  
\end{cases}
\]
in  \eqref{uni_eps}, we deduce that \eqref{ineq_r} holds.

To prove the reverse implication, suppose that $\lambda = \{ \lambda_n  \}_{n \in \N_0}$ satisfies $(1)$, i.e. $\lambda$ is a multiplier from $H^{\Psi} (\D)$ to $\ell^1 (\N_0)$.
 
For $\epsilon = \{ \epsilon_n \}_{n \in \N_0}$ with $| \epsilon_n | \leq 1$ for all $n \in \N_0$ and $M \in \N_0$, consider the trigonometric polynomial $b_{ \epsilon, \lambda }^{(M)}$ with Fourier coefficients given by 
\[
\widehat{b_{ \epsilon, \lambda }^{(M)}} (n): =
\begin{cases}
\epsilon_n \lambda_n , \quad &\text{if } n \in \N_0 \text{ with } n \leq M ; \\
0 \quad &\text{otherwise.}
\end{cases}
\]
To prove $(2)$, it suffices, in view of \eqref{BMO_Psi_1}, to show that 
\begin{equation}\label{BMO_1}
\sup_{\substack{ I \subseteq \T : \\ \text{arc}}} \left\{ \Psi^{-1} ( | I |^{-1} ) \int_I \abs{ b_{ \epsilon, \lambda }^{(M)} (e^{i 2 \pi \theta} ) - \langle b_{ \epsilon , \lambda }^{(M)} \rangle_I }\, \dd \theta \right\} \leq A_{\lambda, \Psi, p};   
\end{equation}
our arguments are in part inspired by a trick in \cite{JPS}*{p. 946}.
To establish \eqref{BMO_1}, fix an arc $I$ in $\T$ and define
\[ a_I  : = \frac{ \Psi^{-1} ( | I |^{-1} )}{2}   \brkt{ e^{ i \arg \left\{ b_{ \epsilon , \lambda }^{(M)} - \langle b_{  \epsilon , \lambda }^{(M)} \rangle_I \right\}} - \langle e^{ i \arg \left\{ b_{ \epsilon,  \lambda }^{(M)} - \langle b_{ \epsilon, \lambda}^{(M)} \rangle_I  \right\}} \rangle_I } \chi_I.
 \]
Then, $\mathrm{supp} (a_I) \subseteq I$, $ \int_I a_I = 0$, $\norm{ a_I }_{L^{\infty} (\T)} \leq \Psi^{-1} ( | I |^{-1} )$ and so, $a_I $ is an $H^{\Psi}$-atom. Moreover,
\begin{align*}
& \Psi^{-1} ( | I |^{-1} ) \int_I \abs{ b_{ \epsilon , \lambda}^{(M)} (e^{i 2 \pi \theta} ) - \langle b_{ \epsilon , \lambda}^{(M)} \rangle_I } \dd \theta  \\
& =2 \abs{ \int_I \brkt{ b_{ \epsilon , \lambda}^{(M)} (e^{i 2 \pi \theta} ) - \langle b_{ \epsilon, \lambda }^{(M)} \rangle_I } \overline{a_I (e^{i2 \pi \theta})} \, \dd \theta } . 
\end{align*}
Hence, Parseval's identity yields
\[ \Psi^{-1} ( | I |^{-1} ) \int_I \abs{ b_{ \epsilon , \lambda }^{(M)} (e^{i 2 \pi \theta} ) - \langle b_{ \epsilon, \lambda}^{(M)} \rangle_I } \dd \theta \leq 2 \sum_{n = 0 }^M | \lambda_n \widehat{a}_I (n) | 
\]
and so, our assumption on $\lambda = \{ \lambda_n \}_{n \in \N_0}$ implies that
\begin{equation}\label{Par_BMO1}
\Psi^{-1} ( | I |^{-1} ) \int_I \abs{ b_{ \epsilon , \lambda}^{(M)} (e^{i 2 \pi \theta} ) - \langle b_{ \epsilon, \lambda }^{(M)} \rangle_I } \dd \theta  \leq 2 C_{\lambda, \Psi, p} \norm{a_I}_{H^{\Psi}(\T)} . 
\end{equation}
Since $a_I$ is an $H^{\Psi}$-atom, \eqref{BMO_1} follows from \eqref{Par_BMO1}. \end{proof}

\subsection{Proof of Theorem \ref{gen_mult_p<1}}
The equivalence of (2) and (3) follows immediately from a geometric summation and the properties of $\Psi^{-1}$. Indeed, since $\Psi$ is a growth function of order $p$, it follows that $\Psi^{-1}$ grows at most polynomially and is thus quasi-constant on intervals of the form $[2^N, 2^{N+1}]$ that is, $\Psi^{-1} (t) \approx \Psi^{-1} (2^N) $ for all $t \in [2^N, 2^{N+1}]$ with the implied constants being independent of $N$. 

As to the equivalence of (1) and (2), we start by observing that it suffices to prove the case $q =1$. Indeed, to see this, notice that, for $q \in (1, \infty)$, a sequence $\lambda = \{ \lambda_n \}_{n \in \N_0}$ is a multiplier from $H^{\Psi} (\D)$ to $\ell^q (\N_0)$ if, and only if, for every $\alpha = \{ \alpha_n \}_{n \in \N_0}$ the sequence $\lambda_{\alpha} = \{ \alpha_n \lambda_n \}_{n \in \N_0}$ is a multiplier from $H^{\Psi} (\D)$ to $\ell^1 (\N_0)$ with
\[
\norm{ \lambda_{\alpha} }_{\mathcal{M}_{H^{\Psi} (\D) \rightarrow \ell^1 (\N_0)}} \leq \norm{\alpha}_{\ell^{q'} (\N_0)} \norm{ \lambda }_{\mathcal{M}_{H^{\Psi} (\D) \rightarrow \ell^q (\N_0)}}. 
\]
Hence, if $(1) \Leftrightarrow (2)$ holds for $q=1$ then $(1) \Leftrightarrow (2)$ holds for $q \in (1, \infty)$ as well.

We will now turn to the proof of the case $q=1$.
Suppose that $\{ \lambda_n \}_{n \in \N_0}$ is an $H^{\Psi}(\D)$-$\ell^1 (\N_0)$ multiplier of norm 1, hence
also $\{ |\lambda_n|\}_{n \in \N_0} \in \mathcal{M}_{H^{\Psi}(\D) \rightarrow \ell^1 (\N_0)}$ with the same norm. Let $N \in \N$. By Proposition 
\ref{equiv_prop}, the function
\[
b_N( e^{i 2\pi \theta} ) : = \sum_{n=1}^N |\lambda_n| e^{i 2\pi n \theta}, \quad \theta \in [0,1),
\]
is in $BMO (\rho_{\Psi}) (\T)$, with norm bound independent of $N$.

Letting $I := [0, (4N)^{-1}]$, note that
since 
\begin{equation}    \label{eq:sinestimate}
 \langle \sin(2\pi n \cdot) \rangle_I - \sin(2\pi n \theta) \approx \langle \sin(2\pi n \cdot) \rangle_I  \approx \frac{n}{N}
\end{equation}
for $\theta \in [0,(16N)^{-1}]$, we have
\begin{align*} %
\sum_{n=1}^N |\lambda_n| \frac{n}{N^{2}} & \lesssim \int_I \left|\sum_{n=1}^N |\lambda_n| \left(\sin(2 \pi n \theta ) - \langle \sin(2\pi n \cdot) \rangle_I  \right)   \right| \, \dd \theta  \\
& \leq \int_I | b_N( e^{i 2\pi \theta} ) - \langle b_N \rangle_I | \, \dd \theta 
\end{align*}
and hence,
\begin{equation}\label{eq:osclow}
\sum_{n=1}^N |\lambda_n| \frac{n}{N^{2}}  \lesssim \frac{1}{\Psi^{-1}(|I|^{-1})}. 
\end{equation}

Now, since $t \mapsto t^{-1} \Psi(t)$ is non-increasing on $(0, \infty)$, the map $t \mapsto t^{-1}/ \Psi^{-1} (t)$ is non-decreasing on $(0, \infty)$ and hence,
\[
\sum_{n=1}^N {\Psi^{-1}(n)} |\lambda_n|
\leq \sum_{n=1}^N \Psi^{-1}(N) \frac{n}{N} |\lambda_n| \lesssim N,
\]
where the implied constant in the second inequality does not depend on $N$.

Conversely, suppose that (2) holds for $q=1$. By
Proposition \ref{equiv_prop}, we have to check that the function
\[
\Gamma_{\epsilon, \lambda}^{(M)} (z) : = \sum_{n=0}^M \epsilon_n \lambda_n z^n, \quad z \in \D,
\]
belongs to $BMO (\rho_{\Psi}) (\D)$, uniformly 
for every choice of $\epsilon = \{ \epsilon_n \}_{n \in \N_0}$ with $| \epsilon_n | \leq 1$, $n \in \N_0$, and for all $M \in \N_0$.
To this end, fix an interval $I$ in $\T$ and let $N \in \N$ be such that
\[
\frac{1}{N} \leq |I| \leq \frac{2}{N}.
\]
We write
\begin{equation}\label{eq:highlow}
\Gamma_{\epsilon, \lambda}^{(M)}     =
K_{\eps, \lambda} + N_{\eps, \lambda} ,
\end{equation}
where
\[ K_{\eps, \lambda} (z) :=  \sum_{n\leq |I|^{-1}} \epsilon_n \lambda_n z^n
\quad \text{and} \quad N_{\eps, \lambda} (z) := \sum_{M \geq n > |I|^{-1}} \epsilon_n \lambda_n z^n , \quad z \in \D.
\]
We consider first
\begin{align*}      \label{eq:lowfreq}
\int_I \left| K_{\eps, \lambda}(e^{i 2\pi \theta}) - \langle K_{\eps, \lambda} \rangle_I \right| \dd \theta 
&\leq \frac{1}{|I|}
\int_I \int_I \left| K_{\eps, \lambda}(e^{i 2 \pi \theta}) - K_{\eps, \lambda}(e^{i 2 \pi \phi}) \right| \, \dd \theta \, \dd \phi \\
&\lesssim  \sum_{n\leq |I|^{-1}} n |\lambda_n| \frac{1}{|I|}
\int_I \int_I \left|\theta - \phi \right| \, \dd \theta \, \dd \phi \\
&\lesssim  \sum_{n\leq |I|^{-1}} n |\lambda_n| |I|^2 \\
&\lesssim |I|  \sum_{k=1}^\infty 2^{-k} \sum_{ 2^{-k+1} |I|^{-1} \geq n> 2^{-k} |I|^{-1} } |\lambda_n|  \\
&\lesssim  |I| \sum_{k=1}^\infty 2^{-k} \frac{2^{-k} |I|^{-1}} {\Psi^{-1}(2^{-k} |I|^{-1})}  \\
&= |I|^{1+1/p} \sum_{k=1}^\infty 2^{-k (2- 1/p)} \frac{2^{-k/p} |I|^{-1/p}} {\Psi^{-1}(2^{-k} |I|^{-1})}  \\
&\lesssim_p  |I|^{1+1/p} \frac{ |I|^{-1/p}} {\Psi^{-1}( |I|^{-1})} 
=  \frac{1}{\Psi^{-1}( |I|^{-1})},
\end{align*}
where we used that, since $\Psi$ is a growth function of order $p > 1/2 $, $t \mapsto  t^{1/p} / \Psi^{-1}(t)$ is quasi-increasing on $(0, \infty)$. 
   
To show that 
\begin{equation}\label{N_eps}
\int_I \left| N_{\eps, \lambda}(e^{i 2 \pi \theta}) - \langle N_{\eps, \lambda} \rangle_I \right| \, \dd  \theta 
\lesssim \frac{1}{\Psi^{-1}( |I|^{-1})},
\end{equation}
we shall estimate
\begin{align*} 
\int_I \left| N_{\eps, \lambda}(e^{i 2 \pi \theta}) - \langle N_{\eps, \lambda} \rangle_I \right|^2 \, \dd  \theta
&\leq  \int_I \left| N_{\eps, \lambda}(e^{i 2 \pi \theta}) \right|^2 \, \dd  \theta \\
& \lesssim |I| \sum_{k=0}^{\infty} \left( \sum_{2^{k+1}|I|^{-1} \geq n > 2^k|I|^{-1}} |\lambda_n| \right)^2 \\
& \lesssim |I|\sum_{k=0}^{\infty} \frac{2^{2k} |I|^{-2}}{[\Psi^{-1}(|I|^{-1} 2^k) ]^2  } \\
& \lesssim |I|\int_{|I|^{-1}}^\infty  \frac{t}{[\Psi^{-1}(t)]^2  } \, \dd t \\
& \lesssim |I|\frac{|I|^{-2/p}}{[\Psi^{-1}(|I|^{-1})]^2}  \int_{|I|^{-1}}^\infty  \frac{1}{ t^{2/p -1} } \, \dd t  \lesssim \frac{|I|^{-1}}{[\Psi^{-1}(|I|^{-1})]^2},
\end{align*}
 where we have used orthogonality in the first inequality and Lemma \ref{lem:uncond1}, Part (\ref{geometric}) below in the second inequality, and finally the fact that $\Psi$ is of upper type $p \in (1/2, 1) $.
 Hence, \eqref{N_eps} follows from the Cauchy--Schwarz inequality and the last estimate. \qed

In  this last part of the proof of Theorem \ref{gen_mult_p<1} we used Lemma \ref{lem:uncond1} below, which can be understood as an unconditionality estimate.

At this point, we would like to mention that, even though the statement of Lemma \ref{lem:uncond1} might seem classical, we have not found Part (\ref{arithmetic}), which will be required for the proof of Theorem \ref{gen_mult_q_leq2} below, in the existing literature or been able to directly extract it from known results. 
The proof of Lemma \ref{lem:uncond1}, Part (\ref{arithmetic})  that we present here is elementary, if somewhat technical, and uses the $\ell^2$-boundedness of the discrete Hilbert transform. We deduce Part (\ref{geometric}) from Part (\ref{arithmetic}) here, but it can also be proved with a simpler direct estimate.

\begin{lemma}\label{lem:uncond1}
If $n \in \N_0$, %
then for every interval $I$ with $\abs{I}= 2^{-n}$ and for every analytic trigonometric polynomial $f$, 
\begin{enumerate}
\item \label{geometric}
\[    \int_I \left| f( e^{i 2 \pi \theta} ) \right|^2 \, \dd \theta \lesssim  \abs{I}  \brkt{\sum_{j=0}^{2^{n}-1} \abs{f_j}}^2 +
    \abs{I}  \sum_{k=0}^\infty \brkt{\sum_{j=2^k 2^n}^{2^{k+1} 2^{n}-1} \abs{f_j}}^2,
\]
\item \label{arithmetic}
\[    \int_I \left| f( e^{i 2 \pi \theta} ) \right|^2 \, \dd \theta \lesssim 
    \abs{I}  \sum_{k=0}^\infty \brkt{\sum_{j=k2^n}^{(k+1)2^{n}-1} \abs{f_j}}^2,
\]
\end{enumerate}
where $f_j := \widehat{f} (j)$, $j \in \N_0$. 
\end{lemma}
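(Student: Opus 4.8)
The plan is to reduce everything to Part~(\ref{arithmetic}), which is the substantive statement, and then obtain Part~(\ref{geometric}) as a corollary. For Part~(\ref{arithmetic}), fix $n\in\N_0$ and an interval $I$ with $|I|=2^{-n}$; by translation invariance of the Lebesgue measure on $\T$ we may take $I=[0,2^{-n})$. Expand $\int_I|f(e^{i2\pi\theta})|^2\,\dd\theta=\sum_{j,l\ge 0}f_j\overline{f_l}\int_I e^{i2\pi(j-l)\theta}\,\dd\theta$. The integral equals $2^{-n}$ when $j=l$ and equals $\frac{e^{i2\pi(j-l)2^{-n}}-1}{i2\pi(j-l)}$ otherwise, so its modulus is $\lesssim \min\{2^{-n},|j-l|^{-1}\}$. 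Group the frequencies into the dyadic-length blocks $B_k:=\{k2^n,\dots,(k+1)2^n-1\}$ for $k\in\N_0$ and write $S_k:=\sum_{j\in B_k}|f_j|$. The diagonal-and-nearby-blocks contribution (pairs $j,l$ with $|j-l|\le 2\cdot 2^n$, say) is bounded by $2^{-n}\sum_k(S_k+S_{k+1})^2\lesssim 2^{-n}\sum_k S_k^2=|I|\sum_k S_k^2$, using $\int_I|\cdot|\lesssim 2^{-n}$ on each such pair and Cauchy--Schwarz within blocks.

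The main work is the off-diagonal part, $\sum_{|k-m|\ge 2}\sum_{j\in B_k}\sum_{l\in B_m}|f_j||f_l|\cdot\frac{1}{|j-l|}$. Here for $j\in B_k$, $l\in B_m$ with $k\ne m$ one has $|j-l|\approx |k-m|2^n$, so this is $\lesssim 2^{-n}\sum_{k\ne m}\frac{S_kS_m}{|k-m|}$. This is precisely a bilinear form governed by the discrete Hilbert transform: $\sum_{k\ne m}\frac{S_kS_m}{|k-m|}=\langle H\mathbf{S},\mathbf{S}\rangle$ up to the sign kernel, and since the discrete Hilbert transform is bounded on $\ell^2(\Z)$, this is $\lesssim \|\mathbf{S}\|_{\ell^2}^2=\sum_k S_k^2$. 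Combining, $\int_I|f|^2\lesssim 2^{-n}\sum_k S_k^2=|I|\sum_{k=0}^\infty\big(\sum_{j=k2^n}^{(k+1)2^n-1}|f_j|\big)^2$, which is Part~(\ref{arithmetic}).

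For Part~(\ref{geometric}), I would deduce it from Part~(\ref{arithmetic}) by a grouping/superposition argument. Collapse the first $2^n$ coefficients into the single block $j\in\{0,\dots,2^n-1\}$ (contributing $(\sum_{j=0}^{2^n-1}|f_j|)^2$ to the right-hand side of~(\ref{geometric})), and for each $k\ge 0$ note that the dyadic block $\{2^k2^n,\dots,2^{k+1}2^n-1\}$ is a union of $2^k$ consecutive arithmetic blocks $B_{k'}$ of length $2^n$; by Cauchy--Schwarz, $\sum_{k'}S_{k'}^2\le\big(\sum_{k'}S_{k'}\big)^2$ over those $2^k$ blocks, so $\sum_{k'\in\text{dyadic block }k}S_{k'}^2\le\big(\sum_{j=2^k2^n}^{2^{k+1}2^n-1}|f_j|\big)^2$. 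Summing over $k$ and adding the first block gives $\sum_{k'\ge 0}S_{k'}^2$ dominated by the right-hand side of~(\ref{geometric}), and then Part~(\ref{arithmetic}) finishes it. The main obstacle I anticipate is the careful bookkeeping in the off-diagonal estimate: one must verify that $|j-l|\approx|k-m|2^n$ uniformly (which fails to hold comfortably only for adjacent blocks $|k-m|=1$, hence the choice to absorb those into the diagonal term), and that the resulting kernel genuinely matches the discrete Hilbert transform kernel $\frac{1}{k-m}$ so that its $\ell^2$-boundedness applies cleanly; handling the boundary frequencies between consecutive blocks and the fact that our indices run over $\N_0$ rather than $\Z$ requires a zero-extension of the sequence $\mathbf{S}$, which is harmless.
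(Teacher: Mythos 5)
Your reduction of Part~(\ref{geometric}) to Part~(\ref{arithmetic}), the block decomposition $B_k=\{k2^n,\dots,(k+1)2^n-1\}$, the diagonal/near-diagonal estimate, the replacement $|j-l|\approx|k-m|2^n$ for $|k-m|\ge2$, and the translation argument are all sound and match the paper's overall strategy. The gap is in the off-diagonal step: having taken absolute values and arrived at the quantity $\sum_{|k-m|\ge 2}\frac{S_kS_m}{|k-m|}$ with $S_k\ge0$, you claim this is $\lesssim\sum_kS_k^2$ by boundedness of the discrete Hilbert transform. That claim is false. The discrete Hilbert transform bound is for the \emph{signed} kernel $\tfrac{1}{k-m}$; once you pass to $\tfrac{1}{|k-m|}$ you have discarded exactly the cancellation that makes the estimate work. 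Concretely, with $S_k=1$ for $k=1,\dots,N$, the left side is $\approx N\log N$ while $\sum_kS_k^2=N$. (Note also that $\langle H\mathbf{S},\mathbf{S}\rangle=0$ for real $\mathbf{S}$ because $H$ is antisymmetric, so the phrase ``up to the sign kernel'' is covering the entire difficulty.) With your weaker bound, the conclusion of Part~(\ref{arithmetic}) does not follow.

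The paper avoids this by \emph{not} taking absolute values at this stage. It keeps the complex kernel $\tfrac{1}{i2\pi(j-l)}$, and replaces $j-l$ by $2^n(k_1-k_2)$ to produce the genuine Hilbert bilinear form in the \emph{complex} block sums $a_k:=\sum_{j\in B_k}f_j$ (satisfying $|a_k|\le S_k$). This signed form is then controlled by the discrete Hilbert transform on $\ell^2$, and only the error from the replacement $j-l\mapsto 2^n(k_1-k_2)$ is estimated in absolute value, which works because the error kernel decays like $|k_1-k_2|^{-2}$ and is amenable to a Schur test. There is also a second term of the same type coming from the other endpoint of the interval (the paper's terms IV/VII/VIII), which you have implicitly dropped by passing to absolute values early; as you note, it carries only unimodular twists $e^{i2\pi j a}$ on the coefficients and is handled identically. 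To repair your proof: postpone the absolute values, write the off-diagonal part as the Hilbert form in the $a_k$ plus a Schur-summable error, and then bound $|a_k|\le S_k$ only at the end.
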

\begin{proof} Let $|I| = 2^{-n}$.
We remark that Part (\ref{geometric}) of the lemma follows from Part (\ref{arithmetic}) by a simple $\ell^1 - \ell^2$ estimate, so it suffices to prove Part (\ref{arithmetic}).

Note that
\begin{align*}
\int_I f(e^{i 2 \pi \theta})\overline{f(e^{i 2 \pi \theta})} \, \dd \theta & = \sum_{k_1 = 0}^\infty\sum_{k_2=0}^\infty \sum_{j=k_1 2^{n}}^{(k_1+1)2^{n}-1}\sum_{l=k_2 2^n}^{(k_2 +1) 2^{n}-1} f_j \overline{f_l} \int_I e ^{i 2\pi (j-l) \theta} \, \dd \theta \\
&=  \I + \II,
\end{align*}
where
\[
\I := \sum_{\substack{ k_1, k_2 \geq 0:\\ |k_1-k_2| \leq 1}} \sum_{j=k_1 2^{n}}^{(k_1+1)2^{n}-1}\sum_{l=k_2 2^n}^{(k_2 +1) 2^{n}-1} f_j \overline{f_l} \int_I e^{i 2\pi (j-l) \theta} \, \dd \theta
\]
and
\[
\II := \sum_{\substack{ k_1, k_2 \geq 0:\\ |k_1-k_2| \geq 2}} \sum_{j=k_1 2^{n}}^{(k_1+1)2^{n}-1}\sum_{l=k_2 2^n}^{(k_2 +1) 2^{n}-1} f_j \overline{f_l} \int_I e^{i 2\pi (j-l) \theta} \, \dd \theta .
\]

Term $\I$ may easily be estimated by
\[
| \I | \leq
\sum_{\substack{k_1, k_2 \geq 0:\\ |k_1-k_2| \leq 1}} \sum_{j=k_1 2^{n}}^{(k_1+1)2^{n}-1}\sum_{l=k_2 2^n}^{(k_2 +1) 2^{n}-1} | f_j | \cdot |  f_l | \cdot 2^{-n}
\leq 3 \cdot 2^{-n} \sum_{k \geq 0} \left(\sum_{j=k 2^{n}}^{(k+1)2^{n}-1} |f_j| \right)^2.
\]
For term $\II$, consider first the case $I = [0,2^{-n}]$. Then
\begin{align*}
&\sum_{\substack{k_1, k_2 \geq 0:\\ |k_1-k_2| \geq 2}} \sum_{j=k_1 2^{n}}^{(k_1+1)2^{n}-1}\sum_{l=k_2 2^n}^{(k_2 +1) 2^{n}-1} f_j \overline{f_l} \int_I e^{i 2\pi (j-l) \theta} \, \dd \theta = \\
& \sum_{\substack{k_1, k_2 \geq 0:\\ |k_1-k_2| \geq 2}}
\sum_{j=k_1 2^{n}}^{(k_1+1)2^{n}-1} \sum_{l=k_2 2^n}^{(k_2 +1) 2^{n}-1} f_j \overline{f_l} \, \frac{1 - e^{i 2 \pi (j-l) 2^{-n}}}{i 2\pi (j-l)} =  \\
&  \text{III} + \text{IV},
\end{align*}
where
\[
\text{III} :=  \sum_{\substack{ k_1, k_2 \geq  0:\\ k_1 \neq k_2} }^\infty 
\sum_{j = k_1 2^{n}}^{(k_1+1) 2^{n}-1} 
\sum_{l = k_2 2^{n}}^{(k_2+1) 2^{n}-1} f_{j} \overline{f_{l}} \frac{1}{i 2\pi ( j-l)}
\]
and
\[
\text{IV} := - \sum_{ \substack{k_1, k_2 \geq  0: \\ k_1 \neq k_2 }}^\infty 
\sum_{j = k_1 2^{n}}^{(k_1+1) 2^{n}-1} 
\sum_{l = k_2 2^{n}}^{(k_2+1) 2^{n}-1}  f_{j} \overline{f_{l}}  \frac{  e^{i 2 \pi (j-l) 2^{-n}}}{i 2\pi  ( j-l)} . 
\]

Moreover,
\begin{align*}
\text{III} &= \sum_{\substack{k_1, k_2\geq 0:\\ |k_1-k_2| \geq 2}}^\infty
\sum_{j_1=0}^{2^{n}-1} \sum_{j_2=0}^{2^{n}-1} f_{k_1 2^n +j_1} \overline{f_{k_2 2^n + j_2}}  \frac{1}{i 2\pi ( 2^n (k_1-k_2) + (j_1-j_2))} \\
& = \text{V} + \text{VI},
\end{align*}
where
\[
\text{V}:= \sum_{\substack{k_1, k_2\geq 0:\\ |k_1 -k_2|\geq 2}}
\sum_{j_1=0}^{2^{n}-1} \sum_{j_2=0}^{2^{n}-1} f_{k_1 2^n +j_1} \overline{f_{k_2 2^n + j_2}}  \frac{1}{i 2\pi 2^n (k_1-k_2)} 
\]
and
\begin{multline*}
\text{VI} := \\
\sum_{\substack{ k_1, k_2\geq 0:\\ |k_1-k_2| \geq 2}}^\infty
\sum_{j_1=0}^{2^{n}-1} \sum_{j_2=0}^{2^{n}-1} f_{k_1 2^n +j_1} \overline{f_{k_2 2^n + j_2}}  \frac{j_2 -j_1}{i 2\pi ( 2^n (k_1-k_2) + (j_1-j_2)) 2^n (k_1-k_2) } .
\end{multline*}

Term V can be estimated by
\begin{align*}
& \left|\sum_{|k_1 -k_2|\geq 2}
\sum_{j_1=0}^{2^{n}-1} \sum_{j_2=0}^{2^{n}-1} f_{k_1 2^n +j_1} \overline{f_{k_2 2^n + j_2}}  \frac{1}{i 2\pi 2^n (k_1-k_2)}
\right| = \\
& \frac{2^{-n}}{2 \pi} \left|\sum_{|k_1 -k_2|\geq 2}
\left(\sum_{j_1=0}^{2^{n}-1}  f_{k_1 2^n +j_1} \right)
\left( \sum_{j_2=0}^{2^{n}-1} \overline{f_{k_2 2^n + j_2}} \right) \frac{1}{k_1-k_2}
\right|
\lesssim \\
&  2^{-n} \sum_{k \geq 1} \left(\sum_{j=k 2^{n}}^{(k+1)2^{n}-1} |f_j| \right)^2,
\end{align*}
where we use the boundedness of the discrete Hilbert transform on $l^2$.

For term $\text{VI}$, we observe that
\[
\left|\frac{j_2 -j_1}{i 2\pi ( 2^n (k_1-k_2) + (j_1-j_2))  2^n (k_1-k_2)}
 \right| \leq 2 \cdot 2^{-n} \frac{1}{2\pi  (k_1-k_2)^2}
\]
for $|k_1-k_2| \geq 2$ and $j_1, j_2 \in \{ 0, 1, \cdots, 2^n-1\}$, which yields
\begin{align*}
 & \left|\sum_{\substack{ k_1, k_2\geq 0:\\ |k_1-k_2| \geq 2}}^\infty
\sum_{j_1=0}^{2^{n}-1} \sum_{j_2=0}^{2^{n}-1} f_{k_1 2^n +j_1} \overline{f_{k_2 2^n + j_2}}  \frac{j_2 -j_1}{i 2\pi ( 2^n (k_1-k_2) + (j_1-j_2)) 2^n (k_1-k_2)   } \right| \leq \\
&   \frac{2^{-n}}{ \pi} \sum_{|k_1 -k_2|\geq 2}
\left(\sum_{j_1=0}^{2^{n}-1}  |f_{k_1 2^n +j_1} |\right)
\left( \sum_{j_2=0}^{2^{n}-1} |f_{k_2 2^n + j_2}| \right) \frac{1}{(k_1-k_2)^2}
\lesssim \\
& 2^{-n} \sum_{k \geq 0} \left(\sum_{j=k 2^{n}}^{(k+1)2^{n}-1} |f_j| \right)^2,
\end{align*}
where we use the the boundedness of the linear operator on $\ell^2$ induced by the matrix
\[
\left(\frac{1}{(k_1-k_2)^2}\right)_{k_1, k_2 \geq 0, |k_1 -k_2|\geq 2},
\]
as can be verified e.g. by appealing to the Schur test.

Note that we have yet to estimate term IV. Instead of doing this directly, we observe that for $I = [a, a+ 2^{-n}]$, term II becomes
\[ 
\sum_{\substack{k_1, k_2 \geq 0:\\ |k_1-k_2| \geq 2}}
\sum_{j=k_1 2^{n}}^{(k_1+1)2^{n}-1} \sum_{l=k_2 2^n}^{(k_2 +1) 2^{n}-1} f_j \overline{f_l} \, 
\frac{e^{i 2 \pi (j-l)a} - e^{i 2 \pi (j-l) (a+2^{-n})}}{i 2\pi (j-l)} = \text{VII} + \text{VIII},
\]
where
\[
\text{VII} := \sum_{\substack{k_1, k_2 \geq 0:\\ |k_1-k_2| \geq 2}}
\sum_{j=k_1 2^{n}}^{(k_1+1)2^{n}-1} \sum_{l=k_2 2^n}^{(k_2 +1) 2^{n}-1} e^{i 2 \pi j a} f_j \overline{e^{i 2 \pi l a} f_l} \, 
    \frac{1}{i 2\pi (j-l)}
\]
and
\[
\text{VIII} := - \sum_{\substack{k_1, k_2 \geq 0:\\ |k_1-k_2| \geq 2}}
\sum_{j=k_1 2^{n}}^{(k_1+1)2^{n}-1} \sum_{l=k_2 2^n}^{(k_2 +1) 2^{n}-1} e^{i 2 \pi j (a + 2^{-n})} f_j \overline{e^{i 2 \pi l (a+ 2^{-n})} f_l} \, 
\frac{1}{i 2\pi (j-l)} .
\]
To handle these last expressions, note that $\text{VII}$ and $\text{VIII}$ have exactly the same form as term $\text{III}$, just with unimodular factors on the Fourier coefficients $f_j$. This observation combined with our previous estimates finishes the proof of the lemma. \end{proof}

\subsection{Proof of Theorem \ref{gen_mult_q_leq2}}
We begin with the implication $(ii) \Rightarrow (i)$ in Part (a). Proceeding along the same lines as in the proof of Theorem \ref{gen_mult_p<1}, we perform the splitting (\ref{eq:highlow}) into high and low frequencies. A simple duality argument again shows that it suffices to prove the case $q=1$. Note that in this case condition $(ii)$ in Theorem \ref{gen_mult_q_leq2}, Part (a), implies in particular condition $(ii)$ in Theorem \ref{gen_mult_p<1}, so the low frequency estimate for $K_{\eps, \lambda}$ goes through without change (the upper type of $\Psi$
is not required here). As to the high frequency estimate for 
$N_{\eps, \lambda}$, we write
\begin{align*}  
\int_I \left| N_{\eps, \lambda}(e^{i 2 \pi \theta}) - \langle N_{\eps, \lambda} \rangle_I \right|^2 \, \dd \theta
&\leq  \int_I \left| N_{\eps, \lambda}(e^{i 2 \pi \theta})  \right|^2 \, \dd \theta\\
& \lesssim |I| \sum_{k=1}^{\infty} \left( \sum_{(k+1)|I|^{-1} \geq n > k|I|^{-1}} |\lambda_n| \right)^2 \\
& \lesssim \frac{|I|^{-1}}{( \Psi^{-1}(|I|^{-1}))^2} 
\end{align*}
by condition $(ii)$, where we have now used the full power of Lemma \ref{lem:uncond1}, namely Part (\ref{arithmetic}).
  
For the reverse estimate, note that by Proposition \ref{equiv_prop}
the function
\[
b_\eps( e^{i 2 \pi \theta} ) = \sum_{k=1}^M \eps_k \sum_{n= k N}^{(k+1)N -1}  e^{i 2 \pi n \theta} |\lambda_n|
= \sum_{k=1}^M \eps_k b_k( e^{i 2 \pi  \theta} ) 
\]
is in $BMO (\rho_{\Psi}) (\D)$, uniformly 
for every choice of $\epsilon = \{ \epsilon_k \}_{k \in \N}$ with $| \epsilon_k | \leq 1$ and for all $M \in \N$, so in particular for $\eps_k \in \{-1,1\}$.
As above, we choose $I= [0, (4N)^{-1}]$. Here, 
\[
b_k (e^{i 2 \pi \theta}) := \sum_{n= k N}^{(k+1)N -1}  e^{i 2 \pi n \theta} |\lambda_n|, \quad \theta \in [0,1).
\]
An integration over the standard product probability space 
$\Omega = \{-1,1\}^\N$ and an application of Khintchine's Theorem (see, e.g., \cite{Grafakos_modern}*{Appendix C}) thus yields
\begin{align*}
\frac{1}{ \Psi^{-1}(|I|^{-1})} & \gtrsim \int_\Omega \int_I \left|  b_\eps(e^{i 2 \pi \theta}) - \langle b_\eps \rangle_I \right| \, \dd \theta \, \dd \mathbb{P} (\eps) \\
& \approx \left( \sum_{k=1}^M \left( \int_I \left|\sum_{n= k N}^{(k+1)N -1}  e^{i 2 \pi n \theta} |\lambda_n| - \langle b_k \rangle_I \right| \, \dd \theta \right)^2\right)^{1/2} \\
& = \left( \sum_{k=1}^M \left( \int_I \left|\sum_{n= k N}^{(k+1)N -1}  e^{i 2 \pi (n- kN)\theta} |\lambda_n|  -  e^{- i 2 \pi k N \theta}\langle b_k \rangle_I \right| \, \dd \theta \right)^2\right)^{1/2} \\
& \gtrsim  \left( \sum_{k=1}^M \left( \sum_{n= k N}^{(k+1)N -1}   |\lambda_n|   \right)^2\right)^{1/2} \frac{1}{N},
\end{align*}
where we used a similar argument as in the proof of (\ref{eq:osclow}) for the last 
inequality. To make this precise, we note that for any $k$,
\[
\left|\sum_{n= k N}^{(k+1)N -1}  e^{i 2 \pi n \theta} |\lambda_n| \right| =  \left|\sum_{n= k N}^{(k+1)N -1}  e^{i 2 \pi (n- kN)\theta} |\lambda_n| \right| \geq \frac{1}{\sqrt{2}} \sum_{n= k N}^{(k+1)N -1} |\lambda_n|
\]
for all $\theta \in I$. If 

\begin{equation}\label{eq:smaverage}
|\langle b_k \rangle_I| \leq \frac{1}{4} \sum_{n= k N}^{(k+1)N -1}   |\lambda_n|,
\end{equation}
this implies
\begin{equation}  \label{eq:lower1}
\int_I \left|\sum_{n= k N}^{(k+1)N -1}  e^{i 2 \pi n \theta} |\lambda_n|  - \langle b_k \rangle_I \right| \, \dd \theta \geq |I| \frac{1}{4} \sum_{n= k N}^{(k+1)N -1} |\lambda_n|.
\end{equation}
Inequality (\ref{eq:smaverage}), and therefore (\ref{eq:lower1}), hold in particular for $k \geq 4$.

If \[
|\langle b_k \rangle_I| > \frac{1}{4} \sum_{n= k N}^{(k+1)N -1}   |\lambda_n|,\]
then $k \leq 3$. Choosing 
\[
\tilde I = 
\left[0, \frac{1}{16 N} \right] \quad \text{and} \quad \dbtilde{I} = \left[0, \frac{1}{64 N} \right],
\]
we note that on $\tilde I$, all terms in $b_k$ 
only take values in the first quadrant of $\C$ and hence
\begin{align*}  %
& \int_{\tilde I} \left|\sum_{n= k N}^{(k+1)N -1}  e^{i 2 \pi n \theta} |\lambda_n|  -  \langle b_k \rangle_{\tilde I} \right| \,  \dd \theta  \\ \geq
&  \int_{\tilde I} \left|\sum_{n= k N}^{(k+1)N -1}  \sin(2 \pi n \theta) |\lambda_n|  -  \langle \sin(2 \pi n \theta) \rangle_{\tilde I} |\lambda_n| \right| \, \dd \theta  \\ \geq
&  \int_{\dbtilde I} \left|\sum_{n= k N}^{(k+1)N -1} \left( \langle   \sin(2 \pi n \theta) \rangle_{\tilde I} - \sin(2 \pi n \theta)\right) |\lambda_n| \right| \, \dd \theta  \\ \gtrsim
&  \frac{1}{N}   \sum_{n= k N}^{(k+1)N -1} \frac{n}{N}  |\lambda_n|  \\ \gtrsim
&  |I|  \sum_{n= k N}^{(k+1)N -1}  |\lambda_n|,
\end{align*}
where we apply the estimate (\ref{eq:sinestimate}) in the penultimate inequality.
\qed

\section{Theorem \ref{LP_BMO_Psi} and its consequences}\label{BMOsection}  
The proof of Theorem \ref{LP_BMO_Psi} can be obtained by suitably adapting known proofs in the literature for the BMO-case, i.e. for the case $r=0$;  for instance,  one can adapt the corresponding arguments in the papers of S. V. Bochkarev \cite{B} and I. Vasilyev and A. Tselishchev \cite{V-T}. Regarding the implication $(1) \implies (2)$, by adapting an argument due to Bourgain \cite{B_SF} one can actually show that a stronger version of $(1) \implies (2)$ holds true; see Theorem \ref{reverse_arbitrary} below). Note that in the statement of Theorem \ref{reverse_arbitrary}, if $K$ is a non-empty interval then $W(K)$ and $\mathbf{K}^{\rm centre}$ are as in \S \ref{LP_part}

\begin{theo}\label{reverse_arbitrary} Let $r \geq 0$ be a given exponent.

Let $\mathcal{K}$ be an arbitrary collection of mutually disjoint intervals on the real line such that $|K| = 3 \cdot 2^{l_K+1}$ for some $l_K \in \N_0$ and $0 \notin K$ for all $K \in \mathcal{K}$. Then 
\begin{multline*}
\left( \sup_{\substack{I \subseteq \T: \\ I \ {\rm arc}}} \frac{1} { \left[ \rho_{\Psi_{r,1}} (|I|) \right]^2 |I|} \int_I \sum_{K \in \mathcal{K}}  \sum_{\substack{J \in W (K) \setminus \mathbf{K}^{\rm centre} : \\ |J| > |I|^{-1}}} \left| \Delta_J (u) (e^{i 2 \pi \theta}) \right|^2 \, \dd \theta \right)^{1/2} \lesssim_r  \\
\norm{ u }_{BMO (\rho_{\Psi_{r,1}}) (\T)}
\end{multline*}
for all $u \in BMO (\rho_{\Psi_{r,1}}) (\T)$, where the implied constant depends only on $r$ and not on $\mathcal{K}$ and $u$. 
\end{theo}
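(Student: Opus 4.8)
The plan is to reduce the square-function estimate to a single fixed arc $I$ and exploit the classical $r=0$ result of Bourgain as a black box, absorbing the growth function $\rho_{\Psi_{r,1}}$ by a rescaling argument. First I would fix an arc $I\subseteq\T$ with $|I|^{-1}\approx 2^{m}$ for some $m\in\N_0$, and observe that since the intervals $K\in\mathcal K$ are mutually disjoint and each $J\in W(K)\setminus\mathbf K^{\rm centre}$ satisfies $2J\subseteq K$ with $\norm{\sum_{J\in W(K)}\chi_{2J}}_{L^\infty}\le 5$, the projections $\{\Delta_J\}$ restricted to frequencies $|J|>|I|^{-1}$ form a well-distributed (Rubio de Francia-type) family with bounded overlap. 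This is precisely the setting in which Bourgain's argument applies: for $u\in BMO(\T)$ one has $\frac1{|I|}\int_I\sum_{J}|\Delta_J(u)|^2\,\dd\theta\lesssim \norm{u}_{BMO(\T)}^2$, uniformly over such well-distributed families and over $I$. The content of the theorem is to upgrade $BMO$ to $BMO(\rho_{\Psi_{r,1}})$, i.e.\ to insert the weight $[\rho_{\Psi_{r,1}}(|I|)]^{-2}$ on the left.

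The key step is a localisation-plus-rescaling reduction. Fix $I$, and decompose $u = u_{\rm low} + u_{\rm high}$ where $u_{\rm high}$ collects the frequencies $|n|>|I|^{-1}$ (so that only $u_{\rm high}$ contributes to the inner sum, since all relevant $J$ satisfy $|J|>|I|^{-1}$). For the high-frequency part, I would use the standard $BMO(\rho_\Psi)$ machinery: subtracting the mean $\langle u\rangle_I$ and truncating $u$ outside a fixed dilate of $I$ changes nothing in the frequencies under consideration up to acceptable errors, and the John--Nirenberg-type self-improvement for $BMO(\rho_{\Psi_{r,1}})$ (available through \eqref{BMO_Psi_norms}, valid for all $q\in[1,\infty)$, in particular $q=2$) gives
\[
\frac1{[\rho_{\Psi_{r,1}}(|I|)]^2|I|}\int_I |u(e^{i2\pi\theta})-\langle u\rangle_I|^2\,\dd\theta \lesssim_r \norm{u}_{BMO(\rho_{\Psi_{r,1}})(\T)}^2.
\]
The function $v := [\rho_{\Psi_{r,1}}(|I|)]^{-1}(u-\langle u\rangle_I)\chi_{CI}$ then has $L^2$-average over $I$ controlled by $\norm{u}_{BMO(\rho_{\Psi_{r,1}})}$, and a suitable affine rescaling mapping $I$ to a unit-scale arc turns $v$ into a function whose relevant square function is exactly of the form handled by the $r=0$, $BMO$-version of the estimate — because on the scale of $I$ the local oscillation of a $BMO(\rho_{\Psi_{r,1}})$ function is, up to the normalising factor $\rho_{\Psi_{r,1}}(|I|)$, indistinguishable from that of an $L^2$-normalised (hence $BMO$-controlled at that single scale) function. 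Summing the resulting bound over the (bounded-overlap) families $W(K)$ and taking the supremum over $I$ yields the claim.

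The main obstacle I anticipate is making the rescaling rigorous across the \emph{full} range of scales simultaneously: the factor $\rho_{\Psi_{r,1}}(|I|)\approx |I|\log^{r}(|I|^{-1}+e)$ depends on $|I|$, so a naive dilation does not produce a single $BMO(\T)$ function to which Bourgain's estimate can be applied once and for all. One must instead run the argument arc-by-arc, and the care lies in controlling the tails: the truncation $u\mapsto u\chi_{CI}$ introduces low-frequency error terms, and one must verify that these contribute frequencies $\le |I|^{-1}$ only (up to rapidly decaying pieces handled by the Schwartz decay of the de la Vall\'ee Poussin-type kernels $\delta_J$), so that they are annihilated by the $\Delta_J$ with $|J|>|I|^{-1}$ or else contribute a geometrically convergent error. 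A secondary technical point is that the families $W(K)\setminus\mathbf K^{\rm centre}$, indexed by an \emph{arbitrary} disjoint collection $\mathcal K$, are genuinely more general than the dyadic Littlewood--Paley family $\{\Delta_n\}$; one must check that Bourgain's argument (or the Bochkarev/Vasilyev--Tselishchev variants cited) applies verbatim to any well-distributed family with the overlap bound $\norm{\sum_{J\in W(K)}\chi_{2J}}_{L^\infty}\le 5$ and $0\notin K$, which is exactly the structural hypothesis imposed on $\mathcal K$.
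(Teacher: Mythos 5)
The high-level skeleton of your plan---reduce to a single arc $I$, invoke the Bourgain machinery underlying the $r=0$ case, and exploit the $q=2$ form of \eqref{BMO_Psi_norms}---matches the paper's strategy. However, the central reduction you propose (truncate, normalise by $\rho_{\Psi_{r,1}}(|I|)$, rescale, and apply the $r=0$ $BMO$ square-function estimate as a black box) has a genuine gap that the paper avoids by taking a slightly different route through Bourgain's lemma.

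The problem is twofold. First, the function $v := [\rho_{\Psi_{r,1}}(|I|)]^{-1}(u-\langle u\rangle_I)\chi_{CI}$ is \emph{not} uniformly in $BMO(\T)$: the sharp cutoff $\chi_{CI}$ forces a jump at $\partial(CI)$ of size comparable to $[\rho_{\Psi_{r,1}}(|I|)]^{-1}|u-\langle u\rangle_I|$, and a short arc $I'$ straddling this boundary sees $BMO$-oscillation of that order, which need not be $\lesssim \norm{u}_{BMO(\rho_{\Psi_{r,1}})}$. Having the $L^2$-average of $v$ over $I$ controlled (your John--Nirenberg step) is much weaker than a uniform $BMO$ bound for $v$, and it is the latter you would need in order to invoke the $r=0$ estimate. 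Second, even setting this aside, $\Delta_J(v)\neq\Delta_J(u)$: multiplying by $\chi_{CI}$ convolves $\widehat{u}$ with the slowly decaying $\widehat{\chi_{CI}}$, so the error is \emph{not} confined to frequencies $\leq|I|^{-1}$, and the de la Vall\'ee Poussin kernels $\delta_J$ decay only polynomially, not rapidly enough to annihilate the leakage in a way that sums geometrically across the whole family $\{J\}$.

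What the paper actually does is to sidestep truncation entirely. It does not apply the $r=0$ $BMO$ square-function result as a black box; rather, it goes one level down to Bourgain's Cotlar-type almost-orthogonality inequality \cite{B_SF}*{(6)}, which is a \emph{weighted $L^2$} estimate with the smooth weight $\omega_\delta(\theta)=\delta^2/(\delta^2+\sin^2(\pi\theta))$:
\[
\int_{\T}\Bigl(\sum_{\alpha}\abs{F_\alpha\ast u}^2\Bigr)\omega_\delta \lesssim \int_{\T}\abs{u-\langle u\rangle_{I_\delta}}^2\omega_\delta .
\]
This inequality is applied directly to $g=u-\langle u\rangle_{I_\delta}$, with no cutoff; the weight $\omega_\delta$ does the localisation for free and, crucially, commutes with the Fourier projections rather than contaminating them. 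The growth factor $[\rho_{\Psi_{r,1}}(|I_\delta|)]^{2}|I_\delta|$ is then produced by estimating $\int|g|^2\omega_\delta$ through a telescoping sum over dyadic annuli $\{2^{k-1}\delta\leq\abs{\theta}\leq 2^k\delta\}$, using that $\log(\delta^{-1})\lesssim k\log((2^{k-1}\delta)^{-1})$ to absorb the ratio $\rho_{\Psi_{r,1}}(2^k\delta)/\rho_{\Psi_{r,1}}(\delta)$; this is precisely where the hypothesis $\Psi=\Psi_{r,1}$ (logarithmic type) enters. This telescoping-in-shells step replaces the rescaling step in your plan and is the key mechanism you would need to supply to make the argument go through.
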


As mentioned above, Theorems \ref{LP_BMO_Psi} and \ref{reverse_arbitrary} can be obtained by modifying known arguments in the literature. We shall briefly present how this can be done for the convenience of the reader. %
More specifically, in \S \ref{rev_arb} we outline how one can adapt an argument in \cite{B_SF}  to establish Theorem \ref{reverse_arbitrary} and in \S \ref{2_implies_1} we briefly present how a modification of the argument in \cite{V-T} for the corresponding BMO-case can be used to establish the implication $(2) \implies (1)$ in Theorem \ref{LP_BMO_Psi}. Apart from the reason of making the presentation to a certain extent self-contained, we also include \S\ref{rev_arb} and \S \ref{2_implies_1} in order to make transparent in which parts of the proofs one uses the assumption that the growth function is of the form $\Psi = \Psi_{r,1}$.

In \S \ref{L-P_ineq_duality}, we present some consequences of Theorem \ref{LP_BMO_Psi}.

\subsection{Proof of Theorem \ref{reverse_arbitrary}}\label{rev_arb}

By translation invariance, to prove Theorem \ref{reverse_arbitrary} it suffices to show that
\begin{multline}\label{main_rev_ineq}
\frac{1} { \left[ \rho_{\Psi_{r,1}} (|I|) \right]^2 |I|} \int_I \sum_{K \in \mathcal{K}}   \sum_{\substack{J \in W (K) \setminus \mathbf{K}^{\rm centre} : \\ |J| > |I|^{-1}}} \left| \Delta_J (u) (e^{i 2 \pi \theta}) \right|^2 \, \dd \theta \lesssim_r \\
\norm{ u }^2_{BMO (\rho_{\Psi_{r,1}}) (\T)},
\end{multline}
for any fixed interval of the form $I = [-\delta/2, \delta/2) $, $\delta \in (0,1] $. Since any de la Vall'ee Poussin-type kernel can be written as a linear combination of two modulated Fej\'er-type kernels (see \S \ref{LP_part}), \eqref{main_rev_ineq} is a direct consequence of the following lemma, which is a variant of \cite{B_SF}*{lemma 2}.

\begin{lemma}\label{reverse_arbitrary_Fejer}
Let $r \geq 0$ be a given exponent. For $\delta \in (0,1] $, consider the interval $I_{\delta} : = [-\delta/2, \delta/2) $. 

Let $\{ F_{\alpha} \}_{\in \mathcal{A}}$ be a finite collection of trigonometric polynomials of the form
\[
F_{\alpha} (e^{i 2 \pi \theta} ) = e^{i 2 \pi c_{\alpha} \theta} K_{N_{\alpha}} (e^{i 2 \pi \theta}), \quad \theta \in [-1/2, 1/2),
\]
where $N_{\alpha} > \delta^{-1}$ for all $\alpha \in \mathcal{A}$ and moreover, the frequency supports of $F_{\alpha}$ and $F_{\alpha'}$ are disjoint for $\alpha \neq \alpha'$ and $0 \notin \mathrm{supp} (\widehat{F_{\alpha}})$ for all $\alpha \in \mathcal{A}$.   

Then 
\[
\frac{1} { \left[ \rho_{\Psi_{r,1}} (|I_{\delta}|) \right]^2 |I_{\delta}|} \int_{I_{\delta}} \sum_{\alpha \in \mathcal{A}} \left| ( F_{\alpha} \ast u) (e^{i 2 \pi \theta}) \right|^2 \, \dd \theta \lesssim_r \norm{ u }^2_{BMO (\rho_{\Psi_{r,1}}) (\T)}
\]
for all $u \in BMO (\rho_{\Psi_{r,1}}) (\T)$, where the implied constant depends only on $r$ and not on $\delta$, $\mathcal{A}$, and $u$. 
\end{lemma}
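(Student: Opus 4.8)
The plan is to follow the argument of Bourgain \cite{B_SF}*{Lemma 2}, replacing the unweighted $BMO$ oscillation estimates by their $\rho_{\Psi_{r,1}}$-weighted versions from \eqref{BMO_Psi_norms} (used with $q=2$) and carefully tracking the logarithmic weights this produces; throughout we use that, by \eqref{inverse_r}, $\rho_{\Psi_{r,1}}(t)\approx_r \log^{-r}(t^{-1}+e)$ is a positive, slowly varying (in particular quasi-doubling) function, so $\rho_{\Psi_{r,1}}(ct)\approx_r\rho_{\Psi_{r,1}}(t)$ for any fixed $c>0$. Dividing through by $[\rho_{\Psi_{r,1}}(|I_\delta|)]^2|I_\delta|$, it suffices to prove
\[
\int_{I_\delta}\sum_{\alpha\in\mathcal{A}}|(F_\alpha\ast u)(e^{i2\pi\theta})|^2\,\dd\theta\lesssim_r[\rho_{\Psi_{r,1}}(|I_\delta|)]^2\,|I_\delta|\,\norm{u}_{BMO(\rho_{\Psi_{r,1}})(\T)}^2.
\]
Write $I:=I_\delta$, fix dilates $I\subseteq I^{*}\subseteq I^{**}\subseteq\T$ with $|I^{*}|\approx|I^{**}|\approx|I|=\delta$, and decompose
\[
u=\langle u\rangle_{I^{*}}+g+h,\quad g:=(u-\langle u\rangle_{I^{*}})\chi_{I^{**}},\quad h:=(u-\langle u\rangle_{I^{*}})\chi_{\T\setminus I^{**}}.
\]
Since $0\notin\mathrm{supp}(\widehat{F_\alpha})$ for every $\alpha$, we have $F_\alpha\ast\langle u\rangle_{I^{*}}=0$, so only the contributions of $g$ and $h$ remain.

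For the local term, the pairwise disjointness of the spectra of the $F_\alpha$ together with $0\le\widehat{F_\alpha}\le1$ (Fej\'er kernels) gives $\sum_\alpha|\widehat{F_\alpha}(n)|^2\le1$ for each $n$, whence by Parseval
\[
\sum_{\alpha\in\mathcal{A}}\int_I|F_\alpha\ast g|^2\,\dd\theta\le\sum_{\alpha\in\mathcal{A}}\norm{F_\alpha\ast g}_{L^2(\T)}^2\le\norm{g}_{L^2(\T)}^2=\int_{I^{**}}|u-\langle u\rangle_{I^{*}}|^2\,\dd\theta.
\]
Comparing $\langle u\rangle_{I^{*}}$ with $\langle u\rangle_{I^{**}}$ and invoking \eqref{BMO_Psi_norms} with $q=2$, this is $\lesssim_r[\rho_{\Psi_{r,1}}(|I^{**}|)]^2|I^{**}|\,\norm{u}_{BMO(\rho_{\Psi_{r,1}})(\T)}^2\approx_r[\rho_{\Psi_{r,1}}(|I|)]^2|I|\,\norm{u}_{BMO(\rho_{\Psi_{r,1}})(\T)}^2$, as required.

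The far term $h$ is where the real work lies. Split $\T\setminus I^{**}$ into dyadic annuli $A_j:=(2^{j+1}I^{*}\setminus2^{j}I^{*})\cap\T$, $1\le j\lesssim\log_2(\delta^{-1})$. For $\theta\in I$ and $\phi\in A_j$ the distance on $\T$ between $\theta$ and $\phi$ is $\approx2^{j}\delta$, which exceeds $N_\alpha^{-1}$ because $N_\alpha>\delta^{-1}$, so the Fej\'er decay estimate $|K_N(e^{i2\pi t})|\lesssim(N\cdot\mathrm{dist}(t,\Z)^2)^{-1}$ yields $|F_\alpha(e^{i2\pi(\theta-\phi)})|\lesssim(N_\alpha(2^{j}\delta)^2)^{-1}$. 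Estimating $\int_{A_j}|u-\langle u\rangle_{I^{*}}|$ by the triangle inequality after telescoping the averages $\langle u\rangle_{2^{i}I^{*}}$ — whose successive differences are $\lesssim\rho_{\Psi_{r,1}}(2^{i}\delta)\,\norm{u}_{BMO(\rho_{\Psi_{r,1}})(\T)}$ — and combining with the oscillation of $u$ on $2^{j+1}I^{*}$, one obtains for each $j$ a bound in which the only losses relative to the target are powers of $\log(\delta^{-1})$; it is precisely here, in keeping those telescoping losses under control, that the specific form $\Psi=\Psi_{r,1}$ (equivalently, the slow variation of $\rho_{\Psi_{r,1}}$) is genuinely used.

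The main obstacle is then the summation over $\alpha$ in the far term. In contrast to smooth Littlewood--Paley projections — one per dyadic scale, with rapidly decaying kernels — there may be many kernels $F_\alpha$ with $N_\alpha$ of a fixed dyadic size, and since Fej\'er kernels decay only quadratically one cannot simply add the pointwise bounds (the naive sum $\sum_\alpha N_\alpha^{-2}$ need not converge). Following Bourgain, one groups the indices $\alpha$ according to the dyadic size of $N_\alpha$ and, within each group, exploits the disjointness of the frequency supports — after a smooth localisation to $I$, whose spectrum has width $\lesssim\delta^{-1}\lesssim N_\alpha$ and therefore preserves this disjointness up to bounded overlap — to recover quantitative decay in the annulus index $j$. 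Summing the resulting geometric-type series in $j$ and over the dyadic sizes of $N_\alpha$, and absorbing the power-of-$\log$ losses against the factors $[\rho_{\Psi_{r,1}}(2^{j}\delta)]^2$ and $2^{-2j}$, gives $\sum_{\alpha}\int_I|F_\alpha\ast h|^2\lesssim_r[\rho_{\Psi_{r,1}}(|I|)]^2|I|\,\norm{u}_{BMO(\rho_{\Psi_{r,1}})(\T)}^2$, which together with the local estimate completes the proof. As indicated, the two delicate points are the control of this $\alpha$-summation and the bookkeeping of the logarithmic weights throughout.
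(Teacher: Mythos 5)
Your decomposition $u=\langle u\rangle_{I^*}+g+h$ and the local estimate for $g$ are fine (the Parseval step correctly uses the disjoint spectra and $0\le\widehat{F_\alpha}\le 1$), and your dyadic-annuli/telescoping bookkeeping for the $\rho_{\Psi_{r,1}}$-weights is the right sort of computation. But the far-term estimate is where the proposal stops being a proof. You correctly observe that the naive pointwise bound gives $\sum_\alpha N_\alpha^{-2}$, which need not be controlled, and that one must exploit the frequency disjointness after a smooth localisation to $I$. The rest — ``Following Bourgain, one groups the indices\ldots to recover quantitative decay in the annulus index $j$\ldots Summing the resulting geometric-type series\ldots gives'' — is a description of the goal, not an argument. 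In particular, the assertion that a smooth localisation of width $\lesssim\delta^{-1}$ ``preserves disjointness up to bounded overlap'' is where the real work lies: making this quantitative is precisely a Cotlar--Stein almost-orthogonality argument (Bourgain's Lemma 1 in \cite{B_SF}), and you have not supplied it. As written there is no valid bound for $\sum_\alpha\int_I|F_\alpha\ast h|^2$.

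The paper sidesteps the $\alpha$-summation entirely by invoking Bourgain's weighted inequality \cite{B_SF}*{(6)} as a black box: with the weight $\omega_\delta(\theta)=\delta^2/(\delta^2+\sin^2(\pi\theta))$, one has $\int\sum_\alpha|F_\alpha\ast u|^2\,\omega_\delta\lesssim\int|u-\langle u\rangle_{I_\delta}|^2\,\omega_\delta$, and \emph{this} inequality encodes the almost-orthogonality. After that, no $g/h$ split is needed — one simply estimates $\int|u-\langle u\rangle_{I_\delta}|^2\omega_\delta$ by splitting the integral at $I_\delta$ and running the dyadic-annuli/telescoping argument you sketched, with the quadratic decay of $\omega_\delta$ absorbing the polynomial-in-$k$ losses (the final series is $\sum_k k^{2r+2}2^{-2k}$). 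If you want a self-contained proof along your lines, you would either have to reprove Bourgain's Lemma 1 (the Cotlar-type estimate for the weighted operators $f\mapsto F_\alpha\ast f$ on $L^2(\omega_\delta)$), or find a genuine substitute for it; neither appears in the proposal, so the argument as stated does not close.
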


\begin{proof} Let $u$ be a given function in $ BMO (\rho_{\Psi_{r,1}}) (\T)$. If
\[
\omega_{\delta} ( \theta) : = \frac{\delta^2}{\delta^2 + \sin^2 ( \pi \theta )}, \quad \theta \in [-1/2,1/2),
\]
then \cite{B_SF}*{(6)} (in the proof of \cite{B_SF}*{lemma 2}) applied to $g= u - \langle u \rangle_{I_{\delta}}$ asserts that there exists an absolute constant $C>0$ such that
\begin{multline}\label{est_Bourgain}
\int_{[-1/2,1/2)} \left( \sum_{\alpha \in \mathcal{A}} \left| ( F_{\alpha} \ast u) (e^{i 2 \pi \theta}) \right|^2 \right) \omega_{\delta} ( \theta )\, \dd \theta \\ \leq
C \int_{[-1/2,1/2)} | u ( e^{i 2 \pi \theta} ) - \langle u \rangle_{I_{\delta}} |^2 \omega_{\delta} ( \theta) \, \dd \theta ,
\end{multline}
where we also used the assumption that $0 \notin \mathrm{supp} (\widehat{F_{\alpha}})$ for all $\alpha \in \mathcal{A}$. Notice that \cite{B_SF}*{(6)} is a consequence of \cite{B_SF}*{Lemma 1}, which is a Cotlar-type almost orthogonality result. 

To prove the desired estimate, note that since $\omega_{\delta} ( \theta) \geq 1/( 2 \pi^2 ) $ for all $\theta \in I_{\delta}$, \eqref{est_Bourgain} yields
\begin{multline}\label{est_omega}
\frac{1} { \left[ \rho_{\Psi_{r,1}} (|I_{\delta}|) \right]^2 |I_{\delta}|} \int_{I_{\delta}} \sum_{\alpha \in \mathcal{A}} \left| ( F_{\alpha} \ast u) (e^{i 2 \pi \theta}) \right|^2 \dd \theta \lesssim \\ 
\frac{1} { \left[ \rho_{\Psi_{r,1}} (|I_{\delta}|) \right]^2 |I_{\delta}|}  \int_{[-1/2,1/2)} | u ( e^{i 2 \pi \theta} ) - \langle u \rangle_{I_{\delta}} |^2 \omega_{\delta} ( \theta ) \, \dd \theta .
\end{multline}
Hence, in view of \eqref{est_omega}, it suffices to show that
\begin{equation}
\label{u_omega_est}
\frac{1} { \left[ \rho_{\Psi_{r,1}} (|I_{\delta}|) \right]^2 |I_{\delta}|} \int_{[-1/2,1/2)} | u ( e^{i 2 \pi \theta} ) - \langle u \rangle_{I_{\delta}} |^2 \omega_{\delta} ( \theta ) \, \dd \theta  \lesssim %
\norm{ u }^2_{BMO (\rho_{\Psi_{r,1}}) (\T)}. 
\end{equation}
As $\omega_{\delta} (\theta) \leq 1 $ for all $\theta \in [-1/2,1/2)$, we have
\begin{equation}\label{I_u_omega_est}
\frac{1} { \left[ \rho_{\Psi_{r,1}} (|I_{\delta}|) \right]^2 |I_{\delta}|} \int_{I_{\delta}} | u (e^{i 2 \pi \theta}) - \langle u \rangle_{I_{\delta}} |^2 \omega_{\delta} ( \theta ) \, \dd \theta \lesssim \norm{ u }^2_{BMO (\rho_{\Psi_{r,1}}) (\T)} .
\end{equation}

To handle the contribution of the term involving the part of the integral on the left-hand side of \eqref{u_omega_est} over $[-1/2, 1/2) \setminus I_{\delta}$, note that as $\sin (t)/t \geq 2/\pi $ for $t \in [-\pi/2, \pi/2] \setminus \{0 \}$ we have
\begin{align*}
& \frac{1} { \left[ \rho_{\Psi_{r,1}} (|I_{\delta}|) \right]^2 |I_{\delta}|} \int_{[- 1/2, - \delta/2) \cup [\delta/2, 1/2)} | u ( e^{i 2 \pi \theta} ) - \langle u \rangle_{I_{\delta}} |^2 \omega_{\delta} ( \theta ) \, \dd \theta \lesssim \\ 
& \frac{\log^{2r} ( \delta^{-1} )} { \delta } \sum_{\substack{k \in \N_0 :\\ 2^k \leq \delta^{-1} }} \frac{1} { 2^{2k} } \int_{ 2^{k-1} \delta \leq |\theta| \leq 2^k \delta } | u ( e^{i 2 \pi \theta} ) - \langle u \rangle_{I_{\delta}} |^2 \, \dd \theta .
\end{align*}
Since $2^k \leq \delta^{-1}$, we have
\[
\log \left( \delta^{-1} \right) \leq k
\log \left( ( 2^{k-1} \delta)^{-1} \right) 
\]
and so,
\begin{align*}
& \frac{\log^{2r} ( \delta^{-1} )} {  \delta } \sum_{\substack{k \in \N_0 :\\ 2^k \leq \delta^{-1} }}  \frac{1} { 2^{2k} } \int_{ 2^{k-1} \delta \leq |\theta|  \leq 2^k \delta } | u ( e^{i 2 \pi \theta} ) - \langle u \rangle_{I_{\delta}} |^2 \, \dd \theta \lesssim_r \\ 
& \sum_{\substack{k \in \N_0 :\\ 2^k \leq \delta^{-1} }} \frac{k^{2r}} { 2^{k} } \left( \frac{\log^{2r} \left( (2^k \delta)^{-1} \right)} {  2^k \delta } \int_{ 2^{k-1} \delta \leq |\theta| \leq 2^k \delta } | u ( e^{i 2 \pi \theta} ) - \langle u \rangle_{I_{\delta}} |^2  \, \dd \theta \right) .
\end{align*}
A standard argument now yields
\begin{multline}\label{tr_ineq_BMO}
\left( \frac{\log^{2r} \left( (2^k \delta)^{-1} \right)} { 2^k \delta } \int_{ 2^{k-1} \delta \leq |\theta| \leq 2^k \delta } | u ( e^{i 2 \pi \theta} ) - \langle u \rangle_{I_{\delta}} |^2 \, \dd \theta \right)^{1/2} \lesssim  \\
k \norm{ u }_{BMO (\rho_{\Psi_{r,1}}) (\T)}.
\end{multline}
Indeed, it follows from the triangle inequality that
\begin{align*} 
\left( \frac{\log^{2r} \left( (2^k \delta)^{-1} \right)} { 2^k \delta } \int_{ 2^{k-1} \delta \leq |\theta| \leq 2^k \delta } | u ( e^{i 2 \pi \theta} ) - \langle u \rangle_{I_{\delta}} |^2 \, \dd \theta \right)^{1/2}  \lesssim_r \\ 
\sum_{l=0}^k \left( \frac{\log^{2r} \left( (2^k \delta)^{-1} \right)} { 2^l \delta } \int_{ 2^l I_{\delta} } | u ( e^{i 2 \pi \theta} ) - \langle u \rangle_{2^{l+1} I_{\delta}} |^2 \, \dd \theta \right)^{1/2}
\end{align*}
where $2^l I_{\delta} = [- 2^{l-1} \delta, 2^{l-1} \delta)$. Using \eqref{tr_ineq_BMO} we get
\begin{align*}
&\frac{1} { \left[ \rho_{\Psi_{r,1}} (|I_{\delta}|) \right]^2 |I_{\delta}|} \int_{[- 1/2, - \delta/2) \cup [\delta/2, 1/2)} | u ( e^{i 2 \pi \theta} ) - \langle u \rangle_{I_{\delta}} |^2 \omega_{\delta} ( \theta ) \, \dd \theta  \lesssim_r  \\ 
& \norm{ u }^2_{BMO (\rho_{\Psi_{r,1}}) (\T)} \sum_{k \in \N_0 }  \frac{k^{2r+2}} { 2^{2k} }
\end{align*}
and so,
\begin{multline}\label{II_u_omega_est}
\frac{1} { \left[ \rho_{\Psi_{r,1}} (|I_{\delta}|) \right]^2 |I_{\delta}|} \int_{ [- 1/2, - \delta/2) \cup [\delta/2, 1/2) } | u ( e^{i 2 \pi \theta} ) - \langle u \rangle_{I_{\delta}} |^2 \omega_{\delta} ( \theta ) \, \dd \theta  \lesssim_r \\ \norm{ u }^2_{BMO (\rho_{\Psi_{r,1}}) (\T)} ,
\end{multline}
as desired.

It thus follows from \eqref{I_u_omega_est} and \eqref{II_u_omega_est} that \eqref{u_omega_est} holds true and hence, the proof of the lemma is complete.
\end{proof}

\subsection{Proof of that (2) implies (1) in Theorem \ref{LP_BMO_Psi}}\label{2_implies_1}
The proof that we present below is an adaptation of the corresponding argument in \cite{V-T} for the $BMO$-case.

Let $u \in L^2 (\T)$ be such that (2) in Theorem \ref{LP_BMO_Psi} holds. It suffices to prove that for any fixed arc $I$ in $\T$ one has
\begin{equation}\label{main_est_1_implies_2_I}
M_I \left( \int_I \abs{ u (e^{i 2 \pi \theta}) - \langle u \rangle_I }^2 \, \dd \theta \right)^{1/2} \lesssim_r \norm{u}_{\Psi_{r,1}, \ast} ,
\end{equation}
where  $M_I := |I|^{-1/2} \log^r ( |I|^{-1} )$ and $\norm{u}_{\Psi_{r,1}, \ast}$ is as in the statement of Theorem \ref{LP_BMO_Psi}. Note that for any $c \in \C$ one has
\[ 
\left( \int_I \abs{ u (e^{i 2 \pi \theta}) - \langle u \rangle_I }^2 \, \dd \theta \right)^{1/2} \leq  2 \left( \int_I \abs{ u (e^{i 2 \pi \theta}) - c }^2 \, \dd \theta \right)^{1/2} .
\]
Hence, if we choose
\[
c:= \sum_{ 2^n \leq | I |^{-1} }  \Delta_n (u) (e^{i 2 \pi c_I}),
\]
with $c_I$ being the centre of $I$, then it suffices to prove that
\begin{equation}\label{U_final}
U  \lesssim \norm{u}_{\Psi_{r,1}, \ast} \quad \text{for } U = A \text{ and } U=B,
\end{equation}
where
\[
A:= M_I \left( \int_I \abs{ \sum_{ 2^n > | I |^{-1} } \Delta_n (u) (e^{i 2 \pi \theta}) }^2 \, \dd \theta \right)^{1/2} 
\]
and
\[
B:= M_I \left( \int_I \abs{ \sum_{ 2^n \leq | I |^{-1} }  \int_{\T} u (e^{i 2 \pi \sigma}) \left[  \delta_n \left( e^{i 2 \pi (\theta - \sigma ) } \right) - \delta_n \left(  e^{i 2 \pi (c_I - \sigma) } \right)  \, \dd \sigma \right] }^2 \, \dd \theta \right)^{1/2} .
\]

Note that if, for $n \in \N$, we define $ \widetilde{\delta}_n := \delta_{n-1}+\delta_n + \delta_{n+1} $, then  $\Delta_n = \widetilde{\Delta}_n \Delta_n $. Here, for $f \in L^1 (\T)$, $ \widetilde{\Delta}_n  (f) := \widetilde{\delta}_n \ast f $. In what follows, we shall use several times that  $\int_K |\widetilde{\Delta}_n  (u) |^2 \lesssim 2^{-n} n^{-2r} \norm{u}_{\Psi_{r,1}, \ast}$ for any arc $K$ with $|K| \approx2^{-n}$  (cf. \cite{V-T}*{Lemma 3.1}). 

We may assume, without loss of generality, that $I$ is of the form $[-|I|/2,|I|/2)$.

\subsubsection{Proof of \eqref{U_final} for $U=A$}
By the triangle inequality, we have $A \leq A_1 + A_2$,
where
\[
A_1 := M_I \left( \int_I \abs{ \sum_{ 2^n > | I |^{-1} }  \int_{\T \setminus I} \widetilde{\Delta}_n (u) (e^{i 2 \pi \sigma}) \delta_n \left( e^{i 2 \pi (\theta - \sigma )  }   \right) \, \dd \sigma  }^2  \, \dd \theta \right)^{1/2} 
\]
and
\[
A_2 := M_I \left( \int_I \abs{  \sum_{ 2^n > | I |^{-1} } \int_I \widetilde{\Delta}_n (u) (e^{i 2 \pi \sigma}) \delta_n \left( e^{i 2 \pi (\theta - \sigma )  } \right)   \, \dd \sigma  }^2 \, \dd \theta \right)^{1/2} .
\]
\subsubsection*{Estimate for $A_1$}
For $n \in \N$, let $I_n^-$ denote the interval that is concentric with $I$ and has length $| I_n^- | = | I | - 2^{-n}$. We then have $ A_1 \leq A_{1,1} + A_{1,2}$, 
where
\[
A_{1,1} := M_I \sum_{ 2^n > | I |^{-1} } \left( \int_{I_n^-} \left(   \int_{\T \setminus I} \abs{ \widetilde{\Delta}_n (u) (e^{i 2 \pi \sigma})  } \cdot \abs{ \delta_n \left( e^{i 2 \pi (\theta - \sigma )  } \right) }  \, \dd \sigma  \right)^2 \, \dd \theta \right)^{1/2} 
\]
and
\[
A_{1,2} := M_I \sum_{ 2^n > | I |^{-1} } \left( \int_{ I \setminus I_n^-} \left( \int_{\T \setminus I} \abs{ \widetilde{\Delta}_n (u) (e^{i 2 \pi \sigma}) } \cdot \abs{ \delta_n \left( e^{i 2 \pi (\theta - \sigma )  } \right) }  \, \dd \sigma  \right)^2 \, \dd \theta \right)^{1/2} .
\]
The terms $A_{1,1}$ and $A_{1,2}$ will be estimated separately.

\subsubsection*{Estimate for $A_{1,1}$}
Without loss of generality we may assume that $| I | = 2^{-\zeta_0}$ for some $\zeta_0 \in \N_0$. Hence, we may write $ I_n^- = \bigcup_{Q \in \mathcal{Q}_n} Q$ and $ \T \setminus I = \bigcup_{P \in \mathcal{P}_n} P$, where $\mathcal{Q}_n$ and $\mathcal{P}_n$ are families of mutually disjoint arcs of length $2^{-n-2}$. Using the Cauchy--Schwarz inequality, 
we have 
\[
A_{1,1}  \lesssim  
M_I  \sum_{ 2^n > | I |^{-1} }  \left( K_n  \sum_{Q \in \mathcal{Q}_n } \int_Q  \left( \sum_{P \in \mathcal{P}_n } \int_P \frac{ \abs{ \widetilde{\Delta}_n (u) (e^{i 2 \pi \sigma}) }^2 }{ \sin^2 (   \pi (\theta-\sigma) ) } \, \dd \sigma \right)   \, \dd \theta \right)^{1/2} ,
\] 
where
\[
K_n:=  \sum_{P \in \mathcal{P}_n }  \int_P \sin^2 (   \pi (\theta-\sigma) ) \abs{ \delta_n \left( e^{i 2 \pi (\theta - \sigma )} \right) }^2 \, \dd \sigma .
\]
As $\phi \mapsto \sin^2 (   \pi \phi )$ is $1$-periodic and $|\sin (t)| \leq t$ for all $t \in \R$, we have $
K_n \lesssim a_n + b_n $, 
where
\[
a_n := \int_{ | \phi | \leq 2^{-n} }  \phi^2  \abs{ \delta_n \left( e^{i 2 \pi \phi } \right) }^2 \, \dd \phi   \quad \text{and} \quad
b_n :=  \int_{ 2^{-n} <  | \phi | \leq  1/2 }  \phi^2  \abs{ \delta_n \left( e^{i 2 \pi \phi } \right) }^2 \, \dd \phi  .
\]
Since $\norm{\delta_n}_{L^{\infty}(\T)} \lesssim 2^n $, we have
$ 
a_n \lesssim  2^{-n}$.
To estimate $b_n$, note that by the properties of the Fej\'er kernel we have $ \abs{ \delta_n \left( e^{i 2 \pi \phi } \right) } \lesssim 2^{-n}  \phi^{-2} $ for all $2^{-n} <  | \phi | \leq  1/2$, 
which readily implies that $b_n \lesssim  2^{-n}$. We thus have
\[
A_{1,1} \lesssim
M_I \sum_{ 2^n > | I |^{-1} }  \left( \sum_{Q \in \mathcal{Q}_n }   \int_Q  \left( \sum_{P \in \mathcal{P}_n } 2^{-n} \int_P \frac{ \abs{ \widetilde{\Delta}_n (u) (e^{i 2 \pi \sigma}) }^2 }{ \sin^2 (   \pi (\theta-\sigma) ) } \, \dd \sigma \right) \, \dd \theta \right)^{1/2} 
\]
and so,
\begin{align*}
A_{1,1} & \lesssim M_I \sum_{ 2^n > | I |^{-1} } \left( \sum_{Q \in \mathcal{Q}_n } \int_Q \left( \sum_{P \in \mathcal{P}_n } \frac{2^{-n}}{ \mathrm{dist}(p,q)^2}  \int_P \abs{ \widetilde{\Delta}_n (u) (e^{i 2 \pi \sigma}) }^2  \, \dd \sigma \right) \, \dd \theta \right)^{1/2} \\
& \lesssim M_I \norm{u}_{\Psi_{r,1}, \ast}  \sum_{ 2^n > | I |^{-1} } \frac{2^{-n/2}}{n^r}  \left( \sum_{Q \in \mathcal{Q}_n } \int_Q  \sum_{P \in \mathcal{P}_n} \frac{2^{-n}}{\mathrm{dist}(p,q)^2} \, \dd \theta \right)^{1/2} \\
& \lesssim M_I \norm{u}_{\Psi_{r,1}, \ast}  \sum_{ 2^n > | I |^{-1} }  \frac{2^{-n/2}}{n^r}  \left(  \int_{ [0, | I |/2 - 2^{-n-1} ) } \left( \int_{ [ | I |/2, 1) }  \frac{ 1 }{  | \theta-\sigma |^2 } \, \dd \sigma \right)  \, \dd \theta \right)^{1/2} .
\end{align*}
It can easily be seen that the sum in the last expression is bounded by $C M_I^{-1}$, with $C$ being a constant independent of $I$. Hence, $A_{1,1} \lesssim   \norm{u}_{\Psi_{r,1}, \ast}$.

\subsubsection*{Estimate for $A_{1,2}$}
To estimate $A_{1,2}$, we have $A_{1,2} \leq A_{1,2,a} + A_{1,2,b}$, where
\[ A_{1,2,a}  :=  
M_I \sum_{ 2^n > | I |^{-1} } \left( \int_{ I \setminus I_n^-} \left( \int_{\T \setminus I_n^+} \abs{ \widetilde{\Delta}_n (u) (e^{i 2 \pi \sigma}) } \cdot \abs{ \delta_n \left( e^{i 2 \pi (\theta - \sigma )  } \right) }  \, \dd \sigma  \right)^2 \, \dd \theta \right)^{1/2}
\]
and
\[
A_{1,2,b}  := 
M_I \sum_{ 2^n > | I |^{-1} } \left( \int_{ I \setminus I_n^-} \left( \int_{I_n^+ \setminus I} \abs{ \widetilde{\Delta}_n (u) (e^{i 2 \pi \sigma}) } \cdot \abs{ \delta_n \left( e^{i 2 \pi (\theta - \sigma )  } \right) }  \, \dd \sigma  \right)^2 \, \dd \theta \right)^{1/2} .
\]
Here, for $n \in \N$, $I_n^+$ denotes the interval that is concentric with $I$ and has length $| I_n^+ | =  | I | + 2^{-n}$. Arguing as in the case of $A_{1,1}$, one has
$A_{1,2,a} \lesssim \norm{u}_{\Psi_{r,1}, \ast} $. To estimate $A_{1,2,b} $, we use the Cauchy--Schwarz inequality, as well as that $\norm{\delta_n}_{L^{\infty}(\T)} \lesssim  2^n$ and $| I \setminus I_n^-| = | I_n^+ \setminus I  | = 2^{-n-1} $ to deduce that
\[
A_{1,2,b}  \lesssim  M_I \sum_{ 2^n > | I |^{-1} }   \left( \int_{ I_n^+ \setminus I } \abs{ \widetilde{\Delta}_n (u) (e^{i 2 \pi \sigma}) }^2 \, \dd \sigma  \right)^{1/2} \lesssim  M_I  \norm{u}_{\Psi_{r,1}, \ast}  \sum_{ 2^n > | I |^{-1} }  \frac{ 2^{-n/2} } {n^r}.
\]
So, $
A_{1,2,b} \lesssim   \norm{u}_{\Psi_{r,1}, \ast} $. 
Hence, combining the estimates for $A_{1,2,a}$ and $A_{1,2,b}$, together with the estimate for $A_{1,1}$, we conclude that $A_1 \lesssim   \norm{u}_{\Psi_{r,1}, \ast}$.

\subsubsection*{Estimate for $A_2$}
Since the maps
\[ \theta \mapsto \int_I \widetilde{\Delta}_{n_1} (u) (e^{i 2 \pi \sigma}) \delta_{n_1} (e^{i 2 \pi ( \theta - \sigma ) }) \, \dd \sigma \  \text{ and } \ \theta \mapsto \int_I \widetilde{\Delta}_{n_2} (u) (e^{i 2 \pi \sigma}) \delta_{n_2} (e^{i 2 \pi ( \theta - \sigma ) }) \, \dd \sigma \]
are orthogonal in $L^2 (\T)$ for $n_1, n_2 \in \N$ with $| n_1 - n_2  | \geq 2$, we have $ A_2 \lesssim A_{2,1} + A_{2,2} $, where
\[
A_{2,1} := M_I \left( \sum_{ 2^n > | I |^{-1} }  \int_{\T \setminus I} \abs{ \int_I  \widetilde{\Delta}_n (u) (e^{i 2 \pi \sigma}) \delta_n \left( e^{i 2 \pi (\theta - \sigma )  } \right)   \, \dd \sigma  }^2 \, \dd \theta \right)^{1/2} 
\]
and
\[
A_{2,2} := M_I \left( \sum_{ 2^n > | I |^{-1} }  \int_I \abs{ \int_I  \widetilde{\Delta}_n (u) (e^{i 2 \pi \sigma}) \delta_n \left( e^{i 2 \pi (\theta - \sigma )  } \right)   \, \dd \sigma  }^2 \, \dd \theta \right)^{1/2} .
\]
Arguing as in the case of  $A_1$, one shows that $A_{2,1} \lesssim \norm{u}_{\Psi_{r,1}, \ast}$.  

We shall now focus on $A_{2,2}$; by the triangle inequality, we have $
A_{2,2} \leq A_{2,2,a} + A_{2,2,b} $,
where
\[
 A_{2,2,a} := 
M_I \left( \sum_{ 2^n > | I |^{-1} }  \int_I \abs{ \int_{\T}  \widetilde{\Delta}_n (u) (e^{i 2 \pi \sigma}) \delta_n \left( e^{i 2 \pi (\theta - \sigma ) } \right) \, \dd \sigma  }^2 \, \dd \theta \right)^{1/2}
\]
and
\[
A_{2,2,b} := 
M_I \left( \sum_{ 2^n > | I |^{-1} }  \int_I \abs{ \int_{\T \setminus I}  \widetilde{\Delta}_n (u) (e^{i 2 \pi \sigma}) \delta_n \left( e^{i 2 \pi (\theta - \sigma )  } \right)   \, \dd \sigma  }^2 \, \dd \theta \right)^{1/2}. 
\]
Arguing as in the case of  $A_1$, one proves that $A_{2,2,b} \lesssim \norm{u}_{\Psi_{r,1}, \ast} $. 
The desired estimate for $A_{2,2,a}$, i.e. $A_{2,2,a} \lesssim \norm{u}_{\Psi_{r,1}, \ast}$, follows directly from the definition of $\norm{u}_{\Psi_{r,1}, \ast}$.  
It thus follows that $A_2 \lesssim \norm{u}_{\Psi_{r,1}, \ast}$ which, together with the corresponding estimate for $A_1$, obtained above, completes the proof of \eqref{U_final} for $U=A$.  

\subsubsection{Proof of \eqref{U_final} for $U=B$} 

If $ \mathcal{G}_n $ denotes a decomposition of $\T$ into mutually disjoint arcs of length $2^{-n-1}$ then, by using the Cauchy--Schwarz inequality and the definition of $\norm{u}_{\Psi_{r,1}, \ast}$, we have
\[
B \lesssim  M_I | I |^{1/2} \norm{u}_{\Psi_{r,1}, \ast} \sum_{ 2^n \leq | I |^{-1} }  \frac{2^{-n}}{n^r} \sum_{G \in \mathcal{G}_n} \max_{\substack{ \theta \in I \\ \sigma \in G}} \abs{ \delta_n \left( e^{i 2 \pi (\theta - \sigma ) } \right) - \delta_n \left( e^{i 2 \pi (c_I - \sigma) } \right) } .
\] 
Using the mean value theorem and the definition of $\delta_n$, we have
\[
\sum_{G \in \mathcal{G}_n} \max_{\substack{ \theta \in I \\ \sigma \in G}} \abs{ \delta_n \left( e^{i 2 \pi (\theta - \sigma ) } \right) - \delta_n \left( e^{i 2 \pi (c_I - \sigma) } \right) } \lesssim \sum_{G \in \mathcal{G}_n}    \frac{2^{2n}  | I |}{ \left[ 1 + 2^n \mathrm{dist} (I , G) \right]^2 } = S_n + T_n,
\] 
where
\[
S_n := \sum_{\substack{ G \in \mathcal{G}_n: \\ \mathrm{dist} (G, I) \leq 2^{-n}}}\frac{2^{2n}  | I |}{ \left[ 1 + 2^n \mathrm{dist} (I , G) \right]^2 }      \ \text{ and } \  T_n := \sum_{\substack{G \in \mathcal{G}_n: \\ \mathrm{dist} (G, I) \geq 2^{-n} } }   \frac{2^{2n}  | I |}{ \left[ 1 + 2^n \mathrm{dist} (I , G) \right]^2 }  .
\]
Since $2^n \leq | I |^{-1}$, there are $O(1)$ intervals $G \in \mathcal{G}_n$ with $ \mathrm{dist} (G, I) \leq 2^{-n}$ and so, $ S_n \approx 2^{2n} |I| $. To handle $T_n$, we have 
\[
T_n \approx 2^{2n} |I| \sum_{\substack{G \in \mathcal{G}_n: \\ \mathrm{dist} (G, I) \geq 2^{-n} } }  \frac{1}{  \left[ 2^n \mathrm{dist} (I , G) \right]^2 }  \lesssim 2^{2n} |I| \sum_{k=1}^{2^n} \frac{1}{k^2} \lesssim 2^{2n} |I| .
\]
Hence, \eqref{U_final} for $U=B$ holds, as
\[
B \lesssim M_I | I |^{3/2} \norm{u}_{\Psi_{r,1}, \ast} \sum_{ 2^n \leq | I |^{-1} }  \frac{2^n}{n^r}   \lesssim M_I | I |^{3/2} \norm{u}_{\Psi_{r,1}, \ast}  \frac{ | I |^{-1}}{\log^r ( | I |^{-1})} \approx \norm{u}_{\Psi_{r,1}, \ast}  .
\] 

\subsection{Consequences of Theorem \ref{LP_BMO_Psi}}\label{L-P_ineq_duality}

For $r \geq 0$, consider the Triebel--Lizorkin-type space 
\[
S^{0,r}_{\infty,2} (\T) := %
\left\{ f \in \mathcal{D}' (\T): \sup_{\substack{I \subseteq \T:\\ I \text{ arc }}} \frac{1}{|I|} \int_I \sum_{\substack{ n \in \N_0 : \\ 2^n \geq |I|^{-1} } } \left( n +1 \right)^{2r} |  \Delta_n (f) (e^{i 2 \pi \theta}) |^2 \, \dd \theta  < \infty \right\}. 
\]

Note that if $f \in S^{0,r}_{\infty,2} (\T)$ then it follows from the definition of $S^{0,r}_{\infty,2} (\T)$ that its classical Littlewood--Paley square function is in $L^2 (\T)$ and hence, $f$ can be identified with an $L^2$-function. 

Theorem \ref{LP_BMO_Psi} immediately implies the following inclusion.

\begin{theo}\label{inclusion_infty}
Let $r \geq 0$ be a given exponent.

One has the inclusion
\begin{equation}\label{inclusion_T-L_infty}
S^{0,r}_{\infty,2} (\T) \subseteq BMO (\rho_{\Psi_{r,1}}) (\T). 
\end{equation}
\end{theo}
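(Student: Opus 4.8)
The plan is to read off Theorem \ref{inclusion_infty} directly from the implication $(2)\Rightarrow(1)$ of Theorem \ref{LP_BMO_Psi}, after checking that the quantity defining $S^{0,r}_{\infty,2}(\T)$ dominates, up to a constant depending only on $r$, the Littlewood--Paley functional appearing in \eqref{L-P_BMO_Psi_cond}.

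Let $f\in S^{0,r}_{\infty,2}(\T)$. As observed in the paragraph preceding the statement, finiteness of the defining supremum forces the classical Littlewood--Paley square function of $f$ into $L^2(\T)$, so $f$ may be identified with an element of $L^2(\T)$ and Theorem \ref{LP_BMO_Psi} is applicable to $u=f$; it then remains only to verify condition $(2)$ there. To this end I would recall from \eqref{inverse_r} that $\Psi_{r,1}^{-1}(t)\approx_r t\log^r(t+e)$, so that $\rho_{\Psi_{r,1}}(|I|)=|I|^{-1}/\Psi_{r,1}^{-1}(|I|^{-1})\approx_r\log^{-r}(|I|^{-1}+e)$ and hence $[\rho_{\Psi_{r,1}}(|I|)]^{-2}\approx_r\log^{2r}(|I|^{-1}+e)$. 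For any arc $I$ and any $n\in\N_0$ with $2^n>|I|^{-1}$ one has $n+1>\log_2(|I|^{-1})$, which together with $n+1\ge 1$ gives the elementary bound $\log^{2r}(|I|^{-1}+e)\lesssim_r (n+1)^{2r}$ on that range. Consequently, for every arc $I$,
\[
\frac{1}{[\rho_{\Psi_{r,1}}(|I|)]^2|I|}\int_I\sum_{\substack{n\in\N_0:\\ 2^n>|I|^{-1}}}\abs{\Delta_n(f)(e^{i2\pi\theta})}^2\,\dd\theta
\lesssim_r
\frac{1}{|I|}\int_I\sum_{\substack{n\in\N_0:\\ 2^n\ge|I|^{-1}}}(n+1)^{2r}\abs{\Delta_n(f)(e^{i2\pi\theta})}^2\,\dd\theta,
\]
where we also used that enlarging the summation range from $\{n:2^n>|I|^{-1}\}$ to $\{n:2^n\ge|I|^{-1}\}$ only adds nonnegative terms.

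Taking the supremum over all arcs $I$, the right-hand side of the last display is finite because $f\in S^{0,r}_{\infty,2}(\T)$; this is exactly condition $(2)$ of Theorem \ref{LP_BMO_Psi}, which therefore yields $f\in BMO(\rho_{\Psi_{r,1}})(\T)$. Moreover, the same display bounds $\norm{f}_{\Psi_{r,1},\ast}^2$ by a constant times the $S^{0,r}_{\infty,2}(\T)$-defining supremum, while $\abs{\int_{[0,1)}f(e^{i2\pi\theta})\,\dd\theta}\le\norm{f}_{L^2(\T)}$, so \eqref{equiv_norm_L-P_BMO_Psi} also provides an effective bound on $\norm{f}_{BMO(\rho_{\Psi_{r,1}})^+(\T)}$.

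The argument is essentially immediate once Theorem \ref{LP_BMO_Psi} is in hand; the only point that demands a little care --- and the closest thing here to an obstacle --- is the elementary comparison $\log^{2r}(|I|^{-1}+e)\lesssim_r(n+1)^{2r}$ valid whenever $2^n>|I|^{-1}$. This is handled by treating separately the case of $|I|$ bounded away from $0$ (where $\log(|I|^{-1}+e)$ is comparable to an absolute constant while $n+1\ge 2$, so the inequality is trivial) and the case of small $|I|$ (where $\log(|I|^{-1}+e)\le 2\log(|I|^{-1})=2\log 2\cdot\log_2(|I|^{-1})\le 2\log 2\cdot(n+1)$). Everything beyond this is routine bookkeeping.
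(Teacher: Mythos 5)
Your proof is correct and takes exactly the approach the paper has in mind: the paper simply asserts that Theorem \ref{LP_BMO_Psi} ``immediately implies'' the inclusion, and your argument supplies the (routine) details, namely that membership in $S^{0,r}_{\infty,2}(\T)$ gives an $L^2$-function and that $[\rho_{\Psi_{r,1}}(|I|)]^{-2}\approx_r\log^{2r}(|I|^{-1}+e)\lesssim_r(n+1)^{2r}$ over the relevant range $2^n>|I|^{-1}$, so that condition $(2)$ of Theorem \ref{LP_BMO_Psi} is verified.
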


\begin{rmk}\label{proper_inclusion_infty}
If $r=0$, then \eqref{inclusion_T-L_infty} holds as an equality; see \cite{B}, \cite{FJ}, \cite{V-T}.
If $r >0$, the inclusion \eqref{inclusion_T-L_infty} is proper. To see this, consider the function $u$ whose Fourier series is given by
 \begin{equation}\label{BMO_psi_lac_ex}
u (e^{i 2\pi \theta}) \sim \sum_{j=0}^{\infty} \frac{1}{j^{2r +1/2}} e^{i 2\pi 2^j \theta}, \quad \theta \in [0,1).
\end{equation}
Then 
\[
u \in 
BMO (\rho_{\Psi_{r,1}}) (\T) \setminus S^{0,r}_{\infty,2} (\T) .
\]
\end{rmk}

Note that 
\begin{equation}\label{duality_T-L}
S^{0,r}_{\infty,2} (\T) \cong ( S^{0,-r}_{1,2} (\T) )^{\ast}.
\end{equation}
For a proof of a Euclidean analogue of \eqref{duality_T-L} see \S \ref{s_log_smooth} below. The Triebel--Lizorkin-type space $S^{0,-r}_{1,2} (\T)$ appearing in \eqref{duality_T-L}  can be defined as the class of all $f \in \mathcal{D}' (\T)$ such that 
\[
\left( \sum_{n \in \N_0} \frac{ \left| \Delta_n (f) \right|^2}{\left( n +1 \right)^{2r}} \right)^{1/2} \in L^1 (\T).
\]
By combining Theorem \ref{inclusion_infty} and \eqref{duality_T-L} we obtain the following inclusion.

\begin{theo}\label{LP_Psi_r_predual_Thm} Let $r \geq 0$ be a given exponent.

There exists a constant $C_r>0$ such that
\begin{equation}\label{LP_H_ineq_torus}
\norm{ \left( \sum_{n \in \N_0} \frac{ \left| \Delta_n (f) \right|^2}{\left( n +1 \right)^{2r}} \right)^{1/2}  }_{L^1 (\T)} \leq C_r \norm{ f }_{H^{\Psi_{r,1}} (\T)}. 
\end{equation}
\end{theo}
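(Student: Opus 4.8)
The plan is to deduce \eqref{LP_H_ineq_torus} by a soft duality argument from three facts already available: the inclusion $S^{0,r}_{\infty,2}(\T)\subseteq BMO(\rho_{\Psi_{r,1}})(\T)$ of Theorem \ref{inclusion_infty}, the identification $S^{0,r}_{\infty,2}(\T)\cong(S^{0,-r}_{1,2}(\T))^{\ast}$ from \eqref{duality_T-L}, and Janson's duality $(H^{\Psi_{r,1}}(\T))^{\ast}\cong BMO(\rho_{\Psi_{r,1}})(\T)$; all three are realised through the ordinary integral pairing $\langle f,g\rangle=\int_{[0,1)}f(e^{i2\pi\theta})\overline{g(e^{i2\pi\theta})}\,\dd\theta$ whenever this makes sense. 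The starting point is that the left-hand side of \eqref{LP_H_ineq_torus} is, by definition, $\norm{f}_{S^{0,-r}_{1,2}(\T)}$. Thus it is enough to prove $\norm{f}_{S^{0,-r}_{1,2}(\T)}\le C_r\norm{f}_{H^{\Psi_{r,1}}(\T)}$, and, by a standard regularisation, only for $f$ for which the left-hand side is a priori finite: given general $f\in H^{\Psi_{r,1}}(\T)$, apply the inequality to the regularisations $f_{\rho}(e^{i2\pi\theta}):=\sum_{n\in\Z}\rho^{\abs{n}}\widehat{f}(n)e^{i2\pi n\theta}$, $0<\rho<1$ (whose Fourier coefficients decay geometrically, so $f_{\rho}\in S^{0,-r}_{1,2}(\T)$, and which satisfy $\norm{f_{\rho}}_{H^{\Psi_{r,1}}(\T)}\le\norm{f}_{H^{\Psi_{r,1}}(\T)}$, cf.\ \eqref{bound_r}), and let $\rho\uparrow1$, invoking Fatou's lemma for the square function.

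Before the main argument I would note that the inclusion of Theorem \ref{inclusion_infty} is in fact bounded, $\norm{g}_{BMO(\rho_{\Psi_{r,1}})(\T)}\lesssim_r\norm{g}_{S^{0,r}_{\infty,2}(\T)}$. Indeed, for $2^{n}>\abs{I}^{-1}$ one has $(n+1)^{2r}\gtrsim_r\log^{2r}(\abs{I}^{-1}+e)\approx_r[\rho_{\Psi_{r,1}}(\abs{I})]^{-2}$ by \eqref{inverse_r}, so the square-function expression defining $S^{0,r}_{\infty,2}(\T)$ dominates, up to an $r$-dependent constant, that defining $\norm{g}_{\Psi_{r,1},\ast}$ in \eqref{equiv_norm_L-P_BMO_Psi}; the remaining mean term in \eqref{equiv_norm_L-P_BMO_Psi} is at most $\norm{g}_{L^{2}(\T)}\lesssim\norm{g}_{S^{0,r}_{\infty,2}(\T)}$ (take $I=\T$ in the definition and use the near-orthogonality of the $\Delta_n$). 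Together with Theorem \ref{inclusion_infty} and \eqref{equiv_norm_L-P_BMO_Psi} this gives the asserted estimate. (Alternatively, continuity of the inclusion follows from the closed graph theorem.)

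Now fix $f\in S^{0,-r}_{1,2}(\T)$. By Hahn--Banach together with \eqref{duality_T-L},
\[
\norm{f}_{S^{0,-r}_{1,2}(\T)}=\sup\set{\abs{\langle f,g\rangle}\ :\ g\in S^{0,r}_{\infty,2}(\T),\ \norm{g}_{S^{0,r}_{\infty,2}(\T)}\le1}.
\]
Any such $g$ belongs to $L^{2}(\T)$ (as observed right after the definition of $S^{0,r}_{\infty,2}(\T)$), so $\langle f,g\rangle$ is the ordinary integral pairing, and in particular coincides with the pairing implementing Janson's duality; moreover, by the previous paragraph, $g\in BMO(\rho_{\Psi_{r,1}})(\T)$ with $\norm{g}_{BMO(\rho_{\Psi_{r,1}})(\T)}\le C_r$. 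Hence Janson's duality gives
\[
\abs{\langle f,g\rangle}\le\norm{f}_{H^{\Psi_{r,1}}(\T)}\,\norm{g}_{BMO(\rho_{\Psi_{r,1}})(\T)}\le C_r\,\norm{f}_{H^{\Psi_{r,1}}(\T)},
\]
and taking the supremum over $g$ yields the desired inequality, completing the proof.

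The entire analytic content of the statement is carried by Theorem \ref{inclusion_infty} (and, behind it, Theorem \ref{LP_BMO_Psi}), so no step above is a genuine obstacle. The only points requiring some care are bookkeeping: verifying that the three duality pairings are mutually consistent (they all reduce to $\int f\bar g$ on trigonometric polynomials and $L^2$ functions), promoting the set inclusion of Theorem \ref{inclusion_infty} to the quantitative form used here, and justifying the regularisation-and-Fatou reduction to general $f\in H^{\Psi_{r,1}}(\T)$.
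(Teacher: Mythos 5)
Your proof is correct and takes essentially the same route as the paper: the paper simply states ``By combining Theorem \ref{inclusion_infty} and \eqref{duality_T-L} we obtain the following inclusion,'' which is exactly the duality argument you spell out (with Janson's duality $(H^{\Psi_{r,1}})^* \cong BMO(\rho_{\Psi_{r,1}})$ as the third ingredient). Your added care in promoting the set-theoretic inclusion of Theorem \ref{inclusion_infty} to a norm estimate, checking consistency of the pairings, and running the regularisation--Fatou reduction are all details the paper suppresses; they are correct, and the paper itself notes that an alternative direct proof via atomic decomposition is given in the Euclidean case (\S \ref{LP_Psi_r_predual}), which it uses instead of reproducing the duality argument in detail.
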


Theorem \ref{LP_Psi_r_predual_Thm} can also be proved directly by using the atomic decomposition of  $H^{\Psi_{r,1}} (\T)$. We present such an approach in the corresponding Euclidean case; see \S \ref{LP_Psi_r_predual} below.

For $r >0$, in view of Remark \ref{proper_inclusion_infty}, the inclusion
\begin{equation}\label{inclusion_T-L_1}
H^{\Psi_{r,1}} (\T) \subseteq S^{0,-r}_{1,2} (\T)
\end{equation}
is proper. However, for $r=0$, we have $H^{\Psi_{0,1}} (\T) = S^{0,0}_{1,2} (\T)$, which is Stein's classical square function characterisation of the Hardy space $H^1 (\T)$; see  \cite{Stein_mult}. See also \cite{Grafakos_modern}*{Theorem 2.2.9} for the corresponding Euclidean case.

Using Theorem \ref{LP_Psi_r_predual_Thm} one can obtain a direct proof of implication $(3) \implies (1)$ of part (b) of Theorem \ref{gen_mult_q_leq2}, i.e. for $q=2$, and for growth functions of the form $\Psi= \Psi_{r,1}$, $r >0$. Indeed, suppose that  $ \lambda = \{ \lambda_n \}_{n \in \N_0}$ is a sequence of complex numbers satisfying condition $(iii)$ in part (b) of Theorem \ref{gen_mult_q_leq2}.  Then take an $f \in H^{\Psi_{r,1}} (\T)$ and observe that, by using $(iii)$, one has
\begin{align*}
\left( \sum_{k \in \N_0} | \lambda_k \widehat{f} (k) |^2 \right)^{1/2} & = \left( \sum_{n \in \N_0} \sum_{2^{n-1} \leq  k  < 2^n } | \lambda_k \widehat{f} (k) |^2 \right)^{1/2} \\
& \leq \left( \sum_{n \in \N_0}  \max_{2^{n-1} \leq k < 2^n} | \widehat{f} (k) |^2  \sum_{2^{n-1} \leq k < 2^n } | \lambda_k |^2 \right)^{1/2} \\
& \lesssim \left( \sum_{n \in \N_0} \frac{ \max_{2^{n-1} \leq k < 2^n} | \widehat{f} (k)|^2 }{(n+1)^{2r}}  \right)^{1/2} \\
&\lesssim \norm{\left( \sum_{n \in \N_0} \frac{ \left| \Delta_n (f) \right|^2}{\left( n +1 \right)^{2r}} \right)^{1/2}  }_{L^1 (\T)} .
\end{align*}
Hence, by using  Minkowski's inequality and then Theorem \ref{LP_Psi_r_predual_Thm}, one deduces that 
\[
\left( \sum_{n \in \N_0} | \lambda_n \widehat{f} (n) |^2 \right)^{1/2} \lesssim \norm{ f }_{H^{\Psi_{r,1}}(\T)}
\]
i.e. that $(i)$ holds.

\section{Results in the Euclidean setting}\label{eucl_proof}
\subsection{Spaces of logarithmic smoothness}\label{s_log_smooth}
Let $\varphi_0$ be a Schwartz function, radial, positive and supported in $\{ \abs{\xi}\leq 1 \}$, which is equal to $1$ in $\{ \abs{\xi}\leq 1/2 \}$.  Let $\varphi_1(\xi):=\varphi_0(\xi/2)-\varphi_0(\xi)$, and 
\[
\varphi_j(\xi):=\varphi_1(2^{-j}\xi), \qquad j\geq 2.
\]
For each $j\geq 1$, the function $\varphi_j$ is supported in the annulus $\{2^{j-1}\leq\abs{\xi}\leq 2^{j+1}\}$, and for all $\xi\in\R^d$ we have
\begin{equation}\label{PasQ}
1=\varphi_0(\xi)+\sum_{\ell=1}^\infty\varphi_\ell(\xi).
\end{equation}
Such a family $\{\varphi_j\}_{j=0}^\infty$ is referred to as a non-homogeneous resolution of unity. 
For $f \in \mathcal{S}' (\R^d)$ we set 
\[ 
\widetilde{\Delta}_j (f) : =\brkt{\varphi_j \widehat{f}}^\vee, \qquad \mbox{for $j\geq 0$.}
\]  

\begin{define}\label{GSTL}
Let $s,r\in\R$, $0<p\leq\infty$, $0<q\leq\infty$, and let $\lbrace\varphi_j\rbrace_{j=0}^\infty$ be a resolution of unity as above. 
\begin{itemize}
\item \cites{cae-mou,M} If $p<\infty$, we define the Triebel--Lizorkin space of generalised smoothness, $F_{p,q}^{s,r}(\R^d)$, to be the set of all tempered distributions $f$ for which
\[
\norm{f}_{F_{p,q}^{s,r}(\R^d)}:=\norm{\left(\sum_{j=0}^\infty 2^{jsq}\brkt{1+j}^{rq}\abs{\widetilde{\Delta}_jf}^q\right)^{1/q}}_{L^p(\R^d)}
\]
is finite, with the usual modification if $q=\infty$.
\item \cite{AR2}*{Definition 2.7} Let $0<q<\infty$ and let $\mathcal{D}$ be the set of all dyadic cubes in $\R^n$. We define $F_{\infty,q}^{s,r}(\R^n)$ to be the set of all tempered distributions $f$ for which
\begin{align*}
\norm{f}_{F_{\infty,q}^{s,r}(\R^d)} :=& \norm{\widetilde{\Delta}_0f}_{L^{\infty}(\R^d)}\\
+& \sup_{\substack{Q\in\mathcal{D} \\ \ell(Q)\leq 1}}\left(\frac{1}{\abs{Q}}\int_Q\sum_{j=-\log_2 \ell(Q)}^\infty 2^{sj q}\brkt{1+j}^{rq}\abs{\widetilde{\Delta}_jf(x)}^q \, \dd x\right)^{1/q}
\end{align*}
is finite.
\end{itemize}
\end{define}

\begin{rmk}
In the previous definition, the case where $r=0$ recovers the classical definition of Triebel--Lizorkin space. 
\end{rmk}
\begin{rmk} Let $p(\xi)=\sum_{\abs{\alpha}\leq N} c_{\alpha} \xi^\alpha$ be a polynomial expression in $\xi$ with constant coefficients, where for every multi-index $\alpha\in \N^d$,  
\[
\abs{\alpha}=\alpha_1+\ldots\alpha_d, \quad \xi^\alpha=\xi_1^{\alpha_1}\cdots \xi_d^{\alpha_d}.
\]
Let $D:=-i\partial$, so $p(D)$ denotes the constant coefficient differential operator
\[
p(D)f=\sum_{\abs{\alpha}\leq d} c_\alpha D^\alpha f.
\]
Using the properties of the Fourier transform, for any $f\in \mathcal{S}$, this can be written as the Fourier multiplier operator with symbol $p(\xi)$ given by
\[
p(D)f(x)=(2\pi)^{-d}\int_{\R^d} p(\xi) \widehat{f}(\xi) e^{ix\xi} \dd \xi.
\]
Notice that such an expression makes sense, on $\mathcal{S}(\R^d)$, and on $\mathcal{S}'(\R^d)$ for a wider class of symbols $p$ than polynomials, such as those in the Kohn--Nirenberg classes $S^m(\R^d)$, with $m\in \R$. Let us recall that a symbol $\sigma$ belongs to $ S^m (\R^d)$ if, and only if, $\sigma$ is smooth and, for all multi-indices $\alpha\in\N^n$, satisfies 
\begin{equation}\label{S0}
\sup_{\xi\in\R^n} \jap{\xi}^{-m+\abs{\alpha}}\abs{\partial^\alpha_\xi\sigma(\xi)}<\infty.   
\end{equation}
Here, we use the shorthand notation \[\jap{\xi}:=(1+|\xi|^2)^{1/2}, \quad \xi \in \R^d.
\]
In particular, if one defines $p(\xi)=\abs{\xi}^2$, the Laplacian of $f$ can be written as 
\[
-\Delta f(x)=\sum_{j=1}^{d} (-i)^2\partial^{2e_j} f(x) =p(D) f(x).  
\]
This allows us to define, for any $s\in \R$, $(1-\Delta)^{s/2}$ as the Fourier multiplier operator with symbol $\jap{\xi}^{s}$.

\end{rmk}

These spaces of generalised smoothness, see Definition \ref{GSTL} above, can be realised as potential type spaces. To be more specific, if one defines, for $r\in \R$,
\[
\log^r(e-\Delta)f(x) :=(2\pi)^{-d}\int_{\R^d} \log^r(e+\abs{\xi}^2) \widehat{f}(\xi) e^{i x\xi} \, \dd \xi, \quad x \in \R^d,
\]
then 
\begin{equation}\label{eq:equivalent_norms}
\norm{f}_{F^{s,r}_{p,q} (\R^d) }\approx	\norm{(1-\Delta)c\frac{s}{2} \log^r(e-\Delta)f}_{F^{0,0}_{p,q} (\R^d) }.
\end{equation}
This is a consequence of the lifting property of these spaces (see \cite{cae-mou}*{Proposition 3.2} for $p<\infty$ and \cite{AR3}*{Proposition 2.15} for the case $p=\infty$) and the fact that for all multi-indices $\alpha$, 
\begin{equation}\label{right_symbol_class}
\abs{\partial_\xi^\alpha \log^r(e+\abs{\xi}^2)}\lesssim (1+\abs{\xi})^{-\abs{\alpha}}{\log^r(e+\abs{\xi})}.
\end{equation}
In particular, we have that
\[
\norm{f}_{F^{0,r}_{p,2}(\R^d)}\approx \norm{\log^r(e-\Delta)f}_{h^p(\R^d)}.
\]
Indeed, if one defines $u_r(\xi):=\log^r(e+\abs{\xi}^2)$,  and 
\begin{equation}\label{log_symbol}
\mathrm{w}_r(\xi)=\sum_{j=0}^\infty (1+j)^r\varphi_j(\xi),\quad \xi\in\R^d,
\end{equation}
the aforementioned lifting property yields that
\[
\norm{f}_{F^{s,r}_{p,q} (\R^d) }=	\norm{(1-\Delta)^\frac{s}{2} \mathrm{w}_r(D)f}_{F^{0,0}_{p,q} (\R^d)}.
\] 
A simple calculation shows that both $u_r/\mathrm{w}_r$ and $\mathrm{w}_r/u_r$ belong to the Kohn--Niren\-berg class $S^0(\R^d)$. Hence, the associated Fourier multiplier operators are bounded in Triebel--Lizorkin spaces (see \cite{Tri83}*{Theorem 2.3.7}), establishing the equivalence in \eqref{eq:equivalent_norms}.

The description as potential-type spaces allows known results to be lifted into spaces of generalised smoothness. For instance, one can show that 
\[
\brkt{F^{0,-r}_{1,2}(\R^d)}^{\ast} \cong F^{0,r}_{\infty,2}(\R^d),
\]
by using the identification of the dual of $h^1(\R^d)=F^{0,0}_{1,2}(\R^d)$ as $\mathrm{bmo}(\R^d)=F^{0,0}_{\infty,2}(\R^d)$. 
This is a consequence of the fact that 
\begin{align*}
\sup_{\norm{ f }_{F^{0,-r}_{1,2} ( \R^d )}\leq 1} |\langle f,g \rangle| & =\sup_{\norm{ \mathrm{w}_{-r}(D)f }_{h^1 ( \R^d ) }\leq 1} |\langle \mathrm{w}_{-r}(D)f, (\mathrm{w}_{-r})^{-1}(D)g \rangle|\\
&= \sup_{\norm{ h }_{h^1 (\R^d)}\leq 1} | \langle h,(\mathrm{w}_{-r})^{-1}(D)g \rangle|\\
&=\norm{ (\mathrm{w}_{-r})^{-1}(D) g }_{\mathrm{bmo} ( \R^d )}\approx \norm{\mathrm{w}_{r}(D)g }_{\mathrm{bmo}( \R^d ) } , %
\end{align*}
where in the last equivalences, we use that both $\mathrm{w}_{-r}/\mathrm{w}_r$ and $\mathrm{w}_{r}/\mathrm{w}_{-r}$ belong to $S^0(\R^d)$, and the boundedness of these Fourier multipliers on $\mathrm{bmo} (\R^d)$.
\subsection{Embedding of local Hardy--Orlicz spaces into spaces of logarithmic smoothness} \label{LP_Psi_r_predual}

In this section we shall prove the following Euclidean analogue of Theorem \ref{LP_Psi_r_predual_Thm}, which can be regarded as an embedding result of the space $h^{\Psi_r,1}(\R^d)$ into the Triebel--Lizorkin space of generalised smoothness $F^{0,-r}_{1,2}(\R^d)$.

\begin{theo}\label{LP_Psi_r_euclidean} Let $r \geq 0$.
There exists a constant $C_r>0$ such that
\begin{equation}\label{LP_H_ineq}
\norm{ \left( \sum_{j \geq 0} \frac{ | \widetilde{\Delta}_j (f)  |^2}{(j+1)^{2r}} \right)^{1/2} }_{L^1 (\R^d)} \leq C_r \norm{ f }_{h^{\Psi_{r,1}} (\R^d)}, 
\end{equation}
where $\Psi_{r,1} (t) := t [\log (e+t)]^{-r}$, $t \geq 0$.
\end{theo}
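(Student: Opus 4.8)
The plan is to deduce Theorem \ref{LP_Psi_r_euclidean} from the atomic decomposition of $h^{\Psi_{r,1}}(\R^d)$ recalled in Section \ref{background}, reducing everything to one estimate for a single atom. Note first that the left-hand side of \eqref{LP_H_ineq} is exactly $\norm{f}_{F^{0,-r}_{1,2}(\R^d)}$. Write $Sg:=\brkt{\sum_{j\ge 0}(j+1)^{-2r}\abs{\widetilde{\Delta}_jg}^2}^{1/2}$; this is subadditive, so $\norm{Sf}_{L^1}\le\sum_k\norm{S\beta_{Q_k}}_{L^1}$ for every atomic decomposition $f=\sum_k\beta_{Q_k}$. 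Since $\norm{\cdot}_{h^{\Psi_{r,1}}}$ and $S$ are positively homogeneous we may assume $\norm{f}_{h^{\Psi_{r,1}}}=1$, and, using that $\Psi_{r,1}$ is doubling ($\Psi_{r,1}(2t)\le 2\Psi_{r,1}(t)$) together with $\norm{f}_{h^{\Psi_{r,1}}}\approx\Lambda_\infty(\{\beta_{Q_k}\})$, we may choose the decomposition so that $\sum_k\abs{Q_k}\,\Psi_{r,1}\brkt{\norm{\beta_{Q_k}}_{L^\infty}}\le 2$. It then suffices to prove the \emph{block estimate}
\[
\norm{Sb}_{L^1(\R^d)}\le C_{r,d}\,\frac{\abs{Q}\,\norm{b}_{L^\infty}}{\brkt{\log(e+\abs{Q}^{-1})}^r}
\]
for every $b$ supported in a cube $Q$ with $\int_Q b=0$ whenever $\abs{Q}<1$, and then to sum. (Recall that $\Psi_{r,1}^{-1}(t)\approx_r t\log^r(e+t)$, so the right-hand side is $\approx_r\norm{b}_{L^\infty}/\Psi_{r,1}^{-1}(\abs{Q}^{-1})$.)

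For the block estimate, set $\ell=\abs{Q}^{1/d}$, $A=\norm{b}_{L^\infty}$, and pick $m\in\N_0$ with $2^m\le\max(1,\ell^{-1})<2^{m+1}$, so that $m+1\approx_d\log(e+\abs{Q}^{-1})$ and $2^j\ell\lesssim 1$ for $0\le j\le m$. Decompose $Sb\le S^{\mathrm{lo}}b+S^{\mathrm{hi}}b$, the two sums running over $0\le j\le m$ and over $j>m$ respectively. For $S^{\mathrm{hi}}b$, since $(j+1)^{-2r}\le(m+1)^{-2r}$ for $j>m$ we get $S^{\mathrm{hi}}b\le(m+1)^{-r}\mathscr{S}b$, where $\mathscr{S}b=\brkt{\sum_{j\ge0}\abs{\widetilde{\Delta}_jb}^2}^{1/2}$ is the usual unweighted inhomogeneous square function; one checks $\norm{\mathscr{S}b}_{L^1}\lesssim_d\abs{Q}A$ by Cauchy--Schwarz together with Plancherel (bounded overlap of $\{\varphi_j\}$) on a fixed dilate $2Q$, and by the standard decay bounds $\abs{\widetilde{\Delta}_jb(x)}\lesssim_N A\abs{Q}\,2^{jd}(1+2^j\,\mathrm{dist}(x,Q))^{-N}$ plus a geometric summation over the dyadic annuli around $Q$ outside $2Q$. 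For $S^{\mathrm{lo}}b$ (nontrivial only when $\ell<1$, where cancellation is available) subtract a constant inside the convolution and apply the mean value theorem: for $0\le j\le m$, $\abs{\widetilde{\Delta}_jb(x)}\lesssim_N\ell A\abs{Q}\,2^{j(d+1)}(1+2^j\,\mathrm{dist}(x,Q))^{-N}$, hence $\norm{\widetilde{\Delta}_jb}_{L^1}\lesssim\ell A\abs{Q}\,2^j$; using that the weighted $\ell^2$ norm dominates nothing but is dominated by the weighted $\ell^1$ norm, and the elementary bound $\sum_{j=0}^m 2^j(j+1)^{-r}\lesssim_r 2^m(m+1)^{-r}$, we obtain $\norm{S^{\mathrm{lo}}b}_{L^1}\lesssim_r\ell A\abs{Q}\,2^m(m+1)^{-r}\approx\abs{Q}A(m+1)^{-r}$. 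Adding the two contributions gives the block estimate.

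Finally, for the summation step I would show that the constraint $\abs{Q_k}\,\Psi_{r,1}\brkt{\norm{\beta_{Q_k}}_{L^\infty}}\le 2$ already forces $\log\brkt{e+\norm{\beta_{Q_k}}_{L^\infty}}\lesssim_r\log(e+\abs{Q_k}^{-1})$: writing $x_k=\abs{Q_k}\norm{\beta_{Q_k}}_{L^\infty}$ the constraint reads $x_k\le 2\log^r\brkt{e+x_k\abs{Q_k}^{-1}}$, and since $\log(e+uv)\le\log(e+u)+\log(e+v)$ this yields $x_k-C_r\log^r(e+x_k)\le C_r\log^r(e+\abs{Q_k}^{-1})$, whence $x_k\lesssim_r\log^r(e+\abs{Q_k}^{-1})$ (after absorbing the bounded range of $x_k$), and the claimed logarithmic comparison follows. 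Consequently $\abs{Q_k}\norm{\beta_{Q_k}}_{L^\infty}\big/\log^r(e+\abs{Q_k}^{-1})\lesssim_r\abs{Q_k}\,\Psi_{r,1}\brkt{\norm{\beta_{Q_k}}_{L^\infty}}$, and summing over $k$ with $\sum_k\abs{Q_k}\,\Psi_{r,1}\brkt{\norm{\beta_{Q_k}}_{L^\infty}}\le 2$ finishes the proof. The main obstacle — and the place where the precise shape of $\Psi_{r,1}$ is used — is exactly this last point: the naive term-by-term inequality $\norm{S\beta_{Q_k}}_{L^1}\lesssim\abs{Q_k}\,\Psi_{r,1}\brkt{\norm{\beta_{Q_k}}_{L^\infty}}$ is \emph{false} for atoms with very large sup norm, so the logarithmic gain coming from the weight $(j+1)^{-r}$ at the resonant scale $2^j\approx\abs{Q_k}^{-1/d}$ must be paired with the size restriction built into the $h^{\Psi_{r,1}}$ norm.
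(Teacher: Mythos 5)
Your proposal follows essentially the same route as the paper: reduce via the $h^{\Psi_{r,1}}$-atomic decomposition to a single block estimate, split the square function at the resonant scale $2^j\approx|Q|^{-1/d}$ (the paper splits at $2^j\approx|Q|^{-1}$, which is equivalent up to $d$-dependent constants), extract the logarithmic gain from the weight at the top of the geometrically increasing low-frequency sum, and close the argument by showing that the atomic constraint $\sum_k|Q_k|\Psi_{r,1}(\norm{\beta_{Q_k}}_\infty)\lesssim1$ forces $\log(e+\norm{\beta_{Q_k}}_\infty)\lesssim_r\log(e+|Q_k|^{-1})$, which is precisely the paper's inequality \eqref{inv_bound}. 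Your final remark about why the naive term-by-term bound fails and why the $\Lambda_\infty$ constraint must be invoked is correct and captures the genuine subtlety.

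There is one small but real slip in the high-frequency estimate. You bound $S^{\mathrm{hi}}b\le(m+1)^{-r}\mathscr{S}b$ with $\mathscr{S}b=\brkt{\sum_{j\ge0}|\widetilde{\Delta}_jb|^2}^{1/2}$ summed over \emph{all} $j$, and then sketch $\norm{\mathscr{S}b}_{L^1}\lesssim|Q|A$ using only the non-cancellative decay $|\widetilde{\Delta}_jb(x)|\lesssim A|Q|2^{jd}(1+2^j\mathrm{dist}(x,Q))^{-N}$ outside $2Q$. When $|Q|<1$, this bound gives $\mathscr{S}b(x)\lesssim A|Q|R^{-d}$ for $\ell<R=\mathrm{dist}(x,Q)<1$, whose outer integral produces an extra factor $\log(e+|Q|^{-1})$ — which is exactly the gain you need to preserve, so the block estimate would degrade to $|Q|A(m+1)^{1-r}$. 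The fix is immediate and brings you back to the paper's argument: either sum only over $j>m$ (for $j>m$ and $R>\ell$ one always has $2^jR>1$, so the non-cancellative decay sums geometrically to $\lesssim A|Q|\ell^{N-d}R^{-N}$ with a convergent outer integral), or retain cancellation in the outer decay bound for small $j$, as you already do for $S^{\mathrm{lo}}b$ and as the paper does in its estimate \eqref{proj_bound2}. The statement $\norm{\mathscr{S}b}_{L^1}\lesssim|Q|A$ is of course true (it is the $h^1$-norm of an atom), but the proof requires cancellation in the outer region, not just Schwartz decay.
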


Theorem \ref{LP_Psi_r_euclidean} will be obtained by combining the following lemma with the atomic decomposition of $h^{\Psi_{r,1}} (\R^d)$.

\begin{lemma}\label{main_lemma} Let $r \geq 0$. There exists a constant $M_r >0$ such that for any cube $Q \subseteq \R^d$ and for any function $\beta_Q \in L^{\infty} (\R^d)$ satisfying that:
\begin{enumerate}[(i)]
\item $\mathrm{supp} (\beta_Q) \subseteq Q$;
\item $\int_Q \beta_Q = 0$ if $\abs{Q}<1$;
\end{enumerate}
one has 
\[
\norm{ \left( \sum_{j \geq 0} \frac{ | \widetilde{\Delta}_j ( \beta_Q )  |^2}{(j+1)^{2r}} \right)^{1/2} }_{L^1 (\R^d)} \leq M_r | Q| \frac{ \norm{ \beta_Q }_{L^{\infty} (\R^d)}} { \left[ \log \left(e + |Q|^{-1} \right) \right]^r }.
\]
\end{lemma}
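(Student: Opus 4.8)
The plan is to control the weighted square function
\[
g_r(\beta_Q):=\brkt{\sum_{j\ge 0}(j+1)^{-2r}\abs{\widetilde{\Delta}_j\beta_Q}^2}^{1/2}
\]
by splitting $\R^d$ into the dilated cube $3Q$ and its complement, and by splitting the Littlewood--Paley sum at the scale of $Q$. By translation invariance of $g_r$ we may assume $Q$ is centred at $0$; set $\ell:=\ell(Q)$, so $\abs{Q}=\ell^d$, and let $N\ge 0$ be the integer with $2^{-N-1}<\ell\le 2^{-N}$ when $\ell<1$, and $N:=0$ when $\ell\ge 1$. Since $\log\brkt{e+\abs{Q}^{-1}}\approx_d N+1$, it suffices to prove $\norm{g_r(\beta_Q)}_{L^1(\R^d)}\lesssim_{d,r}(N+1)^{-r}\abs{Q}\norm{\beta_Q}_{L^\infty(\R^d)}$. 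Throughout I would use that $\widetilde{\Delta}_j\beta_Q=\beta_Q\ast\check{\varphi}_j$ with $\check{\varphi}_j(z)=2^{jd}\check{\varphi}_1(2^jz)$ for $j\ge 1$, that $\check{\varphi}_0,\check{\varphi}_1\in\mathcal{S}(\R^d)$, that $\int_{\R^d}\check{\varphi}_j=\varphi_j(0)=0$ for $j\ge 1$, and that $\int_Q\beta_Q=0$ when $\abs{Q}<1$.

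For the local part, Cauchy--Schwarz gives $\int_{3Q}g_r(\beta_Q)\le\abs{3Q}^{1/2}\norm{g_r(\beta_Q)}_{L^2(\R^d)}$, and Plancherel reduces the $L^2$-norm to $\sum_{j\ge 0}(j+1)^{-2r}\int_{\R^d}\abs{\varphi_j(\xi)}^2\abs{\widehat{\beta_Q}(\xi)}^2\,\dd\xi$. The idea is to split this sum at $j=N$: for $j\ge N$ use $(j+1)^{-2r}\le(N+1)^{-2r}$, the bounded overlap $\sum_j\abs{\varphi_j}^2\lesssim 1$, and $\norm{\beta_Q}_{L^2}\le\abs{Q}^{1/2}\norm{\beta_Q}_{L^\infty}$; for $j<N$ (which forces $\abs{Q}<1$, so the moment condition is available) write $\widehat{\beta_Q}(\xi)=\int_Q\beta_Q(y)(e^{-2\pi i y\cdot\xi}-1)\,\dd y$ to get $\abs{\widehat{\beta_Q}(\xi)}\lesssim_d\norm{\beta_Q}_{L^\infty}\abs{Q}\,\ell\,\abs{\xi}$ on $\mathrm{supp}\,\varphi_j$, whereupon the elementary integral over $\abs{\xi}\lesssim 2^{j}$ produces a series that is geometric in $j$ and dominated by its $j\approx N$ term, where the weight is $\approx(N+1)^{-2r}$. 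This yields $\norm{g_r(\beta_Q)}_{L^2(\R^d)}\lesssim_{d,r}(N+1)^{-r}\abs{Q}^{1/2}\norm{\beta_Q}_{L^\infty}$, and hence the desired bound over $3Q$.

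For the far part I would use $g_r(\beta_Q)(x)\le\sum_{j\ge 0}(j+1)^{-r}\abs{\widetilde{\Delta}_j\beta_Q(x)}$ together with pointwise kernel bounds valid for $x\notin 3Q$, where $\mathrm{dist}(x,Q)>\ell$. For $j\ge N$, the Schwartz decay of $\check{\varphi}_j$ gives $\abs{\widetilde{\Delta}_j\beta_Q(x)}\lesssim_M\norm{\beta_Q}_{L^\infty}(2^j\ell)^d\brkt{1+2^j\,\mathrm{dist}(x,Q)}^{-M}$ for any $M$; for $j<N$ one first subtracts $\check{\varphi}_j(x)$ inside the convolution integral (legitimate since $\int_Q\beta_Q=0$) and applies the mean value theorem to gain one extra factor $2^j\ell\le 1$, obtaining the same estimate with exponent $d+1$ in place of $d$. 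Integrating over $\R^d\setminus 3Q$ and using $\mathrm{dist}(x,Q)>\ell$ there, one checks that $\int_{\R^d\setminus 3Q}\brkt{1+2^j\,\mathrm{dist}(x,Q)}^{-M}\,\dd x\lesssim_M 2^{-jd}$ for $j<N$ and $\lesssim_M 2^{-(j-N)M/2}\abs{Q}$ for $j\ge N$ (taking $M>2d$); inserting these and summing over $j$ the resulting geometric series — again concentrated at $j\approx N$ — gives $\int_{\R^d\setminus 3Q}g_r(\beta_Q)\lesssim_{d,r}(N+1)^{-r}\abs{Q}\norm{\beta_Q}_{L^\infty}$. When $\abs{Q}\ge 1$ the low-frequency regime is empty and $(N+1)^{-r}=1$, so here one only needs $\lesssim\abs{Q}\norm{\beta_Q}_{L^\infty}$, which comes from the same Schwartz-tail estimates, with the $j=0$ term handled by the decay of $\check{\varphi}_0$ and using $\ell^{d-1}\le\ell^d$.

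Adding the local and far contributions completes the proof. The step I expect to be the crux is extracting the logarithmic gain $(N+1)^{-r}\approx[\log(e+\abs{Q}^{-1})]^{-r}$: it forces us to split the $j$-sum precisely at the scale $N\approx\log_2\ell^{-1}$, so that the high-frequency part carries the weight $(N+1)^{-2r}$ directly, while the low-frequency part — controllable only through the single vanishing moment of $\beta_Q$ — is geometrically concentrated at $j\approx N$ and therefore also picks up a factor $\approx(N+1)^{-r}$. Keeping this gain visible simultaneously in the $L^2$ (local) and pointwise (far) estimates, and handling the cancellation-free case $\abs{Q}\ge 1$ separately, is the only genuinely delicate point; the remaining steps are routine Schwartz-kernel estimates.
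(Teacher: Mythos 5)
Your proof is correct, and it reorganizes the argument compared to the paper's. The paper splits the square function by frequency first, setting $A$ (the piece with $2^j<\abs{Q}^{-1}$) and $B$ (the piece with $2^j\geq \abs{Q}^{-1}$); for $A$ both the near and far regions are handled with pointwise/$L^\infty$ kernel bounds coming from one vanishing moment and Schwartz decay (the analogues of your \textup{(proj\_bound1)} and \textup{(proj\_bound2)}), while for $B$ the logarithmic weight is frozen at $(j+1)^{-r}\le[\log(e+\abs{Q}^{-1})]^{-r}$ and the resulting unweighted $L^1$-bound is obtained by Cauchy--Schwarz plus Plancherel on a ball around $Q$ and kernel decay outside. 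You instead split the physical side first (into $3Q$ and its complement) and then the frequency sum at $j=N\approx\log_2\ell^{-1}$ within each region; in particular you treat \emph{all} frequencies of the near part uniformly by Cauchy--Schwarz and Plancherel, using the cancellation bound $\abs{\widehat{\beta_Q}(\xi)}\lesssim\norm{\beta_Q}_\infty\abs{Q}\,\ell\,\abs{\xi}$ only for $j<N$, rather than an $L^\infty$ estimate as in the paper's near low-frequency piece. Both routes extract the crucial $(N+1)^{-r}$ from the same two mechanisms (the weight itself for $j\ge N$; concentration of a geometric series at $j\approx N$ for $j<N$), and the far-part estimates are essentially identical. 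The other genuine difference is the case $\abs{Q}\geq 1$: the paper notes $\beta_Q$ is then a multiple of an $h^1$-atom and invokes the known Littlewood--Paley characterisation of $h^1$, whereas you rerun the Schwartz-tail estimates directly without cancellation; your version works (for $j\ge 1$ the raw kernel decay already gives a convergent geometric sum since $\ell\geq1$, and $j=0$ is handled by the decay of $\check\varphi_0$), and is marginally more self-contained, while the paper's is shorter.
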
 

\begin{proof} Let $r \geq 0$ and let $Q$ and $\beta_Q$ be as in the statement of the lemma with $\abs{Q}\leq 1$.  Without loss of generality, by translation-invariance, we may assume that $Q$ is centred at the origin. 
	
We write
\begin{equation}\label{AB}
\left( \sum_{j \geq 0} \frac{ | \widetilde{\Delta}_j (\beta_Q) (x) |^2}{(j+1)^{2r}} \right)^{1/2} \leq A (x) + B(x),
\end{equation}
where
\[
A (x) := \left( \sum_{2^j < \abs{Q}^{-1}  } \frac{ | \widetilde{\Delta}_j (\beta_Q) (x)  |^2}{(j+1)^{2r}} \right)^{1/2}
\]
and
\[
B (x) := \left( \sum_{2^j \geq \abs{Q}^{-1} } \frac{ | \widetilde{\Delta}_j (\beta_Q) (x)  |^2}{(j+1)^{2r}} \right)^{1/2} . 
\]
We shall prove that there exists a constant $K_r>0$ such that
\begin{equation}\label{L-P_ineq_A}
\norm{ A }_{L^1 (\R^d)} \leq K_r | Q | \frac{ \norm{ \beta_Q }_{L^{\infty} (\R^d)} } { \left[ \log \big( e + | Q |^{-1}) \right]^r }
\end{equation}
and
\begin{equation}\label{L-P_ineq_B}
\norm{ B }_{L^1 (\R^d)} \leq K_r | Q |  \frac{ \norm{ \beta_Q }_{L^{\infty} (\R^d)} } { \left[ \log \big( e +  | Q |^{-1}  \big)  \right]^r }.
\end{equation}
Note that the desired estimate then follows from \eqref{AB}, \eqref{L-P_ineq_A}, and \eqref{L-P_ineq_B}. %
	
For the proofs of \eqref{L-P_ineq_A} and \eqref{L-P_ineq_B}, we shall use the standard facts that there is an absolute constant $C>0$ such that
\begin{equation}\label{proj_bound1}
| \widetilde{\Delta}_j (\beta_Q) (x) | \leq C 2^{jd} | Q | \norm{ \beta_Q }_{L^{\infty} (\R^d)} \quad \text{for all } x \in \R^d,
\end{equation}
and for each $N >0$ there exists a constant $C_N>0$ such that
\begin{equation}\label{proj_bound2}
| \widetilde{\Delta}_j (\beta_Q) (x) | \leq C_N
| Q |^{d+1} \| \beta_Q \|_{L^{\infty} (\R^d)} \frac{ 2^{j(2d-N)}} {|x|^N} 
\end{equation}
for all $x \in \R $ with $ |x| > 2|Q| $ and for all $j \geq 0$. 
	
To prove \eqref{proj_bound1} and \eqref{proj_bound2}, consider Schwartz functions 
\[
\Phi_j:=\varphi_j^\vee.
\]
It follows that 
\[
\norm{ \Phi_j }_{L^{\infty} (\R^d)} \lesssim 2^{jd} \quad \text{for all } j \geq 0.
\]
One thus deduces that
\[
\norm{ \widetilde{\Delta}_j (\beta_Q) }_{L^{\infty} (\R^d)} = \norm{ \Phi_j \ast \beta_Q }_{L^{\infty} (\R^d)}  \lesssim 2^{jd} | Q | \norm{ \beta_Q }_{L^{\infty} (\R^d)} \quad  \text{for all } j \geq 0.
\]
Hence, \eqref{proj_bound1} holds. To prove \eqref{proj_bound2}, let $x$ be such that $|x| > 2\abs{Q}$. For $j \geq 1$, properties (i) and (ii) of $\beta_Q$ yield that
\begin{align*}
| \widetilde{\Delta}_j (\beta_Q) (x) | &= \left| \int_Q \beta_Q (y) \left[ \Phi_j (x-y) - \Phi_j (x) \right] \, \dd y \right| \\
& \leq \| \beta_Q \|_{L^{\infty} (\R^d)} \int_Q  \left| \Phi_j (x-y) - \Phi_j (x) \right| \, \dd y \\
& \lesssim_N \| \beta_Q \|_{L^{\infty} (\R^d)} \int_{Q} |y| \frac{ 2^{2jd}} {(1 + 2^j |x| )^N} \, \dd y \\
& \lesssim | Q |^{d+1} \| \beta_Q \|_{L^{\infty} (\R^d)} \frac{ 2^{j(2d-N)}} {  |x|^N } ,
\end{align*}
where the implicit estimate that was used to go from the second to the third line can be justified by appealing to the mean value theorem and the rapid decay of $\varphi_1$. 
	
Going back to the proof of  \eqref{L-P_ineq_A}, write
\begin{equation}\label{A_dec}
\norm{ A }_{L^1 (\R^d)} = \int_{|x| \leq 2| Q | } |A (x)| \, \dd x +  \int_{| x | > 2| Q | } |A (x)| \, \dd x .
\end{equation}
Hence, by using \eqref{proj_bound1}, we have
\begin{align*}
\int_{|x| \leq 2\abs{Q} } |A (x)| \, \dd x \lesssim | Q |^d \| A \|_{L^{\infty} (\R^d)} 
&\lesssim | Q |^d \left( \sum_{2^j <\abs{Q}^{-1} } \frac{ \| \widetilde{\Delta}_j (\beta_Q) \|^2_{L^{\infty} (\R^d)} }{(j+1)^{2r}} \right)^{1/2}  \\
& \lesssim | Q |^{d+1} \| \beta_Q \|_{L^{\infty} (\R^d)} \left( \sum_{2^j < \abs{Q}^{-1} } \frac{2^{2jd} }{(j+1)^{2r}} \right)^{1/2}
\end{align*}
and as 
\[
\sum_{2^j <\abs{Q}^{-1} } \frac{2^{2jd} }{(j+1)^{2r}} \lesssim_r \frac{ |Q|^{-2d} }{ \left[ \log  \left(e + |Q|^{-1} \right) \right]^{2r}},
\]
we get
\begin{equation}\label{L-P_ineq_A1}
\int_{|x| \leq 2\abs{Q}} |A (x)| \, \dd x  \lesssim_r | Q | \frac{\norm{ \beta_Q }_{L^{\infty} (\R^d)} }{  \left[ \log (e + | Q |^{-1}) \right]^r}.
\end{equation}
To handle the second term, note that by using \eqref{proj_bound2}, we have
\begin{align*}
\int_{|x| >2 \abs{Q}} |A (x)| \, \dd x & \lesssim | Q |^{d+1}  \norm{ \beta_Q }_{L^{\infty} (\R^d)} \left( \sum_{2^j <\abs{Q}^{-1} } \frac{2^{j(4d-2N)}}{(j+1)^{2r}} \right)^{1/2} \int_{|x| > 2| Q | } \frac{\dd x}{|x|^{N}}\\ 
\end{align*}
provided $2d>N>d$. This, combined with
\[
\sum_{2^j < \abs{Q}^{-1} } \frac{2^{j(4d-2N)}}{(j+1)^{2r}} \lesssim_r \frac{| Q |^{-(4d-2N)} }{ \left[ \log \left( e + | Q |^{-1} \right) \right]^{2r}}, 
\]
yields
\begin{equation}\label{L-P_ineq_A2}
\int_{|x| > |Q|} |A (x)| \, \dd x  \lesssim_r | Q |  \frac{ \norm{ \beta_Q }_{L^{\infty} (\T)} } { \left[ \log \big( e + |Q|^{-1} \big) \right]^r} . 
\end{equation}
In view of \eqref{L-P_ineq_A1} and \eqref{L-P_ineq_A2}, the proof of \eqref{L-P_ineq_A} is complete.
	
To prove \eqref{L-P_ineq_B}, note that
\[
B (x) \leq \frac{1}{\left[ \log (e+|Q|^{-1}) \right]^r} \left( \sum_{2^j \geq |Q|^{-1} }  | \widetilde{\Delta}_j (\beta_Q) (x)  |^2 \right)^{1/2} . 
\]
Hence, it suffices to show that
\begin{equation}\label{H^1_est}
\int_{\R^d} \left( \sum_{2^j \geq | I  |^{-1} } | \widetilde{\Delta}_j (\beta_Q) (x)  |^2 \right)^{1/2} \, \dd x \lesssim |Q| \norm{ \beta_Q }_{L^{\infty} (\R^d)}. 
\end{equation}
To this end, write
\begin{align*}
\int_{\R^d} \left( \sum_{2^j \geq | Q  |^{-1} }  | \widetilde{\Delta}_j (\beta_Q) (x)  |^2 \right)^{1/2} \, \dd x &= \int_{|x| \leq 2| Q |} \left( \sum_{2^j \geq | Q  |^{-1} }  | \widetilde{\Delta}_j (\beta_Q) (x)  |^2 \right)^{1/2}\, \dd x \\
&+ \int_{|x| > 2| Q |} \left( \sum_{2^j \geq | I  |^{-1} }  | \widetilde{\Delta}_j (\beta_Q) (x)  |^2 \right)^{1/2} \, \dd x.
\end{align*}
For the first term, by using the Cauchy--Schwarz inequality and Parseval's identity, we get
\[
\int_{|x| \leq 2| Q |} \left( \sum_{2^j \geq |Q|^{-1} }  | \widetilde{\Delta}_j (\beta_Q) (x)  |^2 \right)^{1/2} \, \dd x \lesssim |Q|^{1/2} \norm{\beta_Q}_{L^2 (\R^d)}
\]
and hence,
\begin{equation}\label{H^1_est1}
\int_{| x| \leq 2| Q |} \left( \sum_{2^j \geq |Q|^{-1} }  | \widetilde{\Delta}_j (\beta_Q) (x)  |^2 \right)^{1/2} \, \dd x \lesssim |Q| \norm{\beta_Q}_{L^{\infty} (\R^d)}. 
\end{equation}
To handle the second term, we use \eqref{proj_bound2} for $N>2d$,
\begin{align*}
& \int_{| x| > 2| Q |} \left( \sum_{2^j \geq |Q|^{-1} }  | \widetilde{\Delta}_j (\beta_Q) (x)  |^2 \right)^{1/2} \, \dd x \lesssim \\
&  |Q|^{d+1} \norm{\beta_Q}_{L^{\infty} (\R^d)} \brkt{ \sum_{2^j \geq |Q|^{-1}} 2^{2j(2d-N)} }^{1/2} \int_{| x| > 2|Q|} \frac{1}{| x|^N} \, \dd x. 
\end{align*}
Hence,
\begin{equation}\label{H^1_est2}
\int_{| x| > | Q |} \left( \sum_{2^j \geq | I  |^{-1} }  | \widetilde{\Delta}_j (\beta_Q) (x)  |^2 \right)^{1/2} \, \dd  x \lesssim |Q| \norm{\beta_Q}_{L^{\infty} (\R^d)}. 
\end{equation}
Therefore, in view of \eqref{H^1_est1} and \eqref{H^1_est2},
\eqref{L-P_ineq_B}  holds and so, the proof of the lemma is complete for the case $\abs{Q}<1$.
	
Assume now that $\abs{Q}\geq 1$. Notice then that $\beta_Q$ is then a multiple of an $h^1(\R^d)$-atom as
\[
a_Q=\frac{\beta_Q}{\norm{\beta_Q}_{L^\infty(\R^d)}\abs{Q}}
\]
is an $h_1(\R^d)$-atom. The the characterisation of $h^1(\R^d)$ in terms of Littlewood--Paley partitions (see e.g. \cite{Stein_mult} for the corresponding periodic case), the homogeneity of the $h^1(\R^d)$-norm, and the fact that for $\abs{Q}\geq 1$
\[
\log(e+\abs{Q}^{-1})\approx 1,
\]
yield
\begin{align*}
\norm{ \left( \sum_{j \geq 0} \frac{ | \widetilde{\Delta}_j ( \beta_Q )  |^2}{(j+1)^{2r}} \right)^{1/2} }_{L^1 (\R^d)} &\leq \norm{ \left( \sum_{j \geq 0} | \widetilde{\Delta}_j ( \beta_Q ) |^2 \right)^{1/2} }_{L^1 (\R^d)}\\
&\approx\norm{\beta_Q}_{h^1(\R^d)}\\
&\lesssim | Q | \norm{ \beta_Q }_{L^{\infty} (\R^d)} \\
& \approx \frac{ | Q |\norm{ \beta_I }_{L^{\infty} (\R^d)}} { \left[ \log \left(e + |Q|^{-1} \right) \right]^r }.        
\end{align*}
This completes the proof of the lemma. \end{proof}

\subsection*{Proof of Theorem \ref{LP_Psi_r_euclidean}} The case $r=0$ is well-known. In fact, for $r=0$, \eqref{LP_H_ineq} holds as an equivalence; this is the Littlewood--Paley characterisation of $h^1 (\R^d)$ (see e.g. \cite{Stein_mult} for the corresponding periodic case or \cite{Grafakos_modern}*{Theorem 2.2.9} for the homogeneous case i.e. for the Littlewood--Paley characterisation of $H^1 (\R^d)$).

We shall therefore give a proof for the case $r >0$. 
Towards this aim, observe that, as the norms in both sides of \eqref{LP_H_ineq} are homogeneous, it suffices to establish \eqref{LP_H_ineq} for $f \in h^{\Psi_{r,1}} (\R^d)$  with $\| f \|_{h^{\Psi_{r,1}} (\R^d)} = 1$. 

To this end, consider an $f \in h^{\Psi_{r,1}} (\R^d)$  with $\| f \|_{h^{\Psi_{r,1}} (\R^d)} = 1$ and note that by the atomic decomposition of $h^{\Psi_{r,1}} (\R^d)$ there exists an absolute constant $A_r>0$ and a sequence of multiples $\{ \beta_{Q_k} \}_{k \in \N}$ of atoms such that
\begin{equation}\label{conv}
f = \sum_k \beta_{Q_k} \quad \text{in } \mathcal{S}'(\R^d) 
\end{equation}
and
\begin{equation}\label{uppA}
\sum_{k \in \N} | I_k | \frac{ \norm{ \beta_{Q_k} }_{L^{\infty} (\R^d)} }{ \left[ \log \left( e + \norm{ \beta_{Q_k} }_{L^{\infty} (\R^d)} \right) \right]^r } \leq A_r . 
\end{equation}
By \eqref{conv}, one has
\[
\norm{ \left( \sum_{j \geq 0} \frac{ | \widetilde{\Delta}_j (f)  |^2}{(j+1)^{2r}} \right)^{1/2} }_{L^1 (\R^d)} \leq \sum_{k \in \N} \norm{ \left( \sum_{j \geq 0} \frac{ | \widetilde{\Delta}_j ( \beta_{Q_k} ) |^2}{(j+1)^{2r}} \right)^{1/2} }_{L^1 (\R^d)} 
\]
and hence, in view of \eqref{uppA}, it suffices to prove that there exists an $M_r >0$, depending only on the constant $A_r$ appearing in \eqref{uppA}, such that
\begin{equation}\label{mult_atom}
\norm{ \left( \sum_{j \geq 0} \frac{ | \widetilde{\Delta}_j ( \beta_{Q_k} ) |^2}{(j+1)^{2r}} \right)^{1/2} }_{L^1 (\R^d)} \leq M_r | Q_k | \frac{ \norm{ \beta_{Q_k} }_{L^{\infty} (\R^d)}} { \left[ \log \left(e + \norm{ \beta_{Q_k} }_{L^{\infty} (\R^d)} \right) \right]^r  }
\end{equation}
for all $k \in \N$. 

For each $k \in \N$, as $\beta_{Q_k}$ is a multiple of an $h^{\Psi_{r,1}}$-atom, $\beta_{Q_k}$ satisfies properties (i) and (ii) of Lemma \ref{main_lemma}. Moreover, it follows from \eqref{uppA} that
\[
\frac{ \norm{ \beta_{Q_k} }_{L^{\infty} (\R^d)} }{ \left[ \log \left( e + \norm{ \beta_{Q_k} }_{L^{\infty} (\R^d)} \right) \right]^r } \leq A_r | Q_k |^{-1},
\]
which implies that 
\begin{equation}\label{inv_bound}
\left[ \log \left( e + \norm{ \beta_{Q_k} }_{L^{\infty} (\R^d)} \right) \right]^r  \lesssim_r  \left[ \log \left( e + |Q_k|^{-1} \right) \right]^r . 
\end{equation}
Therefore, \eqref{mult_atom} follows from \eqref{inv_bound} the Lemma  \ref{main_lemma}.

Here we give two re-statements of Theorem \ref{LP_Psi_r_euclidean}. The first one is read as a embedding of spaces, while the second one, by using  \eqref{eq:equivalent_norms}, can be interpreted as a Sobolev-type embedding.  
\begin{corollary}\label{cor:embedding} Let $r\geq 0$. The space $h^{\Psi_r,1}(\R^d)$ is continously embedded in the space $F^{0,-r}_{1,2}(\R^d)$.
\end{corollary}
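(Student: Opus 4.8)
The statement is essentially a reformulation of Theorem \ref{LP_Psi_r_euclidean}, so the plan is to unwind the definition of the target space and match it to the functional inequality \eqref{LP_H_ineq}. First I would recall that, by Definition \ref{GSTL} applied with $s=0$, smoothness exponent $-r$, $p=1$ and $q=2$, and with $\{\varphi_j\}_{j=0}^\infty$ a fixed non-homogeneous resolution of unity, a tempered distribution $f$ belongs to $F^{0,-r}_{1,2}(\R^d)$ precisely when
\[
\norm{f}_{F^{0,-r}_{1,2}(\R^d)}=\norm{\left(\sum_{j=0}^\infty (1+j)^{-2r}\abs{\widetilde{\Delta}_j f}^2\right)^{1/2}}_{L^1(\R^d)}<\infty,
\]
where $\widetilde{\Delta}_j(f)=(\varphi_j\widehat{f})^\vee$ as in \S\ref{s_log_smooth}. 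This is exactly the quantity appearing on the left-hand side of \eqref{LP_H_ineq}.

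Next I would note that any $f\in h^{\Psi_{r,1}}(\R^d)$ is by definition a tempered distribution, so the expression $\widetilde{\Delta}_j(f)$ is meaningful and $F^{0,-r}_{1,2}(\R^d)$ is an admissible receiving space. Theorem \ref{LP_Psi_r_euclidean} then provides a constant $C_r>0$, independent of $f$, such that
\[
\norm{f}_{F^{0,-r}_{1,2}(\R^d)}=\norm{\left(\sum_{j\geq 0}\frac{\abs{\widetilde{\Delta}_j(f)}^2}{(j+1)^{2r}}\right)^{1/2}}_{L^1(\R^d)}\leq C_r\norm{f}_{h^{\Psi_{r,1}}(\R^d)}.
\]
In particular, $f\in F^{0,-r}_{1,2}(\R^d)$, so $h^{\Psi_{r,1}}(\R^d)\subseteq F^{0,-r}_{1,2}(\R^d)$; the displayed inequality is precisely the statement that the inclusion map is bounded, i.e. the embedding is continuous. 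One small bookkeeping point to spell out is that the definition of $F^{0,-r}_{1,2}(\R^d)$ does not depend on the choice of resolution of unity (standard, via the lifting/multiplier theory recalled in \S\ref{s_log_smooth}), so the identification of the norm with the left-hand side of \eqref{LP_H_ineq} is legitimate regardless of which $\{\varphi_j\}$ was used in Theorem \ref{LP_Psi_r_euclidean}.

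There is no genuine obstacle here: all the analytic work has already been carried out in Lemma \ref{main_lemma} and the atomic-decomposition argument proving Theorem \ref{LP_Psi_r_euclidean}. The only thing to be mildly careful about is matching conventions — that the smoothness parameter in $F^{0,-r}_{1,2}$ carries the sign $-r$ so that the weights are $(1+j)^{-2r}$, matching those in \eqref{LP_H_ineq} — and, if desired, remarking (as the text does for $r=0$) that for $r=0$ the embedding is in fact an equivalence, being Stein's square-function characterisation of $h^1(\R^d)$, whereas for $r>0$ it is proper in the periodic model by Remark \ref{proper_inclusion_infty}.
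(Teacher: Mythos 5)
Your proposal is correct and follows exactly the paper's own route: the paper presents Corollary~\ref{cor:embedding} as a direct re-statement of Theorem~\ref{LP_Psi_r_euclidean}, and your unwinding of the $F^{0,-r}_{1,2}(\R^d)$ norm as the left-hand side of \eqref{LP_H_ineq} is precisely the identification the authors intend. The additional remarks about independence of the resolution of unity and the $r=0$ versus $r>0$ dichotomy are sound bookkeeping but not required.
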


\begin{corollary} Let $r\geq 0$. There exists a constant $C_r >0$ such that for all $f\in h^{\Psi_{r,1}}(\R^d)$ one has
\[
	\norm{\log^{-r}(e-\Delta) f}_{h^1(\R^d)}\leq C_r  \norm{f}_{h^{\Psi_{r,1}}(\R^d)}.
\] 
\end{corollary}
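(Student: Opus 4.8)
\emph{Proof plan.} The statement is, as announced, a reformulation of Theorem~\ref{LP_Psi_r_euclidean}, so the plan is to combine the embedding $h^{\Psi_{r,1}}(\R^d)\hookrightarrow F^{0,-r}_{1,2}(\R^d)$ of Corollary~\ref{cor:embedding} with the realisation of $F^{0,-r}_{1,2}(\R^d)$ as a potential-type space. Since $h^1(\R^d)=F^{0,0}_{1,2}(\R^d)$, it suffices to verify the equivalence
\[
\norm{\log^{-r}(e-\Delta)f}_{h^1(\R^d)}\approx_r\norm{f}_{F^{0,-r}_{1,2}(\R^d)},
\]
and then Corollary~\ref{cor:embedding} (equivalently Theorem~\ref{LP_Psi_r_euclidean}) bounds the right-hand side by $C_r\norm{f}_{h^{\Psi_{r,1}}(\R^d)}$, which gives the claim.

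To obtain the displayed equivalence I would repeat the argument of \S\ref{s_log_smooth} leading to \eqref{eq:equivalent_norms}, now with the smoothness exponent $-r$ in place of $r$, and with $s=0$, $p=1$, $q=2$; nothing in that argument uses the sign of the exponent. By the lifting property of Triebel--Lizorkin spaces of generalised smoothness (\cite{cae-mou}*{Proposition 3.2}) one has $\norm{f}_{F^{0,-r}_{1,2}(\R^d)}=\norm{\mathrm{w}_{-r}(D)f}_{F^{0,0}_{1,2}(\R^d)}$, where $\mathrm{w}_{-r}$ is the weight \eqref{log_symbol} with $r$ replaced by $-r$. Writing $u_{-r}(\xi):=\log^{-r}(e+\abs{\xi}^2)$ for the symbol of $\log^{-r}(e-\Delta)$, the bound \eqref{right_symbol_class} (valid for any real exponent) together with the analogous estimate for $u_{-r}^{-1}=u_r$ shows that both $u_{-r}/\mathrm{w}_{-r}$ and $\mathrm{w}_{-r}/u_{-r}$ belong to the Kohn--Nirenberg class $S^0(\R^d)$. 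Hence the corresponding Fourier multiplier operators are bounded on $F^{0,0}_{1,2}(\R^d)=h^1(\R^d)$ by \cite{Tri83}*{Theorem 2.3.7}, so that $\norm{u_{-r}(D)f}_{F^{0,0}_{1,2}(\R^d)}\approx_r\norm{\mathrm{w}_{-r}(D)f}_{F^{0,0}_{1,2}(\R^d)}=\norm{f}_{F^{0,-r}_{1,2}(\R^d)}$, which is the required equivalence.

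Chaining the two displays yields the corollary. There is no essential difficulty here, as the result is pure bookkeeping on top of Theorem~\ref{LP_Psi_r_euclidean}; the only point deserving an explicit line of verification is that $u_{-r}/\mathrm{w}_{-r}$ and $\mathrm{w}_{-r}/u_{-r}$ lie in $S^0(\R^d)$, which follows from the Leibniz rule applied to \eqref{right_symbol_class} and to the standard symbol estimates for $\mathrm{w}_{-r}$, exactly as in the $r\geq0$ case treated in \S\ref{s_log_smooth}. Alternatively, one could bypass the lifting machinery entirely and prove the bound directly from the atomic decomposition of $h^{\Psi_{r,1}}(\R^d)$, mimicking Lemma~\ref{main_lemma} and the proof of Theorem~\ref{LP_Psi_r_euclidean} with $\widetilde{\Delta}_j$ replaced by the frequency-localised pieces of $\log^{-r}(e-\Delta)f$; however, the route through Corollary~\ref{cor:embedding} is the shortest.
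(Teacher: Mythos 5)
The proposal is correct and follows essentially the same route the paper intends: the corollary is a reformulation of Theorem~\ref{LP_Psi_r_euclidean} (equivalently Corollary~\ref{cor:embedding}) through the potential-space identification \eqref{eq:equivalent_norms} applied with $s=0$, $p=1$, $q=2$ and logarithmic smoothness $-r$. Your explicit check that the lifting-plus-$S^0(\R^d)$-multiplier argument of \S\ref{s_log_smooth} is indifferent to the sign of the logarithmic exponent (using $u_{-r}^{-1}=u_r$ and \eqref{right_symbol_class}) fills in precisely the detail the paper leaves implicit when it says ``by using \eqref{eq:equivalent_norms}.''
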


\subsection{Behaviour of the Fourier transform of distributions in Hardy--Orlicz spaces}\label{behaviour}

Let $\Psi$ be a growth function. In this section we study properties of the Fourier transform of distributions belonging to  $H^{\Psi} (\R^d)$, or $h^{\Psi} (\R^d)$. We first obtain pointwise estimates, and then use them to prove Theorem \ref{eucl}, which is an extension of the  Hardy--Littewood inequality to $H^{\log}$-spaces. 

\begin{proposition}\label{FT_HO} Let $\Psi$ be a growth function of order $p \in (0,1]$.

If $f\in H^{\Psi} (\R^d)$, then its Fourier transform coincides with a continuous function that we denote by $\widehat{f}$. Moreover, there exists a constant $A_{d, \Psi}> 0$, depending only on $\Psi$ and $d$, but not on $f$, such that
\[
\abs{\widehat{f}(\xi)} \leq A_{d, \Psi}  \frac{ \Psi^{-1} (a_d \abs{ \xi }^d) }{a_d \abs{ \xi }^d} \norm{f}_{H^{\Psi} (\R^d)}
\]
for all $\xi \in \R^d \setminus \{ 0 \}$. Here $\Psi^{-1}$ denotes the inverse of $\Psi$ and $a_d : = \abs{B(0,1)}^{-1}$. 
\end{proposition}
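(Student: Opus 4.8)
The plan is to combine the atomic decomposition of $H^{\Psi}(\R^d)$ recalled in \S\ref{background} with an elementary estimate for the Fourier transform of a single atom, and then to sum over the atoms grouped by their scale. By homogeneity it suffices to treat $f$ with $\norm{f}_{H^{\Psi}(\R^d)}=1$. Fix an atomic decomposition $f=\sum_k b_{Q_k}$ in $\mathcal{S}'(\R^d)$ with $\Lambda:=\sum_k\abs{Q_k}\,\Psi\brkt{\norm{b_{Q_k}}_{L^\infty(\R^d)}}\lesssim 1$, where each $b_{Q_k}$ is a multiple of an atom (with the cancellation of Definition~\ref{atomdef}; for $p\le d/(d+1)$ one takes atoms with correspondingly more vanishing moments). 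Write $v_k=\abs{Q_k}$, $m_k=\norm{b_{Q_k}}_{L^\infty(\R^d)}$ and let $x_k$ be the centre of $Q_k$. Subtracting the constant $e^{-2\pi i x_k\cdot\xi}$ inside the integral and using $\int_{Q_k}b_{Q_k}=0$ yields $\widehat{b_{Q_k}}(\xi)=\int_{Q_k}b_{Q_k}(x)\brkt{e^{-2\pi i x\cdot\xi}-e^{-2\pi i x_k\cdot\xi}}\,\dd x$, whence
\[
\abs{\widehat{b_{Q_k}}(\xi)}\lesssim_d m_k v_k\min\brkt{1,\abs{\xi}\,v_k^{1/d}},\qquad \xi\in\R^d .
\]
Since the Fourier transform is continuous on $\mathcal{S}'(\R^d)$ we have $\widehat f=\sum_k\widehat{b_{Q_k}}$ in $\mathcal{S}'(\R^d)$; the estimate below shows this series converges absolutely and uniformly on compact subsets of $\R^d\setminus\{0\}$, so its sum is continuous there and coincides with $\widehat f$. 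Continuity at the origin, where $\widehat f(0)=0$ by the cancellation built into $H^{\Psi}$, is then obtained exactly as in the classical $H^p$ case (cf.\ \cite{Big_Stein}).

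It remains to prove the numerical inequality
\[
\sum_k m_k v_k\min\brkt{1,t\,v_k^{1/d}}\lesssim_{d,\Psi}\frac{\Psi^{-1}(a_d t^d)}{a_d t^d},\qquad t:=\abs{\xi}>0,
\]
under the sole assumption $\sum_k v_k\Psi(m_k)\lesssim 1$. I would group the $k$'s into the dyadic families $S_j:=\{k:2^j\le t^d v_k<2^{j+1}\}$, $j\in\Z$, put $w_j:=2^j/t^d$ and $\Lambda_j:=\sum_{k\in S_j}v_k\Psi(m_k)$, so $\sum_j\Lambda_j=\Lambda\lesssim 1$. On $S_j$ every $v_k$ is comparable to $w_j$, hence $\sum_{k\in S_j}\Psi(m_k)\lesssim\Lambda_j/w_j$; the sub-additivity of $\Psi$ (available after the concavification noted in \S\ref{background}) gives $\Psi\brkt{\sum_{k\in S_j}m_k}\le\sum_{k\in S_j}\Psi(m_k)$, so $\sum_{k\in S_j}m_k v_k\lesssim w_j\,\Psi^{-1}(\Lambda_j/w_j)=\Lambda_j\,g(\Lambda_j/w_j)$ with $g(s):=\Psi^{-1}(s)/s$. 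One uses two properties of $g$: it is non-decreasing (because $t\mapsto\Psi(t)/t$ is non-increasing), and it has at most polynomial growth, $g(\lambda s)\lesssim_\Psi\lambda^{1/p-1}g(s)$ for $\lambda\ge1$ (a consequence of the lower type $p$). For $j\ge0$ one has $\Lambda_j/w_j\lesssim_d t^d\asymp_d a_d t^d$, hence $\sum_{j\ge0}\Lambda_j g(\Lambda_j/w_j)\lesssim_{d,\Psi}g(a_dt^d)\sum_{j\ge0}\Lambda_j\lesssim g(a_dt^d)$. For $j<0$ the minimum contributes the factor $t\,v_k^{1/d}\asymp 2^{j/d}$, while $g(\Lambda_j/w_j)\lesssim_{d,\Psi}2^{-j(1/p-1)}g(a_dt^d)$; since $p>\tfrac d{d+1}$ the exponent $1+\tfrac1d-\tfrac1p$ is positive, so $2^{j/d}2^{-j(1/p-1)}=2^{j(1+1/d-1/p)}\le 1$ for $j<0$, and this part is $\lesssim g(a_dt^d)\sum_{j<0}\Lambda_j\lesssim g(a_dt^d)$. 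This is the one place where the order of the growth function is used. Undoing the normalisation completes the proof.

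The main obstacle is the bookkeeping in this dyadic sum: the crucial point is \emph{not} to replace $\Lambda_j$ by $\Lambda$ inside $\Psi^{-1}$ — that would spoil the summation in $j$ for borderline growth functions such as $\Psi(t)=t$ — but rather to keep $\Lambda_j$ as an outer factor and extract all $j$-decay from the geometric factor $2^{j/d}$ together with the sub-/super-linear bounds on $\Psi^{-1}$; the appearance of the normalisation $a_d=\abs{B(0,1)}^{-1}$ is then a matter of tracking dimensional constants. The two remaining points, continuity away from the origin and at the origin, are respectively a by-product of the same estimate and a classical fact, so they require no new ideas.
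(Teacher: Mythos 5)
Your argument is correct in spirit but takes a genuinely different route from the paper. The paper proves Proposition~\ref{FT_HO} without any atomic decomposition: it works directly with the maximal function $M^{\ast}_{\phi}[f]$ for a fixed Schwartz function $\phi$ with $\widehat{\phi}=1$ near the origin, first deriving $\sup_x\abs{f\ast\phi_t(x)}\le\Psi^{-1}(a_dt^{-d})$ by averaging $\Psi(\abs{f\ast\phi_t(\cdot)})$ over balls of radius $t$, and then using the elementary identity $\abs{f\ast\phi_t(x)}=\eta(\abs{f\ast\phi_t(x)})\Psi(\abs{f\ast\phi_t(x)})$ with $\eta(s):=s/\Psi(s)$ non-decreasing to obtain $\int\abs{f\ast\phi_t}\le\eta\circ\Psi^{-1}(a_dt^{-d})$ and hence the claimed Fourier bound via $\widehat{f}(\xi)=\widehat{f\ast\phi_{\abs{\xi}^{-1}}}(\xi)$. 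That route is shorter, gives the sharp dimensional constant $a_d$ directly, and — this matters — is valid for every $p\in(0,1]$ as the statement requires. Your atomic approach plus the dyadic bookkeeping is also perfectly legitimate, and the two properties of $g(s)=\Psi^{-1}(s)/s$ (monotonicity, polynomial growth from lower type $p$) are correctly identified and used.

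Two points in your write-up fall short of the full statement. First, the decisive inequality $2^{j(1+1/d-1/p)}\le1$ for $j<0$ requires $p>d/(d+1)$, so as written the argument proves less than the proposition claims. You note parenthetically that for $p\le d/(d+1)$ one should use atoms with more vanishing moments, which is the correct fix; but Definition~\ref{atomdef} and the atomic decomposition recalled in the paper only provide one vanishing moment, so you would need to invoke (and verify the applicability of) a higher-moment version of Viviani's decomposition, and redo the single-atom Fourier estimate with the higher-order Taylor remainder. Second, continuity of $\widehat f$ at the origin is not a by-product of your dyadic estimate: the bound $\abs{\widehat{b_{Q_k}}(\xi)}\lesssim m_kv_k$ near $\xi=0$ need not be summable in $k$ (e.g.\ for $\Psi(t)=t^p$, $m_kv_k=v_k\Psi(m_k)\,m_k^{1-p}$ can diverge), so local uniform convergence on a neighbourhood of $0$ is not established. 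The paper circumvents this cleanly by gluing the globally continuous functions $\widehat{f\ast\phi_t}$ on the nested sets $\{\abs{\xi}\le1/t\}$; your appeal to ``the classical $H^p$ case'' would have to be made precise, and in that classical case the argument is again the maximal-function one, not the atomic one.
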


\begin{proof} Let $\phi $ be a fixed Schwartz function on $\R^d$ such that $\widehat{\phi} (\xi) = 1$ for all $\abs{\xi } \leq 1$. 
For $f \in \mathcal{S}' (\R^d)$, define
\[
M_{\phi}^{\ast} [f] (x) : =  \sup_{\substack{ (y,t) \in \R^d \times [0, \infty):\\
 \abs{y-x}< t }} \abs{f \ast \phi_s (y)} , \quad x \in \R^d . 
\]
Let $f \in H^{\Psi} (\R^d)$ be non-zero. Suppose first that
\[
\| f \|_{H^{\Psi} (\R^d)} = \inf \left\{ \lambda >0 : \int_{\R^d} \Psi \brkt{\lambda^{-1} M_{ \mathcal{F}_{m_{\Psi}}}^{\ast} [f] (x)} \, \dd x \leq 1 \right\} = 1. 
\]
Observe that this implies 
\begin{equation}\label{extra_assumption}
\int_{\R^d} \Psi \brkt{  M_{\phi}^{\ast} [f] (x)} \, \dd x \leq 1. 
\end{equation}
Note first that, for $t>0$, if $\abs{x-y}<t$ then
\[	\abs{f \ast \phi_t (x)}\leq \sup_{\substack{ (z,s) \in \R^d \times [0, \infty):\\
 \abs{z-y}<s }} \abs{f \ast \phi_s (z)} = M_{\phi}^{\ast} [f] (y), 
\]
which implies that for $t>0$ and $\abs{x-y}<t$,
\[
\Psi \brkt{\abs{f*\phi_t(x)}}\leq \Psi \brkt{M^*_{\phi}f(y)}. 
\]
This yields
\begin{equation}\label{eq:pointwise}
\Psi \brkt{\abs{f*\phi_t(x)}}\leq a_dt^{-d}\int_{B(x,t)} \Psi \brkt{M^*_{\phi}[f](y)} \, \dd y,
\end{equation}
where $a_d : =\abs{B(0,1)}^{-1}$. It follows from \eqref{eq:pointwise}   that   
\begin{equation}\label{eq:pointwise_2}
\sup_{x \in \R^d}{\abs{f \ast \phi_t (x)}}\leq \Psi^{-1}\brkt{a_d t^{-d} },
\end{equation}
where we also used \eqref{extra_assumption}. 
	
Let us define $\eta : (0, \infty) \rightarrow (0, \infty)$ given by $\eta (t) : = t \cdot [\Psi (t)]^{-1}$, $t > 0$. Observe that 
\begin{align*} 
\abs{f*\phi_t(x)} \leq  \eta \brkt{ \abs{f*\phi_t(x)} } \Psi \brkt{ M_{\phi}^{\ast} [f] (x)} \leq \eta \brkt{ \sup_{x \in \R^d}{\abs{f \ast \phi_t (x)}}  } \Psi \brkt{ M_{\phi}^{\ast} [f] (x)}
\end{align*}
and so, by employing \eqref{eq:pointwise_2}, we obtain
\begin{equation}\label{eq:pointwise_ineq}
\abs{f*\phi_t(x)} \leq  \eta\circ \Psi^{-1} \brkt{a_dt^{-d}} \Psi \brkt{ M_{\phi}^{\ast} [f] (x)}.
\end{equation}
Then using \eqref{extra_assumption}, we get 
\begin{equation}\label{eq:trivial}
\int_{\R^d}  \abs{f \ast \phi_t(x)} \, \dd x \leq   \eta \circ \Psi^{-1}\brkt{a_d t^{-d} } .
\end{equation}
Note that for all $0<t<1$, we have $h_t:=f \ast \phi_t \in L^1(\R^d)$, and $\widehat{\phi_t}(\xi)=1$ for $\abs{\xi}\leq 1/t$, which yields that, in the sense of distributions, 
\[
\widehat{f}\widehat{\phi_t} = \widehat{h_t}.
\]
So we have that, for all $0<t<1$, the distribution $\widehat{f}$ concides with the continuous function $\widehat{h_t}$ on the compact set $\{ \abs{\xi}\leq 1/t \}$, and notice that for $0<s<t<1$, $\widehat{h_s}=\widehat{h_t}$ for all $\xi \in \R$ with $\abs{\xi}\leq 1/t$. This allows us to construct a continuous function $g$,   such that $\widehat{f}$ coincides with $g$ in a distributional sense and
 \begin{equation}\label{expr}
g(\xi) = \widehat{ f \ast \phi_t } (\xi), \quad \abs{\xi}\leq 1/t.
 \end{equation}

Fixing $\xi \in \R^d \setminus \{ 0 \} $, and combining  \eqref{expr} (with the choice $t = |\xi|^{-1}$) with \eqref{eq:trivial}, we have
\[ 
\abs{g (\xi)} =\abs{ \widehat{ f \ast \phi_{|\xi|^{-1}} } (\xi) } \leq  {  \norm{ f \ast \phi_{|\xi|^{-1}} }_{L^1 (\R^d)}   } \leq \eta \circ \Psi^{-1} (a_d \abs{ \xi }^d) = \frac{ \Psi^{-1} (a_d \abs{ \xi }^d) }{a_d \abs{ \xi }^d}.  
\]
Since $\Psi$ is of lower type $p$ it follows that for all $\xi\in \R^d$
\[
\abs{g (\xi)}\lesssim (1+\abs{\xi})^{d(\frac{1}{p}-1)},
\]
which yields $g\in \mathcal{S}'(\R^d)$. 
Therefore, the proof is complete if $\norm{f}_{H^{\Psi} (\R^d)} = 1$.

If $f$ is non-zero and $\norm{f}_{H^{\Psi} (\R^d)} \neq 1$, define
\[
\widetilde{f} : = \norm{f}_{H^{\Psi} (\R^d)}^{-1} f  
\]
and then apply the previous step to $\widetilde{f}$. By homogeneity, we obtain the desired result for the general case. 
\end{proof}
 
The proof of the theorem above can be easily modified to obtain the following counterpart for local Hardy--Orlicz spaces. We omit the details. 

\begin{proposition}\label{FT_HO_local}
Let $\Psi$ be a growth function of order $p \in (d/(d+1),1]$. If $f\in h^{\Psi} (\R^d)$, then its Fourier transform coincides with a continuous function that we denote by $\widehat{f}$. Moreover, there exists a constant $A_{d, \Psi}> 0$, depending only on $\Psi$ and $d$, but not on $f$, such that
\[
\abs{\widehat{f}(\xi)} \leq A_{d, \Psi}  \frac{ \Psi^{-1} (a_d(1+\abs{ \xi })^d) }{(1+\abs{ \xi })^d} \norm{f}_{h^{\Psi} (\R^d)}.
\]	
\end{proposition}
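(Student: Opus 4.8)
The plan is to run the proof of Proposition \ref{FT_HO} essentially verbatim, the only structural change being that the dilation parameter $t$ in the local grand maximal function is confined to the range $(0,1)$. Concretely, the self-improving estimate for $f\ast\phi_t$ can now only be deployed for $t\in(0,1)$; this still lets us reach the frequencies $\abs{\xi}\ge 1$ by the scaling $t=\abs{\xi}^{-1}$, but it forces a separate — and easier — argument for the range $\abs{\xi}<1$. The two pieces are then glued together using monotonicity of the relevant profile function, which is exactly what produces the factor $(1+\abs{\xi})$ in the statement.

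By homogeneity it suffices to treat $f\in h^{\Psi}(\R^d)$ with $\norm{f}_{h^{\Psi}(\R^d)}=1$. Fix a Schwartz function $\phi$ with $\widehat{\phi}(\xi)=1$ for $\abs{\xi}\le 1$ and, after the same harmless normalisation as in Proposition \ref{FT_HO} (and using that we may take $\Psi$ concave), arrange that $M^{\ast}_{\phi,\mathrm{loc}}[f](x):=\sup_{(y,t)\in\R^d\times[0,1):\,\abs{x-y}<t}\abs{f\ast\phi_t(y)}\lesssim_{d,\Psi} M^{\ast}_{\mathcal{F}_{m_{\Psi}},\mathrm{loc}}[f](x)$, so that $\int_{\R^d}\Psi\brkt{M^{\ast}_{\phi,\mathrm{loc}}[f](x)}\,\dd x\lesssim_{d,\Psi}1$. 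Exactly as in \eqref{eq:pointwise}--\eqref{eq:pointwise_2}, for $0<t<1$ and $\abs{x-y}<t$ one gets $\Psi(\abs{f\ast\phi_t(x)})\le a_d t^{-d}\int_{B(x,t)}\Psi(M^{\ast}_{\phi,\mathrm{loc}}[f](y))\,\dd y$, hence $\sup_x\abs{f\ast\phi_t(x)}\le\Psi^{-1}(a_d t^{-d})$, and then, via $\eta(s)=s/\Psi(s)$ and the chain \eqref{eq:pointwise_ineq}--\eqref{eq:trivial}, $\norm{f\ast\phi_t}_{L^1(\R^d)}\lesssim_{d,\Psi}\eta\circ\Psi^{-1}(a_d t^{-d})=\Psi^{-1}(a_d t^{-d})/(a_d t^{-d})$ for every $t\in(0,1)$. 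As in Proposition \ref{FT_HO}, for $0<s<t<1$ the functions $\widehat{f\ast\phi_s}$ and $\widehat{f\ast\phi_t}$ agree on $\set{\abs{\xi}\le 1/t}$, so they patch to a continuous function coinciding with $\widehat{f}$ in $\mathcal{D}'(\R^d)$; the hypothesis $p\in(d/(d+1),1]$ enters here exactly as in the local Hardy space theory, guaranteeing in particular that this function has at most polynomial growth and so belongs to $\mathcal{S}'(\R^d)$.

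For the \emph{high-frequency range} $\abs{\xi}\ge 1$, take $t=\abs{\xi}^{-1}\in(0,1]$; since $\widehat{\phi_t}\equiv 1$ on $\set{\abs{\zeta}\le 1/t}\ni\xi$, one has $\abs{\widehat{f}(\xi)}\le\norm{f\ast\phi_t}_{L^1(\R^d)}\lesssim_{d,\Psi}\eta\circ\Psi^{-1}(a_d\abs{\xi}^d)$. Because $s\mapsto s^{-1}\Psi(s)$ is non-increasing, the map $u\mapsto\eta\circ\Psi^{-1}(u)=\Psi^{-1}(u)/u$ is non-decreasing, so this is $\le\eta\circ\Psi^{-1}(a_d(1+\abs{\xi})^d)=\Psi^{-1}(a_d(1+\abs{\xi})^d)/(a_d(1+\abs{\xi})^d)$, the desired bound. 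For the \emph{low-frequency range} $\abs{\xi}<1$, fix instead the single scale $t=1/2$: then $\widehat{\phi_{1/2}}\equiv 1$ on $\set{\abs{\zeta}\le 2}$, so $\widehat{f}(\xi)=\widehat{f\ast\phi_{1/2}}(\xi)$ and $\abs{\widehat{f}(\xi)}\le\norm{f\ast\phi_{1/2}}_{L^1(\R^d)}\lesssim_{d,\Psi}\eta\circ\Psi^{-1}(a_d 2^d)$, a finite constant depending only on $d$ and $\Psi$. Since $1\le 1+\abs{\xi}<2$ on this range and $u\mapsto\eta\circ\Psi^{-1}(u)$ is positive and non-decreasing, $\eta\circ\Psi^{-1}(a_d 2^d)\approx_{d,\Psi}\Psi^{-1}(a_d(1+\abs{\xi})^d)/(a_d(1+\abs{\xi})^d)$, so the same bound holds. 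Combining the two ranges, absorbing $a_d$ and the normalisation of $\phi$ into $A_{d,\Psi}$, and undoing the reduction $\norm{f}_{h^{\Psi}(\R^d)}=1$ completes the proof.

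The computations are routine and parallel those of Proposition \ref{FT_HO}; the only genuine point of departure — and the one thing to be careful about — is that the cutoff $t<1$ blocks the scaling $t=\abs{\xi}^{-1}$ from covering $\abs{\xi}<1$, which is why one needs the auxiliary fixed-scale estimate and then has to verify that $u\mapsto\eta\circ\Psi^{-1}(u)=\Psi^{-1}(u)/u$ is finite, positive and monotone in order to merge the high- and low-frequency bounds into the single clean expression $\Psi^{-1}(a_d(1+\abs{\xi})^d)/(1+\abs{\xi})^d$. This mild extra bookkeeping is presumably why the authors state that the details may be omitted.
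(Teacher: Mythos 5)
Your proposal is correct and carries out exactly the modification the paper alludes to when it says the proof of Proposition \ref{FT_HO} "can be easily modified"; in particular, you correctly identify that the restriction $t\in(0,1)$ forces the split between $\abs{\xi}\ge 1$ (where $t=\abs{\xi}^{-1}$ works) and $\abs{\xi}<1$ (handled at a fixed scale), and the monotonicity of $u\mapsto\Psi^{-1}(u)/u$ merges the two ranges into the $(1+\abs{\xi})^d$ expression. The only cosmetic wrinkle is that $t=\abs{\xi}^{-1}$ equals the excluded endpoint $t=1$ when $\abs{\xi}=1$; since your fixed-scale argument with $t=1/2$ already covers $\abs{\xi}\le 2$, this is harmless.
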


If we take $\Psi_{r,p} (t) : = t^p \left[ \log (e+t) \right]^{-r}$ as in \eqref{Psi_r,p_def}, 
with $0<p\leq 1$ and $r\geq 0$, then %
it follows from \eqref{inverse_r} that
\begin{equation}\label{eq:approx_inverse}
\Psi^{-1}_{r,p}(t)\approx t^{1/p}\log(e+t)^{r/p}.
\end{equation}

We thus deduce from Propositions \ref{FT_HO} and \ref{FT_HO_local} the following result.
 
\begin{corollary}\label{FT_Llog} Given $p \in (d/(d+1), 1]$ and $r\geq 0$, there exist positive constants $A_{d,p,r}, B_{d,p,r}$, depending only on the dimension, $p$, and $r$, such that:
\begin{enumerate}
\item for all $f \in H^{\Psi_{r,p}} (\R^d)$,
\[
\sup_{\xi \in \R^d} \frac{ \abs{\widehat{f} (\xi)} } {\abs{\xi}^{\frac{1}{p}-1}\log^{r/p} (e+\abs{\xi})} \leq A_{d,p,r}  \norm{f}_{H^{\Psi_{r,p}} (\R^d)};
\]
\item for all $f \in h^{\Psi_{r,p}} (\R^d)$,
\[
\sup_{\xi \in \R^d} \frac{ \abs{\widehat{f} (\xi)} } {(1+\abs{\xi})^{\frac{1}{p}-1}\log^{r/p} (e+\abs{\xi})} \leq B_{d,p,r}  \norm{f}_{h^{\Psi_{r,p}} (\R^d)}.
\]
\end{enumerate} 
\end{corollary}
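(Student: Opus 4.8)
The plan is to obtain both estimates as immediate consequences of Propositions \ref{FT_HO} and \ref{FT_HO_local}, fed into the asymptotics \eqref{eq:approx_inverse} for $\Psi_{r,p}^{-1}$.

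For part (1), I would start from the observation that, for the given $p\in(d/(d+1),1]$ and $r\ge0$, the function $\Psi_{r,p}$ of \eqref{Psi_r,p_def} is a growth function of order $p$, so Proposition \ref{FT_HO} applies to $H^{\Psi_{r,p}}(\R^d)$ and yields, for every $f\in H^{\Psi_{r,p}}(\R^d)$, that $\widehat f$ is continuous and
\[
|\widehat f(\xi)|\le A_{d,\Psi_{r,p}}\,\frac{\Psi_{r,p}^{-1}(a_d|\xi|^d)}{a_d|\xi|^d}\,\|f\|_{H^{\Psi_{r,p}}(\R^d)},\qquad \xi\in\R^d\setminus\{0\}.
\]
It then remains only to rewrite the weight $\Psi_{r,p}^{-1}(a_d|\xi|^d)/(a_d|\xi|^d)$ in the form appearing in the statement. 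Plugging $\Psi_{r,p}^{-1}(s)\approx_{p,r}s^{1/p}\log^{r/p}(e+s)$ from \eqref{eq:approx_inverse} with $s=a_d|\xi|^d$ gives
\[
\frac{\Psi_{r,p}^{-1}(a_d|\xi|^d)}{a_d|\xi|^d}\approx_{d,p,r}(a_d|\xi|^d)^{\frac1p-1}\log^{r/p}\!\big(e+a_d|\xi|^d\big);
\]
absorbing the constant $a_d^{1/p-1}$ and using the elementary, uniform two-sided comparison $\log(e+a_d|\xi|^d)\approx_d\log(e+|\xi|)$ on $\R^d\setminus\{0\}$ then produces $\approx_{d,p,r}|\xi|^{d(1/p-1)}\log^{r/p}(e+|\xi|)$, i.e.\ exactly the denominator in (1). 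Taking the supremum over $\xi\ne0$, noting that $\widehat f(0)=0$ when $p<1$ (so the supremand is interpreted as $0$ at the origin) and that $\widehat f$ is bounded when $p=1$, we obtain the claimed inequality with some $A_{d,p,r}$.

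For part (2) the same reasoning applies with Proposition \ref{FT_HO_local} in place of Proposition \ref{FT_HO} — this is the step that genuinely uses $p>d/(d+1)$ — which replaces $|\xi|$ by $1+|\xi|$ and $a_d|\xi|^d$ by $a_d(1+|\xi|)^d$ everywhere; repeating the manipulation of \eqref{eq:approx_inverse} and using $\log(e+a_d(1+|\xi|)^d)\approx_d\log(e+|\xi|)$ recovers the weight $(1+|\xi|)^{d(1/p-1)}\log^{r/p}(e+|\xi|)$.

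Since all the real work has already been carried out in Propositions \ref{FT_HO} and \ref{FT_HO_local} and in \eqref{eq:approx_inverse}, I do not expect any substantive difficulty; the only point requiring a moment's care is the uniform two-sided comparison of $\log(e+a_d|\xi|^d)$ — respectively $\log(e+a_d(1+|\xi|)^d)$ — with $\log(e+|\xi|)$ across all of $\R^d$, which follows at once by distinguishing $|\xi|\le1$ (both quantities are then bounded above and below by absolute constants) from $|\xi|>1$ (where $\log(e+a_d|\xi|^d)\le\log(e+a_d)+d\log|\xi|\lesssim_d\log(e+|\xi|)$, together with a matching lower bound).
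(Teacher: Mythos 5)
Your proposal is correct and reproduces the paper's own (essentially one-line) argument: the corollary is obtained by plugging the asymptotics \eqref{eq:approx_inverse} for $\Psi_{r,p}^{-1}$ into Propositions \ref{FT_HO} and \ref{FT_HO_local} and simplifying, exactly as you do. One small point worth flagging: your computation correctly produces the exponent $d\bigl(\tfrac1p-1\bigr)$ on $|\xi|$ (resp.\ on $1+|\xi|$), whereas the statement of Corollary \ref{FT_Llog} as printed has $\tfrac1p-1$; this appears to be a typo in the paper, since the remark immediately following the corollary asserts that the $r=0$ case recovers the classical bound $|\widehat f(\xi)|\lesssim_{d,p}|\xi|^{d(p^{-1}-1)}\|f\|_{H^p(\R^d)}$, consistent with your derivation.
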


\begin{rmk}
If $p \in (d/(d+1),1]$ and $\Phi_p (t): =\Psi_{0,p}(t)= t^p $, $t \geq 0$, the first part of Corollary \ref{FT_Llog} recovers the classical fact that
\[
\abs{\widehat{f} (\xi)} \lesssim_{d,p} \abs{\xi}^{d (p^{-1}-1)} \norm{f}_{H^p (\R^d)}, \quad \xi \in \R^d; 
\]
see \S 5.4 (a) in Chapter III  of \cite{Big_Stein}, and its local counterpart. 
\end{rmk}

\subsection{Proof of Theorem \ref{eucl}}\label{eucl_det}
Arguing as in the proof of \cite{BPRS}*{Theorem 28}, we shall prove that there exists a constant $A_{d, \Psi,p} > 0$, depending only on $d$, $\Psi$, and $p$, such that
\begin{equation}\label{atom_bound}
\int_{\R^d} \frac{ \Psi \brkt{\abs{\xi}^d\abs{\widehat{a_Q}(\xi)}}}{\abs{\xi}^{2d}} \, \dd \xi \leq A_{d, \Psi, p} \abs{Q} \Psi \brkt{ \norm{a_Q}_{L^{\infty} (\R^d)}},
\end{equation}
for all $L^{\infty}$-functions $a_Q$  that are supported in some cube $Q$ with $\int_Q a_Q (x) \dd x = 0$.

To this end, fix an $L^{\infty}$-function $a_Q$ of this type and write
\begin{equation}\label{A-B}
\int_{\R^d} \frac{ \Psi \brkt{\abs{\xi}^d\abs{\widehat{a_Q}(\xi)}}}{\abs{\xi}^{2d}} \, \dd \xi = A + B,
\end{equation}
where
\[ A:= \int_{\abs{\xi}^{d}\abs{Q}\leq 1} \frac{\Psi \brkt{\abs{\xi}^d\abs{\widehat{a_Q}(\xi)}}}{\abs{\xi}^{2d}} \, \dd \xi  \quad \text{and} \quad B:= \int_{\abs{\xi}^{d}\abs{Q} > 1} \frac{\Psi \brkt{\abs{\xi}^d\abs{\widehat{a_Q}(\xi)}}}{\abs{\xi}^{2d}} \, \dd \xi . 
 \]

To handle $A$, note that by using the cancellation of $a_Q$ and the fact that 
\[ 
\abs{ e^{-i2\pi x \xi} - e^{-i2\pi y \xi} } \leq 2 \pi  \abs{ \xi } \abs{ x-y } \lesssim_d \abs{\xi} \abs{Q}^{1/d} \quad \text{ for all } x,y \in Q, 
\]
one deduces that
\begin{equation}\label{a_Q-bound}
\abs{\widehat{a_Q}(\xi)} \lesssim_d \abs{\xi} \abs{Q}^{1+1/d}\norm{a_Q}_{L^{\infty} (\R^d)} \quad \text{for all } \xi \in \R^d. 
\end{equation}
It thus follows from \eqref{a_Q-bound} that
\begin{align*}
\int_{\abs{\xi}^{d}\abs{Q}\leq 1} \frac{\Psi \brkt{\abs{\xi}^d\abs{\widehat{a_Q}(\xi)}}}{\abs{\xi}^{2d}} \, \dd \xi &\lesssim_d  \int_0^{\abs{Q}^{-1/d}} \frac{\Psi \brkt{ \abs{ Q}^{1+1/d}\norm{a_Q}_{L^{\infty} (\R^d)} s^{d+1} } }{s^{d+1}} \, \dd s \\
&\lesssim_p {\abs{Q}^{p(d+1)/d}}  \Psi \brkt{ \norm{a_Q}_{L^{\infty} (\R^d)}} \int_0^{\abs{Q}^{-1/d}}  \rho^{p (d+1) -\dd- 1} \, \dd \rho \\
& \approx_{d,p} {\abs{Q}} \Psi \brkt{ \norm{a_Q}_{L^{\infty} (\R^d)}}
\end{align*}
since $\Psi$ is of order $p>d/(d+1)$. Hence,
\begin{equation}\label{A}
\int_{\abs{\xi}^{d}\abs{Q}\leq 1} \frac{\Psi \brkt{\abs{\xi}^d\abs{\widehat{a_Q}(\xi)}}}{\abs{\xi}^{2d}} \, \dd \xi \lesssim_{p,d} {\abs{Q}} \Psi \brkt{ \norm{a_Q}_{L^{\infty} (\R^d)}},
\end{equation}
where the implied constant depends only on $p$ and $d$ and not on $a_Q$. 

To deal with $B$, note that  
H\"older's inequality (with exponents $4$ and $4/3$) and Plancherel's theorem yield
\begin{align*}
B &=\int_{\abs{\xi}^{d}\abs{Q} > 1} \frac{\Psi\brkt{\abs{\xi}^d\abs{\widehat{a_Q}(\xi)}}}{\abs{\xi}^{2d}{\abs{\widehat{a_Q}(\xi)}^{1/2}}} \abs{\widehat{a_Q}(\xi)}^{1/2} \, \dd \xi \\ 
&\leq \norm{a_Q}_{L^2 (\R^d)}^{1/2} \brkt{\int_{\abs{\xi}^{d}\abs{Q} > 1} \brkt{ \frac{\Psi\brkt{\abs{\xi}^d\abs{\widehat{a_Q}(\xi)}}}{  \abs{\xi}^{d/2} \abs{ \widehat{a_Q}(\xi)}^{1/2}}}^{4/3} \frac{\dd \xi}{\abs{\xi}^{2d}}}^{3/4} \\
&\leq \abs{Q}^{1/4} \norm{a_Q}_{L^{\infty} (\R^d)}^{1/2} \brkt{\int_{\abs{\xi}^{d}\abs{Q} > 1} \brkt{ \frac{\Psi\brkt{\abs{\xi}^d\abs{\widehat{a_Q}(\xi)}}}{\abs{\xi}^{d/2} \abs{ \widehat{a_Q}(\xi)}^{1/2}}}^{4/3} \frac{\dd \xi}{\abs{\xi}^{2d}} }^{3/4} .
\end{align*}
Since $1/2 \leq d/ (d+1) < p$, the function $t \mapsto t^{-1/2} \Psi(t)$ is quasi-increasing. Hence, as for all $\xi\in \R^d$ one has
\[
\abs{\widehat{a_Q} (\xi)}\leq \abs{Q} \norm{a_Q}_{L^{\infty} (\R^d)} , 
\]
we deduce that 
\begin{equation}\label{psi_bound}
\frac{\Psi\brkt{\abs{\xi}^d\abs{\widehat{a_Q}(\xi)}}}{ \brkt{ \abs{\xi}^d \abs{\widehat{a_Q}(\xi)} }^{1/2}} \lesssim_p \frac{\Psi \brkt{ \abs{\xi}^d \abs{Q} \norm{a_Q}_{L^{\infty} (\R^d)} } }{\brkt{ \abs{\xi}^d \abs{Q}  \norm{a_Q}_{L^{\infty} (\R^d)} }^{1/2}} .
\end{equation}
Then, in view of \eqref{psi_bound}, one has 
\[
B \lesssim_p \abs{Q}^{1/4} \norm{a_Q}_{L^{\infty} (\R^d)}^{1/2} \brkt{\int_{\abs{\xi}^{d}\abs{Q} > 1} \brkt{ \frac{\Psi\brkt{\abs{\xi}^d \abs{Q} \norm{a_Q }_{L^{\infty} (\R^d)} }}{\abs{\xi}^{d/2} \abs{Q}^{1/2} \norm{ a_Q }_{L^{\infty}(\R^d)}^{1/2}}}^{4/3} \frac{\dd \xi}{\abs{\xi}^{2d}} }^{3/4} .
\]
Since $ t \mapsto t^{-1 } \Psi (t) $ is non-increasing, it follows that  
\[
B \lesssim_p  \Psi \brkt{ \norm{a_Q}_{L^{\infty} (\R^d)}} \brkt{    \int_{\abs{\xi}^{d}\abs{Q} > 1}   \frac{ 1 } { \abs{\xi}^{4d/3} } \, \dd \xi }^{3/4}. 
\]
Hence,
\begin{equation}\label{B}
B \lesssim_{d,p} \int_{\abs{\xi}^{d}\abs{Q} >  1} \frac{\Psi \brkt{\abs{\xi}^d\abs{\widehat{a_Q}(\xi)}}}{\abs{\xi}^{2d}} \, \dd \xi \lesssim_{p,d} {\abs{Q}} \Psi \brkt{ \norm{a_Q}_{L^{\infty} (\R^d)}},
\end{equation}
where the implied constant depends only on $p$ and $d$ and not on $a_Q$. 

To complete the proof of the theorem, note that for any given atomic decomposition of $f$ i.e.
\[ 
f=\sum_j b_j, 
\]
where $b_j$ are constant multiples of atoms in $H^{\Psi} (\R^d)$, supported in cubes $Q_j$, the sub-linearity of $\Psi$ and \eqref{atom_bound} imply that 
\[
\int_{\R^d} \frac{ \Psi \brkt{\abs{\xi}^d\abs{\widehat{f}(\xi)}}}{\abs{\xi}^{2d}} \, \dd \xi\lesssim_{d,p} \sum_j  \abs{Q_j}\Psi \brkt{ \norm{b_j}_{L^{\infty} (\R^d)}} .
\]
By taking the infimum over all possible atomic decompositions of $f$, one obtains the result of the statement.
 
The following inequality, is a consequence of Theorem \ref{eucl} combined with Proposition \ref{FT_HO}, and can be regarded as a Euclidean version of Theorem \ref{HL_complex}. 

\begin{corollary}\label{cor}
Let $\Psi$ be a growth function of order $p$ with  $p \in ( d/(d+1), 1 ]$. Then, for every $1\leq q<\infty$, there exist constants $a_d >0$ and $B_{d, \Psi,p,q } > 0$ such that   
\[
\brkt{  \int_{\R^d} \brkt{ \frac{\abs{\widehat{f}(\xi)} \abs{\xi}^d }{ \Psi^{-1} \brkt{a_d \abs{\xi}^d }} }^q \frac{\dd \xi}{ \abs{\xi}^d} }^{1/q} \leq  B_{d, \Psi,p,q } \norm{f}_{H^{\Psi} (\R^d)}.
\]
\end{corollary}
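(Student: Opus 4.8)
## Proof proposal for Corollary~\ref{cor}

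The plan is to derive the stated $L^q$-bound from the single Orlicz-type inequality in Theorem~\ref{eucl} together with the pointwise decay estimate for $\widehat f$ in Proposition~\ref{FT_HO}, following the classical trick of upgrading a weighted-$L^1$ estimate to a weighted-$L^q$ estimate by interpolating against a pointwise $L^\infty$ bound. Concretely, write $g(\xi):=\abs{\xi}^d\abs{\widehat f(\xi)}$ and $w(\xi):=\Psi^{-1}(a_d\abs{\xi}^d)$, so that the quantity to be estimated is $\brkt{\int_{\R^d} (g(\xi)/w(\xi))^q\,\abs{\xi}^{-d}\,\dd\xi}^{1/q}$. By homogeneity we may assume $\norm{f}_{H^\Psi(\R^d)}=1$.

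The first step is to record what the hypotheses give us. From Theorem~\ref{eucl} (applied with $\norm{f}_{H^\Psi}=1$, so the right-hand side is $\lesssim 1$) we get
\[
\int_{\R^d}\frac{\Psi(g(\xi))}{\abs{\xi}^{2d}}\,\dd\xi\lesssim_{d,\Psi,p}1.
\]
Changing the writing of this integral, since $\Psi(g)/\abs{\xi}^{2d}=\Psi(g)/(w\cdot \abs{\xi}^d)\cdot (w/\abs{\xi}^d)$ and $w(\xi)/\abs{\xi}^d=\Psi^{-1}(a_d\abs{\xi}^d)/\abs{\xi}^d$, one rewrites the bound as a weighted $L^1$-type control of $\Psi(g)/w$. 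From Proposition~\ref{FT_HO} we have the pointwise bound $g(\xi)=\abs{\xi}^d\abs{\widehat f(\xi)}\le A_{d,\Psi}\,a_d^{-1}\,\Psi^{-1}(a_d\abs{\xi}^d)=C_d\,w(\xi)$, i.e. $g/w\lesssim 1$ pointwise. This is the crucial extra ingredient that is not available at the level of Theorem~\ref{eucl} alone.

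The second step is the interpolation. Since $t\mapsto t^{-1}\Psi(t)$ is non-increasing, for any $t$ with $0<t\le C$ one has $\Psi(t)\gtrsim_C t\cdot\Psi(C)/C\gtrsim_C t$; more precisely, because $g(\xi)\le C_d w(\xi)$ we get, using that $s\mapsto \Psi(s)/s$ is non-increasing,
\[
\frac{\Psi(g(\xi))}{g(\xi)}\ \ge\ \frac{\Psi(C_d w(\xi))}{C_d w(\xi)}\ \gtrsim_{\Psi,p,d}\ \frac{\Psi(w(\xi))}{w(\xi)}\ =\ \frac{a_d\abs{\xi}^d}{w(\xi)},
\]
where in the last equality we used $\Psi(w(\xi))=\Psi(\Psi^{-1}(a_d\abs{\xi}^d))=a_d\abs{\xi}^d$ and in the middle step the quasi-monotonicity \eqref{eq:growth} (with $l=1$, or just lower type). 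Hence $\Psi(g(\xi))\gtrsim g(\xi)\,a_d\abs{\xi}^d/w(\xi)$, which rearranges to
\[
\frac{g(\xi)}{w(\xi)}\cdot\frac{\abs{\xi}^d}{\abs{\xi}^{2d}}\ \lesssim\ \frac{\Psi(g(\xi))}{\abs{\xi}^{2d}}.
\]
Integrating and using Theorem~\ref{eucl} gives the case $q=1$:
\[
\int_{\R^d}\frac{g(\xi)}{w(\xi)}\cdot\frac{\dd\xi}{\abs{\xi}^d}\ \lesssim_{d,\Psi,p}\ 1.
\]
For general $q\in[1,\infty)$ write $(g/w)^q\,\abs{\xi}^{-d}=(g/w)^{q-1}\cdot (g/w)\,\abs{\xi}^{-d}$ and bound the first factor by the pointwise estimate $g/w\le C_d$, which gives $(g/w)^{q-1}\le C_d^{q-1}$; then apply the $q=1$ bound to the remaining factor:
\[
\int_{\R^d}\brkt{\frac{g(\xi)}{w(\xi)}}^q\frac{\dd\xi}{\abs{\xi}^d}\ \le\ C_d^{q-1}\int_{\R^d}\frac{g(\xi)}{w(\xi)}\cdot\frac{\dd\xi}{\abs{\xi}^d}\ \lesssim_{d,\Psi,p,q}\ 1.
\]
Taking $q$-th roots and restoring the normalization by $\norm{f}_{H^\Psi(\R^d)}$ yields the claim with $B_{d,\Psi,p,q}$ depending only on $d,\Psi,p,q$.

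I do not expect a serious obstacle here: the only points requiring care are (a) making sure $\widehat f$ is genuinely a function so that all the integrals make sense — this is exactly the content of Proposition~\ref{FT_HO} — and (b) checking that the quasi-monotonicity of $\Psi(s)/s$ and the lower-type bound are applied in the correct direction (we need a \emph{lower} bound for $\Psi(g)/g$ in terms of $\abs{\xi}^d/w$, which is legitimate precisely because $g\lesssim w$). The genuinely substantive input is Theorem~\ref{eucl}, whose proof via the atomic decomposition is already given; the present corollary is essentially a clean packaging of it combined with the pointwise Fourier decay.
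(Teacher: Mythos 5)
Your proof is correct and follows essentially the same route as the paper's: establish the $q=1$ bound by combining Theorem~\ref{eucl} with the pointwise Fourier decay of Proposition~\ref{FT_HO} through the monotonicity of $t\mapsto\Psi(t)/t$, then upgrade to $1<q<\infty$ via the pointwise bound (the paper phrases this last step as interpolation against the $q=\infty$ endpoint, which is the same idea). One small note: the citation of \eqref{eq:growth} with $l=1$ is only legitimate when $p=1$, so for $p<1$ you should rely on the alternative justification you already mention, namely the lower-type inequality $\Psi(w)\le c_{\Psi,p}C_d^{-p}\Psi(C_d w)$.
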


\begin{proof} Set $a_d : = \abs{B(0,1)}^{-1}$ and observe that Proposition \ref{FT_HO} corresponds to the `$q=\infty$' case. Therefore, it suffices to prove the desired inequality for $q=1$ and then, the case $q \in (1, 
\infty)$ follows by interpolation.

We shall first establish the  $q=1$ case when $\norm{f}_{H^{\Psi} (\R^d)} = 1$. Write
\[ \int_{\R^d} \frac{\abs{\widehat{f}(\xi)}}{\Psi^{-1}( a_d \abs{\xi}^d)} \, \dd \xi= 
A_{d, \Psi} \int_{\R^d}  
\Psi \brkt{ \frac{ \abs{\xi}^d \abs{\widehat{f}(\xi)} } {A_{d, \Psi}}}
 \frac{\frac{ \abs{\xi}^d\abs{\widehat{f}(\xi)}}{A_{d, \Psi}}}{ \Psi \brkt{ \frac{\abs{\xi}^d \abs{\widehat{f}(\xi)}} {A_{d, \Psi}} }} \frac{1}{\Psi^{-1}(a_d \abs{\xi}^d) \abs{\xi}^d} \, \dd \xi ,
\]
where $A_{d, \Psi}$ is the constant in Proposition \ref{FT_HO}.  By Proposition \ref{FT_HO} together with the fact that $t \mapsto t^{-1 } \Psi (t)$ is non-increasing, we have that
\[
\frac{ \abs{\xi}^d \abs{\widehat{f}(\xi)} }{ A_{d, \Psi} } \leq \Psi^{-1}(a_d \abs{\xi}^d).
\]
We get
\[
\int_{\R^d} \frac{\abs{\widehat{f}(\xi)}}{\Psi^{-1}( a_d \abs{\xi}^d)} \, \dd \xi \leq    
\frac{ A_{d, \Psi} } {a_d} \int_{\R^d}  
\Psi \brkt{ \frac{ \abs{\xi}^d \abs{\widehat{f}(\xi)} } {A_{d, \Psi}}}
  \frac{1} { \abs{\xi}^{2d} } \, \dd \xi
\]
and since $\Psi (2t) \leq 2 \Psi (t)$ for all $t \geq 0$, we deduce that
\[ 
\int_{\R^d} \frac{\abs{\widehat{f}(\xi)}}{\Psi^{-1}( a_d \abs{\xi}^d)} \, \dd \xi \leq    
\frac{ 2 } {a_d} \int_{\R^d}  
\Psi \brkt{   \abs{\xi}^d \abs{\widehat{f}(\xi)}  }
\frac{1} { \abs{\xi}^{2d} } \, \dd \xi .
\]
Hence, by Theorem \ref{eucl} we obtain 
\[
\int_{\R^d} \frac{\abs{\widehat{f}(\xi)}}{\Psi^{-1}( a_d \abs{\xi}^d)}  \,\dd \xi \leq    
B_{d, \Psi, p} ,  
\]
where $ B_{d, \Psi, p} : = 2 a_d^{-1} C_{d, \Psi, p}$ with $C_{d, \Psi, p}$ being as in the statement of Theorem \ref{eucl}. 
Therefore, the case `$q=1$' holds for all $f\in H^{\Psi} (\R^d)$ with $\norm{f}_{H^{\Psi} (\R^d) } = 1$. The general case follows by homogeneity.
\end{proof}

\begin{corollary}\label{cor_local}
Let $\Psi$ be a growth function of order $p$ with  $p \in ( d/(d+1), 1 ]$. Then, for every $1\leq q<\infty$, there exist constants $a_d >0$ and $B_{d, \Psi,p,q } > 0$ such that   
\begin{equation}\label{local_version}
\brkt{  \int_{\R^d} \brkt{ \frac{\abs{\widehat{f}(\xi)} \jap{\xi}^d }{ \Psi^{-1} \brkt{a_d \jap{\xi}^d }} }^q \frac{\dd \xi}{ \jap{\xi}^d} }^{1/q} \leq  B_{d, \Psi,p,q } \norm{f}_{h^{\Psi} (\R^d)}.    
\end{equation}
\end{corollary}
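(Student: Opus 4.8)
The plan is to mirror the proof of Corollary \ref{cor}, with $H^{\Psi}(\R^d)$ replaced by $h^{\Psi}(\R^d)$, the homogeneous weight $\abs{\xi}^d$ by $\jap{\xi}^d$, Proposition \ref{FT_HO} by Proposition \ref{FT_HO_local}, and Theorem \ref{eucl} by its local analogue, namely the inequality
\[
\int_{\R^d} \frac{\Psi\brkt{\jap{\xi}^d\abs{\widehat{f}(\xi)}}}{\jap{\xi}^{2d}}\,\dd\xi \lesssim_{d,\Psi,p} \int_{\R^d}\Psi\brkt{M^{\ast}_{\mathcal{F}_{m_{\Psi}},\mathrm{loc}}[f](x)}\,\dd x, \qquad f\in h^{\Psi}(\R^d).
\]
Granting this local inequality, the three cases $q=\infty$, $q=1$, and $1<q<\infty$ are handled exactly as in the proof of Corollary \ref{cor}: the case $q=\infty$ is Proposition \ref{FT_HO_local}, after observing that $\jap{\xi}\approx 1+\abs{\xi}$ and that $\Psi^{-1}$ grows at most polynomially, so that $\Psi^{-1}(a_d(1+\abs{\xi})^d)/(1+\abs{\xi})^d\approx\Psi^{-1}(a_d\jap{\xi}^d)/\jap{\xi}^d$ up to constants; the case $q=1$ follows by inserting the pointwise bound of Proposition \ref{FT_HO_local} into the integral, using that $t\mapsto t^{-1}\Psi(t)$ is non-increasing and that $\Psi$ is doubling, and then applying the displayed local inequality, just as in Corollary \ref{cor}; and $1<q<\infty$ follows from the elementary bound $\int_{\R^d} G(\xi)^q\jap{\xi}^{-d}\,\dd\xi \le \brkt{\sup_{\xi}G(\xi)}^{q-1}\int_{\R^d} G(\xi)\jap{\xi}^{-d}\,\dd\xi$ with $G(\xi):=\abs{\widehat{f}(\xi)}\jap{\xi}^d/\Psi^{-1}(a_d\jap{\xi}^d)$, combined with the $q=1$ and $q=\infty$ estimates just obtained.

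To prove the local inequality I would use the atomic decomposition of $h^{\Psi}(\R^d)$: by the sub-additivity of $\Psi$ it suffices to establish the atom-level estimate
\[
\int_{\R^d} \frac{\Psi\brkt{\jap{\xi}^d\abs{\widehat{a_Q}(\xi)}}}{\jap{\xi}^{2d}}\,\dd\xi \lesssim_{d,\Psi,p} \abs{Q}\,\Psi\brkt{\norm{a_Q}_{L^{\infty}(\R^d)}}
\]
for $a_Q\in L^{\infty}(\R^d)$ supported in a cube $Q$ and satisfying $\int_Q a_Q=0$ only in the regime $\abs{Q}<1$. I would split the $\xi$-integral into the ranges $\{\abs{\xi}>1\}$ and $\{\abs{\xi}\le1\}$. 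On $\{\abs{\xi}>1\}$, where $\jap{\xi}\approx\abs{\xi}$, the argument from the proof of Theorem \ref{eucl} applies: if $\abs{Q}<1$, then $a_Q$ satisfies the hypotheses of the atom-level bound \eqref{atom_bound}, so the contribution is controlled by $\abs{Q}\,\Psi(\norm{a_Q}_{L^{\infty}})$ directly; if $\abs{Q}\ge1$, then $\{\abs{\xi}>1\}\subseteq\{\abs{\xi}^d\abs{Q}>1\}$, and on this set only the estimate of the term $B$ in the proof of Theorem \ref{eucl} is needed --- and that estimate, which rests on H\"older's inequality, Plancherel's theorem, and the monotonicity of $t\mapsto t^{-1/2}\Psi(t)$ and $t\mapsto t^{-1}\Psi(t)$, never uses the cancellation of the atom. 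On $\{\abs{\xi}\le1\}$, where $\jap{\xi}\approx1$, the integrand is $\approx\Psi(c\abs{\widehat{a_Q}(\xi)})$: when $\abs{Q}\ge1$ one bounds $\abs{\widehat{a_Q}(\xi)}\le\norm{a_Q}_{L^1(\R^d)}\le\abs{Q}\norm{a_Q}_{L^{\infty}(\R^d)}$ and uses that $t\mapsto t^{-1}\Psi(t)$ is non-increasing together with $\abs{Q}\ge1$ to get $\Psi(c\abs{Q}\norm{a_Q}_{L^{\infty}})\lesssim\abs{Q}\,\Psi(\norm{a_Q}_{L^{\infty}})$; when $\abs{Q}<1$ one uses the cancellation of $a_Q$ as in \eqref{a_Q-bound} to get $\abs{\widehat{a_Q}(\xi)}\lesssim_d\abs{Q}^{1+1/d}\norm{a_Q}_{L^{\infty}}$ for $\abs{\xi}\le1$, and then the lower type $p$ of $\Psi$ gives $\Psi(c\abs{Q}^{1+1/d}\norm{a_Q}_{L^{\infty}})\lesssim_{d,\Psi,p}\abs{Q}^{p(1+1/d)}\Psi(\norm{a_Q}_{L^{\infty}})\le\abs{Q}\,\Psi(\norm{a_Q}_{L^{\infty}})$, the last step using $\abs{Q}<1$ and the hypothesis $p\ge d/(d+1)$, i.e. $p(1+1/d)\ge1$. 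Integrating the resulting constant over the unit ball closes the atom-level estimate.

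I expect the only genuine novelty relative to Corollary \ref{cor} --- and hence the main point to be checked carefully --- to be the treatment of atoms without cancellation (the case $\abs{Q}\ge1$) in the local inequality. This is precisely where replacing the singular weight $\abs{\xi}^{-d}$ by the bounded weight $\jap{\xi}^{-d}$ is essential: the homogeneous ``low-frequency'' estimate (the term $A$ in the proof of Theorem \ref{eucl}) genuinely requires the moment condition and in fact diverges for $p\le1$ without it, but since $\jap{\xi}^{-d}$ no longer blows up near the origin, the region $\{\abs{\xi}\le1\}$ can be absorbed into a crude bound, while on $\{\abs{\xi}>1\}$ one has $\jap{\xi}\approx\abs{\xi}$ and only the cancellation-free term $B$ intervenes. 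The remaining bookkeeping --- comparing $\jap{\xi}$, $1+\abs{\xi}$, and $\abs{\xi}$, and controlling $\Psi^{-1}$ of comparable arguments --- is routine, since $\Psi$ grows at most linearly while $\Psi^{-1}$ grows at most polynomially, so both are unchanged, up to constants, under scaling of the argument by a fixed factor.
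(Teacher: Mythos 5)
Your proof is correct, but it takes a genuinely different route from the paper's. The paper does \emph{not} establish a local analogue of Theorem \ref{eucl}; instead it invokes the Nakai--Sawano decomposition (their Lemma 7.4) to split $f$ as $f_0+f_1$ with $f_0=\varphi_0(D)f$ band-limited to the unit ball and $f_1=(1-\varphi_0)(D)f\in H^{\Psi}(\R^d)$, with $\norm{f_0}_{L^\Psi}+\norm{f_1}_{H^\Psi}\approx\norm{f}_{h^\Psi}$. Since $\widehat{f_1}$ lives in $\{|\xi|\gtrsim 1\}$ where $\jap{\xi}\approx|\xi|$, the bound for $f_1$ is a direct application of Corollary \ref{cor}; and since $\widehat{f_0}$ lives in $\{|\xi|\leq1\}$ where the weight is bounded, the bound for $f_0$ follows immediately from Proposition \ref{FT_HO_local} and the support restriction. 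This reduction is quite short.

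What you do instead is prove, from scratch via the atomic decomposition of $h^{\Psi}(\R^d)$, the local inequality
$\int_{\R^d}\Psi(\jap{\xi}^d|\widehat{f}(\xi)|)\jap{\xi}^{-2d}\,\dd\xi\lesssim\int_{\R^d}\Psi(M^{\ast}_{\mathcal{F}_{m_{\Psi}},\mathrm{loc}}[f])\,\dd x$, and then re-run the argument of Corollary \ref{cor}. Your atom-level verification is sound in all four cases: for $|Q|<1$ you use the cancellation of the atom exactly as in Theorem \ref{eucl} (the global atom bound \eqref{atom_bound} covers $\{|\xi|>1\}$; the low-frequency piece is closed using $p(1+1/d)\geq1$ and $|Q|<1$); for $|Q|\geq1$ you correctly observe that $\{|\xi|>1\}\subseteq\{|\xi|^d|Q|>1\}$ so only the paper's term $B$ estimate (H\"older/Plancherel/quasi-monotonicity of $t^{-1/2}\Psi$) is invoked, and none of those steps use the vanishing moment; and on $\{|\xi|\leq1\}$ the bounded weight $\jap{\xi}^{-d}$ lets you absorb a crude $L^1$-bound on $\widehat{a_Q}$. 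The trade-off is clear: the paper's route is shorter and leans on a cited structural lemma for $h^\Psi$, whereas yours is more self-contained and produces a local counterpart of Theorem \ref{eucl} as an intermediate result of possible independent interest. Your diagnosis of where the moment condition matters (and why $\jap{\xi}^{-d}$ dispenses with it near the origin) is exactly right.

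Minor remark on bookkeeping: when transferring between $(1+|\xi|)^d$ (as in Proposition \ref{FT_HO_local}) and $\jap{\xi}^d$, you should note explicitly that $\Psi^{-1}$ is quasi-doubling (which follows from the lower type $p$ of $\Psi$), so $\Psi^{-1}(a_d(1+|\xi|)^d)\approx\Psi^{-1}(a_d\jap{\xi}^d)$ uniformly; you gestured at this but it deserves a sentence.
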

\begin{proof}
By using \cite{NaSa}*{Lemma 7.4} we have that a distribution belongs to the local space $h^\Psi$ if, and only if, $\varphi_0(D)f\in L^\Psi$ and $(1-\varphi_0)(D)f\in H^\Psi$ and 
\[
\norm{f}_{h^\Psi} \approx \norm{(1-\varphi_0)(D)f}_{H^\Psi}+\norm{\varphi_0(D)f}_{L^\Psi}
\]
where $\varphi_0$ is a Schwartz function, radial, positive and supported in $\abs{\xi}\leq 1$, which is equal to $1$ for $\abs{\xi}\leq 1/2$. Let $f_0=\varphi_0(D)f$, and $f_1=f-f_1$. So, it suffices to show \eqref{local_version} for both $f_0$ and $f_1$.

Using that for $\abs{\xi}\geq 1$, $\abs{\xi}\approx \jap{\xi}$, a direct application of Corollary \ref{cor} to deduce that \eqref{local_version} holds for $f_1$.

Since for $\abs{\xi}\leq 1$, $\jap{\xi}\approx 1$, using Proposition \ref{FT_HO_local} we have 
\[
\int_{\R^d} \brkt{ \frac{\abs{\widehat{f_0}(\xi)} \jap{\xi}^d }{ \Psi^{-1} \brkt{\jap{\xi}^d }} }^q \frac{\dd \xi}{ \jap{\xi}^d}  \lesssim \int_{\abs{\xi}\leq 1} \abs{\widehat{f}(\xi)}^q\dd \xi\lesssim \norm{f_0}_{h^\Psi(\R^d)}^q.
\]
\end{proof}

\begin{rmk}
Note that if $\Psi_1 (t) :  = t \cdot \left[ \log ( e + t ) \right]^{-1}$, $t \geq 0$, then Corollary \ref{cor} yields
\begin{equation}\label{Llog_FT}
\int_{\R^d} \frac{\abs{\widehat{f}(\xi)}}{\abs{\xi}^d \log \brkt{ e + \abs{\xi} } } \, \dd \xi \lesssim_d \norm{f}_{H^{\Psi_1} (\R^d)}
\end{equation}
for all $f\in H^{\log} (\R^d)$, which is a Euclidean version of \eqref{H-L_Hlog}.
	
\end{rmk}
\subsection{Applications of the Euclidean results}\label{applicationsection}
\subsubsection{Sobolev embedding-type results}

By using Corollary \ref{cor}, and its local version \eqref{local_version}, with $q=1$, and appealing to the properties of the Fourier transform, we have the following Sobolev embedding-type results.

\begin{corollary} \label{Cor:18}
Let $\Psi$ be a growth function of order $p$ with  $p \in ( d/(d+1), 1 ]$.
\begin{enumerate}
\item Then there exist positive constants $a_d$ and $B_{d, \Psi, p}$ such that for any Schwartz function $f$ on $\R^d$ 
\[
\norm{f}_{L^\infty (\R^d)} \leq B_{d, \Psi, p} \norm{\Psi^{-1}\brkt{ a_d (-\Delta)^{d/2}} f}_{H^{\Psi} (\R^d)}.
\]
\item Then there exist positive constants $a_d$ and $C_{d, \Psi, p}$ such that for any Schwartz function $f$ on $\R^d$ 
\[
\norm{f}_{L^\infty (\R^d)} \leq C_{d, \Psi, p} \norm{\Psi^{-1}\brkt{ a_d {\brkt{1-\Delta}}^{d/2}} f}_{h^{\Psi} (\R^d)}.
\]
\end{enumerate}
\end{corollary}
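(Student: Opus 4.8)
The plan is to derive both statements directly from the $q=1$ instances of Corollary \ref{cor} and Corollary \ref{cor_local}, combined with the elementary Fourier-inversion bound $\norm{h}_{L^\infty(\R^d)}\lesssim_d\norm{\widehat h}_{L^1(\R^d)}$, which holds whenever $\widehat h\in L^1(\R^d)$.

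For part (1), I would fix a Schwartz function $f$ and set $g:=\Psi^{-1}\brkt{a_d(-\Delta)^{d/2}}f$, i.e. the Fourier multiplier operator with symbol $\Psi^{-1}(a_d\abs{\xi}^d)$ applied to $f$. Since $\Psi$ is a growth function of order $p$, its inverse $\Psi^{-1}$ grows at most polynomially, so $\widehat g(\xi)=\Psi^{-1}(a_d\abs{\xi}^d)\widehat f(\xi)$ is a continuous, rapidly decreasing function; in particular $g$ is a tempered distribution with $\widehat g\in L^1(\R^d)$. If $\norm{g}_{H^\Psi(\R^d)}=\infty$ there is nothing to prove, so one may assume $g\in H^\Psi(\R^d)$. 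Using $\widehat f(\xi)=\widehat g(\xi)/\Psi^{-1}(a_d\abs{\xi}^d)$ for $\xi\neq0$ and Fourier inversion,
\[
\norm{f}_{L^\infty(\R^d)}\lesssim_d\norm{\widehat f}_{L^1(\R^d)}=\int_{\R^d}\frac{\abs{\widehat g(\xi)}}{\Psi^{-1}(a_d\abs{\xi}^d)}\,\dd\xi=\int_{\R^d}\brkt{\frac{\abs{\widehat g(\xi)}\abs{\xi}^d}{\Psi^{-1}(a_d\abs{\xi}^d)}}\frac{\dd\xi}{\abs{\xi}^d},
\]
and the last integral is exactly the left-hand side of Corollary \ref{cor} with $q=1$ applied to $g$ in place of $f$. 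Hence it is bounded by $B_{d,\Psi,p,1}\norm{g}_{H^\Psi(\R^d)}=B_{d,\Psi,p,1}\norm{\Psi^{-1}(a_d(-\Delta)^{d/2})f}_{H^\Psi(\R^d)}$, which gives part (1) with $B_{d,\Psi,p}$ a dimensional multiple of $B_{d,\Psi,p,1}$.

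Part (2) would be handled identically, now taking $g:=\Psi^{-1}\brkt{a_d(1-\Delta)^{d/2}}f$, so that $\widehat g(\xi)=\Psi^{-1}(a_d\jap{\xi}^d)\widehat f(\xi)$ and $\widehat f(\xi)=\widehat g(\xi)/\Psi^{-1}(a_d\jap{\xi}^d)$; assuming (as we may) that $g\in h^\Psi(\R^d)$, one writes
\[
\norm{f}_{L^\infty(\R^d)}\lesssim_d\norm{\widehat f}_{L^1(\R^d)}=\int_{\R^d}\brkt{\frac{\abs{\widehat g(\xi)}\jap{\xi}^d}{\Psi^{-1}(a_d\jap{\xi}^d)}}\frac{\dd\xi}{\jap{\xi}^d}
\]
and invokes the $q=1$ case of Corollary \ref{cor_local} to bound the right-hand side by $B_{d,\Psi,p,1}\norm{g}_{h^\Psi(\R^d)}$. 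The only points requiring care are of a bookkeeping nature: checking that $g$ is a bona fide tempered distribution to which the earlier corollaries apply (handled by the polynomial growth of $\Psi^{-1}$ together with the trivial case $g\notin H^\Psi$ or $g\notin h^\Psi$), and keeping track of the normalisation constant in the inversion formula. There is no substantial analytic obstacle here, as all the real work has already been carried out in Theorem \ref{eucl} and Corollaries \ref{cor} and \ref{cor_local}.
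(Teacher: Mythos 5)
Your proposal is correct and takes essentially the same route as the paper: the paper itself only says that Corollary~\ref{Cor:18} follows from Corollary~\ref{cor} and its local counterpart \eqref{local_version} with $q=1$ ``and appealing to the properties of the Fourier transform,'' and your argument --- setting $g:=\Psi^{-1}\brkt{a_d(-\Delta)^{d/2}}f$ (resp.\ $g:=\Psi^{-1}\brkt{a_d(1-\Delta)^{d/2}}f$), using $\norm{f}_{L^\infty}\lesssim_d\norm{\widehat f}_{L^1}$, and observing that $\norm{\widehat f}_{L^1}$ is exactly the $q=1$ left-hand side of Corollary~\ref{cor} (resp.\ Corollary~\ref{cor_local}) evaluated at $g$ --- is the intended spelled-out version of that one-line indication. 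The bookkeeping remarks (polynomial growth of $\Psi^{-1}$ making $g$ a tempered distribution with $\widehat g\in L^1$, and the trivial case $\norm{g}=\infty$) are the right points to address and are handled correctly.
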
 

To give an intuitive interpretation of the previous statement, let us consider the particular case that we take $\Psi_{r,p}(t)=t^p\log(e+t)^{-r}$ as in \eqref{Psi_r,p_def}. Using \eqref{eq:approx_inverse} and the corollary,  we have the following result.

\begin{corollary}\label{cor_sobolev_embedding} Let $ p \in (d/(d+1), 1]$ and $r\geq 0$. Then there exists a positive constant $B_{d, r, p}$ such that for all Schwartz functions $f$ on $\R^d$
\[
\norm{f}_{L^\infty (\R^d)} \leq B_{d, r, p} \norm{(-\Delta)^{\frac{dp}{2}}\log^{\frac r p}(e-\Delta) f}_{H^{\Psi_{r,p}} (\R^d)}
\]
and 
\[
\norm{f}_{L^\infty (\R^d)} \leq B_{d, r, p} \norm{(1-\Delta)^{\frac{dp}{2}}\log^{\frac r p}(e-\Delta) f}_{h^{\Psi_{r,p}} (\R^d)}.
\]
\end{corollary}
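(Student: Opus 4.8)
The plan is to read the result off from Corollary \ref{cor} and its local counterpart, Corollary \ref{cor_local}, both taken with $q=1$ --- these are precisely the inequalities underlying Corollary \ref{Cor:18} --- together with the asymptotics of $\Psi_{r,p}^{-1}$ recorded in \eqref{eq:approx_inverse}. Fix a Schwartz function $f$ on $\R^d$. By Fourier inversion $\norm{f}_{L^\infty(\R^d)}\lesssim_d\norm{\widehat f}_{L^1(\R^d)}$, so it is enough to bound $\norm{\widehat f}_{L^1(\R^d)}$. For the first inequality, set $g:=(-\Delta)^{\frac{d}{2p}}\log^{\frac r p}(e-\Delta)f$, that is, the Fourier multiplier operator with symbol $\ell(\xi):=\abs\xi^{d/p}\log^{r/p}(e+\abs\xi^2)$ applied to $f$; then $\widehat g=\ell\,\widehat f$, and since $\widehat g$ vanishes at the origin we have $\widehat f=\widehat g/\ell$ as a continuous function on $\R^d\setminus\{0\}$. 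We may assume $\norm{g}_{H^{\Psi_{r,p}}(\R^d)}<\infty$, as otherwise there is nothing to prove (for $f$ Schwartz this is in any case automatic, the decay of $g$ coming from the $\abs\xi^{d/p}$-type singularity of $\ell$, together with $\widehat g(0)=0$ and the hypothesis $p>d/(d+1)$, being enough to place $g$ in $H^{\Psi_{r,p}}(\R^d)$).

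The heart of the matter is to compare $\ell$ with $\Psi_{r,p}^{-1}(a_d\abs\xi^d)$. By \eqref{eq:approx_inverse}, $\Psi_{r,p}^{-1}(t)\approx_{p,r}t^{1/p}\log^{r/p}(e+t)$, hence $\Psi_{r,p}^{-1}(a_d\abs\xi^d)\approx_{d,p,r}\abs\xi^{d/p}\log^{r/p}(e+a_d\abs\xi^d)$. Since $\log(e+a_d\abs\xi^d)\approx_d\log(e+\abs\xi^2)$ uniformly in $\xi\in\R^d$ --- both quantities being $\approx 1$ near the origin and $\approx\log\abs\xi$ for $\abs\xi$ large --- we conclude $\ell(\xi)\approx_{d,p,r}\Psi_{r,p}^{-1}(a_d\abs\xi^d)$, and in particular
\[
\ell(\xi)^{-1}\lesssim_{d,p,r}\Psi_{r,p}^{-1}(a_d\abs\xi^d)^{-1}\qquad\text{for all }\xi\neq 0 .
\]

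Combining this with Corollary \ref{cor} applied to $g$ with $q=1$ gives
\begin{align*}
\norm{\widehat f}_{L^1(\R^d)}
&=\int_{\R^d}\frac{\abs{\widehat g(\xi)}}{\ell(\xi)}\,\dd\xi
\lesssim_{d,p,r}\int_{\R^d}\frac{\abs{\widehat g(\xi)}}{\Psi_{r,p}^{-1}(a_d\abs\xi^d)}\,\dd\xi\\
&=\int_{\R^d}\frac{\abs{\widehat g(\xi)}\,\abs\xi^{d}}{\Psi_{r,p}^{-1}(a_d\abs\xi^d)}\,\frac{\dd\xi}{\abs\xi^{d}}
\lesssim_{d,p,r}\norm{g}_{H^{\Psi_{r,p}}(\R^d)} ,
\end{align*}
which is the first asserted inequality. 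The second follows in exactly the same way with $\abs\xi$ replaced by $\jap\xi$ throughout: one takes $g:=(1-\Delta)^{\frac d{2p}}\log^{\frac r p}(e-\Delta)f$, whose symbol $\jap\xi^{d/p}\log^{r/p}(e+\abs\xi^2)$ is comparable to $\Psi_{r,p}^{-1}(a_d\jap\xi^d)$ by the same computation (now with no singularity at the origin, and with $g$ again a Schwartz function), and invokes Corollary \ref{cor_local}, i.e.\ \eqref{local_version}, with $q=1$ in place of Corollary \ref{cor}.

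One could instead try to argue at the level of Corollary \ref{Cor:18} itself: the operator there and the one in the present statement differ by the Fourier multiplier with symbol $\Psi_{r,p}^{-1}(a_d\abs\xi^d)/\ell(\xi)$, which by \eqref{eq:approx_inverse} and Leibniz-type bounds of the kind in \eqref{right_symbol_class} is essentially of Kohn--Nirenberg order zero, hence bounded on $H^{\Psi_{r,p}}(\R^d)$. That route, however, requires a Mihlin--Hörmander multiplier theorem on Hardy--Orlicz spaces together with some care near $\xi=0$ --- where $\abs\xi^{d/p}$ and $\log(e+\abs\xi^d)$ need not be smooth when $d/p$, respectively $d$, is not an even integer --- whereas the direct argument above avoids all of this, since there only the pointwise bound for $\ell(\xi)^{-1}$ enters, inside the integral. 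This uniform comparison of the two symbols down to the origin --- which reduces to the elementary estimate $\log(e+a_d\abs\xi^d)\approx\log(e+\abs\xi^2)$ --- is the only point that requires any genuine care.
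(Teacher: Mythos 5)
Your proof is correct and, in substance, is the route the paper intends: combine Corollary \ref{cor} and its local version \eqref{local_version} (both with $q=1$), the asymptotic \eqref{eq:approx_inverse} for $\Psi_{r,p}^{-1}$, and Fourier inversion. Two remarks. First, the exponent appearing in the statement, $(-\Delta)^{\frac{dp}{2}}$ (symbol $\abs{\xi}^{dp}$), is evidently a misprint for $(-\Delta)^{\frac{d}{2p}}$ (symbol $\abs{\xi}^{d/p}$): by \eqref{eq:approx_inverse} one has $\Psi_{r,p}^{-1}(a_d\abs{\xi}^d)\approx \abs{\xi}^{d/p}\log^{r/p}(e+\abs{\xi})$, which only matches the operator with exponent $\frac{d}{2p}$, and this is indeed the form you silently work with; the same slip ($dp$ vs.\ $d/p$) recurs in the paper's surrounding prose. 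Second, the paper's own derivation is a one-line appeal to ``\eqref{eq:approx_inverse} and Corollary~\ref{Cor:18}'', which, read literally, asks you to replace one Fourier multiplier operator by another with pointwise-comparable symbol inside an $H^{\Psi_{r,p}}$-norm --- a step that is not free, as you correctly observe: it would need a Mihlin--H\"ormander-type multiplier bound on $H^{\Psi_{r,p}}(\R^d)$ plus some care about the failure of smoothness of $\abs{\xi}^{d/p}$ at the origin. By instead going back to the integral inequality of Corollary \ref{cor} (equivalently, unwinding what Corollary \ref{Cor:18} is built from) and substituting inside the integrand, you reduce the matter to a genuine pointwise comparison $\ell(\xi)\approx \Psi_{r,p}^{-1}(a_d\abs{\xi}^d)$, which is elementary. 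The underlying computation is the same as the paper's, but your version makes explicit where the comparison must be made and why no multiplier theorem is needed; this is a legitimate clarification of the argument, not a different one.
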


Roughly speaking, these inequalities say that if a distribution has its derivatives of order $dp$ plus $r/p$-logarithmic order in $h^{\Psi_{r,p}}(\R^d)$, then it belongs to $\mathcal{C}_0(\R^d)$.

Corollary \ref{cor}, and the identification in \cite{AR}*{Proposition 2.26} yield that for $2\leq q < \infty$ one has the following result.
\begin{corollary}\label{cor:Sobolev-embedding_local}
Let $p \in (d/(d+1), 1]$ and $r\geq 0$. Then for all Schwartz functions $f$ on $\R^d$ and for $2\leq q<\infty$ one has
\begin{equation}\label{J-H^{log}_emb}
\begin{split}
\norm{f}_{F^{0,-\frac r p}_{q,2}(\R^d)}&\approx \norm{\log^{-r/p}(e-\Delta)f}_{L^q (\R^d)}\lesssim \norm{ (-\Delta)^{\frac{dp}{2q'}} f}_{H^{\Psi_{r,p}} (\R^d)}
\end{split}
\end{equation}
and 
\begin{equation}\label{J-H^{log}_emb_inh}
\begin{split}
\norm{f}_{F^{-\frac{dp}{q'},-\frac{r}{p}}_{q,2}(\R^d)}&\lesssim \norm{f}_{h^{\Psi_{r,p}} (\R^d)}.
\end{split}
\end{equation}
\end{corollary}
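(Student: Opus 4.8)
The plan is to deduce Corollary~\ref{cor:Sobolev-embedding_local} from three ingredients already available: the realisation of the spaces of generalised smoothness as potential-type spaces recorded in \S\ref{s_log_smooth}, the Fourier-side description of $F^{s,\rho}_{q,2}(\R^d)$ valid in the range $2\le q<\infty$ from \cite{AR}*{Proposition~2.26}, and the weighted Fourier-transform estimates of Corollary~\ref{cor} (homogeneous setting) and Corollary~\ref{cor_local} (local/inhomogeneous setting). First I would dispose of the equivalence in \eqref{J-H^{log}_emb}: by the lifting property \eqref{eq:equivalent_norms} (so that $\norm{f}_{F^{0,-r/p}_{q,2}}\approx\norm{\log^{-r/p}(e-\Delta)f}_{F^{0,0}_{q,2}}$) together with the Littlewood--Paley square function characterisation $F^{0,0}_{q,2}(\R^d)=L^q(\R^d)$, $1<q<\infty$ (see \cite{Grafakos_modern}), one gets $\norm{f}_{F^{0,-r/p}_{q,2}}\approx\norm{\log^{-r/p}(e-\Delta)f}_{L^q}$, so that the content of \eqref{J-H^{log}_emb} reduces to the estimate
\[
\norm{\log^{-r/p}(e-\Delta)f}_{L^q(\R^d)}\lesssim\norm{(-\Delta)^{\frac{dp}{2q'}}f}_{H^{\Psi_{r,p}}(\R^d)}\qquad(2\le q<\infty).
\]

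For this remaining inequality I would set $g:=(-\Delta)^{\frac{dp}{2q'}}f$, so that $\widehat f(\xi)=|\xi|^{-dp/q'}\widehat g(\xi)$ and hence $\widehat{\log^{-r/p}(e-\Delta)f}(\xi)=(\log(e+|\xi|^2))^{-r/p}|\xi|^{-dp/q'}\widehat g(\xi)$. The next step is to invoke \cite{AR}*{Proposition~2.26}, which for $2\le q<\infty$ bounds the $F^{s,\rho}_{q,2}$-norm by a weighted Lebesgue norm of the Fourier transform, the weight being built from $\langle\xi\rangle^{s}$, the logarithmic factor $(1+\log\langle\xi\rangle)^{\rho}$, and a shift $\langle\xi\rangle^{d/q'}$ coming from the Hausdorff--Young/Sobolev mechanism available when $q\ge2$; in the homogeneous form this controls $\norm{\log^{-r/p}(e-\Delta)f}_{L^q}=\norm{\log^{-r/p}(e-\Delta)f}_{\dot F^{0}_{q,2}}$ by a weighted integral of $|\widehat{\log^{-r/p}(e-\Delta)f}|$. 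Substituting $\widehat f=|\xi|^{-dp/q'}\widehat g$ and using $\Psi_{r,p}^{-1}(t)\approx t^{1/p}(\log(e+t))^{r/p}$ from \eqref{eq:approx_inverse}, so that $|\xi|^{d}/\Psi_{r,p}^{-1}(a_d|\xi|^{d})\approx|\xi|^{d(1-1/p)}(\log(e+|\xi|))^{-r/p}$, the factor $|\xi|^{-dp/q'}$ is precisely what turns the resulting weighted integral of $|\widehat g|$ into the left-hand side of Corollary~\ref{cor} with exponent $q$ and $\Psi=\Psi_{r,p}$; that corollary then bounds it by $C\norm{g}_{H^{\Psi_{r,p}}}$, giving \eqref{J-H^{log}_emb}.

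The inhomogeneous statement \eqref{J-H^{log}_emb_inh} is obtained by the same mechanism, now working directly with $f$ and the inhomogeneous spaces: apply \cite{AR}*{Proposition~2.26} to $F^{-dp/q',-r/p}_{q,2}(\R^d)$ to reduce $\norm{f}_{F^{-dp/q',-r/p}_{q,2}}$ to a weighted integral of $|\widehat f|$ over $\R^d$ against $\langle\xi\rangle^{-d}\dd\xi$, recognise the weight through \eqref{eq:approx_inverse} as a power of $\langle\xi\rangle^{d}/\Psi_{r,p}^{-1}(a_d\langle\xi\rangle^{d})$, and close with Corollary~\ref{cor_local}. Alternatively, \eqref{J-H^{log}_emb_inh} can be read off from \eqref{J-H^{log}_emb} by applying the lifting property of Triebel--Lizorkin spaces of generalised smoothness (\S\ref{s_log_smooth}) to $(-\Delta)^{-dp/(2q')}f$. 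Throughout one uses freely the realisations $\norm{f}_{F^{0,\rho}_{q,2}}\approx\norm{\log^{\rho}(e-\Delta)f}_{L^q}$ and the elementary comparison $\log(e+|\xi|^2)\approx1+\log\langle\xi\rangle$.

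The step I expect to require the most care, and which is the computational heart of the argument, is checking that the smoothness and integrability indices fit together when \cite{AR}*{Proposition~2.26} is combined with Corollary~\ref{cor} (respectively Corollary~\ref{cor_local}): one must verify that the power of $|\xi|$ contributed by the Riesz potential $(-\Delta)^{dp/(2q')}$ exactly bridges the gap between the Sobolev-type weight in Proposition~2.26 and the $H^{\Psi_{r,p}}$-weight $|\xi|^{d}/\Psi_{r,p}^{-1}(a_d|\xi|^{d})$ in Corollary~\ref{cor}, and that the ensuing integrals converge — this is where the hypothesis $p>d/(d+1)$ enters, as in the proofs of Theorem~\ref{eucl} and Corollary~\ref{cor}. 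A secondary technical point is the passage between the homogeneous formulation natural for $H^{\Psi_{r,p}}$ and Corollary~\ref{cor} and the inhomogeneous Triebel--Lizorkin scale: as in the proof of Corollary~\ref{cor_local}, the low-frequency region $|\xi|\lesssim1$ is treated separately, using the pointwise Fourier bounds of Propositions~\ref{FT_HO} and \ref{FT_HO_local}.
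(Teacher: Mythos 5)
Your overall route does match the paper's (very terse) indication — combine Corollary~\ref{cor}/\ref{cor_local} with the Fourier-side embedding of \cite{AR}*{Proposition~2.26}, the lifting property of \S\ref{s_log_smooth}, and $F^{0,0}_{q,2}=L^q$ — but the step you yourself flag as ``the computational heart'' is asserted rather than carried out, and the indices do not in fact match for $p<1$. Writing $\Psi_{r,p}^{-1}(t)\approx t^{1/p}\log^{r/p}(e+t)$, Corollary~\ref{cor_local} gives $\norm{\jap{\xi}^{\beta}\log^{-r/p}(e+\jap{\xi})\widehat{f}}_{L^q}\lesssim\norm{f}_{h^{\Psi_{r,p}}}$ with $\beta=d-d/p-d/q$; the Bernstein/Hausdorff--Young mechanism that, for $q\ge2$, sends such a weighted Fourier--Lebesgue quantity into $F^{s,-r/p}_{q,2}$ requires the weight exponent to be at least $s+d(1/q'-1/q)$; and for $s=-dp/q'$ the discrepancy
\[
\beta-\brkt{s+d\brkt{\tfrac1{q'}-\tfrac1q}}=d\brkt{1-\tfrac1p}-\tfrac{d(1-p)}{q'}
\]
vanishes at $p=1$ and is strictly negative for every $p\in(d/(d+1),1)$. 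So for $p<1$ the weight Corollary~\ref{cor_local} produces is genuinely weaker than what the $F^{-dp/q',-r/p}_{q,2}$ estimate needs, and the argument does not close. The same mismatch occurs in your treatment of \eqref{J-H^{log}_emb} with $g=(-\Delta)^{dp/(2q')}f$ and in your alternative derivation of \eqref{J-H^{log}_emb_inh} by lifting.

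This is more than a gap in the write-up: the inequality as stated fails for $p<1$. Take $d=1$, $r=0$, $q=2$, so that \eqref{J-H^{log}_emb_inh} would read $\norm{\jap{\xi}^{-p/2}\widehat{f}}_{L^2(\R)}\lesssim\norm{f}_{h^p(\R)}$. For the $h^p$-atoms $f_\delta(x)=\delta^{-1/p}h(x/\delta)$, $0<\delta<1$, with $h$ a fixed mean-zero bump, one has $\norm{f_\delta}_{h^p(\R)}\approx1$ while $\abs{\widehat{f_\delta}(\xi)}\approx\delta^{2-1/p}\abs{\xi}$ for $\abs{\xi}\lesssim\delta^{-1}$, so
\[
\norm{\jap{\xi}^{-p/2}\widehat{f_\delta}}_{L^2(\R)}^2\gtrsim\delta^{4-2/p}\int_1^{\delta^{-1}}\xi^{2-p}\,\dd\xi\approx\delta^{\,1+p-2/p}\longrightarrow\infty
\]
as $\delta\to0^+$, since $1+p-2/p<0$ for $p<1$. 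The smoothness index that Corollary~\ref{cor_local} combined with the Fourier embedding actually delivers is $s=d/q-d/p=d(1-1/p-1/q')$ (and correspondingly the Riesz potential in \eqref{J-H^{log}_emb} should be $(-\Delta)^{d(1/p-1/q)/2}$); this coincides with $-dp/q'$ only at $p=1$ and is the familiar Sobolev index in $h^p\hookrightarrow F^{d/q-d/p}_{q,2}$. With that index your bookkeeping does close exactly at the borderline, the logarithmic factor supplying the extra summability; the verification you deferred is precisely where the stated claim breaks down.
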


\begin{rmk}
Recall that Corollary \ref{cor:embedding}  yields
\[
\norm{f}_{F^{0,-r}_{1,2}(\R^d)}\approx \norm{\log^{-r}(e-\Delta)f}_{h^1 (\R^d)} \lesssim_{d,q} \norm{ f}_{h^{\log^r} (\R^d)}.
\]
Note that, for all $q>1$, the standard Sobolev embedding between Triebel--Lizorkin spaces \cite{Tri83}*{\S 2.7.1} and this last inequality yield 
\[
\norm{f}_{F^{-\frac{d}{q'},-r}_{q,2}(\R^d)}\lesssim\norm{f}_{F^{0,-r}_{1,2}(\R^d)}\lesssim \norm{f}_{h^{\log^r} (\R^d)},
\]
proving that, for $p=1$, \eqref{J-H^{log}_emb_inh} holds for all $1\leq q<\infty$. 
\end{rmk}

\subsubsection{Embedding of analytic function spaces on the right halfplane}
In this section we show how $L^2$-based analytic Sobolev spaces of generalised logarithmic smoothness, and as a consequence, Hardy--Orlicz spaces of logarithmic type, can be embedded into certain spaces of analytic functions on the right halfplane (see Proposition \ref{prop:logartimic_smoothness_embedd} and its corollary). 

To be more specific, let $\nu$ be a positive regular Borel measure on $[0,\infty)$,  satisfying the doubling condition
\begin{equation}\label{eq:doubling}
\sup_{t>0} \frac{\nu[0,2t)}{\nu[0,t)} < \infty.
\end{equation}
The Zen space $A^2_\nu$ is defined to consist of all analytic  functions $F$ on \[\C_+:=\set{z\in \C: \mathrm{Re}(z)>0}\]
such that 
\[
\norm{F}^2=\sup_{t>0} \int_{\overline{\C_+}} |F(x+iy+t)|^2 \, \dd \nu(x) \, \dd y< \infty.
\]
Examples of Zen spaces include Hardy spaces
$H^p(\C_+)$ (when $\nu=\frac{\delta_0}{2\pi}$) and weighted Bergman
spaces $\mathcal{B}^p_\alpha(\C_ +)$ (take $\alpha>-1$ and $\dd \nu(x)=\frac{x^\alpha}{\pi}\dd x$). We refer the reader to \cites{JPP, Harp1, Harp2} and the references therein.

A feature of Zen spaces is that the Laplace transform
defines an isometric map  from weighted
$L^2$ spaces on $(0,\infty)$ into certain spaces of analytic functions. More specifically we have (see e.g. \cite{JPP}*{Proposition 2.3}):
\begin{thm}[Paley–Wiener theorem for Zen spaces]\label{thm:Paley_Wiener}
Suppose that $w$ is given as a weighted Laplace transform
\[
w(t)=2\pi \int_0^\infty e^{-2rt} \, d\nu(r) \quad (t >0).
\]
Then the Laplace transform provides an isometric map 
\[
\mathcal{L}:  L^2((0,\infty),   w(t) \, \dd t ) \to A^2_\nu,
\]
where 
\[
\mathcal{L}(g)(z)=\int_0^\infty e^{-zt} g(t) \, \dd t .
\]
\end{thm}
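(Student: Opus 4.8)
The plan is to deduce the isometry from the one-dimensional Plancherel theorem applied on vertical lines, together with Tonelli's theorem and monotone convergence; the only genuinely delicate point is checking that $\mathcal{L}g$ is well-defined and holomorphic on all of $\C_+$. Write $W:=L^2\big((0,\infty),w(t)\,\dd t\big)$ and, for $g\in W$, put $F:=\mathcal{L}g$.

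First I would extract from the doubling hypothesis \eqref{eq:doubling} the quantitative information it encodes. Iterating \eqref{eq:doubling} shows that $\nu[0,R)>0$ for every $R>0$ (otherwise some ratio in \eqref{eq:doubling} would be infinite) and that $\nu[0,R)\lesssim\max\{1,R^{N}\}\,\nu[0,1)$ for a suitable $N=N(\nu)\ge 0$; splitting $\int_0^\infty e^{-2rt}\,\dd\nu(r)$ dyadically then gives $w(t)<\infty$ for every $t>0$, while the lower estimate $w(t)\ge 2\pi\int_0^{1/t}e^{-2rt}\,\dd\nu(r)\gtrsim\nu[0,1/t)$, combined with the reverse iteration of \eqref{eq:doubling}, yields the polynomial lower bound
\[
w(t)\gtrsim (1+t)^{-N}\qquad (t>0).
\]
Two consequences follow. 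First, for fixed $x>0$ one has $\sup_{t>0}e^{-2xt}/w(t)<\infty$, so $h_x(s):=e^{-xs}g(s)\chi_{(0,\infty)}(s)$ belongs to $L^2(\R)$, with $\norm{h_x}_{L^2}^2\le\norm{g}_W^2\sup_{t>0}e^{-2xt}/w(t)$. Second, by Cauchy--Schwarz $\int_0^\infty e^{-xt}\abs{g(t)}\,\dd t\le\norm{g}_W\big(\int_0^\infty e^{-2xt}/w(t)\,\dd t\big)^{1/2}\lesssim_x\norm{g}_W$, with a bound that is locally uniform in $x\ge x_0>0$; hence $\mathcal{L}g$ converges absolutely and locally uniformly on $\C_+$, so $F$ is holomorphic there.

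Next comes the computation. Fix $x>0$; then $F(x+iy)=\int_{\R}h_x(s)e^{-iys}\,\dd s=\widehat{h_x}(y)$, so Plancherel's theorem (with the unnormalised Fourier transform) gives $\int_{\R}\abs{F(x+iy)}^2\,\dd y=2\pi\int_0^\infty e^{-2xs}\abs{g(s)}^2\,\dd s$. Integrating against $\dd\nu(x)$, shifting by $t>0$, and using Tonelli (the integrand is nonnegative) to interchange the order of integration,
\begin{multline*}
\int_{\overline{\C_+}}\abs{F(x+iy+t)}^2\,\dd\nu(x)\,\dd y
=2\pi\int_0^\infty\!\!\int_0^\infty e^{-2(x+t)s}\abs{g(s)}^2\,\dd\nu(x)\,\dd s \\
=\int_0^\infty\abs{g(s)}^2 e^{-2ts}\,w(s)\,\dd s,
\end{multline*}
where in the last step we invoked the defining identity $w(s)=2\pi\int_0^\infty e^{-2rs}\,\dd\nu(r)$. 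Since $e^{-2ts}\le 1$ and $e^{-2ts}\uparrow 1$ as $t\downarrow 0$, the monotone convergence theorem gives
\[
\norm{F}^2=\sup_{t>0}\int_0^\infty\abs{g(s)}^2 e^{-2ts}\,w(s)\,\dd s=\int_0^\infty\abs{g(s)}^2 w(s)\,\dd s=\norm{g}_W^2 ,
\]
so $F\in A^2_\nu$ and $\norm{\mathcal{L}g}_{A^2_\nu}=\norm{g}_W$. Injectivity of $\mathcal{L}$ on $W$ is then immediate, and no surjectivity statement is claimed.

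I expect the main obstacle to be precisely the well-definedness step: ensuring that $\mathcal{L}g$ is an honest holomorphic function on the whole of $\C_+$ --- rather than only on some sub-halfplane $\{\mathrm{Re}(z)>c\}$ --- for an arbitrary $g\in W$. This is where the doubling hypothesis is indispensable: it is exactly what produces the polynomial lower bound $w(t)\gtrsim(1+t)^{-N}$, which both places $e^{-x\cdot}g$ in $L^2(0,\infty)$ for each $x>0$ (so that classical Plancherel applies on every vertical line) and forces absolute, locally uniform convergence of the Laplace integral on $\C_+$. Once these structural facts are secured, the identity $\norm{\mathcal{L}g}_{A^2_\nu}=\norm{g}_W$ is the short Plancherel--Tonelli--monotone-convergence computation above.
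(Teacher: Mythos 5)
The paper does not actually prove Theorem \ref{thm:Paley_Wiener}: it is stated as a quoted result (``see e.g. \cite{JPP}*{Proposition 2.3}''), so there is no in-paper argument to compare your attempt against. Judged on its own, your proposal is correct and complete. The identity
\[
\int_{\overline{\C_+}}\abs{F(x+iy+t)}^2\,\dd\nu(x)\,\dd y=\int_0^\infty\abs{g(s)}^2 e^{-2ts}\,w(s)\,\dd s
\]
via Plancherel on vertical lines and Tonelli, followed by monotone convergence in $t\downarrow 0$, is exactly the standard route to such Paley--Wiener theorems for Zen spaces, and you are right that the only nonroutine point is justifying that $\mathcal{L}g$ defines a holomorphic function on all of $\C_+$ for an arbitrary $g\in L^2((0,\infty),w\,\dd t)$; your deduction of the polynomial lower bound $w(t)\gtrsim(1+t)^{-N}$ from the doubling hypothesis \eqref{eq:doubling}, and of the ensuing facts that $e^{-x\cdot}g\in L^2(0,\infty)$ for every $x>0$ and that the Laplace integral converges absolutely and locally uniformly in $\C_+$, handles this cleanly. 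One small bookkeeping point worth stating explicitly: the conclusion $\nu[0,R)>0$ for all $R>0$, and hence the two-sided polynomial control on $w$, presumes $\nu$ is not the zero measure; in the degenerate case $\nu\equiv 0$ both sides vanish and the statement is vacuous, so nothing is lost.
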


\begin{define}
Set 
\[
\mathcal{H}^{s,r}(\R^d):=\set{f\in \mathcal{S}': (1-\Delta)^{\frac{s}{2}}\log^{r}(e-\Delta) f\in L^2(\R^d)}=F^{s,r}_{2,2}(\R^d),
\]
and endow $\mathcal{H}^{s,r}(\R^d)$ with the norm 
\[
\norm{f}_{\mathcal{H}^{s,r}}=\norm{(1-\Delta)^{\frac{s}{2}}\log^{r}(e-\Delta) f}_{L^2}.
\]
We shall denote by  $\mathcal{H}_A^{s,r}(\R)$ those distributions in $\mathcal{H}^{s,r}(\R)$ whose Fourier transform in supported in $[0,\infty)$.
\end{define}

The main result of this section is the following one.
\begin{proposition}
\label{prop:logartimic_smoothness_embedd} Let $a>-1$ and $\gamma>0$. The Laplace--Fourier transform given by
\[
Tf:=\mathcal{L}(\widehat{f})(z)
\] 
provides an isometric map 
\[
T:\mathcal{H}_A^{-\frac{a+1}{2},-\frac{\gamma}{2}}(\R) \to A^{2}_{\nu_{a,\gamma}}(\C_+),
\]
where $\dd\nu_{a,\gamma}(z)=\pi k_{a,\gamma}(x/2)\, \dd x$ and
\begin{equation}\label{kfunction}
    k_{a,\gamma}(t):=e^{-te}\int_0^\infty \frac{t^{a+b}b^{\gamma-1}}{\Gamma(\gamma)\Gamma(a+b+1)} \, \dd b.
\end{equation}
\end{proposition}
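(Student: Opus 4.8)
The plan is to verify directly that $T = \mathcal{L}\circ \widehat{(\cdot)}$ is an isometry from $\mathcal{H}_A^{-(a+1)/2,-\gamma/2}(\R)$ onto (a subspace of) $A^2_{\nu_{a,\gamma}}(\C_+)$ by composing two isometric identifications. First I would recall that, by definition, $f \in \mathcal{H}^{-(a+1)/2,-\gamma/2}(\R)$ with $\operatorname{supp}\widehat f \subseteq [0,\infty)$ means precisely that $g := \widehat f$ is a function supported on $(0,\infty)$ with
\[
\|f\|_{\mathcal{H}^{-(a+1)/2,-\gamma/2}}^2 \;=\; \|(1-\Delta)^{-(a+1)/4}\log^{-\gamma/2}(e-\Delta)f\|_{L^2(\R)}^2 \;=\; c\int_0^\infty |g(t)|^2\,(1+t^2)^{-(a+1)/2}\bigl(\log(e+t^2)\bigr)^{-\gamma}\,\dd t,
\]
using Plancherel's theorem and the fact that $(1-\Delta)^{s/2}\log^r(e-\Delta)$ is the Fourier multiplier with symbol $\langle\xi\rangle^s\log^r(e+\xi^2)$. (I will absorb the harmless constant $c$ coming from the normalisation of the Fourier transform into the definition of the pairing, or equivalently track it explicitly; this is the one point where one must be careful about which Fourier/Laplace normalisation the paper uses, so I would fix the convention at the outset.) Thus the Fourier transform identifies $\mathcal{H}_A^{-(a+1)/2,-\gamma/2}(\R)$ isometrically with $L^2\bigl((0,\infty),\,w_{a,\gamma}(t)\,\dd t\bigr)$, where $w_{a,\gamma}(t) := (1+t^2)^{-(a+1)/2}\bigl(\log(e+t^2)\bigr)^{-\gamma}$ (up to the fixed constant).

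Next I would invoke Theorem \ref{thm:Paley_Wiener} (the Paley--Wiener theorem for Zen spaces): to conclude that $\mathcal{L}$ maps $L^2\bigl((0,\infty),w_{a,\gamma}(t)\,\dd t\bigr)$ isometrically into $A^2_{\nu_{a,\gamma}}$, it suffices to exhibit a positive regular Borel measure $\nu_{a,\gamma}$ on $[0,\infty)$ satisfying the doubling condition \eqref{eq:doubling} such that
\[
w_{a,\gamma}(t) \;=\; 2\pi\int_0^\infty e^{-2rt}\,\dd\nu_{a,\gamma}(r),\qquad t>0.
\]
With the claimed choice $\dd\nu_{a,\gamma}(z)=\pi k_{a,\gamma}(x/2)\,\dd x$ (so that the substitution $r = x/2$ turns the Laplace integral into $2\pi\int_0^\infty e^{-2rt}\,\nu_{a,\gamma}'(r)\,\dd r = \pi\int_0^\infty e^{-xt}k_{a,\gamma}(x/2)\cdot 2\,\dd(x/2)$, i.e.\ matching constants), the required identity reduces to the Laplace-transform computation
\[
\int_0^\infty e^{-xt}\,k_{a,\gamma}(x)\,\dd x \;\stackrel{?}{=}\; \text{(constant)}\cdot (1+t^2)^{-(a+1)/2}\bigl(\log(e+t^2)\bigr)^{-\gamma}
\]
for the kernel $k_{a,\gamma}$ defined in \eqref{kfunction}. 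So the core of the proof is this explicit inverse-Laplace identity.

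To establish it, I would start from the two elementary Gamma-function Laplace identities: for $\sigma>0$, $\int_0^\infty e^{-xt}x^{\sigma-1}\,\dd x = \Gamma(\sigma)t^{-\sigma}$, and the representation $\lambda^{-\gamma} = \frac{1}{\Gamma(\gamma)}\int_0^\infty b^{\gamma-1}e^{-b\lambda}\,\dd b$ for $\lambda>0$. The idea is that $k_{a,\gamma}(x) = e^{-xe}\int_0^\infty \frac{x^{a+b}b^{\gamma-1}}{\Gamma(\gamma)\Gamma(a+b+1)}\,\dd b$ is designed so that, after multiplying by $e^{-xt}$ and integrating in $x$ (shifting $t\mapsto t+e$ because of the $e^{-xe}$ factor), the inner $x$-integral gives $\Gamma(a+b+1)(t+e)^{-(a+b+1)}$, cancelling the $\Gamma(a+b+1)$ in the denominator; the remaining $b$-integral $\frac{1}{\Gamma(\gamma)}\int_0^\infty b^{\gamma-1}(t+e)^{-(a+b+1)}\,\dd b = (t+e)^{-(a+1)}\cdot\frac{1}{\Gamma(\gamma)}\int_0^\infty b^{\gamma-1}e^{-b\log(t+e)}\,\dd b = (t+e)^{-(a+1)}\bigl(\log(t+e)\bigr)^{-\gamma}$, which (after the substitution linking $t$ to $t^2$ that comes from the one-sided Fourier support, $\xi\ge 0$, so $\langle\xi\rangle^2 = 1+\xi^2$ and the relevant variable in the weight is $\xi$, paired against $t$ via $e^{-zt}$, $z=\xi+\cdots$) should reproduce $w_{a,\gamma}$. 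I expect the main obstacle to be exactly the bookkeeping at this juncture: reconciling the weight $(1+t^2)^{-(a+1)/2}\log^{-\gamma}(e+t^2)$ that naturally appears from the Sobolev norm (a function of $\xi^2$) with the output $(t+e)^{-(a+1)}\log^{-\gamma}(t+e)$ of the Laplace computation (a function of $t$), which means the statement as phrased must be understood with the spectral variable already being the Laplace variable; one has to check that with the paper's conventions for $T$ and for $\mathcal H^{s,r}$ these match, possibly up to replacing $e+t^2$ by an equivalent quantity, and that the resulting $\nu_{a,\gamma}$ is genuinely a positive measure (clear, since $k_{a,\gamma}\ge 0$) satisfying \eqref{eq:doubling}. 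The doubling check is routine: $k_{a,\gamma}$ is, up to the exponential $e^{-xe}$, a mild power-type weight, so $\nu_{a,\gamma}[0,2t)/\nu_{a,\gamma}[0,t)$ stays bounded, and one can verify this by splitting the defining $b$-integral or by noting $w_{a,\gamma}$ is itself a doubling weight on $(0,\infty)$ and appealing to the correspondence between doubling of $\nu$ and regularity of its Laplace transform used implicitly around Theorem \ref{thm:Paley_Wiener}. Once the identity $\mathcal{L}k_{a,\gamma}=w_{a,\gamma}$ and doubling are in hand, isometry of $T$ follows by composing the Fourier isometry with the Laplace isometry of Theorem \ref{thm:Paley_Wiener}, completing the proof.
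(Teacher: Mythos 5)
Your proposal follows the same strategy as the paper: compose the Plancherel identification of $\mathcal{H}_A^{-(a+1)/2,-\gamma/2}(\R)$ with a weighted $L^2((0,\infty),w\,\dd t)$ space, then apply the Paley--Wiener theorem for Zen spaces (Theorem \ref{thm:Paley_Wiener}), reducing everything to the Laplace identity $\mathcal{L}(k_{a,\gamma})(s)=(e+s)^{-(a+1)}\log^{-\gamma}(e+s)$ and a doubling check for $\nu_{a,\gamma}$. Your computation of $\mathcal{L}(k_{a,\gamma})$ (split the $b$-integral, do the $x$-integral to cancel $\Gamma(a+b+1)$, recognize the remaining $b$-integral as $\Gamma(\gamma)\log^{-\gamma}(e+s)$) is exactly the paper's Lemma \ref{lem:Laplace_trans}, and your sketch of the doubling check corresponds to Lemma \ref{lem:doubling}.

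The ``bookkeeping obstacle'' you flag is genuine and is in fact glossed over in the paper itself. The definition of $\mathcal{H}^{s,r}$ via the operator $(1-\Delta)^{s/2}\log^r(e-\Delta)$ gives, after Plancherel, the weight $(1+s^2)^{-(a+1)/2}\log^{-\gamma}(e+s^2)$, whereas $\mathcal{L}(k_{a,\gamma})(s)=(e+s)^{-(a+1)}\log^{-\gamma}(e+s)$; these are comparable on $(0,\infty)$ but not equal, and there is additionally the Plancherel normalization constant and the $x=2r$ change of variables entering Theorem \ref{thm:Paley_Wiener}. The paper's displayed chain of equalities writes these two weights as if they were identical, so the proposition as literally stated (``isometric map'') really requires taking $\bigl(\int_0^\infty |\widehat f(s)|^2 (e+s)^{-(a+1)}\log^{-\gamma}(e+s)\,\dd s\bigr)^{1/2}$ as the definition of the norm on $\mathcal{H}_A^{-(a+1)/2,-\gamma/2}(\R)$; with the operator-theoretic definition one obtains a bounded isomorphism onto the range, not an isometry. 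You were right to insist on fixing the Fourier/Laplace conventions at the outset and to treat this reconciliation as the one nontrivial remaining step.
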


The previous proposition, jointly with \eqref{J-H^{log}_emb_inh} with the choice $q=2$, implies the following result.
\begin{corollary}\label{cor:embedding_Analytic} Let $p \in (1/2,1]$  and $r>0$. Then the Laplace--Fourier transform $T$ is a bounded map 
\[
T: h^{\Psi_{r,p}}_A(\R)\to A^{2}_{\nu_{a,\gamma}}(\C_+),
\] 
where 
\[
a=p-1,\quad \gamma=\frac{2r}{p}, \quad \dd\nu_{a,\gamma}(z)=\pi k_{a,\gamma}(x/2) \, \dd x
\]
and $h^{\Psi_{r,p}}_A(\R)$ is defined as those distributions in $h^{\Psi_{r,p}}(\R)$ whose Fourier transform is supported in $[0,\infty)$.
\end{corollary}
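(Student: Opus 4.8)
The plan is to combine the three ingredients that are already in place: the embedding of local Hardy--Orlicz spaces into spaces of generalised smoothness coming from Corollary \ref{cor:Sobolev-embedding_local}, the isometric identification of analytic Sobolev spaces of logarithmic smoothness with Zen spaces provided by Proposition \ref{prop:logartimic_smoothness_embedd}, and the Paley--Wiener theorem for Zen spaces (Theorem \ref{thm:Paley_Wiener}). First I would apply Corollary \ref{cor:Sobolev-embedding_local}, specifically the inhomogeneous estimate \eqref{J-H^{log}_emb_inh}, with $d=1$ and $q=2$. Since $q=2$ we have $q'=2$ as well, so \eqref{J-H^{log}_emb_inh} reads
\[
\norm{f}_{F^{-\frac{p}{2},-\frac{r}{p}}_{2,2}(\R)}\lesssim \norm{f}_{h^{\Psi_{r,p}}(\R)},
\]
and the hypothesis $p\in(1/2,1]$ guarantees $p>d/(d+1)=1/2$, so the corollary applies. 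By the definition of $\mathcal{H}^{s,r}(\R)$ in the excerpt, $F^{s,r}_{2,2}(\R)=\mathcal{H}^{s,r}(\R)$, hence with $s=-p/2$ and the logarithmic exponent $-r/p$ this is exactly $\mathcal{H}^{-\frac{p}{2},-\frac{r}{p}}(\R)$. Writing $a:=p-1$ and $\gamma:=2r/p$, we have $-\tfrac{a+1}{2}=-\tfrac{p}{2}$ and $-\tfrac{\gamma}{2}=-\tfrac{r}{p}$, so the target space in \eqref{J-H^{log}_emb_inh} is precisely $\mathcal{H}^{-\frac{a+1}{2},-\frac{\gamma}{2}}(\R)$.

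Next I would restrict to the analytic subspace. If $f\in h^{\Psi_{r,p}}_A(\R)$, then by definition $\widehat f$ is supported in $[0,\infty)$, so $f\in\mathcal{H}^{-\frac{a+1}{2},-\frac{\gamma}{2}}_A(\R)$ with the norm controlled as above; the Fourier support condition is preserved because the multipliers $(1-\Delta)^{s/2}$ and $\log^{r}(e-\Delta)$ act by multiplication on the Fourier side and hence do not enlarge the spectrum. At this point Proposition \ref{prop:logartimic_smoothness_embedd} applies verbatim with $a>-1$ (which holds since $p>0$) and $\gamma>0$ (which holds since $r>0$), giving that $T f=\mathcal{L}(\widehat f)$ is defined and
\[
\norm{Tf}_{A^2_{\nu_{a,\gamma}}(\C_+)}=\norm{f}_{\mathcal{H}^{-\frac{a+1}{2},-\frac{\gamma}{2}}(\R)},
\]
with $\dd\nu_{a,\gamma}(z)=\pi k_{a,\gamma}(x/2)\,\dd x$ and $k_{a,\gamma}$ as in \eqref{kfunction}. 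Concatenating the two estimates yields
\[
\norm{Tf}_{A^2_{\nu_{a,\gamma}}(\C_+)}\lesssim \norm{f}_{h^{\Psi_{r,p}}(\R)},
\]
which is the claimed boundedness of $T\colon h^{\Psi_{r,p}}_A(\R)\to A^2_{\nu_{a,\gamma}}(\C_+)$.

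The only genuinely nontrivial point is bookkeeping: one must check that the parameters match up correctly, i.e.\ that the exponent pair $(-\tfrac{a+1}{2},-\tfrac{\gamma}{2})$ produced by the choice $a=p-1$, $\gamma=2r/p$ coincides with the exponent pair $(-\tfrac{p}{2},-\tfrac r p)$ appearing on the left side of \eqref{J-H^{log}_emb_inh} when $d=1$, $q=q'=2$, and that the measure $\nu_{a,\gamma}$ and kernel $k_{a,\gamma}$ in the statement of the corollary are literally the ones coming out of Proposition \ref{prop:logartimic_smoothness_embedd}. The subtlety I would be most careful about is the requirement $p>1/2$: this is exactly what makes $d/(d+1)=1/2$ admissible in Corollary \ref{cor:Sobolev-embedding_local} for $d=1$, so the restriction on $p$ in the statement is not cosmetic but is forced by the range in which the Fourier decay estimate (Corollary \ref{cor}, via Proposition \ref{FT_HO_local}) is available. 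Apart from that, every step is a direct invocation of a previously established result, so no new analysis is needed.
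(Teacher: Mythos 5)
Your proof is correct and is essentially identical to the paper's intended argument: the paper simply says "The previous proposition, jointly with \eqref{J-H^{log}_emb_inh} with the choice $q=2$, implies the following result," and your write-up is exactly the unpacking of that sentence, with the parameter bookkeeping $a=p-1$, $\gamma=2r/p$ giving $\bigl(-\tfrac{a+1}{2},-\tfrac{\gamma}{2}\bigr)=\bigl(-\tfrac{p}{2},-\tfrac{r}{p}\bigr)$ carried out correctly. Your remark that $p>1/2$ is precisely $p>d/(d+1)$ for $d=1$ (needed in Corollary \ref{cor:Sobolev-embedding_local}) is the right thing to flag, and the observation that the Fourier-support condition is preserved is the right bridge to the analytic subspace $\mathcal{H}^{-\frac{a+1}{2},-\frac{\gamma}{2}}_A(\R)$.

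One small point of exposition: when you justify that $f\in h^{\Psi_{r,p}}_A(\R)$ lands in $\mathcal{H}_A^{-\frac{a+1}{2},-\frac{\gamma}{2}}(\R)$, the relevant fact is simply that the Fourier support of $f$ is an intrinsic property of $f$ as a distribution and does not depend on which function space we put $f$ in. The remark about the lifting multipliers $(1-\Delta)^{s/2}\log^{r}(e-\Delta)$ not enlarging the spectrum is true but is a red herring here; it matters inside the proof of Proposition \ref{prop:logartimic_smoothness_embedd} (where one passes to $\widehat f$ and invokes the Paley--Wiener theorem for Zen spaces), not in its application. You might also note, if you want the proof to be self-contained, that the estimate \eqref{J-H^{log}_emb_inh} is stated for Schwartz functions and extends to $h^{\Psi_{r,p}}(\R)$ by density, though the paper itself leaves this implicit.
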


\subsubsection*{Proof of Proposition \ref{prop:logartimic_smoothness_embedd}}
We start the proof by giving two technical results that identify the symbol of the Fourier multiplier defining the space $\mathcal{H}^{s,r}(\R^d)$ as the Laplace transform of a suitable doubling measure.  
\begin{lemma}\label{lem:Laplace_trans}
Let $a>-1, \gamma>0$. If $k_{a,\gamma}$ denotes the function given in \eqref{kfunction} then
\[
\mathcal{L}(k_{a,\gamma})(s)=\frac{1}{(e+s)^{a+1}\ln(e+s)^{\gamma} }.
\]
\end{lemma}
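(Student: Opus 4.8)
The strategy is to compute $\mathcal{L}(k_{a,\gamma})(s)$ directly, by interchanging the order of the two integrations defining it and then evaluating the resulting one-dimensional integrals, each of which collapses to a Gamma-function identity. Concretely, I would start from
\[
\mathcal{L}(k_{a,\gamma})(s)=\int_0^\infty e^{-st}\,e^{-et}\left(\int_0^\infty \frac{t^{a+b}\,b^{\gamma-1}}{\Gamma(\gamma)\,\Gamma(a+b+1)}\,\dd b\right)\dd t .
\]
Since every factor of the integrand is non-negative, Tonelli's theorem lets me swap the order of integration, obtaining
\[
\mathcal{L}(k_{a,\gamma})(s)=\frac{1}{\Gamma(\gamma)}\int_0^\infty \frac{b^{\gamma-1}}{\Gamma(a+b+1)}\left(\int_0^\infty e^{-(s+e)t}\,t^{a+b}\,\dd t\right)\dd b .
\]

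Next I would evaluate the inner $t$-integral. Because $a>-1$ and $b>0$ we have $a+b+1>0$, so the substitution $u=(s+e)t$ gives $\int_0^\infty e^{-(s+e)t}t^{a+b}\,\dd t=\Gamma(a+b+1)/(s+e)^{a+b+1}$. Plugging this back, the factors $\Gamma(a+b+1)$ cancel and we are left with
\[
\mathcal{L}(k_{a,\gamma})(s)=\frac{1}{\Gamma(\gamma)\,(s+e)^{a+1}}\int_0^\infty b^{\gamma-1}(s+e)^{-b}\,\dd b .
\]
Finally, writing $(s+e)^{-b}=e^{-b\ln(s+e)}$ and using that $\ln(s+e)>1>0$ for every $s>0$, one more application of the Gamma integral yields $\int_0^\infty b^{\gamma-1}e^{-b\ln(s+e)}\,\dd b=\Gamma(\gamma)/[\ln(s+e)]^{\gamma}$, and after cancelling $\Gamma(\gamma)$ this gives exactly $\mathcal{L}(k_{a,\gamma})(s)=\big[(e+s)^{a+1}\ln(e+s)^{\gamma}\big]^{-1}$.

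The argument is entirely elementary, so I do not expect a genuine obstacle; the only points that deserve an explicit word are the justification of the interchange of integrals (immediate from Tonelli, as the integrand is positive) and the observation that the parameter ranges guarantee convergence of both Gamma integrals, namely $a+b+1>0$ for all $b>0$ and $\ln(e+s)>0$ for all $s>0$. If one wishes to be careful about where $\mathcal{L}(k_{a,\gamma})$ is finite, the same computation shows finiteness for all $s>0$ (indeed for all $\mathrm{Re}\,s>-e+1$), which is more than enough for the use of this lemma in the proof of Proposition \ref{prop:logartimic_smoothness_embedd} via Theorem \ref{thm:Paley_Wiener}.
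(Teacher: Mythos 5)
Your proof is correct. You compute $\mathcal{L}(k_{a,\gamma})(s)$ directly as a double integral, swap the order of integration via Tonelli, and collapse each inner integral with a Gamma-function identity; the parameter restrictions $a>-1$, $b>0$, $\gamma>0$, $s>0$ are exactly what is needed for both $\Gamma$-integrals to converge, and the final cancellations of $\Gamma(a+b+1)$ and $\Gamma(\gamma)$ go through. The paper takes a more modular route: it introduces $g_a(t)=t^a e^{-te}/\Gamma(a+1)$ and $h_\gamma(t)=e^{-te}\Gamma(\gamma)^{-1}\int_0^\infty t^{b-1}b^{\gamma-1}/\Gamma(b)\,\dd b$, computes $\mathcal{L}(g_a)(s)=(e+s)^{-(a+1)}$ and $\mathcal{L}(h_\gamma)(s)=\ln(e+s)^{-\gamma}$, verifies via a Beta-function computation that $k_{a,\gamma}=g_a*h_\gamma$, and then invokes the convolution theorem $\mathcal{L}(g_a*h_\gamma)=\mathcal{L}(g_a)\,\mathcal{L}(h_\gamma)$. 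The two arguments involve essentially the same changes of order of integration and the same Gamma-function evaluations; yours is a little more self-contained in that it avoids the convolution theorem entirely, while the paper's factorisation makes visible how $k_{a,\gamma}$ was designed (as the convolution of the two kernels whose transforms give the two desired factors), which is helpful for motivating the definition in \eqref{kfunction}. Either way the result and its justification are sound.
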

\begin{proof}
Notice that for $a>-1$
\[
(e+s)^{-(a+1)}\Gamma(a+1)=\int_0^\infty t^a e^{-te} e^{-ts} \, \dd t,
\]
so if we take 
\begin{equation}\label{eq:g}
g_a(t)=\frac{t^ae^{-te}}{\Gamma(a+1)},
\end{equation}
then 
\[
\mathcal{L}(g_a)(s)=(e+s)^{-(a+1)}.
\]
Also, we have that 
\begin{align*}
\frac{\Gamma(\gamma)}{\ln(e+s)^{\gamma}}=\int_0^\infty \frac{b^{\gamma-1}}{(e+s)^{b}} \, \dd b & =\int_0^\infty \left(\int_0^\infty \frac{t^{b-1}b^{\gamma-1}}{\Gamma(b)} \dd b\right) e^{-te} e^{-ts} \, \dd t\\
&=\Gamma(\gamma)\mathcal{L}(h_\gamma)(s),
\end{align*}
where 
\begin{equation}\label{eq:h}
h_\gamma(t):=\frac{e^{-te}}{\Gamma(\gamma)}\int_0^\infty \frac{t^{b-1} b^{\gamma-1}}{\Gamma(b)} \, \dd b.
\end{equation}
A change of the order of integration, a change of variables, and the relation between the Beta function and the Gamma function yield 
\begin{align*}
h_\gamma*g_a(t) 
& =\frac{e^{-te}}{\Gamma(a+1)\Gamma(\gamma)}\int_0^\infty \frac{b^{\gamma-1}}{\Gamma(b)}\int_0^t (t-s)^as^{b-1} \,  \dd s \,  \dd b\\
&=e^{-te}\int_0^\infty \frac{t^{a+b}b^{\gamma-1}}{\Gamma(\gamma)\Gamma(a+b+1)} \,  \dd b.
\end{align*}
The result follows since, by the properties of the Laplace transform, we have that
\[
\mathcal{L}(k_{a,\gamma})=\mathcal{L}(h_{\gamma})\mathcal{L}(g_a).
\]
\end{proof}

\begin{lemma}\label{lem:doubling} Let $a>-1$ and $\gamma>0$. Define 
\[
\Phi_{a,\gamma}(t):=\int_0^t k_{a,\gamma}(s) \, \dd s.
\]
Then 
\begin{equation}\label{doubling2}
    \sup_{t>0} \frac{\Phi_{a,\gamma}(2t)}{\Phi_{a,\gamma}(t)}<\infty.  
\end{equation}
\end{lemma}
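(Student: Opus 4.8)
The plan is to determine the size of $k_{a,\gamma}$ near the origin, integrate to obtain two-sided bounds for $\Phi_{a,\gamma}$ on a neighbourhood of $0$, and dispose of the remaining range by monotonicity together with the finiteness of $\int_0^\infty k_{a,\gamma}$.

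\emph{Step 1 (pointwise bounds for $k_{a,\gamma}$ near $0$).} For $0<t\le 1/e$, writing $t^{b}=e^{-b|\ln t|}$ we have
\[
k_{a,\gamma}(t)=\frac{e^{-te}\,t^{a}}{\Gamma(\gamma)}\,J(t),\qquad J(t):=\int_0^\infty \frac{e^{-b|\ln t|}\,b^{\gamma-1}}{\Gamma(a+b+1)}\, \dd b .
\]
Since $e^{-e}\le e^{-te}\le 1$ on $(0,1]$, it is enough to show $J(t)\approx|\ln t|^{-\gamma}$ with constants depending only on $a$ and $\gamma$. Put $L:=|\ln t|\ge 1$. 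Because $a>-1$, one has $\Gamma(a+b+1)\ge\Gamma_{\ast}>0$ for all $b\ge 0$, so $J(t)\le\Gamma_{\ast}^{-1}\int_0^\infty e^{-bL}b^{\gamma-1}\, \dd b=\Gamma_{\ast}^{-1}\Gamma(\gamma)\,L^{-\gamma}$. For the reverse inequality, restrict the integral to $b\in(0,1/L)$, where $e^{-bL}\ge e^{-1}$ and $\Gamma(a+b+1)\le C_a:=\max_{0\le b\le 1}\Gamma(a+b+1)$; then $J(t)\ge e^{-1}C_a^{-1}\int_0^{1/L}b^{\gamma-1}\, \dd b=(e\,\gamma\,C_a)^{-1}L^{-\gamma}$. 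Hence $k_{a,\gamma}(t)\approx t^{a}|\ln t|^{-\gamma}$ for $0<t\le 1/e$.

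\emph{Step 2 (two-sided bound for $\Phi_{a,\gamma}$ near $0$, and doubling there).} Let $0<t\le 1/e$. Since $|\ln s|\ge|\ln t|$ for $0<s\le t$, Step 1 gives $\Phi_{a,\gamma}(t)=\int_0^t k_{a,\gamma}\lesssim|\ln t|^{-\gamma}\int_0^t s^{a}\, \dd s\approx t^{a+1}|\ln t|^{-\gamma}$, the integral converging because $a>-1$. Conversely, integrating only over $[t/2,t]$, where $|\ln s|\le|\ln t|+\ln 2\le(1+\ln 2)|\ln t|$, and using $\int_{t/2}^{t}s^{a}\, \dd s\gtrsim t^{a+1}$ (again $a+1>0$), one gets $\Phi_{a,\gamma}(t)\gtrsim t^{a+1}|\ln t|^{-\gamma}$. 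Thus $c_1 t^{a+1}|\ln t|^{-\gamma}\le\Phi_{a,\gamma}(t)\le c_2 t^{a+1}|\ln t|^{-\gamma}$ on $(0,1/e]$, with $c_1,c_2>0$ depending only on $a,\gamma$. For $0<t\le 1/(2e)$ (so $2t\le 1/e$ and $|\ln 2t|=|\ln t|-\ln 2\ge 1$) this implies
\[
\frac{\Phi_{a,\gamma}(2t)}{\Phi_{a,\gamma}(t)}\le\frac{c_2}{c_1}\,2^{a+1}\left(\frac{|\ln t|}{|\ln 2t|}\right)^{\gamma}\le\frac{c_2}{c_1}\,2^{a+1}(1+\ln 2)^{\gamma},
\]
using $|\ln t|/|\ln 2t|\le 1+\ln 2$ when $|\ln 2t|\ge 1$.

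\emph{Step 3 (the range $t\ge 1/(2e)$, and conclusion).} As $k_{a,\gamma}\ge 0$ and is locally integrable (by Step 1, since $a>-1$), $\Phi_{a,\gamma}$ is continuous, nondecreasing and strictly positive on $(0,\infty)$, and by monotone convergence together with Lemma \ref{lem:Laplace_trans},
\[
\Phi_{a,\gamma}(t)\le\int_0^\infty k_{a,\gamma}(s)\, \dd s=\lim_{s\to 0^+}\mathcal{L}(k_{a,\gamma})(s)=e^{-(a+1)}<\infty .
\]
Hence for $t\ge 1/(2e)$ we have $\Phi_{a,\gamma}(2t)/\Phi_{a,\gamma}(t)\le e^{-(a+1)}/\Phi_{a,\gamma}(1/(2e))$, a finite constant. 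Combining this with Step 2 establishes \eqref{doubling2}. Everything here is elementary; the only delicate point is the lower bound for $J(t)$ in Step 1, where one must localise the $b$-integral to $b\lesssim|\ln t|^{-1}$ in order to recover the logarithmic factor $|\ln t|^{-\gamma}$ --- a crude global bound on $\Gamma(a+b+1)^{-1}$ would destroy it --- while the rest is bookkeeping with standard Gamma-function estimates and the monotonicity of $s\mapsto|\ln s|$.
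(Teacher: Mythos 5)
Your proof is correct, and it follows the same broad strategy as the paper: pin down the behaviour of $\Phi_{a,\gamma}$ near the origin and dispose of the range of large $t$ using finiteness of $\int_0^\infty k_{a,\gamma}$ plus monotonicity. The execution differs in one useful way. The paper rewrites the $b$-integral via the substitution $u=b\,\abs{\ln t}$ to pull out the factor $t^a\abs{\ln t}^{-\gamma}$, and then passes to the \emph{limit} $\lim_{t\to 0^+}\Phi_{a,\gamma}(2t)/\Phi_{a,\gamma}(t)$ and invokes compactness of $[0,1/2]$. This requires transferring an asymptotic for the integrand $\eta(t)$ (essentially $k_{a,\gamma}(t)/e^{-te}$) to one for the primitive $\Phi_{a,\gamma}(t)$, a step the paper does not spell out (one would use L'H\^opital or a regular-variation argument); in fact the paper states the limit as $2^{a}$, whereas the extra integration makes the correct value $2^{a+1}$ — a harmless slip that nonetheless betrays the missing step. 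You instead establish honest two-sided bounds $c_1 t^{a+1}\abs{\ln t}^{-\gamma}\le\Phi_{a,\gamma}(t)\le c_2 t^{a+1}\abs{\ln t}^{-\gamma}$ on $(0,1/e]$ by directly estimating $k_{a,\gamma}(t)\approx t^a\abs{\ln t}^{-\gamma}$ (with the key observation that the lower bound on the $b$-integral must be obtained by localising to $b\lesssim\abs{\ln t}^{-1}$), and then read off the doubling constant. This is more elementary, avoids the compactness/limiting argument entirely, and sidesteps the gap in the paper's write-up; the cost is a somewhat longer computation.
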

\begin{proof} 
A change on the order of integration and a change of variables yield
\begin{align*}
\Phi_{a,\gamma}(t)& =\int_0^\infty \frac{b^{\gamma-1}}{\Gamma(\gamma)\Gamma(a+b+1)} \int_0^t s^{a+b} e^{-se} \,  \dd s \,  \dd b\\
&=\int_0^\infty \frac{e^{-(a+b+1)}b^{\gamma-1}}{\Gamma(\gamma)\Gamma(a+b+1)} \int_0^{et} s^{a+b} e^{-s} \,  \dd s \,  \dd b. 
\end{align*}

In particular it follows that $\Phi_{a,\gamma}$ is continuous on $[0,\infty)$, increasing and
\[
\sup_{t>0}\Phi_{a,\gamma}(t)=e^{-a-1}. 
\] 
Hence, for $t\geq 1/2$ we have that
\begin{equation}\label{eq:doubling_high}
1\leq\frac{\Phi_{a,\gamma}(2t)}{\Phi_{a,\gamma}(t)}\leq \frac{e^{-a-1}}{\Phi_{a,\gamma}(1/2)}.
\end{equation}
Then, it suffices to check the doubling property for $0<t<1/2$.
	
For $t<1$, a change of variables gives
\begin{align*}
\eta(t):=\int_0^\infty \frac{t^{a+b}b^{\gamma-1}}{\Gamma(\gamma)\Gamma(a+b+1)} \, \dd b & =t^{a}\abs{\ln t}^{-\gamma}\int_0^\infty \frac{e^{-u}u^{\gamma-1}}{\Gamma(\gamma)\Gamma(a+\frac{u}{\abs{\ln t}}+1)} \, \dd b \\
 & = t^{a}\abs{\ln t}^{-\gamma} \beta(t).
\end{align*}
Then, for $0<t<1/2$  
\[
\frac{\eta(2t)}{\eta(t)}=2^{a}\abs{\frac{\ln 2t}{\ln t}}^\gamma \frac{\beta(2t)}{\beta(t)}.
\]
It then follows that 
\[
\lim_{t\to 0^+}	\frac{\Phi_{a,\gamma}(2t)}{\Phi_{a,\gamma}(t)}=2^a.
\]
In particular, this yields that we can extend $\frac{\Phi_{a,\gamma}(2t)}{\Phi_{a,\gamma}(t)}$ as a continuous function on $[0,1/2]$. By compactness, it has a maximum on that interval, which jointly with \eqref{eq:doubling_high} yields \eqref{doubling2}.
\end{proof}

Notice first that Lemma \ref{lem:doubling} guarantees the doubling property \eqref{eq:doubling}. By the definition of the norm on $\mathcal{H}^{s,r}_A$, Lemma \ref{lem:Laplace_trans} and the Paley--Wiener theorem \ref{thm:Paley_Wiener} we have
\[
\begin{split}
\norm{f}_{\mathcal{H}_A^{-\frac{a+1}{2},-\gamma/2}(\R)}^2 &=\int_{0}^\infty \frac{\abs{\widehat{f}(s)}^2}{(s+e)^{a+1}\log^\gamma (e+s)} \,  \dd s\\
&=\int_{0}^\infty {\abs{\widehat{f}(s)}^2}\mathcal{L}(k_{a,\gamma})(s) \,  \dd s=\norm{\mathcal{L}(\widehat{f})}_{A^2_{\nu_{a,\gamma}}(\C_+)}^2,
\end{split}
\]
where 
\[
\dd\nu_{a,\gamma}(z)=\pi k_{a,\gamma}(x/2) \,  \dd x,
\]
finishing the proof of Proposition \ref{prop:logartimic_smoothness_embedd}.

\begin{rmk} In the case that $r=0$, with a similar argument as above one shows that 
\[
T: h^p_A(\R)\to A_{\nu_a}^2(\C_+)
\]
with
\[
\dd\nu_{a}(z)=\pi g_{a}(x/2) \,  \dd x\, \dd y=\frac{\pi x^a}{2^a\Gamma(a+1)} e^{-xe/2} \, \dd x \,  \dd y.
\] 
\end{rmk}
\begin{rmk} Note that for $a>-1$ 
\[
\mathcal{L}(x^a)(t)=\Gamma(a+1) t^{-a-1}.
\]
So defining for $\gamma>0$ and $a>-1$
\[
n_{a,\gamma}(t):=\frac{1}{\Gamma(a+1)}\int_0^t (t-s)^a h_{\gamma}(s) \, \dd s,
\]
with $h_\gamma$ as in \eqref{eq:h}, then 
\[
\mathcal{L}(n_{a,\gamma})(s)=\frac{1}{s^{a+1}\log^\gamma(e+s)}.
\]
Tracing the argument above, one shows that for $p \in (1/2, 1]$ and $r>0$,  the Laplace--Fourier transform $T$ is a bounded map 
\[
T: H^{\Psi_{r,p}}_A(\R)\to A^{2}_{\nu_{a,\gamma}}(\C_+),
\] 
where 
\[
a=p-1,\quad \gamma=\frac{2r}{p}, \quad \dd\nu_{a,\gamma}(z)=\pi n_{a,\gamma}(x/2) \, \dd x
\]
and $H^{\Psi_{r,p}}_A(\R)$ is defined  as those distributions in $H^{\Psi_{r,p}}(\R)$ whose Fourier transform is supported in $[0,\infty)$.

One can rephrase this boundedness result in terms of a Sobolev-like embedding involving a Bergman space. For instance, one obtains that for $p \in (1/2, 1]$ and $r\geq 0$
\[
\begin{split}
\norm{Tf}_{B^2_{2(\frac{1}{p}-1)}(\C_+)}^2\lesssim  \norm{\log \brkt{e{{-\Delta}}}^{r/p}f}_{H^{\Psi_{r,p}}(\R)}^2,
\end{split}
\]
where $B^2_a(\C_+)=A_{\nu_a}^2(\C_+)$ with 
\[
\dd \nu_a(z)=\frac{\pi x^a}{2^a\Gamma(a+1)} \,  \dd x \, \dd y.
\]
\end{rmk}

\section*{Acknowledgements}
O.B. would like to thank I. Parissis for some useful discussions on Rubio de Francia-type square functions.

A.S. would like to thank J. Cima and M. Mitkovski for useful comments on the results in this paper.

\end{document}